\theoremstyle{plain}
\newtheorem{theorem}{Theorem}[section]
\newtheorem{lemma}[theorem]{Lemma}
\newtheorem{definition-theorem}[theorem]{Definition-Theorem}
\newtheorem{proposition}[theorem]{Proposition}
\newtheorem{corollary}[theorem]{Corollary}
\newtheorem{conjecture}[theorem]{Conjecture}
\theoremstyle{definition}
\newtheorem{definition}[theorem]{Definition}
\newtheorem{example}[theorem]{Example}
\newtheorem{remark}[theorem]{Remark}
\newtheorem{notation}[theorem]{Notation}
\newcommand \bth[1] { \begin{theorem}\label{t#1} }
\newcommand \ble[1] { \begin{lemma}\label{l#1} }
\newcommand \bpr[1] { \begin{proposition}\label{p#1} }
\newcommand \bco[1] { \begin{corollary}\label{c#1} }
\newcommand \bde[1] { \begin{definition}\label{d#1}\rm }
\newcommand \bex[1] { \begin{example}\label{e#1}\rm }
\newcommand \bre[1] { \begin{remark}\label{r#1}\rm }
\newcommand \bnota[1] {\begin{notation}\label{n#1}\rm }
\newcommand {\ele} { \end{lemma} }
\newcommand {\epr} { \end{proposition} }
\newcommand {\eco} { \end{corollary} }
\newcommand {\ede} { \end{definition} }
\newcommand {\eex} { \end{example} }
\newcommand {\ere} { \end{remark} }
\newcommand {\enota} { \end{notation} }
\begin{document}
\title[Duality for GGP relevant pairs]{Duality for generalized Gan-Gross-Prasad relevant pairs for $p$-adic $\mathrm{GL}_n$} 

\author[Kei Yuen Chan]{Kei Yuen Chan}
\address{Department of Mathematics, the University of Hong Kong, Hong Kong, China }
\email{kychan1@hku.hk}
\maketitle

\begin{abstract}

The main goal of this article is to formulate a notion, called a generalized GGP relevant pair, governing the quotient branching law for $p$-adic general linear groups. Such notion relies on a commutation relation between derivatives (from Jacquet functors) and integrals (from parabolic inductions), for which we provide both representation-theoretic and combinatorial perspectives. Our main result proves a duality on those relevant pairs, which is compatible with a dual restriction in branching law.



\end{abstract}

\section{Introduction}

Branching law is a classical problem in the representation theory. Even formulating a precise branching law is a subtle problem, which has already drawn a lot of attention in recent years \cite{GGP12, GGP20}. This article aims to setup essential ingredients towards establishing the general quotient branching law in the sequel \cite{Ch22+c}. 

In Section \ref{ss commutation probl}, we first define derivatives and integrals, and explain a commutation problem between derivatives and integrals.  Such commutation suggests our terminology of strongly commutative triples in Section \ref{ss strong commut triple}. In Section \ref{ss combin commut triples}, we give an equivalent combinatorial definition for those triples. In Section \ref{ss commutate from branching law}, we explain how those commutative triples arise in branching laws.  The main duality result is in Section \ref{ss duality triples}.

For the interests of readers, some results on derivatives and integrals will be discussed in wider generality e.g. inner forms of general linear groups over a non-Archimedean local field and $\square$-irreducible representations.

\subsection{Derivatives and integrals} \label{ss commutation probl}

Let $F$ be a non-Archimedean local field. Let $D$ be a finite-dimensional central division $F$-algebra. Let $G_n=\mathrm{GL}_n(D)$ be the general linear group over $D$. Let $\mathrm{Alg}(G_n)$ be the category of complex smooth representations of $G_n$. Let $\mathrm{Irr}(G_n)$ be the set of irreducible smooth representations of $G_n$. Let $\mathrm{Irr}=\sqcup_n\mathrm{Irr}(G_n)$.

We first define some operators on $\mathrm{Irr}(G_n)$. Denote, by $\times$, the normalized parabolic induction. Following Lapid-M\'inguez \cite{LM19}, an irreducible representation $\sigma$ of $G_n$ is said to be {\it $\square$-irreducible} if $\sigma \times \sigma$ is still irreducible. There are some recent interests in the problem of characterizing $\square$-irreducible representations, see \cite{LM19}. In particular, they classify when a regular representation is $\square$-irreducible. Let $\mathrm{Irr}^{\square}(G_n)$ be the set of $\square$-irreducible representations of $G_n$. Let $\mathrm{Irr}^{\square}=\sqcup_n \mathrm{Irr}^{\square}(G_n)$. 

Let $I^L_{\sigma}(\pi)$   (resp. $I^R_{\sigma}(\pi)$)  be the unique simple submodule of $\sigma \times \pi$ (resp. $ \pi \times \sigma$), called the {\it left (resp. right) $\sigma$-integral \footnote{It is called a left multiplier in \cite{LM16} while we prefer to use integrals to emphasis the relation to derivatives.} of $\pi$}. The uniqueness part is shown in \cite{LM19} adapting the proof of Kang-Kashiwara-Kim-Oh \cite{KKKO15}. One may view $I^L_{\sigma}$ and $I^R_{\sigma}$ as operators from $\mathrm{Irr}$ to $\mathrm{Irr}$. 





For $n_1+\ldots+n_r=n$, let $P_{n_1, \ldots, n_r}$ be the standard parabolic subgroup containing block diagonal matrices $\mathrm{diag}(g_1, \ldots , g_r)$ with $g_k \in G_k$ and upper triangular matrices. Let $N_{n_1, \ldots, n_r}$ be the unipotent radical of $P_{n_1, \ldots, n_r}$.  We shall sometimes abbreviate $N_{n-i,i}$ by $N_i$. For a unipotent radical $N$ and $\pi \in \mathrm{Alg}(G_k)$, we write $\pi_N$ to be the corresponding normalized Jacquet module for $\pi$.

 For $\sigma \in \mathrm{Irr}^{\square}(G_i)$ and $\pi' \in \mathrm{Irr}(G_n)$, define $D^R_{\sigma}(\pi)$ (resp. $D^L_{\sigma}(\pi)$) to be the unique irreducible representation (if it exists) such that
\begin{align} \label{eqn embedding derivative}
  D^R_{\sigma}(\pi) \boxtimes \sigma \hookrightarrow  \pi_{N}, \quad (\mbox{resp. }\sigma \boxtimes D^L_{\sigma}(\pi) \hookrightarrow \pi_{N'})
\end{align}
where $N=N_{n-i,i}$ (resp. $N'=N_{i,n-i}$). We set $D^R_{\sigma}(\pi)=0$ (resp. $D^L_{\sigma}(\pi)=0$) if it does not exist. This is well-defined by \cite[Theorem 4.1D]{LM22}. By \cite[Corollary 2.4]{LM19} and the second adjointness of parabolic induction, we have $D^R_{\sigma}\circ I^R_{\sigma}(\pi) \cong \pi$ and similarly $I^R_{\sigma}\circ D^R_{\sigma}(\pi)\cong \pi$ if $D^R_{\sigma}(\pi)\neq 0$.

When one operator is applied on the left and another operator is applied on the right, it is natural to ask whether those operators commute. For example, the commutation for $I^L_{\sigma}\circ I^R_{\sigma'}(\pi)\cong I^R_{\sigma'}\circ I^L_{\sigma}(\pi)$ is related to the associativity a binary operation on nilpotent orbits studied by Aizenbud-Lapid \cite{AL22} and Lapid-M\'inguez \cite{LM22}.  We shall use geometric lemma to impose a natural condition to guarantee  $D^R_{\sigma'}\circ I^L_{\sigma}(\tau) \cong I^L_{\sigma}\circ D^R_{\sigma'}(\tau)$ in Definition \ref{def strong comm} below



\subsection{Strongly commutative triples} \label{ss strong commut triple}

It is shown later in Proposition \ref{prop strong commute imply commute} that the following conditions in Definition \ref{def strong comm} guarantees a commutation of a left integral and a right derivative.

\begin{definition} \label{def strong comm}
Let $\pi \in \mathrm{Irr}$. Let $\sigma' \in \mathrm{Irr}^{\square}$ and let $\sigma \in \mathrm{Irr}^{\square}(G_r)$.
\begin{itemize}
\item  A triple $(\sigma, \sigma', \pi)$ is said to be {\it  pre-RdLi-commutative} if $D^R_{\sigma}(\pi)\neq 0$ and the composition of the maps:
\begin{align} \label{eqn embedding for strong commut}
 \quad D^R_{\sigma}\circ I^L_{\sigma'}(\pi)\boxtimes \sigma \hookrightarrow I^L_{\sigma'}(\pi)_{N_r} \hookrightarrow (\sigma'\times \pi)_{N_r} \stackrel{s}{\rightarrow} \sigma' \dot{\times}^1 (\pi_{N_r}) 
\end{align}
is non-zero, where the first injection is the unique embedding from (\ref{eqn embedding derivative}), and the second map is induced from the embedding $I^L_{\sigma'}(\pi)\hookrightarrow \sigma' \times \pi$. Here $Rd$ refers to right derivatives and $Li$ refers to left integrals, and $s: (\sigma' \times \pi)_{N_r} \rightarrow \sigma \dot{\times}^1 (\pi_{N_r})$ be the natural surjection onto the top layer in the geometric lemma (see Section \ref{ss geo lem} below for the notion $\dot{\times}^1$).
\item We say that a triple $(\sigma, \sigma', \pi)$ is {\it  strongly RdLi-commutative} if $(\sigma, \sigma', \pi)$ is  pre-RdLi-commutative and the map (\ref{eqn embedding for strong commut}) above factors through the map 
\[ (\sigma' \times D^R_{\sigma}(\pi))\boxtimes \sigma \hookrightarrow  \sigma' \dot{\times}^1 \pi_{N_r} \]
induced from (\ref{eqn embedding derivative}).
\end{itemize}
\end{definition}

Typical examples of pre-RdLi-commutative triples arise when the intersection of the cuspidal supports of $\sigma$ and $\sigma'$ is non-empty. Another family of examples comes from when the integral preserves the level of $\pi$, see Corollary \ref{cor level preserving integrals comm}.

The version switching the right derivative to a left derivative and switching the left integral to a right integral can be formulated analogously, and we shall call those triples to be {\it pre-LdRi-commutative triples} and {\it strongly LdRi-commutative triples} respectively. For example, in the LdRi-version, (\ref{eqn embedding for strong commut}) becomes of the following form:
\begin{align}
\quad \sigma \boxtimes D^L_{\sigma}\circ I^R_{\sigma'}(\pi) \hookrightarrow I^R_{\sigma'}(\pi)_{N_{r,n(\pi)}} \hookrightarrow (\pi \times \sigma')_{N_{r,n(\pi)}} \stackrel{s}{\rightarrow} \pi_{N_{n(\sigma)-r,r}} \ddot{\times } \sigma ,
\end{align}
where $\pi_{N_{n(\sigma)-r,r}} \ddot{\times } \sigma$ is defined analogously as the $\dot{\times}$ by switching left and right versions, and $\pi_{N_{n(\sigma)-r,r}} \ddot{\times}\sigma$ is a $G_{n(\pi)-r}\times G_{r+n(\sigma)}$-representation. We shall not use the notion $\ddot{\times}$ anymore explicitly.

Indeed, from Section \ref{s prelim}, one can see that the two notions of strong commutativity can be related by the Gelfand-Kazhdan involution, and are a key to the formulation of the main duality result (Theorem \ref{thm dual strong commutation} below). Rather than using the Gelfand-Kazhdan involution, one may also define by using the smooth dual functor, while such formulation has a slightly subtle issue from dualizing the geometric lemma. We shall explain such issue in more details in \cite{Ch24+}.

We make the following conjecture:
\begin{conjecture} \label{conj pre implies strong}
Let $\sigma, \sigma' \in \mathrm{Irr}^{\square}$ and let $\pi \in \mathrm{Irr}$. If a triple $(\sigma, \sigma', \pi)$ is pre-RdLi-commutative, then $(\sigma, \sigma', \pi)$ is strongly RdLi-commutative.
\end{conjecture}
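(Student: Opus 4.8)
The plan is to reduce Conjecture~\ref{conj pre implies strong} to a purely local assertion about the Jacquet module $\pi_{N_r}$, and to isolate the obstruction there. Write $n',n,r$ for the integers with $\sigma'\in\mathrm{Irr}^\square(G_{n'})$, $\pi\in\mathrm{Irr}(G_n)$, $\sigma\in\mathrm{Irr}^\square(G_r)$, and set $V:=D^R_\sigma\circ I^L_{\sigma'}(\pi)$. Since $I^L_{\sigma'}(\pi)$ is irreducible, $V\in\mathrm{Irr}(G_{n+n'-r})\cup\{0\}$; pre-RdLi-commutativity forces the map~(\ref{eqn embedding for strong commut}) to be non-zero, so $V\neq0$, $V\boxtimes\sigma$ is irreducible, and that map is therefore injective, with image $\mathcal{I}\cong V\boxtimes\sigma$ an irreducible subrepresentation of $\sigma'\dot{\times}^1(\pi_{N_r})$. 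Strong RdLi-commutativity says precisely that $\mathcal{I}\subseteq\mathcal{S}$, where $\mathcal{S}:=(\sigma'\times D^R_\sigma(\pi))\boxtimes\sigma$ is the image of the embedding $D^R_\sigma(\pi)\boxtimes\sigma\hookrightarrow\pi_{N_r}$ of~(\ref{eqn embedding derivative}) under the exact functor $\sigma'\dot{\times}^1(-)$.

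For the reduction, given a finite-length smooth $G_{n-r}\times G_r$-representation $Y$, let $\mathcal{M}_Y\subseteq Y$ be the largest subrepresentation all of whose irreducible constituents have $G_r$-factor isomorphic to $\sigma$ (the sum of all such subrepresentations, which is again of this kind). The key observation is that every irreducible subrepresentation of $\sigma'\dot{\times}^1(Y)$ of the form $W\boxtimes\sigma$ lies in $\sigma'\dot{\times}^1(\mathcal{M}_Y)$: writing $\sigma'\dot{\times}^1(Y)=\mathrm{Ind}_{P_{n',n-r}\times G_r}^{G_{n+n'-r}\times G_r}(\sigma'\boxtimes Y)$, Frobenius reciprocity converts an embedding $W\boxtimes\sigma\hookrightarrow\sigma'\dot{\times}^1(Y)$ into a non-zero map $g\colon W_{N_{n',n-r}}\boxtimes\sigma\to\sigma'\boxtimes Y$; as $\sigma'$ is irreducible, $\mathrm{im}(g)$ is $\sigma'$-isotypic over $G_{n'}$, hence of the form $\sigma'\boxtimes Y'$ with $Y'\subseteq Y$; and since $\mathrm{im}(g)$ is a quotient of $W_{N_{n',n-r}}\boxtimes\sigma$, every constituent of $Y'$ has $G_r$-factor $\cong\sigma$, so $Y'\subseteq\mathcal{M}_Y$ and the embedding factors through $\sigma'\dot{\times}^1(\mathcal{M}_Y)$. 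Applying this to $Y=\pi_{N_r}$ and $W=V$ yields $\mathcal{I}\subseteq\sigma'\dot{\times}^1(\mathcal{M}_{\pi_{N_r}})$. Since $D^R_\sigma(\pi)\boxtimes\sigma\subseteq\mathcal{M}_{\pi_{N_r}}$ always, and conversely $\mathrm{soc}(\mathcal{M}_{\pi_{N_r}})=D^R_\sigma(\pi)\boxtimes\sigma$ by the uniqueness and multiplicity-one properties of $D^R_\sigma$ recorded in \cite[Theorem 4.1D]{LM22}, the conjecture would follow from the identity $\mathcal{M}_{\pi_{N_r}}=D^R_\sigma(\pi)\boxtimes\sigma$, i.e.\ from the $\sigma$-saturation of $D^R_\sigma(\pi)\boxtimes\sigma$ inside $\pi_{N_r}$.

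To attack that identity one translates it, via the adjunction $\mathrm{Hom}_{G_{n-r}\times G_r}(A\boxtimes B,\pi_{N_r})\cong\mathrm{Hom}_{G_n}(B\times A,\pi)$ (the second adjointness of parabolic induction together with a Weyl-element twist), into the vanishing $\mathrm{Hom}_{G_{n-r}\times G_r}(F\boxtimes\sigma,\pi_{N_r}/(D^R_\sigma(\pi)\boxtimes\sigma))=0$ for every $F\in\mathrm{Irr}(G_{n-r})$. Under this adjunction $D^R_\sigma(\pi)$ is the unique irreducible $A$ with $\pi$ a quotient of $\sigma\times A$, and $\sigma\times D^R_\sigma(\pi)$ has simple cosocle $\pi$ (simple cosocle of $\sigma\times(-)$ for $\sigma$ $\square$-irreducible, \cite{LM19}), so $\mathrm{Hom}_{G_{n-r}\times G_r}(F\boxtimes\sigma,\pi_{N_r})$ is zero for $F\not\cong D^R_\sigma(\pi)$ and one-dimensional — spanned by the canonical embedding, which dies in the quotient — for $F\cong D^R_\sigma(\pi)$; the long exact sequence then shows the desired vanishing is equivalent to the injectivity of $\mathrm{Ext}^1_{G_{n-r}\times G_r}(F\boxtimes\sigma,\,D^R_\sigma(\pi)\boxtimes\sigma)\to\mathrm{Ext}^1_{G_{n-r}\times G_r}(F\boxtimes\sigma,\,\pi_{N_r})$ for each $F$. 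This $\mathrm{Ext}^1$-injectivity is the main obstacle, the essential case being the self-extension $F\cong D^R_\sigma(\pi)$: a non-split self-extension of $D^R_\sigma(\pi)\boxtimes\sigma$ occurring inside $\pi_{N_r}$ is consistent with every cosocle fact used above. I expect its exclusion to require either finer information on the structure of Jacquet modules than is recorded in \cite{LM22} (something stronger than control of their socles), or a reduction to the case of cuspidal $\sigma$ using $D^R_{\sigma_1\times\sigma_2}=D^R_{\sigma_2}\circ D^R_{\sigma_1}$ when $\sigma_1\times\sigma_2$ is irreducible, where the relevant self-extensions can be analysed directly; one also expects the conjecture to be verifiable by hand for the two families of examples noted after Definition~\ref{def strong comm} (intersecting cuspidal supports, and level-preserving integrals, Corollary~\ref{cor level preserving integrals comm}).
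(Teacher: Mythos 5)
Your reduction to the identity $\mathcal{M}_{\pi_{N_r}}=D^R_{\sigma}(\pi)\boxtimes\sigma$ (in your notation) is logically sound as a \emph{sufficient} condition for strong commutativity, but it is a dead end: the identity is simply false in general, and the ``main obstacle'' you flag — a non-split self-extension of $D^R_{\sigma}(\pi)\boxtimes\sigma$ sitting inside $\pi_{N_r}$ — really does occur, already in the simplest nontrivial case. Take $\rho$ cuspidal on $G_k$, $\pi=\rho\times\rho$ (irreducible, $\square$-irreducible), and $\sigma=\rho$. Then $\pi_{N_k}$ has composition series $[\rho\boxtimes\rho,\ \rho\boxtimes\rho]$, yet $\mathrm{Hom}_{G_k\times G_k}(\rho\boxtimes\rho,\pi_{N_k})\cong\mathrm{Hom}_{G_{2k}}(\rho\times\rho,\rho\times\rho)=\mathbb{C}$ by the adjunction you yourself invoke, so $\pi_{N_k}$ is a \emph{non-split} self-extension of $\rho\boxtimes\rho$. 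Hence $\mathcal{M}_{\pi_{N_k}}=\pi_{N_k}$ has length two while $D^R_{\rho}(\pi)\boxtimes\rho=\rho\boxtimes\rho$ has length one, and the $\mathrm{Ext}^1$-injectivity you need fails. The honest conclusion of your Frobenius-reciprocity step is only $\mathcal{I}\subseteq\sigma'\dot{\times}^1\mathcal{M}_{\pi_{N_r}}$, which is strictly weaker than $\mathcal{I}\subseteq\mathcal{S}$, and something genuinely different is needed to rule out the extension part of $\mathcal{M}_{\pi_{N_r}}$ contributing.

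It is also worth emphasizing that the statement you were asked to prove is an open conjecture in the paper; it is established there only when $\sigma$ and $\sigma'$ are essentially square-integrable (Theorem \ref{thm pre imply strong}), and the strategy is quite different from yours. Rather than trying to show that $\pi_{N_r}$ has no extra $\sigma$-isotypic layers, the paper \emph{detours through a saturated object}: it replaces $\sigma=\mathrm{St}(\Delta)$ by $\mathrm{St}(\mathfrak{p})$ with $\mathfrak{p}=\mathfrak{mx}_{\Delta}(\pi)$, for which $(\mathrm{St}(\mathfrak{p}),\pi)$ is an Rd-irreducible pair (Proposition \ref{prop mx rd irr pair}), so the ``$\sigma$-isotypic part'' of the Jacquet module is an honest direct summand and the issue you ran into disappears; pre-commutativity transfers upward to $\mathfrak{p}$ (Proposition \ref{prop completing pre comm}), gives strong commutativity there (Proposition \ref{thm pre implies strong}(1)), and is then brought back down to $\Delta$ via the big-derivative criterion of Proposition \ref{thm pre implies strong}(2) and a case analysis on $|\mathfrak{mx}_{\Delta_1}(\tau)|$ controlled by $\eta$-invariants (Lemmas \ref{lem decrease mx after derivative}, \ref{lem inequaltiy on left derivatives}, \ref{lem eta for  another side}). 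In other words, the paper never tries to prove the $\sigma$-saturation of $\pi_{N_r}$ directly; it arranges to work in a situation where that saturation holds by construction.
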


It is well-known from the work of Zelevinsky \cite{Ze80} and Tadi\'c \cite{Ta90} that essentially square-integrable representations are $\square$-irreducible.

\begin{theorem} (=Theorem \ref{thm pre imply strong}) \label{thm pre implies strong ds}
Conjecture \ref{conj pre implies strong} holds if both $\sigma$ and $\sigma'$ are essentially square-integrable.
\end{theorem}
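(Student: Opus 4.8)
The plan is to work entirely at the level of Jacquet modules using the geometric lemma, exploiting the fact that for essentially square-integrable $\sigma,\sigma'$ the relevant derivatives and integrals are extremely well understood via the theory of segments. First I would reduce to the case where $\sigma$ and $\sigma'$ share cuspidal support in a single cuspidal line, since if their cuspidal supports are disjoint then $\sigma' \times D^R_\sigma(\pi)$ is irreducible, $D^R_\sigma \circ I^L_{\sigma'}(\pi) \cong \sigma' \times D^R_\sigma(\pi)$ follows formally, and the factorization in Definition \ref{def strong comm} is automatic because the top layer map $s$ lands in a space whose only relevant constituent is forced. So the substance is the equisegment/linked-segment situation.

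Next I would make explicit the embedding $D^R_\sigma(\pi) \boxtimes \sigma \hookrightarrow \pi_{N_r}$ and $D^R_\sigma(I^L_{\sigma'}(\pi)) \boxtimes \sigma \hookrightarrow I^L_{\sigma'}(\pi)_{N_r}$ using that for $\sigma$ a Steinberg-type representation attached to a segment $\Delta$, the derivative $D^R_\sigma$ is controlled by the Mœglin–Waldspurger algorithm / Zelevinsky's description of Jacquet modules of segment representations: $I^L_{\sigma'}(\pi)_{N_r}$ decomposes by the geometric lemma applied to $\sigma' \times \pi$, and the top layer $\sigma' \dot\times^1 \pi_{N_r}$ is the piece where the Jacquet functor acts only on $\pi$. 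The key computation is to show that the composite map \eqref{eqn embedding for strong commut} is non-zero (pre-RdLi-commutativity, which for essentially square-integrable $\sigma,\sigma'$ sharing cuspidal support I would verify directly from the segment combinatorics, or cite the remark after Definition \ref{def strong comm}) and then that its image lies in the sub $(\sigma' \times D^R_\sigma(\pi)) \boxtimes \sigma$. For this I would use that $\sigma' \times D^R_\sigma(\pi)$, being a standard-module-type product of a square-integrable with an irreducible, has a unique irreducible submodule, and that the socle filtration of $\sigma' \dot\times^1 \pi_{N_r}$ in the relevant generalized cuspidal-support block is multiplicity-governed in a way that pins down where the image of an $I^L_{\sigma'}(\pi)$-generated submodule must sit. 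Concretely, I would track the $\sigma' \boxtimes (\text{something}) \boxtimes \sigma$ constituents of $(\sigma' \times \pi)_{N_{r} } $ refined further by taking the Jacquet module along $N_{n(\sigma'),\ldots}$, and match the leading term coming from $I^L_{\sigma'}(\pi)$ — which by definition of the left integral is the one where $\sigma'$ sits "on the left with no exchange" — against the leading term of $\sigma' \times D^R_\sigma(\pi)$.

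The main obstacle I anticipate is exactly the factorization step: showing the image of \eqref{eqn embedding for strong commut} does not merely land in $\sigma' \dot\times^1 \pi_{N_r}$ but inside the specific submodule $(\sigma' \times D^R_\sigma(\pi)) \boxtimes \sigma$. Non-vanishing is comparatively soft, but pinning the image down requires a uniqueness/multiplicity-one statement for the occurrence of the irreducible $D^R_\sigma(I^L_{\sigma'}(\pi)) \boxtimes \sigma$ — equivalently of its further Jacquet module $\sigma' \boxtimes D^R_\sigma(\pi) \boxtimes \sigma$ after one more cut — inside $(\sigma' \times \pi)_{N_r}$ and then inside $\sigma' \dot\times^1 \pi_{N_r}$. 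I would establish this multiplicity-one by combining: (i) the square-integrability of $\sigma'$, which makes $\sigma' \times (-)$ have simple socle and forces the relevant constituent to appear with small multiplicity via Lapid–Mínguez regularity; and (ii) an induction on $n$, peeling off $\sigma'$ by the Bernstein–Zelevinsky / second-adjointness identity $D^R_\sigma \circ I^L_{\sigma'} = I^L_{\sigma'} \circ D^R_\sigma$ on those constituents where a "downward" derivative of $\sigma'$ vanishes, which holds because $\sigma'$ is square-integrable and its cuspidal support sits at one end of its segment. A secondary technical point is bookkeeping the normalization twists in $\dot\times^1$ versus ordinary $\times$, which I would handle once at the start by fixing conventions and then suppress.
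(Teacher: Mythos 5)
Your high-level shape (geometric lemma on $\sigma'\times\pi$, isolating the top layer $\sigma'\dot\times^1\pi_{N_r}$, then a uniqueness argument to force the image into $(\sigma'\times D^R_\sigma(\pi))\boxtimes\sigma$) is in the right neighborhood, but the crucial step is mislocated. The multiplicity-one statement you would need is \emph{not} multiplicity one of $D^R_\sigma(I^L_{\sigma'}(\pi))\boxtimes\sigma$ inside $\sigma'\dot\times^1\pi_{N_r}$; you need to rule out that this irreducible occurs as a submodule of $\sigma'\times\tau$ for composition factors $\tau$ of the big derivative $\mathbb D_\sigma(\pi)$ other than $D^R_\sigma(\pi)$ itself (equivalently, that it does not embed into $(\sigma'\times\mathbb D_\sigma(\pi))/(\sigma'\times D^R_\sigma(\pi))$, which is the hypothesis of Proposition \ref{thm pre implies strong}(2)). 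Your two proposed devices do not reach this: (i) simplicity of the socle of $\sigma'\times(-)$ controls a single product, not which $\tau$ the module embeds through inside the layer; (ii) the ``peeling off $\sigma'$ by second adjointness'' induction quietly assumes the very commutation $D^R_\sigma\circ I^L_{\sigma'}=I^L_{\sigma'}\circ D^R_\sigma$ you are trying to establish (it only holds \emph{after} strong commutativity is known, via Proposition \ref{prop strong commute imply commute}). So the argument is circular at the key point.

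What the paper does instead — and what is missing from your plan — is a completion-to-an-irreducible-pair step and an invariant-tracking step. One first replaces $\Delta_1$ by the maximal $\Delta_1$-saturated multisegment $\mathfrak p=\mathfrak{mx}_{\Delta_1}(\pi)$: Proposition \ref{prop completing pre comm} shows pre-commutativity for $(\mathrm{St}(\Delta_1),\mathrm{St}(\Delta_2),\pi)$ propagates to $(\mathrm{St}(\mathfrak p),\mathrm{St}(\Delta_2),\pi)$, and Proposition \ref{prop mx rd irr pair} shows $(\mathrm{St}(\mathfrak p),\pi)$ is a Rd-irreducible pair, so Proposition \ref{thm pre implies strong}(1) gives strong commutativity for the saturated triple essentially for free (because the relevant piece of $\pi_{N}$ is an actual direct summand, not merely a submodule). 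To descend back to $\Delta_1$ one then verifies the non-embedding hypothesis of Proposition \ref{thm pre implies strong}(2) by a three-case analysis on $|\mathfrak{mx}_{\Delta_1}(\tau)|$ versus $|\mathfrak{mx}_{\Delta_1}(\pi)|$ for the other composition factors $\tau$ of $\mathbb D_{\Delta_1}(\pi)$, using the invariants $\eta_{\Delta_1}$ and $\eta^L_{\Delta_2}$ and how derivatives and integrals move them (Lemmas \ref{lem eta unchange}, \ref{lem composition factor big derivative}, \ref{lem decrease mx after derivative}, \ref{lem inequaltiy on left derivatives}). The hardest of these, Lemma \ref{lem eta for another side}, already needs the duality Proposition \ref{prop commutative triple} for pre-commutativity — so the left/right dual theory is an ingredient, not a corollary. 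None of this machinery (irreducible pairs, $\mathfrak{mx}_\Delta$-saturation, $\eta$-invariants, the duality for pre-commutativity) appears in your outline, and without it the ``factorization step'' you correctly flag as the main obstacle remains unproved.
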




\subsection{Combinatorially commutative triples} \label{ss combin commut triples}

For applications on branching laws, we are interested in $\sigma$ and $\sigma'$ to be essentially square-integrable representations. We shall give a combinatorial criteria for those strongly commutative triples, and we need some combinatorial objects. 


Let $|.|$ be the absolute value of $F$ and let $\mathrm{Nrd}: G_n \rightarrow F^{\times}$ be the reduced norm. To each cuspidal representation $\rho$ of $G_n$, we associate a unique character $\nu_{\rho}(g)=|\mathrm{Nrd}(g)|^{s_{\rho}}$ of $G_n$ with $s_{\rho}>0$ so that $\rho \times \nu_{\rho}^{\pm 1}\rho$ is reducible. For $a,b \in \mathbb Z$ with $b-a \geq 0$ and a cuspidal representation $\rho$, we call $[a,b]_{\rho}$ to be a {\it segment}, following \cite{Ze80}. We consider two segments $[a,b]_{\rho}$ and $[a',b']_{\rho'}$ to be equal if $\nu_{\rho}^a\rho\cong \nu_{\rho}^{a'}\rho'$ and $\nu_{\rho}^b\rho \cong \nu_{\rho}^{b'}\rho'$. For each segment $\Delta$, let $\mathrm{St}(\Delta)$ be the corresponding essentially square-integrable representations i.e. the unique simple quotient of $\nu_{\rho}^a\rho\times \ldots \times \nu_{\rho}^b\rho$. 

For a segment $\Delta$, we set 
\[D^R_{\Delta}(\pi)=D^R_{\mathrm{St}(\Delta)}(\pi),\ D^L_{\Delta}(\pi)=D^L_{\mathrm{St}(\Delta)}(\pi),\ I^R_{\Delta}(\pi)=I^R_{\mathrm{St}(\Delta)}(\pi),\ I^L_{\Delta}(\pi)=I^L_{\mathrm{St}(\Delta)}(\pi) .\]
We define more combinatorial invariants:
\begin{itemize}
\item  Define $\varepsilon_{\Delta}(\pi):=\varepsilon_{\Delta}^R(\pi)$ (resp. $\varepsilon_{\Delta}^L(\pi)$) to be the largest non-negative integer $k$ such that $D^k_{\Delta}(\pi)\neq 0$ (resp. $(D^L_{\Delta})^k(\pi)\neq 0$). Here the power $k$ means the composition of $D_{\Delta}$ for $k$-times and when $k=0$, $D^0_{\Delta}$ is regarded as the identity operator.
\item  Define 
\[ \eta_{\Delta}(\pi):=\eta^R_{\Delta}(\pi):=(\varepsilon_{[a,b]_{\rho}}(\pi), \varepsilon_{[a+1,b]_{\rho}}(\pi), \ldots, \varepsilon_{[b,b]_{\rho}}(\pi));\]
\[  (resp. \quad \eta_{\Delta}^L(\pi):=(\varepsilon^L_{[a,b]_{\rho}}(\pi), \varepsilon^L_{[a,b-1]_{\rho}}(\pi), \ldots, \varepsilon^L_{[a,a]_{\rho}}(\pi)) ).
\] 
\end{itemize}
When $\Delta=\left\{ \rho \right\}$, it is a more standard case in \cite{Ja07, Mi09}. 

\begin{definition} \label{def combinatorial comm triple}
Let $\Delta, \Delta'$ be segments. Let $\pi \in \mathrm{Irr}$. We say that $(\Delta, \Delta', \pi)$ is a {\it combinatorially RdLi-commutative triple} if $\varepsilon_{\Delta}(\pi)\neq 0$ (equivalently $D^R_{\Delta}(\pi)\neq 0$) and
\[ \eta_{\Delta}(I^L_{\Delta'}(\pi)) = \eta_{\Delta}(\pi) . 
\]
\end{definition}

We have the following criteria for strong commutation, which also uses Theorem \ref{thm pre implies strong ds} in the proof:

\begin{theorem} (=Part of Theorem \ref{thm combinatorial def})  \label{thm combin and strong}
 A triple $(\Delta, \Delta', \pi)$ is combinatorially RdLi-commutative if and only if $(\mathrm{St}(\Delta), \mathrm{St}(\Delta'), \pi)$ is strongly RdLi-commutative.
\end{theorem}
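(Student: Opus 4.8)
The plan is to prove the equivalence by analyzing both conditions through a single bookkeeping device: the behavior of the derivative invariants $\varepsilon_\Delta$ and $\eta_\Delta$ under the left integral $I^L_{\Delta'}$. First I would recall from Theorem \ref{thm pre implies strong ds} that for essentially square-integrable $\sigma=\mathrm{St}(\Delta)$ and $\sigma'=\mathrm{St}(\Delta')$, being pre-RdLi-commutative is equivalent to being strongly RdLi-commutative, so it suffices to show that $(\Delta,\Delta',\pi)$ is combinatorially RdLi-commutative if and only if $(\mathrm{St}(\Delta),\mathrm{St}(\Delta'),\pi)$ is pre-RdLi-commutative. The non-vanishing hypothesis $\varepsilon_\Delta(\pi)\neq 0$ (equivalently $D^R_\Delta(\pi)\neq 0$) appears on both sides, so I can assume it throughout. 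The heart of the argument is then to relate the non-vanishing of the composite map in (\ref{eqn embedding for strong commut}) to the equality $\eta_\Delta(I^L_{\Delta'}(\pi))=\eta_\Delta(\pi)$.

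The key steps, in order, would be as follows. First, I would establish the inequality $\eta_\Delta(I^L_{\Delta'}(\pi))\geq \eta_\Delta(\pi)$ componentwise (in the natural partial order), coming from the fact that $I^L_{\Delta'}$ is a ``multiplication'' operation that can only create, not destroy, derivatives relative to $\Delta$; this should follow from the geometric lemma applied to $(\sigma'\times\pi)_{N_r}$ together with exactness properties of Jacquet functors, and I would phrase it via the top-layer surjection $s$. Second, I would show that the composite map in (\ref{eqn embedding for strong commut}) is non-zero precisely when the image $D^R_\Delta\circ I^L_{\Delta'}(\pi)\boxtimes\mathrm{St}(\Delta)$ survives the projection to the top layer $\sigma'\dot\times^1(\pi_{N_r})$; by analyzing the layers of the geometric lemma filtration and using the unique-embedding property from (\ref{eqn embedding derivative}) (valid by \cite[Theorem 4.1D]{LM22}), non-vanishing of $s$ restricted to this submodule is equivalent to the derivative $D^R_\Delta$ not ``dropping a level'' when passing from $\pi$ to $I^L_{\Delta'}(\pi)$, which in combinatorial terms is exactly $\varepsilon_{[a',b]_\rho}(I^L_{\Delta'}(\pi))=\varepsilon_{[a',b]_\rho}(\pi)$ for all the sub-segments $[a',b]_\rho$ of $\Delta=[a,b]_\rho$ — that is, $\eta_\Delta(I^L_{\Delta'}(\pi))=\eta_\Delta(\pi)$. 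Combining with the first step (which gives $\geq$) reduces the whole statement to showing that strictness in any component of $\eta$ is detected by, and detects, vanishing of the composite. Third, I would package these two implications to conclude the iff.

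The main obstacle I expect is the second step: controlling exactly how the top-layer surjection $s$ interacts with the iterated derivative embeddings. The subtlety is that $\eta_\Delta$ records the full derivative profile along the whole chain of sub-segments $[a,b]_\rho,[a+1,b]_\rho,\ldots,[b,b]_\rho$, not just the single derivative $D^R_\Delta$, so I need an inductive argument peeling off one $\mathrm{St}([b,b]_\rho)=\nu_\rho^b\rho$-derivative at a time and tracking, through the geometric lemma, that each such peeling commutes with $I^L_{\Delta'}$ exactly under the corresponding equality of $\varepsilon$'s. This requires a careful compatibility statement between the top layer of $(\sigma'\times\pi)_{N_r}$ and the top layers of the further Jacquet modules $(\sigma'\times\pi)_{N_{r'}}$ for the finer unipotent radicals $N_{r'}$ involved in computing $\eta$; I would likely isolate this as a lemma about associativity of the ``top-layer'' construction under composition of Jacquet functors, and then run the induction. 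The base case $k=0$ is trivial, and the inductive step is where the bulk of the work lies, but it is genuinely a combinatorial-geometric bookkeeping argument rather than anything requiring new representation-theoretic input beyond \cite{LM19,LM22} and Theorem \ref{thm pre implies strong ds}.
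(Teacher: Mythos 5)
Your reduction via Theorem \ref{thm pre implies strong ds} to the pre-commutativity version is correct and coincides with the paper's strategy, and your Step 1 (the componentwise inequality $\eta_\Delta(I^L_{\Delta'}(\pi))\geq\eta_\Delta(\pi)$) is indeed elementary and is exactly the paper's Lemma \ref{lem right multi submulti}. The gap is in Step 2. You assert that the non-vanishing of the composite through the top-layer surjection $s$ is equivalent to preservation of the full $\eta_\Delta$-vector, and you describe this as a ``combinatorial-geometric bookkeeping argument'' to be extracted from one (or an iterated family of) geometric-lemma filtrations. But this is precisely the hard content of the theorem, and in the paper it is \emph{not} obtained by direct bookkeeping on layers. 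The obstruction is that pre-commutativity is a statement about where the embedding $D^R_\Delta\circ I^L_{\Delta'}(\pi)\boxtimes\mathrm{St}(\Delta)\hookrightarrow(\sigma'\times\pi)_{N_r}$ lands in the geometric-lemma filtration, while the $\eta$-invariant is a global statement about iterated derivatives; no single geometric-lemma analysis translates one into the other, and the natural obstacle is that even when $\varepsilon_\Delta$ is preserved the embedding might a priori land in a lower layer.

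The paper's actual bridge is the theory of Rd-irreducible pairs: Proposition \ref{prop mx rd irr pair} characterizes $\eta_\Delta$-preservation as the statement that the Jacquet module of $I_{\Delta'}(\pi)$ still has $D_{\mathfrak p}(\cdot)\boxtimes\mathrm{St}(\mathfrak p)$ as a \emph{direct summand} (for $\mathfrak p=\mathfrak{mx}_\Delta(\pi)$), and Proposition \ref{prop trivial orbit and irred} / Proposition \ref{prop trivial orbit in irreducible case} then convert that direct-summand property into the trivial-supporting-orbit (i.e.\ top-layer) condition. In one direction one first must amplify the pre-commutativity of $(\mathrm{St}(\Delta),\mathrm{St}(\Delta'),\pi)$ to that of $(\mathrm{St}(\mathfrak p),\mathrm{St}(\Delta'),\pi)$ via Proposition \ref{prop completing pre comm}, which is a serious argument relying on the big-derivative structure of Lemma \ref{lem big derivative compared with jacquet}, and then appeal to Proposition \ref{prop strong irr imply irr} to show the Rd-irreducibility of the pair survives the integral. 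In the other direction one starts from the Rd-irreducibility given by $\eta$-preservation, uses Proposition \ref{prop trivial orbit in irreducible case} to get the trivial orbit, and then descends back from $\mathfrak p$ to $\Delta$ via the transitivity of pre-commutativity (Proposition \ref{prop transtivity pre commute}). None of this is visible in your sketch. The ``peeling off one $\nu_\rho^b\rho$-derivative at a time'' heuristic also misidentifies the combinatorics: the entries of $\eta_\Delta$ are indexed by $\Delta$-saturated segments $[a',b]_\rho$, which share a right endpoint, and these do not compose from iterated $\mathrm{St}([b,b]_\rho)$-derivatives. Your proposed ``associativity of top layers'' lemma corresponds to Proposition \ref{prop composition supp orbit} in the appendix, but the paper uses it inside the proof of Proposition \ref{prop completing pre comm}, not as a direct induction on the $\eta$-vector. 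As written, Step 2 is an assertion of the theorem's content rather than a proof of it.
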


There is a dual formulation in Definition \ref{def dual com commutative}.

\subsection{Where does the commutation appear in branching laws?} \label{ss commutate from branching law}

Those integrals are a generalization of Kashiwara's crystal operators, as suggested by Leclerc \cite{Le03}, which is related to $p$-adic groups via the quantum Schur-Weyl duality of Chari-Pressley \cite{CP96} and the Bernstein theory. It is also known that there are tight links of the branching laws of type A algebras to crystal graphs, and so one may regard our work in \cite{Ch22+c} is to search for such link to $p$-adic groups. A notable connection is the Lascoux-Leclerc-Thibon algorithm \cite{LLT96} in finding the decomposition numbers for Hecke algebras at the root of unity, proved by Ariki \cite{Ar96}. There are many subsequent developments after that e.g. about canonical bases \cite{VV99, Sc00}, about categorifications \cite{CR08, KL09} as well as block structures \cite{Fa10}.

We now explain more precisely how the commutation comes into the play of branching laws. Let $D=F$. Let $\pi_1 \in \mathrm{Irr}(G_{n+1})$ and let $\pi_2 \in \mathrm{Irr}(G_n)$. We refer the reader to Section \ref{s branching laws} for a notion of derivatives $D^R_{\mathfrak n}$ and integral $I^L_{\mathfrak m}$ for multisegments $\mathfrak m$ and $\mathfrak n$, which extend the segment cases $D^R_{\Delta}$ and $I^L_{\Delta}$ respectively. We shall not introduce what the Bernstein-Zelevinsky derivatives are (see e.g. \cite{Ch21, CS21} for details), but studies in \cite{Ch22+, Ch22+b, Ch22+e} suggest that $D^R_{\mathfrak n}(\pi)$ and $D^L_{\mathfrak n}(\pi)$ are weak replacements of right and left Bernstein-Zelevinsky derivatives. 

Now suppose $\mathrm{Hom}_{G_n}(\pi_1, \pi_2) \neq 0$. Then the Bernstein-Zelevinsky theory suggests that $I^L_{\mathfrak m}\circ D^R_{\mathfrak n}(\nu^{1/2}\pi) \cong \pi'$ for some multisegments $\mathfrak m$ and $\mathfrak n$. By the left Bernstein-Zelevinsky theory \cite{CS21, Ch21}, we also have $I_{\mathfrak m'}^R \circ D^L_{\mathfrak n'}(\nu^{-1/2}\pi)\cong \pi'$. This imposes a condition on those $\mathfrak m, \mathfrak m', \mathfrak n, \mathfrak n'$:
\[    I^L_{\mathfrak m}\circ D^R_{\mathfrak n}(\nu^{1/2}\pi) \cong  I_{\mathfrak m'}^R \circ D^L_{\mathfrak n'}(\nu^{-1/2}\pi)  .
\] 
This gives that
\[   D^L_{\mathfrak n'}(\nu^{-1/2}\pi) \boxtimes \mathrm{St}(\mathfrak n') \hookrightarrow  I^L_{\mathfrak m}\circ D^R_{\mathfrak n}(\nu^{1/2}\pi)_{N_l} \hookrightarrow (\mathrm{St}(\mathfrak m) \times (D^R_{\mathfrak n}(\nu^{1/2}\pi)) )_{N_l} ,
\]
where $l=l_{abs}(\mathfrak m)$. In a special case that the Zelevinsky multisegments parametrizing $\pi$ and $\pi'$ has all segments of relative length at least $2$, combinatorics forces that $(\mathrm{St}(\mathfrak n'), \mathrm{St}(\mathfrak m), D^R_{\mathfrak n}(\nu^{1/2}\pi))$ is a pre-RdLi-commutative triple. This is a starting point of this article.

The goal of this article is to study in details for the segment case. Extending the segment case would be better to be dealt with the concept of minimal multisegments \cite{Ch22+, Ch22+a} and so will be postponed to \cite{Ch22+c}. 

\subsection{Duality} \label{ss duality triples}


One of our main results in this article is the following duality:

\begin{theorem} (=Theorem \ref{thm pre imply strong}+Corollary \ref{cor dual strong commutative triple}) \label{thm dual strong commutation}
Let $\sigma, \sigma'$ be essentially square-integrable representations in $\mathrm{Irr}$. A triple $(\sigma, \sigma', \pi)$ is strongly RdLi-commutative (pre-RdLi-commutative) if and only if $(\sigma', \sigma, I^L_{\sigma'}\circ D^R_{\sigma}(\pi))$ is strongly LdRi-commutative (resp. pre-LdRi-commutative).
\end{theorem}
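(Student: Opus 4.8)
The plan is to reduce, using the results already available, to a single symmetric combinatorial identity, and then to prove that identity by the geometric-lemma analysis behind Theorem~\ref{thm combin and strong}. Since $\sigma=\mathrm{St}(\Delta)$ and $\sigma'=\mathrm{St}(\Delta')$ are essentially square-integrable, Theorem~\ref{thm pre implies strong ds} identifies pre-RdLi-commutativity with strong RdLi-commutativity and likewise pre-LdRi-commutativity with strong LdRi-commutativity, so the two assertions of the statement coincide and it suffices to treat the ``strong'' one. Then Theorem~\ref{thm combin and strong} and its dual (combinatorial LdRi-commutativity, Definition~\ref{def dual com commutative}) replace strong RdLi-commutativity of $(\mathrm{St}(\Delta),\mathrm{St}(\Delta'),\pi)$ by combinatorial RdLi-commutativity of $(\Delta,\Delta',\pi)$, and strong LdRi-commutativity of $(\mathrm{St}(\Delta'),\mathrm{St}(\Delta),\tau)$ by combinatorial LdRi-commutativity of $(\Delta',\Delta,\tau)$. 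Thus it is enough to prove
\[
(\Delta,\Delta',\pi)\ \text{comb.\ RdLi-commutative}\quad\Longleftrightarrow\quad (\Delta',\Delta,\tau)\ \text{comb.\ LdRi-commutative},\qquad \tau:=I^L_{\Delta'}\circ D^R_{\Delta}(\pi).
\]

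I would normalize both sides by the substitution $\xi:=I^L_{\Delta'}(\pi)$. In either direction the commutativity hypothesis gives, via Proposition~\ref{prop strong commute imply commute} (and its LdRi form), $D^R_{\Delta}\circ I^L_{\Delta'}(\pi)\cong I^L_{\Delta'}\circ D^R_{\Delta}(\pi)=\tau$; combined with the relations $D^R_{\sigma}\circ I^R_{\sigma}\cong\mathrm{id}$, $D^L_{\sigma}\circ I^L_{\sigma}\cong\mathrm{id}$ (always) and $I^R_{\sigma}\circ D^R_{\sigma}\cong\mathrm{id}$, $I^L_{\sigma}\circ D^L_{\sigma}\cong\mathrm{id}$ (on non-zero objects) from Section~\ref{ss commutation probl}, one obtains $\pi\cong D^L_{\Delta'}(\xi)$, $\tau\cong D^R_{\Delta}(\xi)$ and $\xi\cong I^R_{\Delta}(\tau)$. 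Unwinding Definition~\ref{def combinatorial comm triple} and its dual, the two sides become
\[
D^R_{\Delta}(\pi)\neq 0\ \text{and}\ \eta^R_{\Delta}(\xi)=\eta^R_{\Delta}\big(D^L_{\Delta'}(\xi)\big),\qquad\text{resp.}\qquad \varepsilon^L_{\Delta'}(\tau)\neq 0\ \text{and}\ \eta^L_{\Delta'}(\xi)=\eta^L_{\Delta'}\big(D^R_{\Delta}(\xi)\big).
\]
As $\tau\neq 0$ forces $D^R_{\Delta}(\pi)\neq 0$ while $\varepsilon^L_{\Delta'}(\tau)=\varepsilon^L_{\Delta'}\big(D^R_{\Delta}(\pi)\big)+1\ge 1$ whenever $D^R_{\Delta}(\pi)\neq 0$, both non-vanishing clauses are equivalent to $D^R_{\Delta}(\pi)\neq 0$, so the heart of the matter is the symmetric equivalence
\[
\eta^R_{\Delta}(\xi)=\eta^R_{\Delta}\big(D^L_{\Delta'}(\xi)\big)\quad\Longleftrightarrow\quad \eta^L_{\Delta'}(\xi)=\eta^L_{\Delta'}\big(D^R_{\Delta}(\xi)\big),\qquad\text{for }\xi\text{ with }D^R_{\Delta}(\xi)\neq 0\neq D^L_{\Delta'}(\xi).
\]
Note that the Gelfand--Kazhdan involution $\iota$ of Section~\ref{s prelim}, which intertwines $\eta^R_{\Xi}$ with $\eta^L_{\iota(\Xi)}$ and $D^R$ with $D^L$, carries each side of this equivalence onto the other with $\Delta$ and $\Delta'$ interchanged: $\iota$ makes the statement self-consistent but does not prove it.

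To prove the equivalence I would unfold each $\eta$ into its list of $\varepsilon$-entries. Writing $\Delta=[a,b]_{\rho}$ and $\Delta'=[a',b']_{\rho'}$, the left equality is $\varepsilon^R_{[a+j,b]_{\rho}}(\xi)=\varepsilon^R_{[a+j,b]_{\rho}}\big(D^L_{\Delta'}(\xi)\big)$ for all $0\le j\le b-a$, and the right one is $\varepsilon^L_{[a',b'-k]_{\rho'}}(\xi)=\varepsilon^L_{[a',b'-k]_{\rho'}}\big(D^R_{\Delta}(\xi)\big)$ for all $0\le k\le b'-a'$. Each such equality asserts that a right-derivative depth along a suffix of $\Delta$ is undisturbed by peeling $\mathrm{St}(\Delta')$ off the left; computing these depths through the top-layer surjection $s$ of the geometric lemma appearing in~(\ref{eqn embedding for strong commut}) (and the $\ddot{\times}$ layer on the other side), exactly as in the proof of Theorem~\ref{thm combin and strong}, turns this into a transversality statement between the ``right-peeling along $\Delta$'' and ``left-peeling along $\Delta'$'' filtrations of $\xi$, which is symmetric under $(\Delta,R)\leftrightarrow(\Delta',L)$. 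I would then induct on $(b-a)+(b'-a')$, removing one box from $\Delta$ or from $\Delta'$ at each step, the base case --- one of $\Delta,\Delta'$ a single cuspidal --- being governed by the $\rho$-derivative combinatorics of Jantzen and M\'inguez alluded to near Definition~\ref{def combinatorial comm triple}.

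The main obstacle is this transversality in the inductive step. Opposite-side derivatives $D^R_{\Delta}$ and $D^L_{\Delta'}$ do not commute in general, and a priori a depth $\varepsilon^R_{[a+j,b]_{\rho}}$ can drop after applying $D^L_{\Delta'}$; the content is that the hypothesis forces the two chains of sub-derivatives $D^R_{[a+j,b]_{\rho}}$ $(0\le j\le b-a)$ and $D^L_{[a',b'-k]_{\rho'}}$ $(0\le k\le b'-a')$ to act on $\xi$ along independent directions, so that performing one of them leaves the depths recorded by the other unchanged. Establishing this uses the $\square$-irreducibility of $\mathrm{St}(\Delta)$ and $\mathrm{St}(\Delta')$ and the precise form of the top-layer map $s$, and amounts to essentially the same bookkeeping that underlies Theorem~\ref{thm combin and strong}; once it is in place the induction closes, the symmetric equivalence follows, and Theorem~\ref{thm dual strong commutation} follows in turn.
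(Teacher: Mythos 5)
Your reduction to a symmetric combinatorial statement is a sensible reorganization, and the bookkeeping up to that point (the $\xi$--normalization, the identification of the non-vanishing clauses, the reformulation of both sides as $\eta^R_{\Delta}(\xi)=\eta^R_{\Delta}(D^L_{\Delta'}(\xi))$ vs.\ $\eta^L_{\Delta'}(\xi)=\eta^L_{\Delta'}(D^R_{\Delta}(\xi))$) is correct. But the proposal stops exactly where the difficulty begins: the equivalence of those two $\eta$-equalities is asserted to follow from ``the same bookkeeping that underlies Theorem~\ref{thm combin and strong}'' and from a transversality of the two chains of sub-derivatives, with no argument for why the depths recorded by $\eta^R_{\Delta}$ are undisturbed by $D^L_{\Delta'}$ once the hypothesis is in place, nor a workable base case. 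This is the content of the theorem, not a routine verification, and there is no reason to expect the $\rho$-derivative combinatorics of Jantzen--M\'inguez plus an induction on $(b-a)+(b'-a')$ to deliver it: left- and right-derivatives of $\square$-irreducibles simply do not commute, and the ``independence of directions'' you need is precisely what one must prove.

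What the paper actually uses, and what your proposal omits entirely, is the irreducible-pair machinery of Sections \ref{s irr pair der}--\ref{s construct precommut}: one first upgrades the pre-commutative triple $(\mathrm{St}(\Delta),\mathrm{St}(\Delta'),\pi)$ to a pre-commutative triple $(\mathrm{St}(\mathfrak p),\mathrm{St}(\Delta'),\pi)$ with $(\mathrm{St}(\mathfrak p),\pi)$ Rd-irreducible (Proposition \ref{prop completing pre comm}), then applies the representation-theoretic duality Proposition \ref{prop commutative triple} --- whose proof rests on Proposition \ref{prop strong irr imply irr} (the irreducibility of the pair passes to the integral) and the supporting-orbit computation of Proposition \ref{prop trivial orbit and irred} --- to flip from RdLi to LdRi, and finally cuts back down from $\mathfrak p$ to $\Delta$ with the transitivity Proposition \ref{prop commutative triple induct}. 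The ``transversality'' you want is exactly what these propositions encode, and they in turn depend on the big-derivative input of Lemma \ref{lem big derivative compared with jacquet} (hence \cite{Ch22+b}). Without reconstructing that apparatus your induction has no mechanism to close; and since your route anyway relies on Theorem \ref{thm combin and strong}, whose proof already uses Proposition \ref{prop transtivity pre commute} and Proposition \ref{prop trivial orbit in irreducible case}, you are not avoiding the representation-theoretic layer, only deferring the parts of it that do the real work. In short: the framing is fine, but the proof of the symmetric combinatorial equivalence is missing, and it is the whole theorem.
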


Theorem \ref{thm dual strong commutation} will be important in the study of quotient branching laws, for which we deduce a duality  for the relevant pairs (see Corollary \ref{cor dual on relevant pair}). This corresponds to the following dual restriction in branching law:
\begin{proposition} (see \cite[Proposition 4.1]{Ch22})
Let $\pi \in \mathrm{Irr}(\mathrm{GL}_{n+1}(F))$ and let $\pi' \in \mathrm{Irr}(\mathrm{GL}_n(F))$. Then there exists a cuspidal representation $\sigma$ of $\mathrm{GL}_2(F)$ such that $\tau \times \pi'{}^{\vee}$ is irreducible and 
\[  \mathrm{Hom}_{\mathrm{GL}_n(F)}(\pi, \pi') \cong \mathrm{Hom}_{\mathrm{GL}_{n+1}(F)}(\tau \times \pi'{}^{\vee}, \pi^{\vee}) .\]
\end{proposition}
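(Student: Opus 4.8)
The plan is to choose the auxiliary representation carefully, reduce the assertion by the Gelfand--Kazhdan involution to a single ``one rung'' comparison of restriction problems, and then carry out that comparison with the geometric lemma, using the cuspidality of the $\mathrm{GL}_2(F)$-representation to annihilate all but the relevant orbit contributions. First I would fix $\tau$ to be a cuspidal representation of $\mathrm{GL}_2(F)$ whose cuspidal support lies on a cuspidal line disjoint from the cuspidal supports of $\pi$ and of $\pi'$; this excludes only finitely many lines, and it forces $\tau\times\pi'{}^\vee$ to be irreducible, no linking being possible on the line of $\tau$. Both Hom-spaces in the statement are at most one-dimensional by the multiplicity-one theorem for the pair $(\mathrm{GL}_{n+1},\mathrm{GL}_n)$, so it is enough to show that the two dimensions coincide (a natural isomorphism then follows, and can also be tracked directly). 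The Gelfand--Kazhdan involution $g\mapsto{}^t g^{-1}$ preserves $\mathrm{GL}_n\subset\mathrm{GL}_{n+1}$ and carries each irreducible representation to its contragredient, whence $\dim\mathrm{Hom}_{\mathrm{GL}_n}(\pi,\pi')=\dim\mathrm{Hom}_{\mathrm{GL}_n}(\pi^\vee|_{\mathrm{GL}_n},\pi'{}^\vee)$. Setting $\sigma=\pi^\vee\in\mathrm{Irr}(\mathrm{GL}_{n+1}(F))$ and $\rho=\pi'{}^\vee\in\mathrm{Irr}(\mathrm{GL}_n(F))$, the proposition becomes the identity
\[
\dim\mathrm{Hom}_{\mathrm{GL}_n}\bigl(\sigma|_{\mathrm{GL}_n},\rho\bigr)\;=\;\dim\mathrm{Hom}_{\mathrm{GL}_{n+1}}\bigl((\tau\times\rho)|_{\mathrm{GL}_{n+1}},\sigma\bigr),
\]
namely: adjoining a cuspidal $\mathrm{GL}_2(F)$-representation to the smaller member of a Gan--Gross--Prasad pair and pushing the problem up one rung does not change the branching multiplicity.

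To prove this identity I would argue as follows. If $\rho$ and $\sigma$ are generic, so is $\tau\times\rho$ (a product of generic representations is generic), and both sides equal $1$ by the uniqueness of Rankin--Selberg functionals of Jacquet--Piatetski-Shapiro--Shalika; this covers in particular the tempered case. In general, write $\tau\times\rho=\mathrm{Ind}_{P_{2,n}}^{\mathrm{GL}_{n+2}}(\tau\boxtimes\rho)$ -- since this induced module is irreducible the $\mathrm{GL}_2$-block may be placed so as to protrude from $\mathrm{GL}_{n+1}$ -- and decompose the restriction to $\mathrm{GL}_{n+1}$ by the geometric lemma along the $\mathrm{GL}_{n+1}$-orbits on the Grassmannian $\mathrm{GL}_{n+2}/P_{2,n}$; there are three such orbits, classified by the relative position of the moving $2$-plane with the fixed hyperplane and fixed line cutting out $\mathrm{GL}_{n+1}\subset\mathrm{GL}_{n+2}$. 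Applying $\mathrm{Hom}_{\mathrm{GL}_{n+1}}(-,\sigma)$ and left-exactness: on the closed orbit the associated piece is an honest parabolic induction whose inducing datum has $\tau$ as its $\mathrm{GL}_2$-factor, so by the second adjointness of parabolic induction its contribution is a $\mathrm{Hom}$-space pairing $\tau$ against a Jacquet module of $\sigma$, and this vanishes because $\tau$ is cuspidal with cuspidal support disjoint from that of $\sigma$. On each of the other two orbits the $\mathrm{GL}_2$-datum enters only through $\tau|_{M_2}$, or through the restriction of $\tau$ to a non-central $\mathrm{GL}_1\subset\mathrm{GL}_2$; by the Kirillov model for $\mathrm{GL}_2$ -- valid since cuspidal representations are generic -- these are $\mathrm{ind}_{N_2}^{M_2}(\psi)$ and $C_c^\infty(F^\times)$, independent of $\tau$. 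Unwinding these two pieces by Frobenius reciprocity together with the Bernstein--Zelevinsky description of $\sigma|_{M_{n+1}}$ identifies their combined contribution with $\mathrm{Hom}_{\mathrm{GL}_n}(\sigma|_{\mathrm{GL}_n},\rho)$ (for instance by comparing with the analogous orbit decomposition governing the branching $\mathrm{GL}_{n+1}\supset\mathrm{GL}_n$ for $\sigma$ and $\rho$). Together with the vanishing of the closed-orbit term this yields the displayed identity, hence the proposition.

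The hard part is the last step: tracking the modulus characters through the geometric lemma and verifying that the two surviving orbit contributions -- which interlace twisted Jacquet modules of $\sigma$ with restrictions of $\rho$ -- reassemble \emph{precisely} into $\mathrm{Hom}_{\mathrm{GL}_n}(\sigma|_{\mathrm{GL}_n},\rho)$, with nothing left over; this requires the Kirillov-model input for $\mathrm{GL}_2$ and a careful matching of the Bernstein--Zelevinsky filtration of $\sigma|_{M_{n+1}}$ against the orbit stratification. A minor but necessary point, already used above, is that a single $\tau$ can be chosen to make $\tau\times\pi'{}^\vee$ irreducible and simultaneously to have cuspidal support disjoint from $\pi$ and $\pi'$, both being ``avoid finitely many cuspidal lines'' conditions; one should also record that the argument is insensitive to the precise choice of the embeddings $\mathrm{GL}_n\subset\mathrm{GL}_{n+1}\subset\mathrm{GL}_{n+2}$ within their conjugacy classes.
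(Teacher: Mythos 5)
First, a point of comparison: the paper does not prove this proposition at all --- it is quoted verbatim from \cite[Proposition 4.1]{Ch22} --- so there is no in-paper argument to measure you against. The proof in the cited reference runs through the Bernstein--Zelevinsky filtration/derivative machinery for the restrictions $\pi|_{P_{n+1}}$ and $(\tau\times\pi'^\vee)|_{P_{n+2}}$, using the cuspidality of $\tau$ (via the Leibniz rule $\tau^{(1)}=0$, $\tau^{(2)}=\mathbb{C}$) to kill the extraneous terms; your Grassmannian orbit analysis is a geometric repackaging of the same idea rather than a genuinely different route.

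Your preliminary reductions are sound: choosing $\tau$ on a cuspidal line disjoint from the supports of $\pi$ and $\pi'$ does make $\tau\times\pi'^\vee$ irreducible, multiplicity one reduces the isomorphism to an equality of dimensions, the Gelfand--Kazhdan step is standard, and the vanishing of the orbit $\{U\subset W\}$ via second adjointness and disjointness of cuspidal supports is correct. The genuine gap is that the entire content of the proposition is concentrated in the sentence ``identifies their combined contribution with $\mathrm{Hom}_{\mathrm{GL}_n}(\sigma|_{\mathrm{GL}_n},\rho)$,'' which you assert but do not prove. Two specific obstacles are hidden there. First, the orbit stratification of $(\tau\times\rho)|_{G_{n+1}}$ only gives a short exact sequence $0\to A\to (\tau\times\rho)|_{G_{n+1}}\to B_1\oplus B_2\to 0$ (open orbit as sub, the two closed orbits as quotients), and applying the left-exact functor $\mathrm{Hom}_{G_{n+1}}(-,\sigma)$ yields $0\to\mathrm{Hom}(B_1\oplus B_2,\sigma)\to\mathrm{Hom}((\tau\times\rho)|,\sigma)\to\mathrm{Hom}(A,\sigma)\to\mathrm{Ext}^1(B_1\oplus B_2,\sigma)$; to extract an equality of dimensions rather than a pair of inequalities you need to control the $\mathrm{Ext}^1$ term or invoke the projectivity/Ext-vanishing results of \cite{Ch21,CS21}, which you never do. Second, the two surviving pieces are compactly induced from \emph{non-parabolic} (Bessel-type) subgroups, so second adjointness is unavailable for them; unwinding them against $\sigma$ requires interlacing the Kirillov-model data with the Bernstein--Zelevinsky filtration of $\sigma|_{P_{n+1}}$ and matching modulus characters, and since $\mathrm{Hom}_{G_n}(\sigma|_{G_n},\rho)$ is itself at most one-dimensional, ``reassemble with nothing left over'' must mean that exactly one of the two pieces produces it and the other contributes nothing --- a claim that needs proof, not a summary. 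As written, the proposal is a plausible programme whose decisive step is the proposition restated.
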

One can utilize these two dualities with the Bernstein-Zelevinsky machinery to give some inductive proofs in \cite{Ch22+c}.

\subsection{Key ideas in the proof of Theorem \ref{thm pre implies strong ds}. }

The proof relies on a special structure from the Jacquet functor, which we refer to an irreducible pair. Roughly speaking, the structure is a direct summand in a Jacquet module so that the followings work:
\begin{itemize}
\item one can deduce the strong commutativity from the pre-commutativity involving an irreducible pair (Proposition \ref{thm pre implies strong}(2)); 
\item the irreducibility pair property is preserved under the integral in a strongly commutative triple (Proposition \ref{prop strong irr imply irr}).
\end{itemize}

In order to apply the first bullet, we need to construct some irreducible pairs, that is Proposition \ref{prop completing pre comm}. The proof is basically an analysis on the terms in the geometric lemma, but a key to rule out a possibility of a structure is a structure of a big derivative studied in \cite{Ch22+b} (see Lemma \ref{lem big derivative compared with jacquet}).

These two properties are used to prove preservation of $\eta$-invariants under integrals and derivatives involved in some strongly commutative triples. Now, for a pre-RdLi-commutative triple $(\sigma_1, \sigma_2, \pi)$, one analyses the constituents on $\sigma_1 \dot{\times}^1 \pi_{N_{n(\sigma_1)}}$ and use the invariants to force the required embedding in the strong embedding has to happen.

\subsection{Key ideas in the proof of Theorem \ref{thm dual strong commutation}}

Theorem \ref{thm pre implies strong ds} reduces proving Theorem \ref{thm dual strong commutation} to proving the corresponding statement for pre-commutativity. The main idea of the proof is first constructing a certain irreducible pair to obtain some new pre-commutativity (Proposition \ref{prop completing pre comm}), then apply the dual theory for such pre-commutativity (Proposition \ref{prop commutative triple}) and finally use a transitivity property (Proposition \ref{prop commutative triple induct}) to obtain the desired pre-commutativity.

\subsection{Key ideas in the proof of Theorem \ref{thm combin and strong}}

For the if direction of Theorem \ref{thm combin and strong}, it is proved along the proof of Theorem \ref{thm combinatorial def} as well. The main technical ingredient is an analysis of the terms in the geometric lemma in Proposition \ref{prop completing pre comm}.

For the only if direction of Theorem \ref{thm combin and strong}, the combinatorial criteria can be used to produce a strongly RdLi-commutative triple and so one applies Proposition \ref{prop trivial orbit in irreducible case}  and the geometry in the geometric lemma to obtain the pre-commutativity. Now, one concludes the pre-commutativity of the original case by using the transitivity in Proposition \ref{prop transtivity pre commute} and then obtain the strong commutativity by Theorem \ref{thm pre implies strong ds}.

\subsection{Structure of arguments}

We think it is helpful to give links on main results. Let $\Delta, \Delta'$ be segments. Let $\pi \in \mathrm{Irr}$. Let $\mathfrak p=\mathfrak{mx}_{\Delta}(\pi)$ (see Section \ref{ss multisegment for eta}). 

\begin{itemize}
\item[(PCSeg)] Pre-commutativity for $(\mathrm{St}(\Delta), \mathrm{St}(\Delta'), \pi)$
\item[(PCMulti)] Pre-commutativity for $(\mathrm{St}(\mathfrak p), \mathrm{St}(\Delta'), \pi)$
\item[(SCSeg)] Strong commutativity for $(\mathrm{St}(\Delta), \mathrm{St}(\Delta'), \pi)$  
\item[(SCMulti)] Strong commutativity for $(\mathrm{St}(\mathfrak p), \mathrm{St}(\Delta'), \pi)$ 
\item[(CC)] Combinatorial commutativity for $(\Delta, \Delta', \pi)$
\item[(DualPCMulti)] Dual for pre-commutativity for $(\mathrm{St}(\Delta'), \mathrm{St}(\mathfrak p), I_{\Delta'}\circ D_{\mathfrak p}(\pi))$
\item[(DualPCSeg)] Dual for pre-commutativity for $(\mathrm{St}(\Delta'), \mathrm{St}(\Delta), I_{\Delta'}\circ D_{\Delta}(\pi))$
\item[(DualSC)] Dual for strong commutativity
\item[(DualCC)] Dual for combinatorial commutativity
\end{itemize}

\[ \xymatrix{  \mathrm{(PCSeg)} \ar[d]_{\mathrm{Prop. \ref{prop completing pre comm}}}  \\   \mathrm{(PCMulti)}   \ar[d]_{\mathrm{Lemma \ref{lem eta unchange}}} \ar[r]^{\mathrm{Prop. \ref{thm pre implies strong}(1)}}  &  \mathrm{(SCMulti)} \ar[r]^{\mathrm{Prop. \ref{prop commutative triple}}} & \mathrm{(DualPCMulti)}\ar[ddr]^{\mathrm{Prop. \ref{prop commutative triple induct}}} \ar[d]_{\mathrm{Lem. \ref{lem eta for  another side}}}  \\   \mathrm{(CC)} \ar[d]_{\mathrm{Thm. \ref{thm pre imply strong}}}  &  & \mathrm{(DualCC)} \ar[dll]^{\mathrm{Thm. \ref{thm pre imply strong}}} & \\
   \mathrm{(SCSeg)} \ar[rrrd] &  &  &  \mathrm{(DualPCSeg)} \ar[d]^{\mathrm{Cor. \ref{cor dual strong commutative triple}}}  \\
	 & && \mathrm{(DualSC) \ar@/^2.0pc/[lllu]^{\mathrm{Thm. \ref{thm combinatorial def} (using Prop.\ref{prop trivial orbit in irreducible case})}} }
}
\]


\subsection{Applications}
Other than the duality above, the commutation (Proposition \ref{prop strong commute imply commute}) is also useful and is straightforward from Definition \ref{def strong comm} (from representation-theoretic perspective). On the other hand, Definition \ref{def combinatorial comm triple} (from combinatorial perspective) is useful in proving properties for the generalized GGP relevance in \cite{Ch22+c}. Further applications need to combine with the study in \cite{Ch22+, Ch22+a}. For example, combining with results in a series of articles \cite{Ch22+, Ch22+a}, one can prove the uniqueness of the relevant pairs under some minimal condition in \cite{Ch22+a}.

\subsection{Organizations}

Section \ref{s unique maps} studies some uniqueness of simple submodules/quotients from $\square$-irreducible representations. Section \ref{s geometric lemma geometry} gives some preparation on the geometric lemma. Section \ref{s irr pair der} studies a structure arising from the Jacquet functor, which we refer to an irreducible pair. Section \ref{s strong commutation for der and int} studies our main notions: pre-commutativity and strong commutativity, and we show a transitivity property for strong commutativity. Section \ref{s dual strong commut} studies a duality for strong commutativity. Section \ref{s construct precommut} constructs some pre-commutativity involving an irreducible pair. Section \ref{s pre implies strong in sq} proves that pre-commutativity implies strong commutativity for essentially square-integrable representations, using results in previous sections. Section \ref{s combinatorial commutation} shows our main result on equivalent definitions in terms of duality in Section \ref{s dual strong commut} as well as a combinatorial formulation. Section \ref{s application on strong commut triples} studies some consequences from our results. Section \ref{s branching laws} defines a notion of generalized GGP relevant pairs and gives some examples. 

\subsection{Acknowledgment} 
Part of results in this article was announced in the workshop of minimal representations and theta correspondence in the ESI at Vienna in April 2022. The author would like to thank the organizers, Wee Teck Gan, Marcela Hanzer, Alberto M\'inguez, Goran Mui\'c and Martin Weissman, for their kind invitation for giving a talk related to this work. This project is supported in part by the Research Grants Council of the Hong Kong Special Administrative Region, China (Project No: 17305223) and  the National Natural Science Foundation of China (Project No. 12322120).

\section{Preliminaries} \label{s prelim}




For results involving derivatives and integrals, most of time, we shall only prove results involving $D_{\sigma}=D^R_{\sigma}$ and $I_{\sigma}=I^L_{\sigma}$ for $\sigma \in \mathrm{Irr}^{\square}$. The analogous result that switches between $D^R_{\sigma}$ and $D^L_{\sigma}$, and between $I^R_{\sigma}$ and $I^L_{\sigma}$ can be proved similarly. From Sections \ref{ss left right switch} and \ref{ss switch left right jacquet functor}, we give some results that one can deduce results from one version to another.

\subsection{Second adjointness}

Let $J$ be the matrix in $G_n$ with $1$ in the anti-diagonal entries and $0$ in other entries. Let $\theta=\theta_n: G_n \rightarrow G_n$ given by $\theta(g)=Jg^{-T}J$. Then $\theta$ also deduces an auto-equivalence, still denoted by $\theta$, on the category of smooth representations of $G_n$.

We shall use the following standard fact. The proof is similar to the one in \cite[Proposition 2.1]{Ch22+} and we only sketch it.

\begin{proposition} \label{prop second adjoint map}
Let $\pi_1$ and $\pi_2$ be smooth representations of $G_{n_1}$ and $G_{n_2}$ respectively. Let $\pi$ be a smooth representation of $G_{n_1+n_2}$. Then 
\[  \mathrm{Hom}_{G_{n_1+n_2}}(\pi_1 \times \pi_2, \pi) \cong \mathrm{Hom}_{G_{n_2}\times G_{n_1}}(\pi_2 \boxtimes \pi_1, \pi_{N_{n(\pi_1)}}) .
\]
\end{proposition}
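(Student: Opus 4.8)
The plan is to establish the second adjointness isomorphism by combining two inputs: Bernstein's second adjointness theorem for parabolic induction, and an identification of the opposite-parabolic Jacquet module with the standard one after twisting by $\theta$. First I would recall that, for the standard parabolic $P_{n_1,n_2}$ with Levi $G_{n_1}\times G_{n_2}$ and unipotent radical $N_{n_1,n_2}$, Bernstein's second adjointness gives
\[
\mathrm{Hom}_{G_{n_1+n_2}}(\mathrm{Ind}_{P_{n_1,n_2}}^{G_{n_1+n_2}}(\pi_1\boxtimes\pi_2),\pi)\;\cong\;\mathrm{Hom}_{G_{n_1}\times G_{n_2}}\!\bigl(\pi_1\boxtimes\pi_2,\,\pi_{\overline{N}_{n_1,n_2}}\bigr),
\]
where $\overline{N}_{n_1,n_2}$ is the unipotent radical of the opposite parabolic $\overline{P}_{n_1,n_2}$ and the Jacquet functor on the right is normalized. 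Here $\pi_1\times\pi_2$ is by definition the normalized induction, so the left-hand side already matches the statement; the work is entirely on the right-hand side, where I must rewrite the opposite Jacquet module $\pi_{\overline{N}_{n_1,n_2}}$ as the standard Jacquet module $\pi_{N_{n(\pi_1)}}=\pi_{N_{n_2,n_1}}$ with the two tensor factors transposed.

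Next I would use the involution $\theta_{n_1+n_2}(g)=Jg^{-T}J$. Conjugation by the anti-diagonal matrix $J$ sends the standard parabolic $P_{n_1,n_2}$ to $P_{n_2,n_1}$ and, composed with the transpose-inverse, the automorphism $\theta$ sends $\overline{P}_{n_1,n_2}$ to $P_{n_2,n_1}$: it carries the Levi $G_{n_1}\times G_{n_2}$ isomorphically onto $G_{n_2}\times G_{n_1}$ by $(g_1,g_2)\mapsto(\theta_{n_2}(g_2),\theta_{n_1}(g_1))$ and the unipotent radical $\overline{N}_{n_1,n_2}$ onto $N_{n_2,n_1}=N_{n(\pi_1)}$. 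Since $\theta$ preserves Haar measure and the modulus characters (both $\overline{P}_{n_1,n_2}$ and $P_{n_2,n_1}$ have the same modulus after the identification, as $\theta$ is measure-preserving), applying $\theta$ to $\pi$ gives a natural isomorphism of $G_{n_2}\times G_{n_1}$-representations $\pi_{\overline{N}_{n_1,n_2}}\cong(\text{something})$; but rather than carrying $\theta(\pi)$ around, the cleaner route is: the opposite parabolic $\overline{P}_{n_1,n_2}$ is $G_{n_1+n_2}$-conjugate to $P_{n_2,n_1}$ via $J$ alone (no transpose needed), and this conjugation identifies $\pi_{\overline{N}_{n_1,n_2}}$ with $\pi_{N_{n_2,n_1}}$ as representations of the conjugated Levi, which is $G_{n_2}\times G_{n_1}$ embedded in the $(n_2,n_1)$-block form. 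Tracking how the two block factors sit inside, one sees the $\pi_1$-factor pairs with the $G_{n_2}$-block and the $\pi_2$-factor with the $G_{n_1}$-block, yielding exactly $\mathrm{Hom}_{G_{n_2}\times G_{n_1}}(\pi_2\boxtimes\pi_1,\pi_{N_{n(\pi_1)}})$.

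I expect the main obstacle to be bookkeeping rather than anything deep: one must be careful that the normalization (the half-powers of the modulus character) on the induction side and on the Jacquet side match up correctly after the conjugation by $J$, and that the transposition of tensor factors $\pi_1\boxtimes\pi_2\rightsquigarrow\pi_2\boxtimes\pi_1$ is the one genuinely forced by the geometry of which block goes where, not an artifact. Since $J$-conjugation is an inner automorphism of $G_{n_1+n_2}$ it does not disturb the representation $\pi$ itself, only relabels the subgroups, so the modulus characters are automatically compatible; the normalized second adjointness of Bernstein then yields the claim directly. As the statement itself says, the argument is parallel to \cite[Proposition 2.1]{Ch22+}, so I would only sketch these identifications and refer there for the routine verifications. \qed
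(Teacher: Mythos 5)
Your proposal is correct but takes a route genuinely different from the paper's. The paper conjugates the entire problem by the outer involution $\theta(g)=Jg^{-T}J$: it applies Bernstein's second adjointness to $\theta(\pi_1)\times\theta(\pi_2)$ and $\theta(\pi)$, then uses the factor-swap $\theta'$ together with the identity $\theta'(\pi_{N^-})\cong\theta(\pi)_{N_{n_2,n_1}}$ to convert the opposite Jacquet module into a standard one with swapped factors, and finally strips off all the $\theta$'s by invoking that $\theta$ is a bijection on representations. You instead use the \emph{inner} automorphism $\mathrm{Ad}(J)$ alone, which already carries $\overline{P}_{n_1,n_2}$ onto $P_{n_2,n_1}$ and thus canonically identifies $\pi_{\overline{N}_{n_1,n_2}}$ with $\pi_{N_{n_2,n_1}}$; the block-wise twists by $J_{n_i}$ induced on the Levi factors are themselves inner, so each $\pi_i$ is unchanged up to isomorphism. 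Since $\mathrm{Ad}(J)$ is inner, you never need to replace $\pi$ by $\theta(\pi)$, and you avoid the closing bijectivity step — a small but genuine simplification. One sentence of yours is garbled ("the $\pi_1$-factor pairs with the $G_{n_2}$-block"): after conjugation by $J$, the factor $\pi_1$ in fact pairs with the bottom-right $G_{n_1}$-block of $P_{n_2,n_1}$ and $\pi_2$ with the top-left $G_{n_2}$-block, which is precisely what lands you in $\mathrm{Hom}_{G_{n_2}\times G_{n_1}}(\pi_2\boxtimes\pi_1,\pi_{N_{n_2,n_1}})$, so your conclusion is still correct. The paper's choice of $\theta$ over plain $J$-conjugation is presumably for uniformity with the rest of Section~2, where $\theta$ is used systematically to pass between left and right versions of derivatives and integrals.
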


\begin{proof}
Define $\theta'(g_1, g_2)=(g_2, g_1)$. It follows from definitions that $\theta'(\pi_{N^-})\cong (\theta(\pi))_{N_{n(\sigma_2), n(\sigma_1)}}$, where $N^-$ is the unipotent subgroup in the parabolic subgroup opposite to $P_{n_1, n-2}$. Then 
\begin{align*}
   \mathrm{Hom}_{G_{n_1+n_2}}(\theta(\pi_1)\times \theta(\pi_2), \theta(\pi)) & \cong \mathrm{Hom}_{G_{n_1}\times G_{n_2}}(\theta(\pi_1)\boxtimes \theta(\pi_2), \theta(\pi)_{N^-}) \\
	        &\cong \mathrm{Hom}_{G_{n_2}\times G_{n_1}}(\theta(\pi_2)\boxtimes \theta(\pi_1), \theta(\pi)_{N_{n(\pi_2), n(\pi_1)}}) .
\end{align*}
Since $\pi_1, \pi_2$ and $\pi$ are arbitrary and $\theta$ is a bijection, the above isomorphism implies the proposition.
\end{proof}

One consequence on the structure of Jacquet functors is as follows:

\begin{corollary}
Let $\sigma \in \mathrm{Irr}^{\square}$ and let $\pi \in \mathrm{Irr}$. Suppose $D_{\sigma}(\pi)\neq 0$. Then $D_{\sigma}(\pi)\boxtimes \sigma$ appears in both submodule and quotient of $\pi_{N_{n(\sigma)}}$. 
\end{corollary}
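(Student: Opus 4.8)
The plan is to deduce the quotient statement from the submodule statement in \eqref{eqn embedding derivative} by applying second adjointness in the form of Proposition \ref{prop second adjoint map}, together with the known invertibility $I_\sigma \circ D_\sigma(\pi) \cong \pi$ (and $D_\sigma \circ I_\sigma(\pi) \cong \pi$) recalled in Section \ref{ss commutation probl}. First, since $D_\sigma(\pi) \neq 0$, the defining embedding \eqref{eqn embedding derivative} already gives $D_\sigma(\pi) \boxtimes \sigma \hookrightarrow \pi_{N_{n(\sigma)}}$, so $D_\sigma(\pi)\boxtimes\sigma$ appears as a submodule; it remains to show it appears as a quotient of $\pi_{N_{n(\sigma)}}$. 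By Proposition \ref{prop second adjoint map} with $\pi_1 = \sigma$ (of $G_{n(\sigma)}$) and $\pi_2 = D_\sigma(\pi)$ (of $G_{n-n(\sigma)}$), a nonzero map $D_\sigma(\pi)\boxtimes\sigma \to \pi_{N_{n(\sigma)}}$ of $G_{n-n(\sigma)}\times G_{n(\sigma)}$-representations is the same datum as a nonzero map $\sigma \times D_\sigma(\pi) \to \pi$ of $G_n$-representations. So the quotient claim is equivalent to exhibiting a nonzero $G_n$-map $\sigma \times D_\sigma(\pi) \twoheadrightarrow$ (something with $D_\sigma(\pi)\boxtimes\sigma$ in the top of its Jacquet module); more precisely, I want the image of such a map to have $D_\sigma(\pi)\boxtimes\sigma$ as a quotient of its Jacquet module.

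Here is the cleaner route I would actually take. The representation $I_\sigma(\pi) = I^L_\sigma(\pi)$ is by definition the unique simple submodule of $\sigma\times\pi$; dually (using that $\sigma$ is $\square$-irreducible and the uniqueness results of Section \ref{s unique maps}), one knows $\pi\times\sigma$ has a unique simple quotient — call it $Q$ — and in fact the relevant uniqueness statements give that $I^L_\sigma$ applied to things, or rather the socle/cosocle duality, identifies the cosocle. But more directly: apply $\theta = \theta_n$ (the Gelfand–Kazhdan / second-adjointness involution from Section \ref{ss commutation probl}). Since $\theta(\sigma\times\pi) \cong \theta(\pi)\times\theta(\sigma)$ and $\theta$ is exact and contravariant on nothing — it is a covariant auto-equivalence — $\theta$ sends the simple submodule of one induced representation to the simple submodule of the $\theta$-twisted one; combining with the fact that $\theta$ reverses the order of $\times$, one converts the submodule statement for $D^L$ into a quotient statement for $D^R$ and vice versa (this is exactly the left/right switching machinery flagged in Sections \ref{ss left right switch}, \ref{ss switch left right jacquet functor}). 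Concretely: the $D^L$-analogue of \eqref{eqn embedding derivative} gives $\sigma\boxtimes D^L_\sigma(\theta\pi)\hookrightarrow (\theta\pi)_{N'}$; applying $\theta$ and using $\theta'(\pi_{N^-})\cong (\theta\pi)_{N_{n(\sigma_2),n(\sigma_1)}}$ from the proof of Proposition \ref{prop second adjoint map}, one gets a surjection $\pi_{N_{n(\sigma)}} \twoheadrightarrow \theta(D^L_\sigma(\theta\pi))\boxtimes\sigma$, and then one identifies $\theta(D^L_\sigma(\theta\pi))$ with $D^R_\sigma(\pi) = D_\sigma(\pi)$ by uniqueness.

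The main obstacle is bookkeeping rather than anything deep: I must be careful that the $\theta$-involution really does interchange $D^R_\sigma$ with $D^L_{\theta\sigma}$ (not merely $D^L_\sigma$) and interchange submodules of $\sigma\times\pi$ with submodules of $\theta(\pi)\times\theta(\sigma)$, then absorb the $\theta$-twists on $\sigma$ using that $\sigma$ is $\square$-irreducible and that $D_\sigma$, $I_\sigma$ depend only on the isomorphism class — and to invoke the uniqueness of the irreducible quotient appearing in $\pi_{N_{n(\sigma)}}$ so that the representation produced on the quotient side is forced to be $D_\sigma(\pi)\boxtimes\sigma$ and not merely something containing it as a subquotient. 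An alternative, avoiding $\theta$ entirely, is: $D_\sigma(\pi)\boxtimes\sigma \hookrightarrow \pi_{N_{n(\sigma)}}$ gives by Frobenius reciprocity a nonzero $\pi \hookrightarrow \sigma\times D_\sigma(\pi)$; since $I_\sigma\circ D_\sigma(\pi)\cong\pi$ is the unique simple submodule, $\pi$ is the socle of $\sigma\times D_\sigma(\pi)$, so $\sigma\times D_\sigma(\pi)$ has finite length with $\pi$ occurring once and one can pass to the cosocle; but matching the cosocle's Jacquet module top layer back to $D_\sigma(\pi)\boxtimes\sigma$ again needs the geometric lemma for $(\sigma\times D_\sigma(\pi))_{N_{n(\sigma)}}$ and the uniqueness of the relevant summand. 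I expect the $\theta$-argument to be the shortest to write.
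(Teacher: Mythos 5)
Your proposal does not close the gap that the paper's proof actually has to close, and in fact the paper's argument is considerably shorter than either of your two routes.

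The crux is the passage from the submodule statement $D_\sigma(\pi)\boxtimes\sigma \hookrightarrow \pi_{N_{n(\sigma)}}$ to the quotient statement $\pi_{N_{n(\sigma)}}\twoheadrightarrow D_\sigma(\pi)\boxtimes\sigma$. Your main $\theta$-route cannot accomplish this: $\theta$ (and the factor-swap $\theta'$) is a \emph{covariant} auto-equivalence, so it sends embeddings to embeddings; it will convert a submodule of $(\theta\pi)_{N'}$ into a submodule of (a twist of) $\pi_{N^-}$, never into a quotient of $\pi_{N}$. The non-formal ingredient that lets one cross from ``submodule'' to ``quotient'' is precisely \cite[Corollary 2.4]{LM19} (the socle/cosocle swap for products with a $\square$-irreducible factor), which your $\theta$-sketch never invokes. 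Without that input — or without instead combining Gelfand--Kazhdan ($\theta(\pi)\cong\pi^\vee$ for irreducible $\pi$) with the contravariant smooth dual, which you also do not do — the $\theta$-bookkeeping lands you back on the submodule side of $\pi$.

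Your ``avoiding $\theta$'' alternative has a concrete error. From the defining embedding $D_\sigma(\pi)\boxtimes\sigma\hookrightarrow\pi_{N_{n(\sigma)}}$, Proposition \ref{prop second adjoint map} (with $\pi_1=\sigma$, $\pi_2=D_\sigma(\pi)$) produces a nonzero map $\sigma\times D_\sigma(\pi)\to\pi$, i.e.\ a \emph{surjection} $\sigma\times D_\sigma(\pi)\twoheadrightarrow\pi$, not an embedding $\pi\hookrightarrow\sigma\times D_\sigma(\pi)$. The relation the paper records and uses is $I^R_\sigma\circ D^R_\sigma(\pi)\cong\pi$, i.e.\ $\pi$ is the unique simple \emph{submodule of} $D_\sigma(\pi)\times\sigma$, not of $\sigma\times D_\sigma(\pi)$; your $I_\sigma\circ D_\sigma(\pi)\cong\pi$ with $I_\sigma=I^L_\sigma$ (the paper's convention) is false in general — already for $\sigma=\nu^{1/2}$, $\pi=\mathbf 1_{\mathrm{GL}_2}$, one has $D_\sigma(\pi)=\nu^{-1/2}$ and $I^L_\sigma(\nu^{-1/2})$ is the Steinberg, not $\mathbf 1$. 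Once you do have the correct $\pi\hookrightarrow D_\sigma(\pi)\times\sigma$, the quotient claim is immediate from ordinary (first) Frobenius reciprocity $\mathrm{Hom}_{G_n}(\pi, D_\sigma(\pi)\times\sigma)\cong\mathrm{Hom}(\pi_{N_{n(\sigma)}}, D_\sigma(\pi)\boxtimes\sigma)$ plus irreducibility of the target, with no need for the geometric lemma.

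For comparison, the paper's proof is exactly this short chain: the defining embedding gives $\pi\hookrightarrow D_\sigma(\pi)\times\sigma$ (equivalently, via Proposition \ref{prop second adjoint map}, $\sigma\times D_\sigma(\pi)\twoheadrightarrow\pi$); \cite[Corollary 2.4]{LM19} interchanges these two statements; and then one adjunction translates the side one needs into the corresponding (sub- or quotient-) statement for $\pi_{N_{n(\sigma)}}$. The $\square$-irreducibility of $\sigma$ enters only through \cite[Corollary 2.4]{LM19}, which is the step your sketch omits.
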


\begin{proof}
By definition, $\pi$ appears as a submodule in $D_{\sigma}(\pi) \times \sigma$. Then, by \cite[Corollary 2.4]{LM19}, $\pi$ also apperas as a quotient of $\sigma \times D_{\sigma}(\pi)$. Now the statement follows from Proposition \ref{prop second adjoint map}.
\end{proof}

\subsection{Switching between left and right versions (parabolic inductions)} \label{ss left right switch}

 By (\ref{eqn square irreducible}) and the fact that $\theta$ preserves the irreducibility, $\pi$ is $\square$-irreducible if and only if $\theta(\pi)$ is $\square$-irreducible. Moreover, it is a simple fact that for a cuspidal representation (resp. essentially square-integrable representation) $\rho$ of $G_n$, $\theta(\rho)$ is still cuspidal (resp. essentially square-integrable).

Using $\theta(P_{n_1, n_2})=P_{n_2,n_1}$, we deduce that
\begin{align} \label{eqn square irreducible}
  \theta(\pi_1 \times \pi_2) \cong \theta(\pi_2) \times \theta(\pi_1).
\end{align}

\begin{lemma} \label{lem derivative integral under theta}
Let $\sigma \in \mathrm{Irr}^{\square}$. Let $\pi \in \mathrm{Irr}$. Then 
\[ \theta(I^L_{\sigma}(\pi))\cong I^R_{\theta(\sigma)}(\theta(\pi)), \quad \theta(I^R_{\sigma}(\pi)) \cong I^L_{\theta(\sigma)}(\theta(\pi)) .
\]
\end{lemma}

\begin{proof}
By definition, we have $I^L_{\sigma}(\pi) \hookrightarrow \sigma \times \pi$. Hence, by applying $\theta$, we have:
\[   \theta(I^L_{\sigma}(\pi)) \hookrightarrow \theta(\sigma \times \pi) \cong \theta(\pi) \times \theta(\sigma) .
\] 
Hence, by definition, we have:
\[  \theta(I^L_{\sigma}(\pi)) \cong I^R_{\theta(\sigma)}(\theta(\pi)) .
\]
\end{proof}

\subsection{Switching between left and right versions (Jacquet functors)} \label{ss switch left right jacquet functor}

\begin{lemma} \label{lem switch by theta}
Let $\pi_1$ and $\pi_2$ be smooth representations of $G_{n_1}$ and $G_{n_2}$ respectively. Let $\pi$ be a smooth representation of $G_{n_1+n_2}$. Then
\[   \mathrm{Hom}_{G_{n_1}\times G_{n_2}}(\pi_1 \boxtimes \pi_2, \pi_{N_{n_2}}) \cong \mathrm{Hom}_{G_{n_2}\times G_{n_1}}(\theta(\pi_2)\boxtimes \theta(\pi_1), \theta(\pi)_{N_{n_1}}) .
\]
\end{lemma}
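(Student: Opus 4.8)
The statement is a routine consequence of the auto-equivalence $\theta$ together with the standard behaviour of Jacquet functors under $\theta$. First I would recall the key compatibility already used in the proof of Proposition \ref{prop second adjoint map}: for the unipotent radical $N_{n_2}$ of $P_{n_1,n_2}$ one has $\theta'(\pi_{N_{n_2}}) \cong \theta(\pi)_{N_{n_1}}$, where $\theta'(g_1,g_2)=(g_2,g_1)$ realizes the swap $G_{n_1}\times G_{n_2}\to G_{n_2}\times G_{n_1}$ and uses $\theta(P_{n_1,n_2})=P_{n_2,n_1}$. Here the left-hand side is the Jacquet module with respect to $N_{n_2}$ inside $G_{n_1+n_2}$, viewed as a $G_{n_1}\times G_{n_2}$-representation, and applying $\theta$ (componentwise, then swapping factors) produces exactly the Jacquet module $\theta(\pi)_{N_{n_1}}$ as a $G_{n_2}\times G_{n_1}$-representation. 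I would state this as a one-line lemma or simply cite the corresponding line in the proof of Proposition \ref{prop second adjoint map}.

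Given this, the proof is immediate: since $\theta$ is an auto-equivalence of the category of smooth representations, it induces a bijection on Hom-spaces, and applying it to a morphism $\pi_1\boxtimes\pi_2 \to \pi_{N_{n_2}}$ of $G_{n_1}\times G_{n_2}$-representations yields a morphism
\[
\theta(\pi_1\boxtimes\pi_2) \longrightarrow \theta(\pi_{N_{n_2}})
\]
of $G_{n_2}\times G_{n_1}$-representations. Now $\theta(\pi_1\boxtimes\pi_2)\cong \theta(\pi_2)\boxtimes\theta(\pi_1)$ after the swap of factors (this is just the definition of how $\theta$ acts on external tensor products via $\theta'$), and $\theta(\pi_{N_{n_2}})\cong \theta(\pi)_{N_{n_1}}$ by the compatibility above. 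Hence
\[
\mathrm{Hom}_{G_{n_1}\times G_{n_2}}(\pi_1\boxtimes\pi_2,\ \pi_{N_{n_2}})
\cong
\mathrm{Hom}_{G_{n_2}\times G_{n_1}}(\theta(\pi_2)\boxtimes\theta(\pi_1),\ \theta(\pi)_{N_{n_1}}),
\]
as desired. Functoriality of $\theta$ guarantees this is an isomorphism of vector spaces (indeed natural in all arguments).

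There is essentially no obstacle here; the only point requiring a little care is bookkeeping of which unipotent radical appears on each side and making sure the swap $\theta'$ of the two $\mathrm{GL}$-factors is correctly tracked — i.e., that the $G_{n_1}$-action on $\pi_1$ becomes the $G_{n_1}$-action on $\theta(\pi_1)$ sitting in the \emph{second} slot on the right-hand side, and likewise for $\pi_2$. Since $\theta(P_{n_1,n_2})=P_{n_2,n_1}$ interchanges the two diagonal blocks, this matches precisely the index shift from $N_{n_2}$ to $N_{n_1}$. I would present the argument in two or three lines, referencing the computation $\theta'(\pi_{N^-})\cong\theta(\pi)_{N_{n(\sigma_2),n(\sigma_1)}}$ already appearing in the proof of Proposition \ref{prop second adjoint map} rather than reproving it.
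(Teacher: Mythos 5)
Your proof is correct and takes essentially the same route as the paper: transport across the Gelfand--Kazhdan involution $\theta$, using that $\theta$ interchanges $N_{n_1,n_2}$ and $N_{n_2,n_1}$ and (after swapping factors) sends $\pi_1\boxtimes\pi_2$ to $\theta(\pi_2)\boxtimes\theta(\pi_1)$. The paper realizes the resulting bijection of Hom-spaces by an explicit formula $f\mapsto f'$ (involving the anti-diagonal elements $w_1,w_2$) and checks equivariance by hand, whereas you deduce the bijection from functoriality of $\theta$ together with the two identifications; the mathematical content is the same, and your version is arguably cleaner in that it avoids tracking the $w_i$'s.

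One small caveat on the citation you propose. The identity $\theta'(\pi_{N^-})\cong\theta(\pi)_{N_{n(\sigma_2),n(\sigma_1)}}$ in the proof of Proposition \ref{prop second adjoint map} is stated for the \emph{opposite} unipotent radical $N^-$, which is what the second-adjointness argument there requires. What your argument needs is the parallel identification for $N_{n_2}=N_{n_1,n_2}$ itself, which rests on $\theta(N_{n_1,n_2})=N_{n_2,n_1}$; this is equally immediate but is a different instance, so it should be stated (or proved in one line) rather than cited verbatim from that proof.
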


\begin{proof}
Let $w_1$ and $w_2$ be elements in $G_{n_1}$ and $G_{n_2}$ with $1$ in anti-diagonal entries and $0$ elsewhere. Given a map $f \in \mathrm{Hom}_{G_{n_2}\times G_{n_1}}(\theta(\pi_2)\boxtimes \theta(\pi_1), \theta(\pi)_{N_{n_1}})$, we define a map $f'$ in $\mathrm{Hom}_{G_{n_1}\times G_{n_2}}(\pi_1\boxtimes \pi_2, \pi_{N_{n_2}})$ determined by: for $x_1 \in \pi_1$ and $x_2 \in \pi_2$,
\[    f'( x_1 \otimes x_2 ) =f((w_2.x_2)\otimes (w_1.x_1))  .
\]
Then, it is straightforward to check that $f'$ is $G_{n_1} \times G_{n_2}$-equivariant. The inverse map can be similarly defined.
\end{proof}

\begin{lemma}
Let $\sigma \in \mathrm{Irr}^{\square}$. Let $\pi \in \mathrm{Irr}(G_n)$. Then 
\[   \theta(D^L_{\sigma}(\pi)) \cong D^R_{\theta(\sigma)}(\theta(\pi)), \quad \theta(D^R_{\sigma}(\pi)) \cong D^L_{\theta(\sigma)}(\theta(\pi)) .
\]
\end{lemma}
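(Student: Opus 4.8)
The statement to prove is that $\theta$ intertwines left and right derivatives:
\[
\theta(D^L_{\sigma}(\pi)) \cong D^R_{\theta(\sigma)}(\theta(\pi)), \qquad \theta(D^R_{\sigma}(\pi)) \cong D^L_{\theta(\sigma)}(\theta(\pi)).
\]
The plan is to argue exactly as in Lemma \ref{lem derivative integral under theta}, replacing parabolic induction by the Jacquet functor and using Lemma \ref{lem switch by theta} in place of the identity (\ref{eqn square irreducible}). I will only treat the first isomorphism; the second follows by applying the first to $\theta(\pi)$ and using $\theta^2 = \mathrm{id}$ together with the fact (recorded in Section \ref{ss left right switch}) that $\theta(\sigma)$ is again $\square$-irreducible.

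First I would unwind the defining property (\ref{eqn embedding derivative}) of $D^L_{\sigma}(\pi)$: it is the unique irreducible representation with an embedding
\[
\sigma \boxtimes D^L_{\sigma}(\pi) \hookrightarrow \pi_{N'}, \qquad N' = N_{n(\sigma), n-n(\sigma)}.
\]
Apply $\theta$ (an auto-equivalence, hence exact and injectivity-preserving) and then feed the result into Lemma \ref{lem switch by theta} with $\pi_1 = \sigma$, $\pi_2 = D^L_{\sigma}(\pi)$: a nonzero map $\sigma \boxtimes D^L_{\sigma}(\pi) \to \pi_{N_{n-n(\sigma)}}$ — note $\pi_{N'}$ in the notation of (\ref{eqn embedding derivative}) is $\pi_{N_{n(\sigma),n-n(\sigma)}}$, which in the $N_i = N_{n-i,i}$ abbreviation is $\pi_{N_{n-n(\sigma)}}$ — corresponds to a nonzero map $\theta(D^L_{\sigma}(\pi)) \boxtimes \theta(\sigma) \to \theta(\pi)_{N_{n(\sigma)}}$. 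Since $\theta$ preserves irreducibility, $\theta(D^L_{\sigma}(\pi))$ is irreducible, and this embedding is of exactly the shape characterizing $D^R_{\theta(\sigma)}(\theta(\pi))$ in (\ref{eqn embedding derivative}) (with $i = n(\theta(\sigma)) = n(\sigma)$, $N = N_{n - n(\sigma), n(\sigma)}$). By the uniqueness in (\ref{eqn embedding derivative}) — well-defined by \cite[Theorem 4.1D]{LM22}, applicable since $\theta(\sigma) \in \mathrm{Irr}^{\square}$ — we conclude $\theta(D^L_{\sigma}(\pi)) \cong D^R_{\theta(\sigma)}(\theta(\pi))$.

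A small point to be careful about, which I would check rather than dismiss, is the matching of the two nonzero maps coming out of Lemma \ref{lem switch by theta}: that lemma gives an isomorphism of $\mathrm{Hom}$-spaces, not merely that one is nonzero iff the other is, so the embedding realizing $D^L_{\sigma}(\pi)$ is carried to a genuine (nonzero, hence injective since source is irreducible) map realizing the defining property on the $\theta$-side, and conversely the zero case ($D^L_{\sigma}(\pi) = 0$) is transported to $D^R_{\theta(\sigma)}(\theta(\pi)) = 0$ as well. The main — really the only — obstacle is bookkeeping of which block parabolic appears where: one must track that $\theta$ swaps $P_{n_1,n_2}$ with $P_{n_2,n_1}$ so that a left derivative (Jacquet along $N_{n(\sigma), \bullet}$) becomes a right derivative (Jacquet along $N_{\bullet, n(\sigma)}$), and that the $\boxtimes$-factors are transposed accordingly; Lemma \ref{lem switch by theta} already packages precisely this transposition, so once the indices are lined up the argument is immediate. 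No further input is needed.
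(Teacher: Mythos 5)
Your proof is correct and follows essentially the same route as the paper: apply Lemma \ref{lem switch by theta} to the defining embedding and invoke uniqueness. The only cosmetic difference is that the paper proves the second isomorphism directly (starting from $D^R_\sigma(\pi)\boxtimes\sigma\hookrightarrow\pi_{N_m}$), whereas you prove the first and deduce the second via $\theta^2=\mathrm{id}$; this is immaterial.
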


\begin{proof}
Let $m=n(\sigma)$. By definition, we have:
\[    D^R_{\sigma}(\pi) \boxtimes \sigma \hookrightarrow \pi_{N_m} .
\]
Then, by Lemma \ref{lem switch by theta}, we have:
\[  \theta(\sigma) \boxtimes \theta(D^R_{\sigma}(\pi))  \hookrightarrow \theta(\pi)_{N_{n-m}} .
\]
Then, by definition, we have $\theta(D^R_{\sigma}(\pi)) \cong D_{\theta(\sigma)}^L(\theta(\pi))$. 
\end{proof}

\subsection{Switching left and right versions by using duals} \label{ss switch left right by dual}




We again have the standard fact that an irreducible representation $\sigma$ of $G_n$ is $\square$-irreducible if and only if $\sigma^{\vee}$ is $\square$-irreducible. Hence, for $\sigma \in \mathrm{Irr}^{\square}$, $I_{\sigma^{\vee}}^R$ and $I_{\sigma^{\vee}}^L$ are well-defined.

\begin{lemma} \label{lem dual on integrals}
Let $\sigma \in \mathrm{Irr}^{\square}$. Let $\pi \in \mathrm{Irr}$. Then $I^L_{\sigma}(\pi)^{\vee} \cong I^R_{\sigma^{\vee}}(\pi^{\vee})$.
\end{lemma}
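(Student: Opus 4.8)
The plan is to transport the defining embedding of $I^L_\sigma(\pi)$ through the contragredient functor, which is exact and contravariant on the finite-length admissible representation $\sigma \times \pi$, and then identify the resulting simple quotient using the submodule/quotient interchange that comes from $\square$-irreducibility.

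First I would record two standard facts. (a) The smooth contragredient commutes with normalized parabolic induction along a fixed parabolic, so $(\sigma \times \pi)^{\vee} \cong \sigma^{\vee} \times \pi^{\vee}$ (in the same order), and $(-)^{\vee}$ is an exact contravariant functor on admissible finite-length representations with $(-)^{\vee\vee}\cong \mathrm{id}$; in particular it interchanges unique simple submodules with unique simple quotients. (b) By \cite[Corollary 2.4]{LM19}, for a $\square$-irreducible $\tau$ and any irreducible $\pi'$, the unique simple submodule of $\tau \times \pi'$ coincides with the unique simple quotient of $\pi' \times \tau$, and symmetrically the unique simple quotient of $\tau \times \pi'$ coincides with the unique simple submodule of $\pi' \times \tau$. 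This is exactly the form of \cite[Corollary 2.4]{LM19} already used in the proof of the corollary following Proposition \ref{prop second adjoint map}.

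Then the argument is short. Start from the defining embedding $I^L_\sigma(\pi) \hookrightarrow \sigma \times \pi$ as the unique simple submodule. Applying $(-)^{\vee}$ and using (a) gives a surjection $\sigma^{\vee} \times \pi^{\vee} \twoheadrightarrow I^L_\sigma(\pi)^{\vee}$, and by exactness and contravariance $I^L_\sigma(\pi)^{\vee}$ is the unique simple quotient of $\sigma^{\vee} \times \pi^{\vee}$; uniqueness is legitimate because $\sigma^{\vee}$ is again $\square$-irreducible, as recalled just before the lemma. Now apply (b) with $\tau = \sigma^{\vee}$ and $\pi' = \pi^{\vee}$: the unique simple quotient of $\sigma^{\vee} \times \pi^{\vee}$ equals the unique simple submodule of $\pi^{\vee} \times \sigma^{\vee}$, which is by definition $I^R_{\sigma^{\vee}}(\pi^{\vee})$. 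Hence $I^L_\sigma(\pi)^{\vee} \cong I^R_{\sigma^{\vee}}(\pi^{\vee})$.

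The only delicate point — the nearest thing to an obstacle — is fact (b): one must confirm that the interchange between the unique simple submodule of $\tau \times \pi'$ and the unique simple quotient of $\pi' \times \tau$ is precisely what \cite[Corollary 2.4]{LM19} supplies, and that it is available in the inner-form setting $G_n = \mathrm{GL}_n(D)$ (which is the generality in which $I^L_\sigma$, $I^R_\sigma$ are defined in the excerpt). Everything else is the standard formalism of contragredients, so I do not expect further difficulties.
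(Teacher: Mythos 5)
Your proposal is correct and follows essentially the same route as the paper: dualize the defining embedding $I^L_\sigma(\pi)\hookrightarrow\sigma\times\pi$ to get a surjection $\sigma^\vee\times\pi^\vee\twoheadrightarrow I^L_\sigma(\pi)^\vee$, invoke \cite[Corollary 2.4]{LM19} to convert this quotient of $\sigma^\vee\times\pi^\vee$ into a submodule of $\pi^\vee\times\sigma^\vee$, and conclude by the definition of $I^R_{\sigma^\vee}$. The ``delicate point'' you flag is exactly the ingredient the paper cites, and it does hold in the inner-form setting of \cite{LM19}, so there is no gap.
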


\begin{proof}
By definition, we have:
\[   I^L_{\sigma}(\pi) \hookrightarrow \sigma \times \pi .
\]
Hence, we have a surjection from $\sigma^{\vee}\times \pi^{\vee} \cong (\sigma \times \pi)^{\vee}$ to $I^L_{\sigma}(\pi)^{\vee}$. By \cite[Corollary 2.4]{LM19}, $I^L_{\sigma}(\pi)^{\vee}$ embeds to $\pi^{\vee} \times \sigma^{\vee}$. Thus, by definition, we have $I^L_{\sigma}(\pi)^{\vee} \cong I^R_{\sigma^{\vee}}(\pi^{\vee})$. 
\end{proof}

\begin{corollary} \label{cor dual on derivatives}
Let $\sigma \in \mathrm{Irr}^{\square}$. Let $\pi \in \mathrm{Irr}$. Suppose $D^L_{\sigma}(\pi) \neq 0$. Then $D^L_{\sigma}(\pi)^{\vee} \cong D^R_{\sigma^{\vee}}(\pi^{\vee})$.
\end{corollary}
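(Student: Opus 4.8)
The plan is to deduce Corollary~\ref{cor dual on derivatives} from Lemma~\ref{lem dual on integrals} exactly as the statement's placement suggests, treating it as the derivative-level shadow of the integral-level duality $I^L_\sigma(\pi)^\vee\cong I^R_{\sigma^\vee}(\pi^\vee)$. First I would recall from the discussion after (\ref{eqn embedding derivative}) that when $D^L_\sigma(\pi)\neq 0$ the operators $D^L_\sigma$ and $I^L_\sigma$ are mutually inverse bijections on the relevant piece of $\mathrm{Irr}$: explicitly, $I^L_\sigma\circ D^L_\sigma(\pi)\cong\pi$ and $D^L_\sigma\circ I^L_\sigma(\tau)\cong\tau$ (the left-handed analogue of the $R$-statement spelled out in the text, valid by the "switch left and right" principle of Section~\ref{s prelim}). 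Set $\tau=D^L_\sigma(\pi)$, so that $I^L_\sigma(\tau)\cong\pi$ and $\tau\neq 0$.

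Now apply the smooth dual: $\pi^\vee\cong I^L_\sigma(\tau)^\vee\cong I^R_{\sigma^\vee}(\tau^\vee)$ by Lemma~\ref{lem dual on integrals}. Since $\sigma^\vee\in\mathrm{Irr}^\square$ (the standard fact recalled just before Lemma~\ref{lem dual on integrals}) and $I^R_{\sigma^\vee}(\tau^\vee)\neq 0$, the inverse relation $D^R_{\sigma^\vee}\circ I^R_{\sigma^\vee}\cong\mathrm{id}$ applies and gives $D^R_{\sigma^\vee}(\pi^\vee)\cong D^R_{\sigma^\vee}\circ I^R_{\sigma^\vee}(\tau^\vee)\cong\tau^\vee=D^L_\sigma(\pi)^\vee$, which is the claim. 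In particular this argument also shows $D^R_{\sigma^\vee}(\pi^\vee)\neq 0$, matching the hypothesis being symmetric under the duality.

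The only points that need a little care — and the closest thing to an obstacle — are bookkeeping about which inverse identities are available on the left. The text states the mutual-inverse property only for $D^R_\sigma, I^R_\sigma$; I would either invoke the Section~\ref{s prelim} "switch between left and right versions" lemmas (via $\theta$, using Lemma~\ref{lem derivative integral under theta} and the Jacquet-functor version) to transport it to the $L$-side, or equivalently derive the $L$-version directly from \cite[Corollary 2.4]{LM19} and second adjointness as in the $R$-case. A cleaner alternative that sidesteps this entirely: combine $\theta$ and $\vee$. Since $\theta(D^R_\sigma(\pi))\cong D^L_{\theta(\sigma)}(\theta(\pi))$ (the lemma just above) and $\theta$ commutes with $\vee$ up to the standard identification, one can reduce the desired $L$-statement to the $R$-statement $D^R_\sigma(\pi)^\vee\cong D^L_{\sigma^\vee}(\pi^\vee)$, and the latter follows from Lemma~\ref{lem dual on integrals} together with the $R$-side inverse identities that are explicitly in the text. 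Either route is short; I would present the first (via $\tau=D^L_\sigma(\pi)$ and Lemma~\ref{lem dual on integrals}) as the main line and note the $L$-side inverse identities come from Section~\ref{s prelim}.
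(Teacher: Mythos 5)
Your main line is exactly the paper's proof: set $\tau=D^L_\sigma(\pi)$, use $I^L_\sigma(\tau)\cong\pi$, dualize via Lemma \ref{lem dual on integrals} to get $\pi^\vee\cong I^R_{\sigma^\vee}(\tau^\vee)$, and apply $D^R_{\sigma^\vee}$. Your additional care about sourcing the left-side inverse identities (and the $\theta$-based alternative) is a reasonable gloss but the substance matches the paper's argument.
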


\begin{proof}
Let $\tau =D^L_{\sigma}(\pi)$. Then $I_{\sigma}^L(\tau) \cong \pi$. Hence, $(I_{\sigma}^L(\tau))^{\vee} \cong I_{\sigma^{\vee}}^R(\tau^{\vee})$. Hence, we have $\pi^{\vee} \cong I_{\sigma^{\vee}}^R(\tau^{\vee})$. Applying $D^L_{\sigma^{\vee}}$, we have $D_{\sigma^{\vee}}^R(\pi^{\vee}) \cong (D^L_{\sigma}(\pi))^{\vee}$. 
\end{proof}

\section{Some uniqueness from $\square$-irreducible representations} \label{s unique maps}

In this section, we shall first deduce some uniqueness property from \cite{KKKO15} or \cite{LM19}. For $\pi \in \mathrm{Alg}(G_n)$, let $n(\pi)=n$.

\subsection{Uniqueness}

We remark that via Frobenius reciprocity, we have $\pi \hookrightarrow D_{\sigma}(\pi)\times \sigma$ (resp. $\pi \hookrightarrow \sigma \times D^L_{\sigma}(\pi)$) when $D_{\sigma}(\pi)\neq 0$ (resp. $D^L_{\sigma}(\pi)\neq 0$). We shall frequently use these facts.

Earlier forms of derivatives for cuspidal representations go back to the work of Jantzen \cite{Ja07} and M\'inguez \cite{Mi09}.

\begin{lemma} \label{lem unique embedding}
Let $\sigma_1, \sigma_2, \ldots, \sigma_r \in \mathrm{Irr}^{\square}$ such that $\sigma_1 \times \ldots \times \sigma_r$ is still $\square$-irreducible. Let $\pi \in \mathrm{Irr}$. Then there exists at most one $\omega \in \mathrm{Irr}$ such that 
\[  \omega \boxtimes \sigma_r \boxtimes \ldots \boxtimes \sigma_1 \hookrightarrow \pi_N,
\]
where $N=N_{n(\pi)-n(\sigma_1)-\ldots -n(\sigma_r), n(\sigma_r), \ldots, n(\sigma_1)}$. Moreover, if such $\omega$ exists, the embedding is unique.
\end{lemma}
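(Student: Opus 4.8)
The plan is to induct on $r$, reducing the multi-step Jacquet module statement to the one-step uniqueness of derivatives, which is exactly the content of \cite[Theorem 4.1D]{LM22} quoted at the start of the excerpt (``$D^R_\sigma$ is well-defined'').

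\textbf{Base case and setup.} For $r=1$ the claim is precisely that $D^R_{\sigma_1}(\pi)$ is the unique irreducible $\omega$ with $\omega\boxtimes\sigma_1\hookrightarrow\pi_{N_{n(\pi)-n(\sigma_1),n(\sigma_1)}}$, together with uniqueness of the embedding; both are given. For the inductive step, write $\pi_N$ for the iterated Jacquet module with respect to $N=N_{m, n(\sigma_r),\dots,n(\sigma_1)}$ where $m=n(\pi)-\sum_j n(\sigma_j)$. By transitivity of the Jacquet functor, $\pi_N \cong \bigl((\pi_{N_{n(\sigma_1)}})\bigr)_{N'\boxtimes \mathrm{id}}$ where $N'$ is the appropriate unipotent radical inside the $G_{n(\pi)-n(\sigma_1)}$-factor; that is, an embedding $\omega\boxtimes\sigma_r\boxtimes\cdots\boxtimes\sigma_1\hookrightarrow\pi_N$ is the same as an embedding $(\omega\boxtimes\sigma_r\boxtimes\cdots\boxtimes\sigma_2)\boxtimes\sigma_1\hookrightarrow (\pi_{N_{n(\sigma_1)}})_{\widetilde N}$ of $G_{n(\pi)-n(\sigma_1)}\times G_{n(\sigma_1)}$-modules, where $\widetilde N$ acts on the first factor.

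\textbf{Reduction step.} Here is the key point: although $\pi_{N_{n(\sigma_1)}}$ need not be semisimple, one can use the uniqueness for $\sigma_1$ alone to pin down which piece the tail embeds into. Concretely, suppose $\omega\boxtimes\sigma_r\boxtimes\cdots\boxtimes\sigma_1\hookrightarrow\pi_N$. Restricting attention to the $G_{n(\sigma_1)}$-isotypic behaviour: since $\sigma_1$ is $\square$-irreducible and $\sigma_1\times\cdots\times\sigma_r$ is $\square$-irreducible, $\sigma_2\times\cdots\times\sigma_r$ is $\square$-irreducible as well (a factor of a $\square$-irreducible product is $\square$-irreducible, cf.\ \cite{LM19}), and I expect that the socle of the restriction of the first $G_{n(\pi)-n(\sigma_1)}$-tensor-factor forces $\omega\boxtimes\sigma_r\boxtimes\cdots\boxtimes\sigma_2$ to embed into $\bigl(D^R_{\sigma_1}(\pi)\bigr)_{\widetilde N}$. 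The cleanest way to see this is to project onto $D^R_{\sigma_1}(\pi)\boxtimes\sigma_1$ using that, by the Corollary just proved in the excerpt, $D^R_{\sigma_1}(\pi)\boxtimes\sigma_1$ sits as both a sub and a quotient of $\pi_{N_{n(\sigma_1)}}$, and that by the $r=1$ uniqueness no other irreducible with $\sigma_1$ in the second slot appears in the socle; hence any embedding of $(\cdots)\boxtimes\sigma_1$ must land in the $D^R_{\sigma_1}(\pi)\boxtimes\sigma_1$-part. Composing with the quotient map $\pi_{N_{n(\sigma_1)}}\twoheadrightarrow D^R_{\sigma_1}(\pi)\boxtimes\sigma_1$ then shows $\omega\boxtimes\sigma_r\boxtimes\cdots\boxtimes\sigma_2\hookrightarrow \bigl(D^R_{\sigma_1}(\pi)\bigr)_{N_{n(\pi)-n(\sigma_1)-\cdots-n(\sigma_r),n(\sigma_r),\dots,n(\sigma_2)}}$, and the composite is nonzero (injective on the socle). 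Now apply the inductive hypothesis to $D^R_{\sigma_1}(\pi)$ with the $\square$-irreducible chain $\sigma_2,\dots,\sigma_r$: there is at most one such $\omega$ and at most one such embedding. Finally, lift uniqueness of the embedding back up: two embeddings of $\omega\boxtimes\cdots\boxtimes\sigma_1$ into $\pi_N$ that agree after the quotient to $(D^R_{\sigma_1}(\pi))_{\widetilde N}$ must agree, because on the socle the map to the $D^R_{\sigma_1}(\pi)\boxtimes\sigma_1$-quotient is injective.

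\textbf{Main obstacle.} The delicate step is precisely the claim that an embedding of $(\text{irreducible})\boxtimes\sigma_1$ into the non-semisimple module $\pi_{N_{n(\sigma_1)}}$ is forced to factor through the maximal $\sigma_1$-in-second-slot quotient $D^R_{\sigma_1}(\pi)\boxtimes\sigma_1$; this is where one genuinely needs the $\square$-irreducibility of $\sigma_1$ and the ``unique sub/quotient'' technology of \cite{KKKO15}, \cite{LM19} rather than just formal nonsense about Jacquet functors. Everything else — transitivity of Jacquet modules, extracting the $\square$-irreducibility of subchains, the induction bookkeeping on the indices of $N$ — is routine. I would isolate that step as the crux and present it carefully, citing \cite[Corollary 2.4]{LM19} and \cite[Theorem 4.1D]{LM22}.
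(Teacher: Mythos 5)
The paper does not use induction at all. Its proof is a one-step argument: iterated applications of Frobenius reciprocity and the second adjointness (Proposition \ref{prop second adjoint map}) convert
\[
\mathrm{Hom}_{G'}\bigl(\omega\boxtimes\sigma_r\boxtimes\cdots\boxtimes\sigma_1,\pi_N\bigr)\;\cong\;\mathrm{Hom}_{G_{n-n'}\times G_{n'}}\bigl(\omega\boxtimes(\sigma_1\times\cdots\times\sigma_r),\pi_{N_{n'}}\bigr)\;\cong\;\mathrm{Hom}_{G_n}\bigl((\sigma_1\times\cdots\times\sigma_r)\times\omega,\pi\bigr),
\]
and then the multiplicity-at-most-one is read off directly from \cite[Corollary 2.4, Lemma 2.8]{LM19} applied to the single $\square$-irreducible representation $\sigma_1\times\cdots\times\sigma_r$.

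Your inductive route has a genuine gap exactly where you flagged it. After the transitivity of Jacquet functors, what you obtain from an embedding $\omega\boxtimes\sigma_r\boxtimes\cdots\boxtimes\sigma_1\hookrightarrow\pi_N$ is an embedding of $\omega\boxtimes\sigma_r\boxtimes\cdots\boxtimes\sigma_2$ into $\bigl(\mathbb D_{\sigma_1}(\pi)\bigr)_{N'}$, where $\mathbb D_{\sigma_1}(\pi)=\mathrm{Hom}_{G_{n(\sigma_1)}}\bigl(\sigma_1,\pi_{N_{n(\sigma_1)}}\bigr)$ is the \emph{big} derivative, not into $\bigl(D^R_{\sigma_1}(\pi)\bigr)_{N'}$. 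The socle statement you invoke — that $D^R_{\sigma_1}(\pi)\boxtimes\sigma_1$ is the unique simple sub of $\pi_{N_{n(\sigma_1)}}$ with $\sigma_1$ in the second slot — lives at the level of $\pi_{N_{n(\sigma_1)}}$, but applying the further Jacquet functor $N'$ to the first tensor factor is exact, not left exact in a way that preserves socles: composition factors of $\mathbb D_{\sigma_1}(\pi)$ above $D^R_{\sigma_1}(\pi)$ can and do contribute new submodules after $N'$ is applied. Thus your ``cleanest way to see this,'' via projecting onto the $D^R_{\sigma_1}(\pi)\boxtimes\sigma_1$-part, does not force the composite to be nonzero. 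This is precisely the $\mathbb D_\sigma$ versus $D_\sigma$ tension that the paper spends real effort on elsewhere (Section \ref{ss converse and big derivatives}, Lemma \ref{lem big derivative compared with jacquet}), and it cannot be dismissed as ``routine bookkeeping.'' In fact, the reduction step you need is essentially of the same strength as the lemma itself, so the induction doesn't gain leverage. A secondary point: your assertion that a sub-product of a $\square$-irreducible product is automatically $\square$-irreducible is used without a precise citation and deserves a check, though this is not the main issue. The paper's direct argument avoids all of this.
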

\begin{proof}
Let $n'=n(\sigma_1)+\ldots+n(\sigma_r)$. By Frobenius reciprocity and Proposition \ref{prop second adjoint map}, we have a natural isomorphism:
\[ \mathrm{Hom}_{G'}(\omega \boxtimes \sigma_r \boxtimes \ldots \boxtimes \sigma_1, \pi_N) \cong \mathrm{Hom}_{G''}(\omega \boxtimes (\sigma_1 \times \ldots \times \sigma_r), \pi_{N_{n'}}) ,
\]
where $G'=G_{n-n'}\times G_{n(\sigma_1)}\times \ldots \times G_{n(\sigma_r)}$ and $G''=G_{n-n'}\times G_{n'}$. Note that the RHS term above is further isomorphic to 
\[ \mathrm{Hom}_{G_n}((\sigma_r\times \ldots \times \sigma_1)\times \omega, \pi) 
\]
via Frobenius reciprocity and so has the multiplicity at most one by \cite[Corollary 2.4 and Lemma 2.8]{LM19}. Thus we also have the LHS term above has the multiplicity at most one, as desired.
\end{proof}

\begin{lemma} \label{lem uniqueness of products}
Let $\pi \in \mathrm{Irr}(G_n)$. Let $\sigma_1 \in \mathrm{Irr}^{\square}(G_k)$ and let $\sigma_2 \in \mathrm{Irr}^{\square}(G_l)$. Let $\tau=I_{\sigma_1}^R\circ I_{\sigma_2}^L(\pi)$ or $I_{\sigma_2}^L\circ I_{\sigma_1}^R(\pi)$. Then 
\[  \mathrm{dim}~ \mathrm{Hom}_{G_{n+k+l}}( \tau, \sigma_2 \times \pi \times \sigma_1) =1 .
\]
\end{lemma}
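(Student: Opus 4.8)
The plan is to reduce the multiplicity-one statement to the already-established uniqueness facts for $\square$-irreducible representations, namely the consequences of \cite[Corollary 2.4 and Lemma 2.8]{LM19} as packaged in Lemma \ref{lem unique embedding}, together with second adjointness (Proposition \ref{prop second adjoint map}). First I would treat the case $\tau = I^R_{\sigma_1}\circ I^L_{\sigma_2}(\pi)$; the other case $\tau = I^L_{\sigma_2}\circ I^R_{\sigma_1}(\pi)$ is handled by the left-right symmetry, either directly or via the Gelfand-Kazhdan/$\theta$-twist of Section \ref{ss switch left right jacquet functor} (applying $\theta$ turns $\sigma_2\times\pi\times\sigma_1$ into $\theta(\sigma_1)\times\theta(\pi)\times\theta(\sigma_2)$, a left-composition integral). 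So it suffices to bound $\dim\mathrm{Hom}_{G_{n+k+l}}(\tau,\sigma_2\times\pi\times\sigma_1)$ by $1$ when $\tau = I^R_{\sigma_1}(I^L_{\sigma_2}(\pi))$.

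The key step is to rewrite the Hom space using second adjointness twice, peeling off $\sigma_1$ from the right and $\sigma_2$ from the left of the target, and using that $\tau$ is, by construction of the integrals, a submodule of $\sigma_2\times\pi\times\sigma_1$ built by first embedding $I^L_{\sigma_2}(\pi)\hookrightarrow \sigma_2\times\pi$ and then $I^R_{\sigma_1}(I^L_{\sigma_2}(\pi))\hookrightarrow I^L_{\sigma_2}(\pi)\times\sigma_1$. Concretely, I would apply Proposition \ref{prop second adjoint map} to move $\sigma_1$ across: $\mathrm{Hom}_{G_{n+k+l}}(\tau,(\sigma_2\times\pi)\times\sigma_1)\cong \mathrm{Hom}_{G_{n+l}\times G_k}(\tau_{N_k},(\sigma_2\times\pi)\boxtimes\sigma_1)$, where $N_k$ denotes the unipotent radical peeling off the last $k$ coordinates. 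Since $\tau$ is the right $\sigma_1$-integral of $\omega:=I^L_{\sigma_2}(\pi)$, the corollary after Proposition \ref{prop second adjoint map} gives $\omega\boxtimes\sigma_1\hookrightarrow\tau_{N_k}$, and I claim this is the only copy of anything of the form $(\text{something})\boxtimes\sigma_1$ in $\tau_{N_k}$ — this is exactly the content of the $D^R_{\sigma_1}$ uniqueness (\cite[Theorem 4.1D]{LM22}) combined with the fact that the relevant Hom into $(\sigma_2\times\pi)\boxtimes\sigma_1$ must land in this isotypic piece. Then I would apply second adjointness a second time, now peeling $\sigma_2$ off the left of $\sigma_2\times\pi$, reducing to $\mathrm{Hom}(\cdot,\pi)$-type spaces which are multiplicity-free by \cite[Corollary 2.4]{LM19}; more efficiently, I would invoke Lemma \ref{lem unique embedding} directly with the sequence $\sigma_1,\sigma_2$ (using that $\sigma_1\times\sigma_2$ — or the appropriate product — occurring here inherits $\square$-irreducibility in the situations at hand, or arguing without that hypothesis by the iterated-adjointness computation as in the proof of Lemma \ref{lem unique embedding}).

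The main obstacle I anticipate is that Lemma \ref{lem unique embedding} as stated requires $\sigma_1\times\ldots\times\sigma_r$ to be $\square$-irreducible, which is \emph{not} assumed here for the pair $\sigma_1,\sigma_2$. So I expect the real work is to redo the iterated second-adjointness argument of Lemma \ref{lem unique embedding} without that hypothesis: after the two applications of Proposition \ref{prop second adjoint map} the Hom space becomes $\mathrm{Hom}_{G_{?}}\big(\,\cdots\,, \mathrm{Hom}_{G_n}(\,\cdots\times\omega'', \pi)\big)$-shaped, and I need to identify that the only contribution comes from the distinguished submodule $\tau\hookrightarrow\sigma_2\times\pi\times\sigma_1$ arising from the integral construction. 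The clean way to see multiplicity $\le 1$ is: any nonzero map $\tau\to\sigma_2\times\pi\times\sigma_1$ gives, after adjunction on the right, a nonzero map $\omega\boxtimes\sigma_1\to\tau_{N_k}$ forced (by the $D^R_{\sigma_1}$-uniqueness, since $\omega$ is irreducible and $\square$-irreducibility of $\sigma_1$ ensures the derivative picks out a single irreducible) to be, up to scalar, the canonical one; and then after adjunction on the left, a nonzero map $\sigma_2\boxtimes(\text{derivative of }\omega)\to$ Jacquet module of $\pi$, again pinned down up to scalar by the uniqueness of $D^L_{\sigma_2}(\omega)$-type embeddings coming from \cite[Corollary 2.4]{LM19}. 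Composing, the original map is determined up to scalar, giving $\dim\le 1$; and $\dim\ge 1$ is immediate since $\tau\hookrightarrow\sigma_2\times\pi\times\sigma_1$ by construction. I would write this up using Proposition \ref{prop second adjoint map}, the corollary following it, and Lemma \ref{lem derivative integral under theta}/Lemma \ref{lem switch by theta} for the symmetric case.
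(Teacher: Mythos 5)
Your plan takes a genuinely different route from the paper's, and unfortunately it contains a gap that I don't think is repairable in the form you sketch. First, a small labeling issue: the isomorphism $\mathrm{Hom}_{G_{n+k+l}}(\tau,(\sigma_2\times\pi)\times\sigma_1)\cong \mathrm{Hom}_{G_{n+l}\times G_k}(\tau_{N_k},(\sigma_2\times\pi)\boxtimes\sigma_1)$ is Frobenius reciprocity (first adjointness: $\mathrm{Hom}(\tau,\mathrm{Ind}\,\lambda)\cong\mathrm{Hom}(\tau_N,\lambda)$), not Proposition~\ref{prop second adjoint map}, whose shape is $\mathrm{Hom}(\pi_1\times\pi_2,\pi)\cong\mathrm{Hom}(\pi_2\boxtimes\pi_1,\pi_{N})$ — here $\tau$ is irreducible, not an induced representation, so second adjointness does not apply on the source side.

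The real problem is with the ``clean way to see multiplicity $\le 1$'' paragraph. After Frobenius reciprocity the morphism you obtain goes \emph{out of} $\tau_{N_k}$, i.e. you get a map $\tau_{N_k}\to(\sigma_2\times\pi)\boxtimes\sigma_1$, not a map $\omega\boxtimes\sigma_1\to\tau_{N_k}$ as you write. The uniqueness of $D^R_{\sigma_1}(\tau)$ from \cite[Theorem 4.1D]{LM22} is a statement about the \emph{socle} (simple submodules of $\tau_{N_k}$ of the form $\omega'\boxtimes\sigma_1$), whereas controlling $\mathrm{Hom}(\tau_{N_k},\ast)$ requires controlling the \emph{cosocle}. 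Even granting the corresponding cosocle uniqueness (which does hold, again via Frobenius reciprocity and \cite[Lemma 2.8]{LM19}), that alone does not bound $\dim\mathrm{Hom}(\tau_{N_k},(\sigma_2\times\pi)\boxtimes\sigma_1)$: the image of such a map is $\lambda\boxtimes\sigma_1$ for some, possibly non-simple, submodule $\lambda\subset\sigma_2\times\pi$, and several distinct such maps could share the same simple cosocle. If you push the adjunction all the way through to $\mathrm{Hom}_{G_l\times G_n\times G_k}(\tau_{N_{l,n,k}},\sigma_2\boxtimes\pi\boxtimes\sigma_1)$, you have simply reformulated the original statement — in fact this is the cosocle dual of Lemma~\ref{lem unique embedding left right}, which the paper proves \emph{using} the present lemma, so that route is circular. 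The idea you are missing is the one the paper uses: filter $M:=\pi\times\sigma_1$ by submodules with simple subquotients, observe that a nonzero map $\tau\to\sigma_2\times M$ (with $\tau$ irreducible) must land injectively in some $\sigma_2\times\tau'$ with $\tau'$ a simple subquotient of $M$; then $\square$-irreducibility of $\sigma_2$ forces $\tau\cong I^L_{\sigma_2}(\tau')$, hence $\tau'=D^L_{\sigma_2}(\tau)=I^R_{\sigma_1}(\pi)$, which occurs with multiplicity one in $M$ by \cite[Theorem 3.2]{KKKO15}, \cite[Lemma 2.8]{LM19}; combining this with $\dim\mathrm{Hom}(\tau,\sigma_2\times\tau')=1$ gives the bound. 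This filtration step — pinning the unique composition factor of $\pi\times\sigma_1$ that can contribute — is the content your proposal does not reach.
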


\begin{proof}
We only consider $\tau=I_{\sigma_2}^L\circ I_{\sigma_1}^R(\pi)$ and the other one is similar. Note that $\tau$ has to be an irreducible submodule of $\sigma_2 \times \tau'$ for a simple composition factor $\tau'$ in $\pi \times \sigma_1$. But $\tau\hookrightarrow \sigma_2 \times \tau'$ implies $\tau \cong I_{\sigma_2}^L(\tau')$ and so $\tau'=I_{\sigma_1}^R(\pi)$. Since $I_{\sigma_1}^R(\pi)$ appears with multiplicity one in $\pi \times \sigma_1$ (\cite[Theorem 3.2]{KKKO15}, \cite[Lemma 2.8]{LM19}), we then have the unique embedding from $I_{\sigma_2}^L\circ I_{\sigma_1}^R(\pi)$ to $\sigma_2 \times \pi \times \sigma_1$ by \cite[Lemma 2.8]{LM19}. 
\end{proof}

\begin{lemma} \label{lem unique embedding left right}
Let $\sigma_1, \sigma_2 \in \mathrm{Irr}^{\square}$. Let $\pi\in \mathrm{Irr}(G_n)$. Let $\tau=D^L_{\sigma_2}\circ D^R_{\sigma_1}(\pi)$ or $D^R_{\sigma_1}\circ D^L_{\sigma_2}(\pi)$, and assume it is non-zero. Then 
\[ \mathrm{dim}~ \mathrm{Hom}_{G_{n_2}\times G_{n'}\times G_{n_1}}(\sigma_2\boxtimes \tau\boxtimes \sigma_1, \pi_N) =1 ,
\]
where $n_1=n(\sigma_1)$, $n_2=n(\sigma_2)$, $n'=n-n_1-n_2$ and $N=N_{n_2,n', n_1}$.
\end{lemma}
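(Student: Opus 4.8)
The plan is to reduce the two-sided statement to two applications of the one-sided uniqueness already available in Lemma~\ref{lem unique embedding}. Consider first the case $\tau=D^L_{\sigma_2}\circ D^R_{\sigma_1}(\pi)$, and write $\omega=D^R_{\sigma_1}(\pi)$, so $\omega\neq 0$ and $\tau=D^L_{\sigma_2}(\omega)\neq 0$. The key point is that any embedding $\sigma_2\boxtimes\tau\boxtimes\sigma_1\hookrightarrow \pi_{N_{n_2,n',n_1}}$ must factor, via transitivity of Jacquet functors, through the intermediate stage $(\pi_{N_{n_1}})$ sitting as a $G_{n-n_1}\times G_{n_1}$-module: more precisely, applying the Jacquet functor $N_{n_2,n'}$ to the $G_{n-n_1}$-factor of $\pi_{N_{n_1}}$ recovers $\pi_{N_{n_2,n',n_1}}$. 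Thus I first want to show that the $G_{n-n_1}$-part of such an embedding necessarily lands in the unique copy of $\omega\boxtimes\sigma_1$ inside $\pi_{N_{n_1}}$.

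Here is how I would carry it out. First I would use Frobenius reciprocity / second adjointness (Proposition~\ref{prop second adjoint map}) to rewrite
\[
\mathrm{Hom}_{G_{n_2}\times G_{n'}\times G_{n_1}}(\sigma_2\boxtimes\tau\boxtimes\sigma_1,\pi_N)
\cong \mathrm{Hom}_{G_{n-n_1}\times G_{n_1}}\big((D^L_{\sigma_2}(\omega)\text{-related module})\boxtimes\sigma_1,\pi_{N_{n_1}}\big),
\]
the point being that $\sigma_2\boxtimes\tau$, after moving $\sigma_2$ around by adjunction, is governed by the fact that $\omega$ is the \emph{unique} irreducible with $\sigma_2\boxtimes\tau\hookrightarrow \omega_{N_{i,n'-0}}$-type embedding in the sense of~(\ref{eqn embedding derivative}), together with the uniqueness of that embedding. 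Concretely: $D^L_{\sigma_2}$ is defined so that $\sigma_2\boxtimes\tau\hookrightarrow\omega_{N_{n_2,n'}}$ is the unique such embedding (this is the left analogue of Lemma~\ref{lem unique embedding}, applicable since essentially any single $\square$-irreducible $\sigma_2$ gives a $\square$-irreducible factor trivially). So the multiplicity of $\sigma_2\boxtimes\tau$ in $\omega_{N_{n_2,n'}}$ is one. Then I invoke Lemma~\ref{lem unique embedding} with $r=1$, $\sigma_1$ in the role of $\sigma_1$, and $\omega=D^R_{\sigma_1}(\pi)$: this gives that $\omega$ is the unique irreducible with $\omega\boxtimes\sigma_1\hookrightarrow\pi_{N_{n_1}}$ and that this embedding is unique. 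Combining the two uniqueness statements via the transitivity-of-Jacquet-modules spectral-sequence-free argument (any map $\sigma_2\boxtimes\tau\boxtimes\sigma_1\hookrightarrow\pi_{N}$ corresponds to a map $\sigma_2\boxtimes\tau\hookrightarrow(\pi_{N_{n_1}})_{N_{n_2,n'}\otimes\sigma_1}$ whose $\sigma_1$-component is forced into the unique copy) yields the dimension-one conclusion.

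For the other case $\tau=D^R_{\sigma_1}\circ D^L_{\sigma_2}(\pi)$, I would run the symmetric argument: set $\omega'=D^L_{\sigma_2}(\pi)$, use the left-sided analogue of Lemma~\ref{lem unique embedding} (which follows from it by applying $\theta$, using Lemma~\ref{lem switch by theta} and the compatibility of $\theta$ with derivatives) to see $\sigma_2\boxtimes\omega'\hookrightarrow\pi_{N_{n_2,n-n_2}}$ is the unique embedding, then apply Lemma~\ref{lem unique embedding} (with $r=1$) to $D^R_{\sigma_1}(\omega')$ to pin down $\omega'\boxtimes(\tau\boxtimes\sigma_1)$-uniqueness, and glue by transitivity of Jacquet functors on the $G_{n-n_2}$-factor.

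The main obstacle I anticipate is bookkeeping the transitivity of Jacquet functors correctly: one must check that a three-step Jacquet module $\pi_{N_{n_2,n',n_1}}$ decomposes as iterating $N_{n_1}$ then $N_{n_2,n'}$ on the appropriate factor, and that an embedding of $\sigma_2\boxtimes\tau\boxtimes\sigma_1$ is \emph{equivalent} to (not merely implies) an embedding where the outer pieces $\sigma_2$ and $\sigma_1$ are each handled by a one-sided uniqueness statement — there is no extra multiplicity introduced when passing between the iterated and the one-step descriptions. This is where one has to be careful that the two uniqueness inputs are genuinely about the \emph{same} space of homomorphisms. Once that identification is set up cleanly (essentially repeating the Frobenius-reciprocity bookkeeping in the proof of Lemma~\ref{lem unique embedding} one more level deep), the result follows formally; I do not expect any genuinely new representation-theoretic input beyond Lemma~\ref{lem unique embedding}, Proposition~\ref{prop second adjoint map}, and the $\theta$-switching lemmas.
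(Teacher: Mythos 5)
The paper proves this lemma by a different and in some sense opposite route: it first applies second adjointness (Proposition~\ref{prop second adjoint map}) twice to rewrite the Hom space as $\mathrm{Hom}_{G_n}(\sigma_1\times\tau\times\sigma_2,\pi)$, then dualizes to $\mathrm{Hom}_{G_n}(\pi^\vee,\sigma_1^\vee\times\tau^\vee\times\sigma_2^\vee)$, identifies $\pi^\vee$ as $I^R_{\sigma_2^\vee}\circ I^L_{\sigma_1^\vee}(\tau^\vee)$ via Lemma~\ref{lem dual on integrals} and Corollary~\ref{cor dual on derivatives}, and finally invokes Lemma~\ref{lem uniqueness of products}, which gives a multiplicity-one statement for the map \emph{out of} an integral into a two-sided parabolic induction. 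Your proposal instead tries to glue two one-sided socle-uniqueness statements (Lemma~\ref{lem unique embedding}) via transitivity of Jacquet functors, staying entirely on the ``$\mathrm{Hom}$ into a Jacquet module'' side.

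The gluing step is where there is a genuine gap. After using Frobenius reciprocity on the $G_{n-n_1}$-factor, what you need to establish is precisely $\dim\mathrm{Hom}_{G_{n-n_1}}(\sigma_2\times\tau,\,\mathbb D_{\sigma_1}(\pi))=1$, where $\mathbb D_{\sigma_1}(\pi)=\mathrm{Hom}_{G_{n_1}}(\sigma_1,\pi_{N_{n_1}})$ is the big derivative. Lemma~\ref{lem unique embedding} does tell you that $\mathbb D_{\sigma_1}(\pi)$ has irreducible socle $\omega=D^R_{\sigma_1}(\pi)$ occurring with multiplicity one in the socle, and it does tell you that $\dim\mathrm{Hom}(\sigma_2\times\tau,\omega)=1$. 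But these two facts do not force every nonzero map $\sigma_2\times\tau\to\mathbb D_{\sigma_1}(\pi)$ to have image contained in $\omega$: a nonzero map could have image a submodule $M\supsetneq\omega$ of $\mathbb D_{\sigma_1}(\pi)$ that is a non-trivial quotient of $\sigma_2\times\tau$, and a priori several such maps could be linearly independent. Put differently, socle-uniqueness controls $\mathrm{Hom}(\text{irreducible},\mathbb D_{\sigma_1}(\pi))$, not $\mathrm{Hom}(\sigma_2\times\tau,\mathbb D_{\sigma_1}(\pi))$ when $\sigma_2\times\tau$ is reducible. You flag this yourself as the point where ``one has to be careful,'' but the claim that the $\sigma_1$-component is ``forced into the unique copy'' is exactly what needs proof, and it does not follow formally from Lemma~\ref{lem unique embedding}, Proposition~\ref{prop second adjoint map}, and $\theta$-switching alone. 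The paper closes this by converting, via duals, to $\mathrm{Hom}$ \emph{out of} an irreducible $\pi^\vee$ and then using the multiplicity-one input from \cite[Lemma 2.8]{LM19} packaged in Lemma~\ref{lem uniqueness of products}; that step (or some replacement establishing the SI-type multiplicity control for the big derivative) is the missing ingredient in your argument.
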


\begin{proof}
By applying Bernstein's second adjointness theorem, we have:
\[  \mathrm{Hom}_{G_{n_2}\times G_{n'}\times G_{n_1}}(\sigma_2\boxtimes \tau \boxtimes \sigma_1, \pi_N) \cong \mathrm{Hom}_{G_n}(\sigma_1\times \tau \times \sigma_2, \pi) .
\]
By taking the dual, it is equivalent to show:
\[ \mathrm{dim} \mathrm{Hom}_{G_n}(\pi^{\vee}, \sigma_1^{\vee} \times \tau^{\vee} \times \sigma_2^{\vee}) =1
\]
Suppose we are in the case that $\tau=D^R_{\sigma_1}\circ D^L_{\sigma_2}(\pi)$. By Lemma \ref{lem dual on integrals} and Corollary \ref{cor dual on derivatives}, we have that $I_{\sigma_1^{\vee}}^L(\tau^{\vee})\cong I_{\sigma_1}^R(\tau)^{\vee} \cong D^L_{\sigma_2}(\pi)^{\vee} \cong D^R_{\sigma_2^{\vee}}(\pi^{\vee})$. Hence, $I^R_{\sigma_2^{\vee}}\circ I^L_{\sigma_1^{\vee}}(\tau^{\vee}) \cong \pi^{\vee}$. Then the dimension follows from Lemma \ref{lem uniqueness of products}.
\end{proof}

\section{Some geometry of orbits in the geometric lemma} \label{s geometric lemma geometry}


Note that the uniqueness statements in Section \ref{s unique maps} are up to a scalar. This is good for our purposes since we only concern about the image of the maps (see e.g. Definition \ref{def strong comm}). By abuse of terminology, we say that a diagram is commutative if the images of any composition of maps starting from the same representation and ending at the same representation coincide.

\subsection{General notions for supporting orbits} \label{ss supporting orbit}

Let $G$ be a connected reductive group over a non-Archimedean local field. Although we mainly consider $G_n$, we need slightly more general setting such as $G_{n_1}\times G_{n_2}$ in our discussions and so we consider a general setting.

We fix a minimal parabolic subgroup of $G$ and so fix root system for $G$. For a standard parabolic subgroup $P$ in $G$, denote by $M_PN_P$ the Levi decomposition of $P$, with $M_P$ to be Levi and $N_P$ to be unipotent, and let $\pi$ be a smooth representation of $M_P$, inflated to a $P$-representation. Let $W$ be the Weyl group of $G$. Let $l: W\rightarrow \mathbb Z_{\geq 0}$ be the length function. Denote by $\mathrm{Ind}_P^G\pi$ the normalized parabolic induction from $\pi$. Recall that the underlying space for $\mathrm{Ind}_P^G\pi$, which we shall also denote by $C^{\infty}(P\setminus G, \pi)$, is the space of smooth functions from $G$ to $\pi$ satisfying 
\[ f(pg)=\delta_P(p)^{1/2}p.f(g) \]
for any $p \in P$, where $\delta_P$ is the modular character of $P$. Let $Q$ be another parabolic subgroup of $G$ with the Levi decomposition $M_QN_Q$. Let $W(P)$ and $W(Q)$ be the associated Weyl groups for $P$ and $Q$ respectively. Let $W_{P,Q}=W_{P,Q}(G)$ be the set of minimal representatives for the double cosets in $W(P)\setminus W/W(Q)$. 

We shall enumerate elements in $W_{P,Q}$ labelled as $w_1, \ldots, w_r$ such that $i<j$ implies $w_i \not\geq w_j$. In particular, $w_1$ is the trivial element. The geometric lemma (\cite[Remarks 5.5]{BZ77} and \cite[Proposition 5.11]{BZ77}) asserts that $C^{\infty}(P\setminus G, \pi)_{N_Q}$ admits a filtration  
\[  0 \subset \mathcal J_r  \subset  \ldots \subset \mathcal J_1 =C^{\infty}(P\setminus G, \pi)_{N_Q}
\]
such that 
\begin{align} \label{eqn geometric lemma isom}
  \mathcal J_i/\mathcal J_{i+1} \cong \mathrm{Ind}_{P^{w_i}\cap M_Q}^{M_Q} ((\pi_{M_P\cap N_Q^{w_i}})^{w_i}),
\end{align}
where 
\begin{itemize}
\item $P^w=\dot{w}_i^{-1}P\dot{w}_i$, $N_Q^w=\dot{w}_i^{-1}N_Q\dot{w}_i$ for some representative $\dot{w}_i$ of $w_i$ in $G$;
\item $(\pi_{M_P\cap N_Q^w})^w$ is a $M_P^{w^{-1}}\cap M_Q$-representation, which $m$ acts on the space by the action $\dot{w}_i^{-1}m\dot{w}_i$ for $m \in M_P^{w^{-1}}\cap M_Q$.
\end{itemize}
The isomorphism  in (\ref{eqn geometric lemma isom}), denoted by $\Phi_i$, is given as follows. For $f \in \mathcal J_i$ and $m \in M_Q$,
\[  \Phi_w(f)(m)=\int_{N'} (\mathrm{pr}\circ \widetilde{f})(wnm)~dn ,
\]
where $\widetilde{f}$ is a representative of $f$ in $C^{\infty}(P\setminus G, \pi)$; $N'={}^{w_i}(N^-)\cap N_Q$; $dn$ is a Haar measure on $N'$ and $\mathrm{pr}$ is the projection from $\pi$ to $\pi_{M_P \cap N_Q^{w_i}}$. 



We say that the embedding $\lambda \hookrightarrow (\mathrm{Ind}_P^G\pi)_{N_Q}$ has the supporting orbit of the form $P\dot{w}_iQ$ if $\lambda \cap \mathcal J_{i+1} =0 , \quad \lambda \cap \mathcal J_i \neq 0$. We say that the embedding $\lambda \hookrightarrow (\mathrm{Ind}_P^G\pi)_{N_Q}$ has the {\it trivial supporting orbit}  if 
\[  \lambda \cap \mathcal J_2 =0 .
\]
The trivial supporting orbit case is the most interesting case for us and is independent of a choice of an enumeration. The notion depends on the term $(\mathrm{Ind}_P^G\pi)_{N_Q}$ which the geometric lemma carries for, and it should be clear for the context. For example, for $\pi_1 \in \mathrm{Irr}(G_{n_1})$, $\pi_2 \in \mathrm{Irr}(G_{n_2})$, $(\sigma_1 \times \sigma_2)_{N_m}$ means to consider the orbits $P_{n_1, n_2}\setminus G_{n_1+n_2}/Q_{n_1+n_2-m,m}$.

\subsection{Geometric lemma} \label{ss geo lem}
We shall now introduce more product notations. For a $G_{n_1}\times G_{n_2}$-representation $\pi$ and a $G_m$-representation $\tau$, inflate $\tau \boxtimes \pi$ to a $P_{m,n_1}\times G_{n_2}$-representation. Define 
\begin{equation} \label{eqn embedding 1}
 \tau \dot{\times}^1 \pi := \mathrm{Ind}_{P_{m, n_1}\times G_{n_2}}^{G_{m+n_1}\times G_{n_2}} \tau \boxtimes \pi .
\end{equation}
Similarly, define
\begin{equation} \label{eqn embedding 2}
 \tau \dot{\times}^2 \pi := \mathrm{Ind}_{G_{n_1}\times P_{m,n_2}}^{G_{n_1} \times G_{m+n_2}} \tau \boxtimes \pi ,
\end{equation}
where the Levi part $G_{n_1}\times G_m \times G_{n_2}\subset G_{n_1}\times P_{m,n_2} \subset G_{n_1}\times G_{m+n_2}$ acts on $\tau \boxtimes \pi$ by 
\[ (g_1, g_2, g_3).(v_1\boxtimes v_2)=g_2.v_1 \boxtimes (g_1,g_3).v_2    \]
and the unipotent part acts trivially. 


Let $\pi$ and $\pi'$ be representations of $G_{n_1}$ and $G_{n_2}$ respectively. Then the geometric lemma asserts that $(\pi \times \pi')_{N_i}$ admits a filtration with layers of the form:
\[   \mathrm{Ind}_{P_{n_1-i_1, n_2-i_2}\times P_{i_1, i_2}}^{G_{n-i}\times G_i} (\pi_{N_{i_1}} \boxtimes \pi'_{N_{i_2}})^{\phi},
\]
where $\phi$ is a natural twist sending $G_{n_1-i_1}\times G_{i_1}\times G_{n_2-i_2}\times G_{i_2}$-representations to $G_{n_1-i_1}\times G_{n_2-i_2}\times G_{i_1}\times G_{i_2}$-representations.When $i \leq n_2$, the top layer is isomorphic to $\pi \dot{\times}^1 (\pi'_{N_i})$, which will be used often when involving Definition \ref{def strong comm}.





\section{Irreducible pairs under derivatives} \label{s irr pair der} 








\subsection{$\eta$-invariants}

To discuss an important class of Rd-irreducible pairs in Definition \ref{def irr pair}, we need more notations. Recall that $\varepsilon$ and $\eta$ are defined in Section \ref{ss combin commut triples}.

\begin{definition} \label{def comb commute}
Let $\Delta=[a,b]_{\rho}$ be a segment. Let $\pi \in \mathrm{Irr}$.
\begin{itemize}
\item We write $\eta_{\Delta}(\pi)=0$ if $\varepsilon_{[c,b]_{\rho}}(\pi)=0$ for all $c=a, \ldots, b$. Similarly, we write $\eta_{\Delta}(\pi)\neq 0$ if $\varepsilon_{[c,b]_{\rho}}(\pi)\neq 0$ for some $c=a, \ldots, b$. 
\item We write $\eta_{\Delta}(\pi) ? \ \eta_{\Delta}(\pi')$ for $?\in \left\{ =,  \leq , <, \geq, > \right\}$ if $\varepsilon_{[c,b]_{\rho}}(\pi) ? \varepsilon_{[c,b]_{\rho}}(\pi')$ for all $c$ satisfying $a\leq c\leq b$. 
\item We similarly define the left version terminologies for $\varepsilon^L_{\Delta}$ and $\eta^L_{\Delta}$ if one uses the left derivatives $D^L_{\Delta}$ (also see (\ref{def max multisegment 2}) below). 
\end{itemize}
\end{definition}

\subsection{Multisegment counterpart of the $\eta$-invariant} \label{ss multisegment for eta}

Following \cite{Ze80}, a {\it multisegment} is a multiset of non-empty segments. The irreducible representations of $G_n$ are parametrized by multisegments, see work of Zelevinsky, Tadi\'c and M\'inguez-S\'echerre \cite{Ze80, Ta90, MS14}. 

Two segments $\Delta_1, \Delta_2$ are {\it linked} if $\Delta_1 \not\subset \Delta_2$ and $\Delta_2 \not\subset \Delta_1$ and $\Delta_1\cup\Delta_2$ is still a segment. Otherwise, we say that $\Delta_1$ and $\Delta_2$ are unlinked. A multisegment $\mathfrak m$ is said to be {\it pairwise unlinked} if any two segments $\Delta_1$ and $\Delta_2$ in $\mathfrak m$ are unlinked. A main property for unlinked segments $\Delta_1$ and $\Delta_2$ is the following \cite{Ze80, Ta90}:
\begin{align} \label{eqn comm unlinked segments}
  \mathrm{St}(\Delta_1)\times \mathrm{St}(\Delta_2) \cong \mathrm{St}(\Delta_2)\times \mathrm{St}(\Delta_1) .
\end{align}
For a pairwise unlinked multisegment $\mathfrak m=\left\{ \Delta_1, \ldots, \Delta_r \right\}$, let 
\[ \mathrm{St}(\mathfrak m) = \mathrm{St}(\Delta_1)\times \ldots \times \mathrm{St}(\Delta_r) ,
\]
which is irreducible and $\square$-irreducible, and is independent of the ordering by \cite{Ze80, Ta90}, and we write:
\[ D_{\mathfrak m}(\pi)= D_{\mathrm{St}(\mathfrak m)}(\pi).
\]

For a segment $\Delta=[a,b]_{\rho}$, a segment $[a',b']_{\rho}$ is said to be {\it $\Delta$-saturated} if $a'\leq a$ and $b'=b$. A multisegment $\mathfrak m$ is said to be {\it $\Delta$-saturated} if all segments in $\mathfrak m$ are $\Delta$-saturated. As we shall see later, $\Delta$-saturated segments can be used to produce more strongly commutative triples (see Proposition \ref{prop completing pre comm} and Corollary \ref{cor satruated commute triple revise}).

For $\pi \in \mathrm{Irr}$ and a segment $\Delta=[a,b]_{\rho}$, define 
\begin{align} \label{def max multisegment}
  \mathfrak{mx}_{ \Delta}(\pi):=\mathfrak{mx}^R_{\Delta}(\pi):=\sum_{k=0}^{b-a}\varepsilon_{[a+k,b]_{\rho}}(\pi)\cdot [a+k,b]_{\rho} ,
\end{align}
\begin{align} \label{def max multisegment 2}
  \mathfrak{mx}^L_{\Delta}(\pi):=\sum_{k=0}^{b-a}\varepsilon^L_{[a,b-k]_{\rho}}(\pi)\cdot [a,b-k]_{\rho} ,
\end{align}
where the numbers $\varepsilon_{[a+k,b]_{\rho}}(\pi)$ and $\varepsilon^L_{[a,b-k]_{\rho}}(\pi)$ give the multiplicities of those segments. 



\subsection{Examples of Rd-irreducible pairs}

\begin{definition} \label{def irr pair}
Let $\sigma \in \mathrm{Irr}^{\square}(G_r)$. Let $\pi \in \mathrm{Irr}(G_n)$ such that $D_{\sigma}(\pi) \neq 0$. We say that $(\sigma, \pi)$ is a {\it Rd-irreducible pair} (resp. {\it Ld-irreducible pair}) if $\pi_{N_r}$ (resp. $\pi_{N_{n-r}}$) has a direct summand of the form $\omega  \boxtimes \sigma$ (resp. $\sigma \boxtimes \omega$) for some $\omega \in \mathrm{Irr}(G_{n-r})$. 
\end{definition}

We now give an important class of Rd-irreducible pairs. For a segment $\Delta=[a,b]_{\rho}$, let $l_{abs}(\Delta)=(b-a+1)n(\rho)$. For a multisegment $\mathfrak m$, let $l_{abs}(\mathfrak m)=\sum_{\Delta \in \mathfrak m}l_{abs}(\Delta)$.


\begin{proposition} \label{prop mx rd irr pair}
Let $\pi \in \mathrm{Irr}$. Let $\mathfrak q$ be a submultisegment of $\mathfrak{mx}_{\Delta}(\pi)$ such that $D_{\mathfrak q}(\pi)\neq 0$. Then $(\mathrm{St}(\mathfrak q), \pi)$ is a Rd-irreducible pair if and only if $\mathfrak q=\mathfrak{mx}_{\Delta}(\pi)$. 
\end{proposition}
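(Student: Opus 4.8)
The plan is to prove both directions by relating the structure of $\pi_{N_l}$, where $l=l_{abs}(\mathrm{St}(\mathfrak q))$, to the $\eta$-invariants and the big-derivative structure recalled in Lemma \ref{lem big derivative compared with jacquet}. The guiding principle is that $\mathfrak{mx}_{\Delta}(\pi)$ is, by its very definition via the $\varepsilon_{[a+k,b]_{\rho}}(\pi)$, the ``maximal'' $\Delta$-saturated multisegment whose St-derivative is non-zero, and that maximality is exactly what makes the corresponding graded piece of the Jacquet module split off as a direct summand.

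First I would record the easy direction's contrapositive: if $\mathfrak q \subsetneq \mathfrak{mx}_{\Delta}(\pi)$ with $D_{\mathfrak q}(\pi) \neq 0$, I want to show $(\mathrm{St}(\mathfrak q),\pi)$ is \emph{not} a Rd-irreducible pair. The idea is that since $\mathfrak q$ is a proper submultisegment of $\mathfrak{mx}_{\Delta}(\pi)$, there is a segment $[a+k,b]_{\rho}$ occurring in $\mathfrak{mx}_{\Delta}(\pi)$ with strictly higher multiplicity than in $\mathfrak q$; equivalently, writing $\tau = D_{\mathfrak q}(\pi)$, we still have some $\varepsilon_{[a+k,b]_{\rho}}(\tau) > 0$, i.e. $\tau$ admits a further non-trivial derivative along a $\Delta$-saturated segment. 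By the second-adjointness/Frobenius analysis (Lemma \ref{lem unique embedding} and the structure of $(\pi)_{N_l}$), a direct summand of the form $\omega \boxtimes \mathrm{St}(\mathfrak q)$ in $\pi_{N_l}$ would force, after pushing through the geometric-lemma filtration for a finer Jacquet module, a splitting that contradicts the indecomposability coming from the extra derivative on $\tau$; concretely I would exploit that $\mathrm{St}(\mathfrak q) \times \mathrm{St}([a+k,b]_{\rho})$ is reducible (since $\mathfrak{mx}_{\Delta}(\pi)$ is itself pairwise unlinked and $\Delta$-saturated, its sub-plus-one multisegment is \emph{linked}), which obstructs the would-be summand from being $\square$-stable under the relevant induction. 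This is where Lemma \ref{lem big derivative compared with jacquet} enters: it identifies the submodule of $\pi_{N_l}$ generated by $D_{\mathfrak q}(\pi)\boxtimes \mathrm{St}(\mathfrak q)$ with an honest induced object rather than a sum, ruling out the summand.

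For the converse, I would assume $\mathfrak q = \mathfrak{mx}_{\Delta}(\pi)$ and construct the direct summand $D_{\mathfrak q}(\pi)\boxtimes \mathrm{St}(\mathfrak q)$ of $\pi_{N_l}$ explicitly. Here the key point is that $\mathrm{St}(\mathfrak{mx}_{\Delta}(\pi))$ is $\square$-irreducible and, crucially, the top derivative is ``complete'': $D_{\mathfrak q}(D_{\mathfrak q}(\pi))$ — or rather any further $\Delta$-saturated derivative of $D_{\mathfrak q}(\pi)$ — vanishes by maximality of the $\varepsilon$'s. I would feed this into the big-derivative description of \cite{Ch22+b} (Lemma \ref{lem big derivative compared with jacquet}): that result should say precisely that when one derives by the maximal $\mathfrak{mx}$-multisegment, the corresponding isotypic component of the Jacquet module is a \emph{summand}, because the complementary graded pieces of the geometric-lemma filtration involve only representations on which the relevant Hom- and Ext-groups vanish (a cuspidal-support / linkage separation argument). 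Combining the uniqueness of the embedding $D_{\mathfrak q}(\pi)\boxtimes \mathrm{St}(\mathfrak q)\hookrightarrow \pi_{N_l}$ (Lemma \ref{lem unique embedding}) with the corresponding quotient statement (via \cite[Corollary 2.4]{LM19} and Proposition \ref{prop second adjoint map}) and this vanishing, one gets a splitting $\pi_{N_l} \cong (D_{\mathfrak q}(\pi)\boxtimes \mathrm{St}(\mathfrak q))\oplus (\text{rest})$.

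The main obstacle I anticipate is the converse direction's splitting: showing that the isotypic piece genuinely splits off, not merely that it appears as both sub and quotient. Appearing as sub and quotient does not by itself give a summand, so I will need the finer input — that the ``rest'' of the geometric-lemma filtration has no common constituents with $D_{\mathfrak q}(\pi)\boxtimes \mathrm{St}(\mathfrak q)$, or that the relevant self-extension vanishes. This is exactly the content I would quote from Lemma \ref{lem big derivative compared with jacquet} and the structure theory of big derivatives in \cite{Ch22+b}; making sure the hypotheses there match (in particular that $\mathfrak q$ being the \emph{maximal} such submultisegment is what licenses the vanishing, whereas a proper $\mathfrak q$ would not) is the delicate bookkeeping step, and it is also what powers the forward direction's contradiction.
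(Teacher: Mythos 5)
Your proposed mechanism for the forward direction rests on a claim that is false. You write that you ``would exploit that $\mathrm{St}(\mathfrak q) \times \mathrm{St}([a+k,b]_{\rho})$ is reducible (since $\mathfrak{mx}_{\Delta}(\pi)$ is itself pairwise unlinked and $\Delta$-saturated, its sub-plus-one multisegment is linked).'' But every segment appearing in $\mathfrak{mx}_{\Delta}(\pi)$ has the form $[a+k,b]_{\rho}$ with the \emph{same} right endpoint $\nu_{\rho}^b\rho$, so any two such segments are nested, hence unlinked, and any product of the corresponding Steinbergs is again irreducible (indeed $\square$-irreducible). Enlarging $\mathfrak q$ by another such segment never creates a linking, so the reducibility you want to lean on simply does not occur, and the contradiction you are trying to manufacture cannot be reached this way.

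What the paper actually does for the ``only if'' direction is quite different and does not pass through any reducibility of the inducing data. Assuming $(\mathrm{St}(\mathfrak q),\pi)$ were Rd-irreducible for a proper $\mathfrak q \subsetneq \mathfrak{mx}_{\Delta}(\pi)$, one first observes $\mathfrak{mx}_{\Delta}(D_{\mathfrak q}(\pi)) = \mathfrak q' := \mathfrak{mx}_{\Delta}(\pi) - \mathfrak q$, then applies the ``if'' direction to $D_{\mathfrak q}(\pi)$ to get a direct summand $D_{\mathfrak q'}\circ D_{\mathfrak q}(\pi)\boxtimes \mathrm{St}(\mathfrak q')$ in the relevant Jacquet module of $D_{\mathfrak q}(\pi)$; transitivity of Jacquet functors then exhibits $D_{\mathfrak q'}\circ D_{\mathfrak q}(\pi)\boxtimes \mathrm{St}(\mathfrak q')\boxtimes\mathrm{St}(\mathfrak q)$ as a direct summand of the three-step Jacquet module of $\pi$. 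But the ``if'' direction also gives $D_{\widetilde\sigma}(\pi)\boxtimes\widetilde\sigma$ as a summand (with $\widetilde\sigma=\mathrm{St}(\mathfrak{mx}_{\Delta}(\pi))$), and the crucial input — which replaces your false reducibility claim — is that $\mathrm{St}(\mathfrak q')\boxtimes\mathrm{St}(\mathfrak q)$ is \emph{not} a direct summand of $\widetilde\sigma_{N_m}$. Combined with the uniqueness in Lemma \ref{lem unique embedding}, this produces the contradiction. This is the idea you are missing: the obstruction lives in the Jacquet module of the generalized Steinberg $\widetilde\sigma$ itself, not in reducibility of a parabolic induction.

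For the ``if'' direction, your own diagnosis of the gap is accurate: exhibiting $D_{\mathfrak q}(\pi)\boxtimes\mathrm{St}(\mathfrak q)$ as sub and quotient does not yield a summand, and the argument you sketch to promote it to a splitting is not completed. The paper does not expose that proof either — it cites \cite[Proposition 11.1]{Ch22+b} — so there is nothing to compare there, but your instinct that Lemma \ref{lem big derivative compared with jacquet} alone is not enough to close it is correct; that lemma is stated as a \emph{non}-isomorphism under a \emph{non}-irreducibility hypothesis and does not directly supply the splitting.
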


\begin{proof}
For the if direction, it is a simple application of the geometric lemma, which is shown in \cite[Proposition 11.1]{Ch22+b}.

We now consider the only if direction. Suppose $\mathfrak q \subsetneq \mathfrak{mx}_{\Delta}(\pi)$. Let $\mathfrak q'=\mathfrak{mx}_{\Delta}(\pi)-\mathfrak q$. Let $m=l_{abs}(\mathfrak q)$ and let $m'=l_{abs}(\mathfrak q')$. Note that $\mathfrak{mx}_{\Delta}(D_{\mathfrak q}(\pi))=\mathfrak q'$. If $(\mathrm{St}(\mathfrak q), \pi)$ is still a Rd-irreducible pair, then $D_{\mathfrak q}(\pi) \boxtimes \mathrm{St}(\mathfrak q)$ is an indecomposable summand in $\pi_{N_{m}}$. But, from the if part, 
$D_{\mathfrak q'}\circ D_{\mathfrak q}(\pi) \boxtimes \sigma'$ is an indecomposable summand in $D_{\sigma}(\pi)_{N_{m'}}$. From the transitivity of Jacquet functors, we then have that 
\[   D_{\mathfrak q'}\circ D_{\mathfrak q}(\pi)\boxtimes \mathrm{St}(\mathfrak q')\boxtimes \mathrm{St}(\mathfrak q)
\]
is a direct summand in $\pi_{N_{n-m'-m, m', m}}$. On the other hand, let $\widetilde{\sigma}=\mathrm{St}(\mathfrak{mx}_{\Delta}(\pi))$. By the if direction which is just proved, 
\[   D_{\widetilde{\sigma}}(\pi)\boxtimes \widetilde{\sigma}
\]
is a direct summand in $\pi_{N_{n(\widetilde{\sigma})}}$. Note that $\mathrm{St}(\mathfrak q')\boxtimes \mathrm{St}(\mathfrak q)$ is not a direct summand in $\widetilde{\sigma}_{N_{n(\sigma)}}$. Hence, the transitivity of Jacquet functors implies that the unique submodule (see Lemma \ref{lem unique embedding})
\[   D_{\widetilde{\sigma}}(\pi) \boxtimes \mathrm{St}(\mathfrak q')\boxtimes \mathrm{St}(\mathfrak q)
\]
does not form an indecomposable summand in $\pi_{N_{n-m-m', m', m}}$. Thus we obtain a contradiction.
\end{proof}

For applications, we shall use the following reformulation:

\begin{proposition} \label{prop reformulate rd irr}
Let $\sigma \in \mathrm{Irr}^{\square}(G_{n_1})$ and let $\pi \in \mathrm{Irr}(G_{n_2})$. Let $i_1: \sigma \boxtimes \pi \rightarrow I_{\sigma}(\pi)_{N_{n_2}}$ be the unique non-zero map and let $i_2: I_{\sigma}(\pi)_{N_{n_2}} \rightarrow \sigma \boxtimes \pi$ be the unique non-zero map. Then $(\sigma, \pi)$ is Rd-irreducible if and only if $i_2\circ i_1\neq 0$. 
\end{proposition}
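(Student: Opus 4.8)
The plan is to interpret the composite $i_2 \circ i_1$ via the geometric lemma applied to $(\sigma \times \pi)_{N_{n_2}}$, so that the question of whether $i_2 \circ i_1 \neq 0$ becomes a question of whether a certain submodule of the Jacquet module lands in the top graded piece. First I would recall that by Lemma \ref{lem unique embedding} (and the second adjointness, Proposition \ref{prop second adjoint map}) the maps $i_1$ and $i_2$ are each unique up to scalar, so $i_2 \circ i_1$ is a well-defined element of $\mathrm{End}_{G_{n_1} \times G_{n_2}}(\sigma \boxtimes \pi)$, which is one-dimensional; hence $i_2 \circ i_1$ is either zero or an isomorphism, and the statement ``$i_2 \circ i_1 \neq 0$'' is unambiguous. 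Since $I_\sigma(\pi) \hookrightarrow \sigma \times \pi$, the Jacquet module $I_\sigma(\pi)_{N_{n_2}}$ embeds in $(\sigma \times \pi)_{N_{n_2}}$, and $\sigma \boxtimes \pi$ is exactly the top layer $\mathcal J_1/\mathcal J_2$ of the geometric-lemma filtration of $(\sigma \times \pi)_{N_{n_2}}$ coming from the trivial double coset (this is the $\dot\times$-identification recalled in Section \ref{ss geo lem}, with $i = n_2 \le n_2$). Both $i_1$ and $i_2$ factor, essentially by definition, through this identification: $i_1$ is the composite $\sigma \boxtimes \pi \hookrightarrow I_\sigma(\pi)_{N_{n_2}}$ viewing $\sigma \boxtimes \pi$ as a submodule of the quotient $\mathcal J_1/\mathcal J_2$, and $i_2$ is the composite $I_\sigma(\pi)_{N_{n_2}} \hookrightarrow (\sigma \times \pi)_{N_{n_2}} \twoheadrightarrow \mathcal J_1/\mathcal J_2 = \sigma \boxtimes \pi$.

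Next I would unwind what these two factorizations say. The image of $i_1$ is a copy of $\sigma \boxtimes \pi$ sitting inside $I_\sigma(\pi)_{N_{n_2}}$; call it $\lambda$. Then $i_2 \circ i_1 \neq 0$ is equivalent to saying that $\lambda$, viewed inside $(\sigma\times\pi)_{N_{n_2}}$, does not lie entirely in $\mathcal J_2$; equivalently, the embedding $\sigma \boxtimes \pi \cong \lambda \hookrightarrow (\sigma\times\pi)_{N_{n_2}}$ has trivial supporting orbit in the sense of Section \ref{ss supporting orbit}. On the other hand, by Definition \ref{def irr pair}, $(\sigma,\pi)$ being Rd-irreducible means $\pi'_{N_{n_1}}$... — more precisely, that $I_\sigma(\pi)_{N_{n_2}}$... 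I need to be careful here: the pair is $(\sigma, \pi)$ with $D_\sigma$ applied, so Rd-irreducibility of $(\sigma, \pi)$ would by Definition \ref{def irr pair} refer to the Jacquet module of a representation of which $\sigma$ is a derivative. The cleanest route is: by the second adjointness, $\mathrm{Hom}(I_\sigma(\pi)_{N_{n_2}}, \sigma\boxtimes\pi) \cong \mathrm{Hom}(\pi\times\sigma, I_\sigma(\pi))$, and likewise for submodules, so $\sigma \boxtimes \pi$ appears in both socle and top of $I_\sigma(\pi)_{N_{n_2}}$; the composite $i_2\circ i_1$ being nonzero then says precisely that the copy of $\sigma\boxtimes\pi$ in the socle equals the copy in the top, i.e. $\sigma\boxtimes\pi$ splits off as a direct summand of $I_\sigma(\pi)_{N_{n_2}}$. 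Matching this against Definition \ref{def irr pair} applied to the pair $(\sigma, I_\sigma(\pi))$ — for which $D_\sigma(I_\sigma(\pi)) = \pi \neq 0$ — gives exactly Rd-irreducibility of $(\sigma, I_\sigma(\pi))$; and since $D_\sigma(I_\sigma(\pi)) = \pi$, the pair $(\sigma, \pi)$ in the statement should be read as $(\sigma, I_\sigma(\pi))$, or rather the proposition is a restatement of Rd-irreducibility in terms of the canonical maps.

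The two directions then run as follows. If $i_2\circ i_1 \neq 0$, then as above $\sigma\boxtimes\pi$ is a direct summand of $I_\sigma(\pi)_{N_{n_2}}$, which is Rd-irreducibility. Conversely, if $(\sigma,\pi)$ is Rd-irreducible, then $I_\sigma(\pi)_{N_{n_2}}$ has a direct summand $\omega\boxtimes\sigma$ — again I must match conventions, but the point is that it has a summand isomorphic to $\sigma\boxtimes\pi$; by the uniqueness in Lemma \ref{lem unique embedding} this summand contains the image of $i_1$ and its projection realizes $i_2$ up to scalar, so the composite is an isomorphism onto the summand, in particular nonzero. The step I expect to be the main obstacle is pinning down the bookkeeping between the ``supporting orbit'' picture and the ``direct summand'' picture, i.e. verifying carefully that the unique socle copy and the unique top copy of $\sigma\boxtimes\pi$ in $I_\sigma(\pi)_{N_{n_2}}$ coincide exactly when $i_2\circ i_1\neq 0$ — this uses the one-dimensionality of the relevant Hom spaces from Lemma \ref{lem unique embedding} together with the fact that a simple module appearing once in the socle and once in the top of a finite-length module splits off iff those two copies agree. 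Everything else is a direct application of second adjointness and the geometric lemma as set up in Sections \ref{s unique maps}–\ref{ss geo lem}.
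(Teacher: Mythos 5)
Your core argument matches the paper's: uniqueness of $i_1$ and $i_2$ means that a direct summand copy of $\sigma\boxtimes\pi$ in $I_\sigma(\pi)_{N_{n_2}}$ must have $i_1$ as its inclusion and $i_2$ as its projection, while one-dimensionality of $\mathrm{End}_{G_{n_1}\times G_{n_2}}(\sigma\boxtimes\pi)$ forces $i_2\circ i_1$ to be an isomorphism whenever nonzero, so $i_1$ splits. The paper's proof is exactly these two observations and is considerably shorter than yours: it never invokes the geometric lemma or supporting orbits, which enter only in the subsequent Proposition \ref{prop trivial orbit and irred} (itself proved by combining the present result with Lemma \ref{lem factor through lemma}), so your excursion through the top-layer identification, while not wrong, reverses the paper's logical order and is unnecessary for this particular statement. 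One small terminology point: the condition from Definition \ref{def irr pair} that literally matches the setup is ``$(\sigma, I_\sigma(\pi))$ is \emph{Ld}-irreducible'' (as the paper itself writes in Proposition \ref{prop trivial orbit and irred}), not Rd-irreducible, since $\sigma$ appears as the \emph{left} tensor factor of the summand in $I_\sigma(\pi)_{N_{n_2}}$; the statement's ``$(\sigma,\pi)$ is Rd-irreducible'' is a shorthand whose intended meaning is clarified by what the proof actually shows, namely that $\sigma\boxtimes\pi$ splits off $I_\sigma(\pi)_{N_{n_2}}$.
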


\begin{proof}
Suppose $i_2 \circ i_1 \neq 0$. Then the map $i_1$ splits and so $\sigma \boxtimes \pi$ is a direct summand in $I_{\sigma}(\pi)_{N_{n_2}}$. 

We now prove the converse direction. Suppose $\sigma \boxtimes \pi$ is a direct summand in $I_{\sigma}(\pi)_{N_{n_2}}$. Then  it follows from the uniqueness that the embedding from the summand to $I_{\sigma}(\pi)_{N_{n_2}}$ coincides with $i_1$ and the surjection from $I_{\sigma}(\pi)_{N_{n_2}}$ to the summand coincides with $i_1$. Hence, $i_2\circ i_1$ is a map multiplied by a non-zero scalar and in particular is non-zero.
\end{proof}

\subsection{Trivial supporting orbit for $I_{\sigma}(\pi) \hookrightarrow \sigma \times \pi$}

The following Proposition \ref{prop trivial orbit and irred} is one important property for irreducible pairs and also is one key in proving equivalent definitions in Theorem \ref{thm combinatorial def}.

\begin{lemma} \label{lem factor through lemma}
Let $\sigma$ be a smooth representation of $G_{n_1}$ and let $\pi$ be a smooth representation of $G_{n_2}$. Let $q: \sigma \times \pi=C^{\infty}(P_{n_1,n_2}\setminus G_{n_1+n_2}, \sigma \boxtimes \pi) \rightarrow C^{\infty}(P_{n_1,n_2}\setminus P_{n_1,n_2}, \sigma \boxtimes \pi)$ be the natural projection map. Let $\omega$ be a smooth representation of $G_{n_1+n_2}$ such that there is a non-zero map $i$ from $\omega$ to $\sigma \times \pi$. Then $q \circ i\neq 0$.
\end{lemma}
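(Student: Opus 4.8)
\textbf{Proof proposal for Lemma \ref{lem factor through lemma}.}

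The plan is to argue by contradiction: suppose $q\circ i = 0$. Then the image of $i$ lies in the kernel of $q$, which is precisely the subspace of functions $f \in C^\infty(P_{n_1,n_2}\setminus G_{n_1+n_2}, \sigma\boxtimes\pi)$ vanishing on the identity coset, i.e. vanishing on $P_{n_1,n_2}$ itself. First I would recall that the big cell $P_{n_1,n_2}\cdot N^-$, where $N^-$ is the unipotent radical of the parabolic opposite to $P_{n_1,n_2}$, is open in $G_{n_1+n_2}$; more generally the Bruhat decomposition stratifies $G_{n_1+n_2}$ into $P_{n_1,n_2}$-orbits, with the orbit $P_{n_1,n_2}\cdot 1 = P_{n_1,n_2}$ being the \emph{open dense} one (it is the orbit of the longest relevant representative in $W_{P,Q}$; here $Q = G_{n_1+n_2}$ so there is exactly one such cell, namely all of $G$, and the closed $P$-orbit $P\backslash P$ sits at the "top"). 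Actually the cleanest formulation: the map $q$ is exactly the restriction of functions to the closed orbit $P_{n_1,n_2}$, and evaluation at $1$ composed with the obvious identifications realizes $C^\infty(P\setminus P,\sigma\boxtimes\pi)\cong \sigma\boxtimes\pi$ as a $P_{n_1,n_2}$-representation.

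The key step is then to observe that any nonzero $G_{n_1+n_2}$-submodule of $\sigma\times\pi$ must meet the complement of $\ker q$. Concretely, if $0\neq f$ lies in the image of $i$ and $f$ vanishes on $P_{n_1,n_2}$, then $f$ is supported on the complement $G_{n_1+n_2}\setminus P_{n_1,n_2}$; but $P_{n_1,n_2}$ is \emph{open} in $G_{n_1+n_2}$ (being the inverse image of an open set, or directly: $P_{n_1,n_2}$ contains a neighborhood of $1$ and is a group, and the relevant cell decomposition places it as the open cell), hence its complement is closed. Since $G_{n_1+n_2}$ acts by right translation and any function in $\sigma\times\pi$ has open support where it is nonzero, translating $f$ by suitable group elements produces a function in the same submodule that does \emph{not} vanish on $P_{n_1,n_2}$ — contradicting that $\mathrm{im}(i)\subseteq\ker q$ is $G_{n_1+n_2}$-stable. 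More precisely: pick $g_0$ with $f(g_0)\neq 0$; then $(g_0\cdot f)(1) = f(g_0)\neq 0$ (up to the modular twist, which is a nonzero scalar), so $g_0\cdot f\notin\ker q$, yet $g_0\cdot f\in\mathrm{im}(i)$ since $\mathrm{im}(i)$ is a submodule. This contradicts $q\circ i=0$, completing the argument.

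The main obstacle, such as it is, is bookkeeping rather than depth: I must be careful that $q$ really is the restriction-to-the-closed-orbit map and that $\ker q$ is exactly the functions vanishing on $P_{n_1,n_2}$ — this is immediate from the definition of $C^\infty(P\setminus P,\sigma\boxtimes\pi)$ as the quotient recording the value on the identity coset (the normalized action and $\delta_P^{1/2}$ twist are harmless nonzero scalars). One should also note this is really a statement about smooth representations and does not use irreducibility of $\sigma$, $\pi$, or $\omega$, nor the $\square$-irreducibility hypotheses elsewhere in the paper; it is the soft "the closed Bruhat cell detects all of an induced module" principle. No serious estimate or geometric input beyond the openness of the big parabolic and the $G$-equivariance of the space of functions is needed.
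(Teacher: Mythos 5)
Your core argument is correct and is essentially the paper's own proof: the paper phrases it via Frobenius reciprocity (the nonzero $i$ corresponds to a nonzero $i'\colon\omega_{N}\to\sigma\boxtimes\pi$ given by $i'(\bar{x})=i(x)(1)$, so some $i(x^*)(1)\neq 0$), while you phrase it by translating a nonzero $f\in\mathrm{im}(i)$ by a $g_0$ with $f(g_0)\neq 0$ and invoking $G$-stability of $\mathrm{im}(i)$. These are the same computation, since the Frobenius adjunction is exactly evaluation at $1$.

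That said, several of your surrounding topological remarks are false, even though the essential step does not rely on them. A proper parabolic $P_{n_1,n_2}$ is \emph{not} open in $G_{n_1+n_2}$ and does \emph{not} contain a neighborhood of $1$: it is a closed subgroup of positive codimension (in $\mathrm{GL}_2(F)$ every neighborhood of the identity contains matrices with nonzero $(2,1)$-entry). You in fact correctly call $P$ the closed orbit a few lines later, contradicting the earlier "open dense" claim; the open dense cell is $PN^-$, not $P$ itself. Also $(g_0\cdot f)(1)=f(g_0)$ exactly, with no modular twist, because the $G$-action on the induced space is plain right translation and $\delta_P^{1/2}$ enters only in the left $P$-transformation law. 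Finally, $\ker q$ is only $P$-stable (since $q$ is only $P$-equivariant), so the phrase "contradicting that $\mathrm{im}(i)\subseteq\ker q$ is $G$-stable" is misleading: what your translate contradicts is the inclusion $\mathrm{im}(i)\subseteq\ker q$, using that $\mathrm{im}(i)$ (not $\ker q$) is $G$-stable. None of these slips breaks the proof --- the argument only needs that $\{g:f(g)\neq 0\}$ is a nonempty open set and that $\mathrm{im}(i)$ is a $G$-submodule --- but the misstatements about the geometry of $P\setminus G$ are worth correcting before you build on this lemma in the geometric-lemma arguments that follow.
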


\begin{proof}
Let $n=n_1+n_2$. By applying Frobenius reciprocity on the map $\omega \stackrel{i}{\rightarrow} \sigma \times \pi$, we also have a non-zero map $\omega_{N_{n_2}} \stackrel{i'}{\rightarrow} \sigma \boxtimes \pi $. Then $i(x)(g)=i'(g.x)$ for $x \in \omega$ and $g \in G_n$, where we regard $g.x$ as an element in $\omega_{N_{n_2}}$ via the natural projection. We can choose $x^* \in \omega$ such that $i'(x^*)\neq 0$. Then $i(x^*)$ (viewed as a function from $G_n$ to $\sigma\boxtimes \pi$) has a support containing $1$. This implies that $q \circ i \neq 0$. 
\end{proof}

\begin{proposition} \label{prop trivial orbit and irred}
Let $\sigma \in \mathrm{Irr}^{\square}$ and let $\pi \in \mathrm{Irr}$. Then the embedding 
\[ \sigma \boxtimes \pi \hookrightarrow I_{\sigma}(\pi)_{N_{n(\pi)}} \hookrightarrow (\sigma \times \pi)_{N_{n(\pi)}}  \] 
has the trivial supporting orbit if and only if $(\sigma, I_{\sigma}(\pi))$ is a Ld-irreducible pair.
\end{proposition}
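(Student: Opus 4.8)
The plan is to unwind both sides of the equivalence through the reformulation of Rd/Ld-irreducibility in Proposition~\ref{prop reformulate rd irr} (in its Ld-version) and to identify the ``round-trip'' map $i_2\circ i_1$ with the composition of the embedding onto $\mathcal{J}_1$ followed by projection to $\mathcal{J}_1/\mathcal{J}_2$. Concretely, write $\tau=I_{\sigma}(\pi)$ and $n=n(\pi)$. The geometric lemma gives the filtration $0\subset\mathcal{J}_r\subset\cdots\subset\mathcal{J}_1=(\sigma\times\pi)_{N_n}$, where by the discussion in Section~\ref{ss geo lem} (applied with the roles of left and right adjusted, since here we take the Jacquet module $N_n$ cutting off a $G_n$-block on the \emph{left}) the top quotient $\mathcal{J}_1/\mathcal{J}_2$ is isomorphic to $\sigma\boxtimes\pi$ via the map $q$ of Lemma~\ref{lem factor through lemma}. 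The embedding $\tau_{N_n}\hookrightarrow(\sigma\times\pi)_{N_n}$ is the map $i_1$ of Proposition~\ref{prop reformulate rd irr}, and the statement that this embedding ``has trivial supporting orbit'' means precisely $\tau_{N_n}\cap\mathcal{J}_2=0$, i.e. the composition $\tau_{N_n}\hookrightarrow\mathcal{J}_1\twoheadrightarrow\mathcal{J}_1/\mathcal{J}_2\cong\sigma\boxtimes\pi$ is injective, hence in particular non-zero.

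The forward direction then runs as follows. If the embedding has trivial supporting orbit, the composite $q\circ(\text{incl}): \sigma\boxtimes\pi\hookrightarrow I_{\sigma}(\pi)_{N_n}\hookrightarrow\mathcal{J}_1\to\mathcal{J}_1/\mathcal{J}_2\cong\sigma\boxtimes\pi$ is a non-zero endomorphism of $\sigma\boxtimes\pi$; since $\sigma\boxtimes\pi$ is irreducible (as $\sigma,\pi$ are irreducible), by Schur's lemma this composite is an isomorphism. But under the identification $\mathcal{J}_1/\mathcal{J}_2\cong\sigma\boxtimes\pi$ the surjection $q$ is, up to scalar, the unique non-zero map $i_2: I_{\sigma}(\pi)_{N_n}\to\sigma\boxtimes\pi$ restricted through $\mathcal{J}_1$, so we have shown $i_2\circ i_1\neq 0$; Proposition~\ref{prop reformulate rd irr} (Ld-version) then gives that $(\sigma,I_{\sigma}(\pi))$ is a Ld-irreducible pair. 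For the converse, if $(\sigma,I_{\sigma}(\pi))$ is Ld-irreducible then $i_2\circ i_1\neq 0$; tracing the same identifications, $q$ restricted to the image of $i_1$ in $\mathcal{J}_1$ is non-zero, so the image of $i_1$ is not contained in $\mathcal{J}_2$, which by irreducibility of $I_{\sigma}(\pi)_{N_n}$'s relevant submodule — more precisely, by the uniqueness of the embedding in Lemma~\ref{lem unique embedding} — forces $\tau_{N_n}\cap\mathcal{J}_2=0$; that is, the embedding has trivial supporting orbit.

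The step I expect to be the main obstacle is the bookkeeping that matches the abstract ``supporting orbit'' language with the concrete maps $i_1, i_2$: one must verify that under the geometric-lemma isomorphism $\mathcal{J}_1/\mathcal{J}_2\cong\sigma\boxtimes\pi$ the projection $q$ of Lemma~\ref{lem factor through lemma} is genuinely the top-layer surjection $\Phi_{w_1}$ (with $w_1$ trivial), and that the unique surjection $i_2:I_{\sigma}(\pi)_{N_n}\to\sigma\boxtimes\pi$ coming from second adjointness agrees, up to a non-zero scalar, with $q$ precomposed with $i_1$. Once that compatibility is in place the argument is a one-line application of Schur's lemma in each direction; the subtlety is entirely in ensuring the identifications are the same ones, and in checking that ``trivial supporting orbit'' — being about a single submodule $\tau_{N_n}$ — is exactly the non-vanishing of one composite. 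I would also note that the irreducibility of $\sigma\boxtimes\pi$ is what makes ``non-zero composite'' upgrade to ``injective on the nose,'' which is why the statement can afford to be an ``if and only if'' rather than merely an implication.
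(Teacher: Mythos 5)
Your proof takes essentially the same route as the paper's: use Lemma~\ref{lem factor through lemma} to identify the quotient-to-top-layer map with the unique $i_2: I_{\sigma}(\pi)_{N_{n(\pi)}}\to\sigma\boxtimes\pi$ from second adjointness, and then invoke Proposition~\ref{prop reformulate rd irr} to translate the non-vanishing of $i_2\circ i_1$ into the irreducible-pair condition. The only slip is a misstatement of the definition: ``trivial supporting orbit'' for this embedding means that the \emph{image of $\sigma\boxtimes\pi$} (the irreducible submodule $\lambda$, in the notation of Section~\ref{ss supporting orbit}) meets $\mathcal{J}_2$ trivially, not that all of $\tau_{N_n}=I_\sigma(\pi)_{N_{n(\pi)}}$ meets $\mathcal{J}_2$ trivially; the latter is strictly stronger and is neither needed nor established. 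Your converse accordingly need not and cannot conclude $\tau_{N_n}\cap\mathcal{J}_2=0$; it suffices that the (irreducible) image of $i_1$ is not contained in $\mathcal{J}_2$, so the appeal to Lemma~\ref{lem unique embedding} for that extra conclusion should simply be dropped. Similarly, in the forward direction ``non-zero'' suffices via Proposition~\ref{prop reformulate rd irr}; the Schur-lemma upgrade to an isomorphism is correct but superfluous. With those cosmetic corrections the argument matches the paper's.
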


\begin{proof}
Let $P=P_{n(\sigma), n(\pi)}$ and let $N=N_{n(\sigma), n(\pi)}$. By Lemma \ref{lem factor through lemma}, the unqiue map from $I_{\sigma}(\pi)_N$ to $\sigma \boxtimes \pi$ (arising from Frobenius reciprocity) factors through the quotient map $(\sigma \times \pi)_N \twoheadrightarrow \sigma \boxtimes \pi$ in the geometric lemma.


Suppose $(\sigma, I_{\sigma}(\pi))$ is a Ld-irreducible pair. By Proposition \ref{prop reformulate rd irr}, the composition of the unique embedding $\sigma \boxtimes \pi \rightarrow I_{\sigma}(\pi)_N$ and the quotient map above is non-zero, giving the trivial supporting orbit statement.


For the converse, the definition of the trivial supporting orbit implies that we have a non-zero composition:
\[ \sigma \boxtimes \pi \hookrightarrow I_{\sigma}(\pi)_N \hookrightarrow (\sigma \times \pi)_N \twoheadrightarrow \sigma \boxtimes \pi .
\]
Then, the Rd-irreducibility condition follows from Proposition \ref{prop reformulate rd irr} again.
\end{proof}

\subsection{Some variants}

We shall also need the following variation:

\begin{proposition} \label{prop trivial orbit in irreducible case}
Let $\sigma \in \mathrm{Irr}^{\square}(G_n)$. Let $\omega \in \mathrm{Irr}$ such that $(\sigma, \omega)$ is Ld-irreducible. Suppose $\omega \hookrightarrow \sigma \times \pi$ for some smooth (not necessarily irreducible) representation $\pi$. Then, the embedding
\[   \sigma \boxtimes D^L_{\sigma}(\omega) \hookrightarrow \omega_N \hookrightarrow (\sigma\times \pi)_N ,
\]
where $N=N_{n(\sigma),n-n(\sigma)}$, has the trivial supporting orbit.
\end{proposition}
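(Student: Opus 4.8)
The statement is a mild generalization of Proposition \ref{prop trivial orbit and irred}: there, $\pi$ was irreducible and one studied $\sigma \boxtimes \pi \hookrightarrow I_\sigma(\pi)_N \hookrightarrow (\sigma\times\pi)_N$; here $\pi$ is an arbitrary smooth representation, $\omega$ plays the role of $I_\sigma(\pi)$ (but need not equal it), and we only assume $(\sigma,\omega)$ is Ld-irreducible together with an embedding $\omega \hookrightarrow \sigma\times\pi$. So the plan is to mimic the proof of Proposition \ref{prop trivial orbit and irred}, but replacing the use of the uniqueness of the embedding $I_\sigma(\pi)\hookrightarrow\sigma\times\pi$ (which is not available now) by a direct argument using Lemma \ref{lem factor through lemma}.

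First I would set $N=N_{n(\sigma),\,n-n(\sigma)}$ and $P=P_{n(\sigma),\,n-n(\sigma)}$, and recall that applying the Jacquet functor to the given embedding $\omega\hookrightarrow\sigma\times\pi$ produces the top row $\omega_N \hookrightarrow (\sigma\times\pi)_N$, whose composite with the top-layer quotient $q\colon(\sigma\times\pi)_N \twoheadrightarrow \sigma\boxtimes\pi$ of the geometric lemma is exactly the image, under Frobenius reciprocity applied to $\omega\hookrightarrow\sigma\times\pi$, of the corresponding map $\omega_N \to \sigma\boxtimes\pi$; concretely this is the content of Lemma \ref{lem factor through lemma} applied with the non-zero map $i\colon\omega\to\sigma\times\pi$, which guarantees $q\circ i\neq 0$ and hence that $\omega_N \to \sigma\boxtimes\pi$ is non-zero. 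Next, since $(\sigma,\omega)$ is Ld-irreducible, by definition $\omega_N$ has a direct summand of the form $\sigma\boxtimes D^L_\sigma(\omega)$, and the embedding $\sigma\boxtimes D^L_\sigma(\omega)\hookrightarrow \omega_N$ in the statement is the canonical one from \eqref{eqn embedding derivative}. The point is then that the composite
\[ \sigma\boxtimes D^L_\sigma(\omega)\hookrightarrow \omega_N \xrightarrow{\ q\circ(\text{incl})\ } \sigma\boxtimes\pi \]
is non-zero: indeed, by Frobenius reciprocity the non-zero map $\omega_N\to\sigma\boxtimes\pi$ corresponds to the non-zero map $\omega\to\sigma\times\pi$, and restricting along the direct summand $\sigma\boxtimes D^L_\sigma(\omega)$ this restriction cannot vanish because $\sigma\hookrightarrow\sigma\times(\text{something})$ forces the $\sigma$-isotypic behaviour to be detected — more precisely, one uses that the map $\omega_N\to\sigma\boxtimes\pi$ restricted to the summand $\sigma\boxtimes D^L_\sigma(\omega)$, if zero, would mean the non-zero map $\omega\to\sigma\times\pi$ factors through $\omega$ modulo a subrepresentation not containing the $\sigma$-generated part, contradicting that $\omega\hookrightarrow\sigma\times\pi$ is injective and $\sigma\boxtimes D^L_\sigma(\omega)$ generates (via the canonical embedding and irreducibility considerations) enough of $\omega$. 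By the definition of trivial supporting orbit ($\lambda\cap\mathcal J_2=0$, i.e. the image survives the projection onto the top layer $\mathcal J_1/\mathcal J_2\cong \sigma\boxtimes\pi$), non-vanishing of this composite is precisely the assertion.

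The step I expect to be the main obstacle is the non-vanishing of the composite $\sigma\boxtimes D^L_\sigma(\omega)\hookrightarrow\omega_N\to\sigma\boxtimes\pi$: one must be careful that restricting a non-zero $G_{n(\sigma)}\times G_{n-n(\sigma)}$-map from $\omega_N$ to a \emph{particular} direct summand need not be non-zero in general, so this requires genuinely exploiting that the summand in question is the canonical $D^L_\sigma$-summand and that $\omega\hookrightarrow\sigma\times\pi$. I would handle this by the following observation: by the uniqueness of the embedding $\sigma\boxtimes D^L_\sigma(\omega)\hookrightarrow\omega_N$ (Lemma \ref{lem unique embedding}) together with Frobenius reciprocity $\mathrm{Hom}(\sigma\boxtimes D^L_\sigma(\omega),\omega_N)\cong\mathrm{Hom}(\sigma\times D^L_\sigma(\omega),\omega)$, the composite of interest corresponds under adjunction to the composite $\sigma\times D^L_\sigma(\omega)\twoheadrightarrow\omega\hookrightarrow\sigma\times\pi$, where the first surjection is the canonical one (note $\omega$ is a quotient of $\sigma\times D^L_\sigma(\omega)$ by \cite[Corollary 2.4]{LM19}); since $\omega\hookrightarrow\sigma\times\pi$ is injective and $\sigma\times D^L_\sigma(\omega)\twoheadrightarrow\omega$ is surjective, the composite $\sigma\times D^L_\sigma(\omega)\to\sigma\times\pi$ is non-zero, hence so is the adjoint map $\sigma\boxtimes D^L_\sigma(\omega)\to(\sigma\times\pi)_N$, and then Lemma \ref{lem factor through lemma} (or rather the direct-summand version: the image of $\sigma\boxtimes D^L_\sigma(\omega)$ under $\omega_N\to(\sigma\times\pi)_N$ lands in the top layer because $q$ restricted to it is non-zero, applying Lemma \ref{lem factor through lemma} to the map $\sigma\times D^L_\sigma(\omega)\to\sigma\times\pi$ composed appropriately) finishes the argument. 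This reduces everything to adjunction bookkeeping plus one application of Lemma \ref{lem factor through lemma}.
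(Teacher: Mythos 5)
Your overall plan is the right one and matches the paper's: start from Lemma~\ref{lem factor through lemma} to get that $\omega_N\hookrightarrow(\sigma\times\pi)_N\twoheadrightarrow\sigma\boxtimes\pi$ is non-zero, and then argue that this non-zero map survives restriction to the direct summand $\sigma\boxtimes D^L_\sigma(\omega)$ of $\omega_N$ furnished by Ld-irreducibility. You also correctly single out this last restriction step as the real obstacle. The difficulty is that your proposed ``adjunction bookkeeping'' for that step does not go through, and this is exactly where the paper does something different.

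The gap is this: the inclusion $j\colon \sigma\boxtimes D^L_\sigma(\omega)\hookrightarrow\omega_N$ is governed by the \emph{second} adjointness (Proposition~\ref{prop second adjoint map}), which gives $\mathrm{Hom}_M(\sigma\boxtimes D^L_\sigma(\omega),\omega_N)\cong\mathrm{Hom}_G(D^L_\sigma(\omega)\times\sigma,\omega)$ --- note the factor order is reversed relative to what you wrote, and it is $D^L_\sigma(\omega)\times\sigma$, not $\sigma\times D^L_\sigma(\omega)$, that surjects onto $\omega$ by \cite[Corollary 2.4]{LM19} (the latter has $\omega$ as a \emph{sub}module, not a quotient). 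Meanwhile the map $f\colon\omega_N\to\sigma\boxtimes\pi$ is governed by the \emph{first} adjointness, $\mathrm{Hom}_M(\omega_N,\sigma\boxtimes\pi)\cong\mathrm{Hom}_G(\omega,\sigma\times\pi)$. Since $j$ and $f$ transpose under two different adjunctions with two different counits, there is no single adjunction under which $f\circ j$ corresponds to $\widetilde f\circ\widetilde j$; the non-vanishing of the obvious composite $D^L_\sigma(\omega)\times\sigma\twoheadrightarrow\omega\hookrightarrow\sigma\times\pi$ does not, by ``bookkeeping,'' imply non-vanishing of $f\circ j$. This is a genuine missing idea, not a detail to be tidied up.

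What the paper actually does at this point is a multiplicity-one argument rather than an adjunction argument. Since the map $\omega_N\to\sigma\boxtimes\pi$ is non-zero with image $\sigma\boxtimes\widetilde\pi$ for some submodule $\widetilde\pi\subset\pi$, one obtains a non-zero simple quotient of $\omega_N$ of the form $\sigma\boxtimes\kappa$. By Frobenius reciprocity and \cite[Lemma 2.8]{LM19} (the quotient-side analogue of Lemma~\ref{lem unique embedding}), $\omega_N$ has at most one irreducible quotient of the form $\sigma\boxtimes(\cdot)$, and it is $\sigma\boxtimes D^L_\sigma(\omega)$, with $\dim\mathrm{Hom}_M(\omega_N,\sigma\boxtimes D^L_\sigma(\omega))=1$. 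By Ld-irreducibility, $\sigma\boxtimes D^L_\sigma(\omega)$ is a direct summand, so the projection onto it \emph{is} that unique quotient map. Therefore the quotient map onto $\sigma\boxtimes\kappa$ is (a scalar multiple of) this projection, hence it is non-zero on the summand $\sigma\boxtimes D^L_\sigma(\omega)$, and the composite $\sigma\boxtimes D^L_\sigma(\omega)\hookrightarrow\omega_N\to\sigma\boxtimes\pi$ is non-zero. You would need to supply this uniqueness-of-simple-quotient argument (or something equivalent) to close your proof.
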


\begin{proof}
It follows from the argument in Proposition \ref{prop trivial orbit and irred} that the composition
\[   \omega_N \hookrightarrow (\sigma \times \pi)_N \twoheadrightarrow \sigma \boxtimes \pi
\]
is non-zero (but it is not necessarily surjective since $\pi$ is not necessarily irreducible). Thus, $\omega_N$ has a quotient of the form $\sigma \boxtimes \widetilde{\pi}$ for some submodule $\widetilde{\pi}$ of $\pi$. By the condition of Ld-irreducibility, the unique simple submodule $\sigma \boxtimes D^L_{\sigma}(\omega)$ coincides with the unique simple quotient $\sigma \boxtimes D^L_{\sigma}(\omega)$. Thus, we must have that $\widetilde{\pi}$ coincides with $\pi$ and the above compositions are non-zero. Thus the supporting orbit for the embedding in the lemma has the trivial supporting orbit.
\end{proof}

We prove a situation of the trivial supporting orbit that will be used later. 

\begin{corollary} \label{example on trivial supporting orbit by irreducible}
Let $\Delta=[a,b]_{\rho}$ and let $\widetilde{\Delta}=[a',b]_{\rho}$ for some $ a'>a$.  Suppose we have
\[ \mathrm{St}(\Delta) \hookrightarrow \mathrm{St}(\widetilde{\Delta}) \times \pi. \] 
Then the induced embedding $\mathrm{St}(\widetilde{\Delta})\boxtimes \mathrm{St}([a,a'-1]_{\rho})$ to $(\mathrm{St}(\widetilde{\Delta})\times \pi)_N$ has the trivial supporting orbit. 
\end{corollary}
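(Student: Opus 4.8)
The plan is to deduce this from Proposition \ref{prop trivial orbit in irreducible case} by checking its hypotheses in the present situation. Write $\Delta = [a,b]_\rho$, $\widetilde{\Delta} = [a',b]_\rho$ with $a' > a$, and set $\pi_0 = \mathrm{St}([a,a'-1]_\rho)$. The key combinatorial input is that, since the segments $[a',b]_\rho$ and $[a,a'-1]_\rho$ are linked with union $[a,b]_\rho$, we have the well-known embedding (and its dual quotient) $\mathrm{St}(\Delta) \hookrightarrow \mathrm{St}(\widetilde{\Delta}) \times \mathrm{St}([a,a'-1]_\rho)$ coming from Zelevinsky's classification; in particular $D^L_{\mathrm{St}(\widetilde{\Delta})}(\mathrm{St}(\Delta)) = \pi_0$ and $\mathrm{St}(\Delta) = I^L_{\mathrm{St}(\widetilde{\Delta})}(\pi_0)$ since $\mathrm{St}(\Delta)$ is the unique irreducible submodule of $\mathrm{St}(\widetilde{\Delta}) \times \pi_0$ (the socle is irreducible here).

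First I would verify the Ld-irreducibility of the pair $(\mathrm{St}(\widetilde{\Delta}), \mathrm{St}(\Delta))$. By Definition \ref{def irr pair} and Proposition \ref{prop reformulate rd irr} (in its left version), this amounts to checking that $\mathrm{St}(\Delta)_{N_{n(\widetilde{\Delta})}}$ — here using the parabolic $N_{n(\widetilde\Delta), n(\Delta) - n(\widetilde\Delta)}$ — has $\mathrm{St}(\widetilde{\Delta}) \boxtimes \pi_0$ as a direct summand, equivalently that the composite of the canonical embedding $\mathrm{St}(\widetilde{\Delta})\boxtimes \pi_0 \hookrightarrow \mathrm{St}(\Delta)_N$ with the canonical surjection $\mathrm{St}(\Delta)_N \twoheadrightarrow \mathrm{St}(\widetilde\Delta)\boxtimes \pi_0$ is non-zero. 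This is a standard fact about Jacquet modules of essentially square-integrable representations: the Jacquet module of $\mathrm{St}([a,b]_\rho)$ along the relevant parabolic is a single irreducible piece $\mathrm{St}([a',b]_\rho)\boxtimes \mathrm{St}([a,a'-1]_\rho)$ (appearing with multiplicity one, hence automatically a direct summand since it is both the socle and the cosocle along that Jacquet functor). One can also see this from $\varepsilon^L$-invariant bookkeeping, or cite it as a known structural result; with the excerpt's conventions, $(\mathrm{St}(\widetilde{\Delta}), \mathrm{St}(\Delta))$ being Ld-irreducible is precisely of the "$\Delta$-saturated" flavour discussed after Definition \ref{def irr pair}.

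Having established that $(\mathrm{St}(\widetilde{\Delta}), \mathrm{St}(\Delta))$ is Ld-irreducible and that $\mathrm{St}(\Delta) \hookrightarrow \mathrm{St}(\widetilde{\Delta}) \times \pi$ by hypothesis, I can apply Proposition \ref{prop trivial orbit in irreducible case} directly with $\sigma = \mathrm{St}(\widetilde{\Delta})$ and $\omega = \mathrm{St}(\Delta)$: that proposition gives that
\[ \mathrm{St}(\widetilde{\Delta}) \boxtimes D^L_{\mathrm{St}(\widetilde{\Delta})}(\mathrm{St}(\Delta)) \hookrightarrow \mathrm{St}(\Delta)_N \hookrightarrow (\mathrm{St}(\widetilde{\Delta}) \times \pi)_N \]
has the trivial supporting orbit, where $N = N_{n(\widetilde{\Delta}), n - n(\widetilde{\Delta})}$. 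Since $D^L_{\mathrm{St}(\widetilde{\Delta})}(\mathrm{St}(\Delta)) = \mathrm{St}([a,a'-1]_\rho)$ as noted above, this is exactly the assertion of the corollary.

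The only genuine content — the main obstacle — is the Ld-irreducibility of $(\mathrm{St}(\widetilde{\Delta}), \mathrm{St}(\Delta))$, i.e. pinning down the relevant Jacquet module of an essentially square-integrable representation and observing it is multiplicity-free there so the relevant piece splits off. Everything else is a direct quotation of Proposition \ref{prop trivial orbit in irreducible case} together with the identification of the left $\mathrm{St}(\widetilde\Delta)$-derivative of $\mathrm{St}(\Delta)$. I would state the Jacquet-module fact explicitly (it follows from the Zelevinsky/Tadić description of derivatives of Steinberg-type representations already invoked in the excerpt) and then the corollary is immediate.
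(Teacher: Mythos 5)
Your proposal is correct and follows the same route as the paper: the paper's own proof is exactly the two-sentence observation that the Jacquet module of $\mathrm{St}(\Delta)$ along the relevant maximal parabolic is irreducible (hence "semisimple," hence automatically a direct summand), so that $(\mathrm{St}(\widetilde{\Delta}), \mathrm{St}(\Delta))$ is Ld-irreducible, and then an appeal to Proposition \ref{prop trivial orbit in irreducible case}. You spell out the same two ingredients with more care, including the identification $D^L_{\mathrm{St}(\widetilde{\Delta})}(\mathrm{St}(\Delta)) = \mathrm{St}([a,a'-1]_{\rho})$ which the paper leaves implicit.
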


\begin{proof}
The Jacquet functor on $\mathrm{St}(\widetilde{\Delta})$ is semisimple and in particular this implies that $(\mathrm{St}(\widetilde{\Delta}), \mathrm{St}(\Delta))$ is Ld-irreducible. Now the trivial supporting orbit statement follows from Proposition \ref{prop trivial orbit in irreducible case}. 
\end{proof}

\subsection{Converse of Proposition \ref{prop trivial orbit in irreducible case} }\label{ss converse and big derivatives}

We prove the converse of Proposition \ref{prop trivial orbit in irreducible case} for a special case in Lemma \ref{lem indecomp trivial embedd}, which relies on a property shown in Lemma \ref{lem generic indecomp} below. Some arguments appear in \cite[Section 9.4]{Ch22+b} and we reproduce and slightly generalize for the convenience of the reader. Lemma \ref{lem indecomp trivial embedd} will be used to show certain embedding cannot have the trivial supporting orbit in Lemma \ref{lem impossible trivial supp orbit} below.

For $\pi \in \mathrm{Irr}$, there exists a unique multiset of cuspidal representations $\rho_1, \ldots, \rho_k$ such that $\pi$ is a simple composition factor in $\rho_1 \times \ldots \times \rho_k$. We denote such multiset by $\mathrm{csupp}(\pi)$, called the {\it cuspidal support} of $\pi$.



The Krull-Schmidt theorem asserts that any smooth representation of $G_n$ of finite length can be uniquely written as a direct sum of indecomposable representations. For $\pi \in \mathrm{Irr}$ and $\sigma \in \mathrm{Irr}^{\square}$ with $D_{\sigma}(\pi)\neq 0$, since 
\[   \mathrm{Hom}_{G_{n(\pi)-n(\sigma)}\times G_{n(\sigma)}}(D_{\sigma}(\pi)\boxtimes \sigma, \pi_{N_{n(\sigma)}}) \cong \mathbb C,\]
there is a unique indecomposable summand $\kappa$ in $\pi_{N_{n(\sigma)}}$ containing $D_{\sigma}(\pi)\boxtimes \sigma$ as a submodule. We remark that we do not know whether the embedding $\kappa$ to $\pi_{N_{n(\sigma)}}$ is unique in general.

\begin{lemma} \label{lem generic indecomp}
Let $\omega=\mathrm{St}(\mathfrak m)$ for some pairwise unlinked multisegment $\mathfrak m$. Let $\sigma=\mathrm{St}(\mathfrak m') \in \mathrm{Irr}$ for some pairwise unlinked multisegment $\mathfrak m'$ such that $D^L_{\sigma}(\omega)\neq 0$. Let $\kappa$ be the indecomposable component in $\omega_{N_l}$ ($l=l_{abs}(\mathfrak m)-l_{abs}(\mathfrak m')$) containing $\sigma \boxtimes D^L_{\sigma}(\omega)$ as a submodule. Then $\kappa$ has unique irreducible submodule and unique irreducible quotient and both are isomorphic to $\sigma \boxtimes D^L_{\sigma}(\omega)$. Furthermore, $D^L_{\sigma}(\omega) \cong \mathrm{St}(\mathfrak n)$ for some pairwise unlinked multisegment $\mathfrak n$.
\end{lemma}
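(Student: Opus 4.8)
\textbf{Plan for the proof of Lemma \ref{lem generic indecomp}.}
The plan is to reduce everything to the product structure of $\omega=\mathrm{St}(\mathfrak m)$ and the semisimplicity of Jacquet modules of essentially square-integrable representations. Write $\mathfrak m=\{\Delta_1,\ldots,\Delta_s\}$, so $\omega\cong \mathrm{St}(\Delta_1)\times\cdots\times\mathrm{St}(\Delta_s)$ in any order; applying the geometric lemma iteratively, $\omega_{N_l}$ is a successive extension of induced pieces $\mathrm{Ind}(\boxtimes_j (\mathrm{St}(\Delta_j))_{N_{l_j}})$ over all ways $l=\sum_j l_j$ of distributing the Jacquet level. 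Each $(\mathrm{St}(\Delta_j))_{N_{l_j}}$ is semisimple, in fact of the form $\mathrm{St}(\Delta_j^{\mathrm{left}})\boxtimes\mathrm{St}(\Delta_j^{\mathrm{right}})$ for the natural left/right subsegments, because the Jacquet module of a Steinberg is a single irreducible piece at each level. Thus every layer in the filtration of $\omega_{N_l}$ is of the form $\mathrm{Ind}$ of a product of Steinbergs, i.e. an induced module of the shape $\tau\boxtimes\mathrm{St}(\mathfrak n')$ where $\tau=\mathrm{St}(\mathfrak m'')$ and $\mathfrak m'',\mathfrak n'$ are pairwise unlinked (after checking unlinkedness survives — the subsegments of unlinked segments sharing the same cuspidal lines remain unlinked on each side).

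First I would pin down $D^L_{\sigma}(\omega)$ combinatorially: since $\sigma\boxtimes D^L_{\sigma}(\omega)\hookrightarrow \omega_{N_l}$ and every irreducible subquotient of $\omega_{N_l}$ on the left-hand factor must be $\mathrm{St}(\mathfrak m')=\sigma$, only those layers survive in which the left-level distribution $(l_j)$ yields exactly the multiset of left-subsegments assembling to $\mathfrak m'$; any such layer has right factor $\mathrm{St}(\mathfrak n)$ for a determined pairwise-unlinked $\mathfrak n$ (the complementary right-subsegments), and by counting cuspidal supports this $\mathfrak n$ is forced to be the same for every surviving layer. That gives the last sentence: $D^L_{\sigma}(\omega)\cong \mathrm{St}(\mathfrak n)$. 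Next, to identify the indecomposable summand $\kappa$, I would argue that $\kappa$ — being an indecomposable summand of $\omega_{N_l}$ containing $\sigma\boxtimes\mathrm{St}(\mathfrak n)$ as a submodule — is itself a subquotient built only from those layers isomorphic (as induced modules $\tau\dot\times$ something) to pieces whose unique irreducible sub/quotient is $\sigma\boxtimes\mathrm{St}(\mathfrak n)$. The key input here is that $\mathrm{Ind}_{P}^{M}(\sigma\boxtimes\mathrm{St}(\mathfrak n))$ — where $P$ is the relevant parabolic in $G_{n(\sigma)}$ coming from the enlarged Levi, since $\sigma$ occurs only after a further induction on the left — has a unique irreducible submodule and a unique irreducible quotient, both $\cong\sigma\boxtimes\mathrm{St}(\mathfrak n)$; this follows from the fact that $\sigma$ is $\square$-irreducible (so $\sigma\times\sigma'$-type products containing $\sigma$ have multiplicity-one socle and cosocle, via \cite[Corollary 2.4, Lemma 2.8]{LM19}) and that $\mathrm{St}(\mathfrak n)$ appears with multiplicity one at its own slot by the unlinkedness.

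Then I would finish as follows. Because the socle and cosocle of the relevant layer(s) are both $\sigma\boxtimes\mathrm{St}(\mathfrak n)$ with multiplicity one, and the Krull--Schmidt theorem applies to the finite-length module $\omega_{N_l}$, the unique indecomposable summand $\kappa$ containing $\sigma\boxtimes\mathrm{St}(\mathfrak n)$ as a submodule must have $\sigma\boxtimes\mathrm{St}(\mathfrak n)$ as its unique irreducible submodule; dually (using that $\sigma\boxtimes D^L_{\sigma}(\omega)$ also appears as a \emph{quotient} of $\omega_{N_l}$ — a standard consequence of $\pi\twoheadleftarrow$ on the other side, already noted in the excerpt after \prref{prop second adjoint map}) it has $\sigma\boxtimes\mathrm{St}(\mathfrak n)$ as its unique irreducible quotient as well. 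Uniqueness of sub forces $\kappa$ to be the single indecomposable summand realizing both. The main obstacle I anticipate is the bookkeeping that no \emph{other} layer of $\omega_{N_l}$ — coming from a different level distribution $(l_j)$ — can contribute $\sigma\boxtimes\mathrm{St}(\mathfrak n)$ to the socle or cosocle of $\kappa$; controlling this requires comparing cuspidal supports of the two tensor factors across all layers and invoking that the left factor literally determines the distribution $(l_j)$ up to the $\mathfrak m'$-data, plus a semisimplicity/multiplicity-one argument on the surviving induced pieces. I would lean on the parallel computation already carried out in \cite[Section 9.4]{Ch22+b} for the structural input, reproducing only the extra unlinkedness checks needed in this slightly more general setting.
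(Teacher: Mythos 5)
Your plan diverges from the paper's proof, which is quite short: the paper invokes \cite[Lemma 10.14]{Ch22+} (see also \cite[Corollary 2.6]{Ch21}) to conclude directly that every simple submodule of the relevant indecomposable component $\kappa$ is of the form $\mathrm{St}(\mathfrak n')\boxtimes\mathrm{St}(\mathfrak n'')$ with $\mathfrak n',\mathfrak n''$ pairwise unlinked, and then the multiplicity-one bound $\dim\mathrm{Hom}\bigl(\sigma\boxtimes D^L_\sigma(\omega),\omega_{N_l}\bigr)\leq 1$ forces a unique socle; the cosocle is handled by a dual (or symmetric) argument, and the last sentence of the lemma is an immediate byproduct. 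You instead try to reconstruct this from the geometric-lemma filtration of $\omega_{N_l}$.

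There is a concrete gap in that reconstruction: the assertion that each layer has the shape $\mathrm{St}(\mathfrak m'')\boxtimes\mathrm{St}(\mathfrak n')$ with $\mathfrak m'',\mathfrak n'$ pairwise unlinked. Your parenthetical claim that "the subsegments of unlinked segments sharing the same cuspidal lines remain unlinked on each side" is false. Take $\Delta_1=[0,3]_\rho$ and $\Delta_2=[1,2]_\rho$, which are unlinked since $\Delta_2\subset\Delta_1$; the left subsegments $[0,1]_\rho$ of $\Delta_1$ and $[1,2]_\rho$ of $\Delta_2$ are linked, so $\mathrm{St}([0,1]_\rho)\times\mathrm{St}([1,2]_\rho)$ is reducible and the corresponding geometric-lemma layer is not a $\mathrm{St}\boxtimes\mathrm{St}$. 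Everything downstream in your plan — identifying $D^L_\sigma(\omega)$ as $\mathrm{St}(\mathfrak n)$ from the layer structure, singling out the "surviving" layers, and proving that each such layer has unique socle and cosocle equal to $\sigma\boxtimes\mathrm{St}(\mathfrak n)$ — rests on this false premise and does not go through as written. Closing the gap essentially requires the socle-is-generic statement of \cite[Lemma 10.14]{Ch22+}, which is not obtainable by the layer-by-layer bookkeeping you outline; note also that the reference you propose to lean on, \cite[Section 9.4]{Ch22+b}, concerns the SI-property of big derivatives (used for Lemma \ref{lem big derivative compared with jacquet}), not the genericity-of-socle input needed here.
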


\begin{proof}
We consider the component $\kappa$ of $\pi_{N_{n(\sigma)}}$, which has cuspidal support $( \mathrm{csupp}(\sigma), \mathrm{csupp}(\pi)-\mathrm{csupp}(\sigma))$. As shown in \cite[Lemma 10.14]{Ch22+} (which is stated for $D=F$ case, but the argument works for general $D$, also see \cite[Corollary 2.6]{Ch21}), any simple submodule of $\kappa$ is isomorphic to $\mathrm{St}(\mathfrak n') \boxtimes \mathrm{St}(\mathfrak n'')$ for some pairwise unlinked multisegment $\mathfrak n', \mathfrak n''$ and so $\mathrm{dim}~\mathrm{Hom}_{G_{n-n(\sigma)}\times G_{n(\sigma)}}( \sigma \boxtimes D^L_{\sigma}(\omega), \pi_{N_{n(\sigma)}}) \leq 1$ implies that $\kappa$ has unique submodule. Similar argument (or by taking dual) gives that $\kappa$ has a unique simple quotient, and the simple quotient is of the form. This implies the lemma.
\end{proof}

For $\pi \in \mathrm{Alg}(G_n)$ and $\sigma \in \mathrm{Irr}(G_k)$, define:
\[  \mathbb D_{\sigma}(\pi) = \mathrm{Hom}_{G_k}(\sigma, \pi_{N_k}),
\]
where $\pi_{N_k}$ is regarded as a $G_k$-module via the embedding to the second factor in $G_{n-k}\times G_k$. Moreover, $\mathbb D_{\sigma}(\pi)$ is viewed as a $G_{n-k}$-module via the map 
\[  (g.f)(v)=\mathrm{diag}(g, I_k).f(v) .
\]
It is called a {\it big derivative} in \cite{Ch22+b}. For a segment $\Delta$, we usually write $\mathbb D_{\Delta}(\pi)$ for $\mathbb D_{\mathrm{St}(\Delta)}(\pi)$. The left version of the big derivative $\mathbb D^L_{\sigma}(\pi)$ is similarly defined by using 
\[     \mathbb D^L_{\sigma}(\pi) = \mathrm{Hom}_{G_k}(\sigma, \pi_{N_{n-k}}),
\]
where $\pi_{N_{n-k}}$ is regarded as a $G_k$-module via the embedding to the first factor in $G_k \times G_{n-k}$. Again $\mathbb D^L_{\sigma}(\pi)$ has a natural $G_{n-k}$-module structure.

\begin{lemma} \label{lem SI property of big derivative} \cite{Ch22+}
Let $\sigma =\mathrm{St}(\mathfrak m) \in \mathrm{Irr}(G_k)$ for some pairwise unlinked multisegment $\mathfrak m$ and let $\pi=\mathrm{St}(\mathfrak n)$ for some pairwise unlinked multisegment $\mathfrak n$. Suppose $D^L_{\sigma}(\pi)\neq 0$. Then $\mathbb D^L_{\sigma}(\pi)$ satisfies the SI property i.e. the socle of $\mathbb D^L_{\sigma}(\pi)$ is irreducible and appears with multiplicity one in the Jordan-H\"older series of $\mathbb D^L_{\sigma}(\pi)$. 
\end{lemma}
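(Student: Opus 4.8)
\textbf{Proof plan for Lemma \ref{lem SI property of big derivative}.}

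The plan is to exploit the compatibility of the big derivative $\mathbb D^L_\sigma$ with the filtration coming from the geometric lemma, together with the semisimplicity of Jacquet modules of essentially square-integrable (in fact, of $\mathrm{St}(\mathfrak m)$ for pairwise unlinked $\mathfrak m$) representations. First I would record the structural input: when $\pi = \mathrm{St}(\mathfrak n)$ for a pairwise unlinked multisegment $\mathfrak n$, the Jacquet module $\pi_{N_{n-k}}$ is computed by the geometric lemma on the product $\mathrm{St}(\Delta_1)\times\cdots\times\mathrm{St}(\Delta_r)$, and since each $\mathrm{St}(\Delta_j)$ has semisimple Jacquet modules and the segments are pairwise unlinked, every irreducible subquotient appearing in $\pi_{N_{n-k}}$ is of the form $\mathrm{St}(\mathfrak a)\boxtimes\mathrm{St}(\mathfrak b)$ with $\mathfrak a,\mathfrak b$ pairwise unlinked, exactly as used in Lemma \ref{lem generic indecomp}. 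Then $\mathbb D^L_\sigma(\pi) = \mathrm{Hom}_{G_k}(\sigma, \pi_{N_{n-k}})$ is built, as a $G_{n-k}$-module, from the pieces $\mathrm{Hom}_{G_k}(\mathrm{St}(\mathfrak m),\mathrm{St}(\mathfrak a))\otimes \mathrm{St}(\mathfrak b)$; non-vanishing forces $\mathrm{St}(\mathfrak a)\cong\sigma$, so only finitely many such $\mathfrak b$ contribute.

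Next I would identify the socle. By Frobenius reciprocity / the second adjointness (Proposition \ref{prop second adjoint map}), a non-zero map $\omega \to \mathbb D^L_\sigma(\pi)$ for $\omega\in\mathrm{Irr}$ corresponds to a non-zero map $\sigma\boxtimes\omega \to \pi_{N_{n-k}}$, i.e. to an occurrence of $\sigma\boxtimes\omega$ as a submodule of $\pi_{N_{n-k}}$; by definition of $D^L_\sigma$ (and \cite[Theorem 4.1D]{LM22}), the only irreducible $\omega$ admitting such an embedding is $\omega\cong D^L_\sigma(\pi)$, and it occurs with multiplicity one there. This shows $\mathrm{soc}(\mathbb D^L_\sigma(\pi))$ is irreducible, isomorphic to $D^L_\sigma(\pi)$. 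The remaining — and main — point is multiplicity one in the full Jordan--Hölder series, not merely in the socle. Here I would argue that any composition factor of $\mathbb D^L_\sigma(\pi)$ isomorphic to $D^L_\sigma(\pi)$ produces a composition factor $\sigma\boxtimes D^L_\sigma(\pi)$ of $\pi_{N_{n-k}}$; so it suffices to show $\sigma\boxtimes D^L_\sigma(\pi)$ appears with multiplicity one in $\pi_{N_{n-k}}$. This is where I expect the real work: one reduces, via the geometric-lemma filtration on $\mathrm{St}(\Delta_1)\times\cdots\times\mathrm{St}(\Delta_r)$ and the explicit (semisimple) form of each $\mathrm{St}(\Delta_j)_{N}$, to a purely combinatorial count of the ways of splitting the multiset $\{\Delta_j\}$ into a ``left'' part whose Steinberg is $\sigma$ and a ``right'' part whose Steinberg is $D^L_\sigma(\pi)$; the pairwise-unlinked hypothesis (which makes all the relevant products commute, cf. (\ref{eqn comm unlinked segments})) together with the description $D^L_\sigma(\pi)\cong\mathrm{St}(\mathfrak n)$ from Lemma \ref{lem generic indecomp} should pin this down to a single splitting, giving multiplicity one.

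I expect the main obstacle to be precisely this last multiplicity-one count inside $\pi_{N_{n-k}}$: the geometric lemma gives a filtration, not a direct sum, so in principle a factor $\sigma\boxtimes D^L_\sigma(\pi)$ could be distributed across several layers, and one must rule out cancellations/overcounting by a careful bookkeeping of cuspidal supports and of which layer can carry a subquotient with the prescribed left Steinberg factor. An alternative route that sidesteps some of this is to invoke Lemma \ref{lem generic indecomp} directly: it already furnishes the indecomposable summand $\kappa\subseteq\pi_{N_{n-k}}$ with unique simple sub and unique simple quotient both $\cong \sigma\boxtimes D^L_\sigma(\pi)$; combined with a Krull--Schmidt argument showing no \emph{other} summand of $\pi_{N_{n-k}}$ has $\sigma\boxtimes(\text{anything})$ as a subquotient with the middle factor isomorphic to $D^L_\sigma(\pi)$, one again concludes. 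Either way, the SI property for $\mathbb D^L_\sigma(\pi)$ follows by transporting these statements through $\mathrm{Hom}_{G_k}(\sigma,-)$, using that this functor is exact on the relevant (semisimple-in-the-first-variable) subquotients and that it takes $\sigma\boxtimes(-)$ to $(-)$.
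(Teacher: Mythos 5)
The lemma is cited from \cite{Ch22+} and is not proved in this paper, so there is no in-paper argument to compare against; I will assess your plan directly.

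The core reduction in your plan fails, and the paper itself shows why. You reduce the multiplicity-one assertion for $D^L_\sigma(\pi)$ inside $\mathbb D^L_\sigma(\pi)$ to the claim that $\sigma\boxtimes D^L_\sigma(\pi)$ occurs with multiplicity one in the Jordan--H\"older series of $\pi_{N_{n-k}}$. That claim is false in general. In the proof of Lemma \ref{lem big derivative compared with jacquet} --- under exactly the hypotheses of the present lemma, plus the assumption that $(\sigma,\pi)$ is not Ld-irreducible --- the indecomposable summand $\kappa$ of $\pi_{N_{n-k}}$ singled out by Lemma \ref{lem generic indecomp} is shown to contain the composition factor $\sigma\boxtimes D^L_\sigma(\pi)$ with multiplicity at least $2$: its unique simple submodule and unique simple quotient are both $\cong\sigma\boxtimes D^L_\sigma(\pi)$, yet $\kappa$ is not irreducible. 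So $\pi_{N_{n-k}}$ genuinely carries the factor $\sigma\boxtimes D^L_\sigma(\pi)$ with multiplicity $\ge 2$ in cases covered by the lemma, even though $\mathbb D^L_\sigma(\pi)$ is still SI. The inequality you implicitly rely on --- that each occurrence of $D^L_\sigma(\pi)$ in $\mathbb D^L_\sigma(\pi)$ comes from an occurrence of $\sigma\boxtimes D^L_\sigma(\pi)$ in $\pi_{N_{n-k}}$ --- is correct, but it is only an upper bound coming from the left exactness of $\mathrm{Hom}_{G_k}(\sigma,-)$, and the whole content of the lemma is that this bound is \emph{not} achieved: the non-exactness of the functor collapses the extra copies. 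Your alternative route via Lemma \ref{lem generic indecomp} and Krull--Schmidt has the same problem: it localizes to $\kappa$, but $\kappa$ itself carries the factor with multiplicity $\ge 2$, so ruling out other summands does not finish. A correct proof has to track how $\mathrm{Hom}_{G_k}(\sigma,-)$ interacts with the actual extension structure of $\kappa$ (socle filtrations, $\mathrm{Ext}$-information across the $\sigma$-factor), not merely Jordan--H\"older multiplicities of the Jacquet module.

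The preparatory parts of your plan are sound: the cuspidal-support restriction on the layers of $\pi_{N_{n-k}}$, the identification of $\mathrm{soc}\,\mathbb D^L_\sigma(\pi)\cong D^L_\sigma(\pi)$ via the tensor--hom adjunction together with the uniqueness from $\square$-irreducibility, and the observation that every composition factor of $\mathbb D^L_\sigma(\pi)$ is a $\tau_2$ with $\sigma\boxtimes\tau_2$ occurring in $\pi_{N_{n-k}}$. It is the last step that needs a different idea.
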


\begin{lemma} \label{lem big derivative compared with jacquet} (c.f. \cite[Section 9.4]{Ch22+b})
We use the notations in Lemma \ref{lem SI property of big derivative}. Suppose $D^L_{\sigma}(\pi)\neq 0$. Let $\kappa$ be the indecomposable component in $\pi_{N_{n-k}}$ that has an irreducible submodule isomorphic to $\sigma \boxtimes D^L_{\sigma}(\pi)$. If $(\sigma, \pi)$ is not Ld-irreducible, then $\kappa \not\cong \sigma \boxtimes \mathbb D^L_{\sigma}(\pi)$. 
\end{lemma}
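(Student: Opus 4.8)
The plan is to argue by contradiction: suppose $(\sigma,\pi)$ is not Ld-irreducible but $\kappa \cong \sigma \boxtimes \mathbb{D}^L_{\sigma}(\pi)$, and derive an incompatibility between the SI structure of the big derivative $\mathbb{D}^L_{\sigma}(\pi)$ (Lemma \ref{lem SI property of big derivative}) and the socle/cosocle structure of the indecomposable Jacquet summand $\kappa$ (Lemma \ref{lem generic indecomp}). By Lemma \ref{lem generic indecomp}, applied with $\omega = \pi = \mathrm{St}(\mathfrak{n})$, the summand $\kappa$ has \emph{both} a unique irreducible submodule and a unique irreducible quotient, each isomorphic to $\sigma \boxtimes D^L_{\sigma}(\pi)$; moreover $D^L_{\sigma}(\pi)$ is of the form $\mathrm{St}(\mathfrak{n}')$ for a pairwise unlinked multisegment. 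On the other hand, $\mathbb{D}^L_{\sigma}(\pi) = \mathrm{Hom}_{G_k}(\sigma, \pi_{N_{n-k}})$ is the largest quotient-type object in the sense that $\sigma \boxtimes \mathbb{D}^L_{\sigma}(\pi)$ surjects onto every $\sigma \boxtimes(-)$ quotient of $\pi_{N_{n-k}}$; in particular $D^L_{\sigma}(\pi)$ is the cosocle of $\mathbb{D}^L_{\sigma}(\pi)$, consistent with $\kappa$, but the socle of $\mathbb{D}^L_{\sigma}(\pi)$ is some irreducible $\mathrm{St}(\mathfrak{n}'')$ that need not equal $D^L_{\sigma}(\pi)$.

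First I would recall the adjunction that identifies $\sigma \boxtimes \mathbb{D}^L_{\sigma}(\pi)$ as an explicit subobject of $\pi_{N_{n-k}}$: since $\mathrm{Hom}_{G_k\times G_{n-k}}(\sigma \boxtimes \mathbb{D}^L_{\sigma}(\pi), \pi_{N_{n-k}}) $ contains the canonical evaluation map, and this image is precisely the $\sigma$-isotypic part of $\pi_{N_{n-k}}$ for the $G_k$-action on the first factor, it lands inside a single indecomposable summand — which must be $\kappa$, since $\kappa$ is the unique one containing $\sigma \boxtimes D^L_{\sigma}(\pi)$ as a submodule (here one uses that $D^L_{\sigma}(\pi) \hookrightarrow \mathbb{D}^L_{\sigma}(\pi)$ as the socle in the second variable). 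So the hypothesis $\kappa \cong \sigma \boxtimes \mathbb{D}^L_{\sigma}(\pi)$ is exactly the statement that this canonical map is an isomorphism onto the whole summand $\kappa$.

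Next I would combine this with the socle/cosocle analysis. Under the isomorphism $\kappa \cong \sigma \boxtimes \mathbb{D}^L_{\sigma}(\pi)$, the unique irreducible submodule of $\kappa$ corresponds to $\sigma \boxtimes \mathrm{socle}(\mathbb{D}^L_{\sigma}(\pi))$, while the unique irreducible quotient corresponds to $\sigma \boxtimes \mathrm{cosocle}(\mathbb{D}^L_{\sigma}(\pi)) = \sigma \boxtimes D^L_{\sigma}(\pi)$. Lemma \ref{lem generic indecomp} forces the submodule to also be $\sigma \boxtimes D^L_{\sigma}(\pi)$, hence $\mathrm{socle}(\mathbb{D}^L_{\sigma}(\pi)) \cong D^L_{\sigma}(\pi) \cong \mathrm{cosocle}(\mathbb{D}^L_{\sigma}(\pi))$. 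By the SI property (Lemma \ref{lem SI property of big derivative}), $D^L_{\sigma}(\pi)$ then appears with multiplicity one in $\mathbb{D}^L_{\sigma}(\pi)$, and being simultaneously socle and cosocle, this would force $\mathbb{D}^L_{\sigma}(\pi) \cong D^L_{\sigma}(\pi)$ to be irreducible, hence $\kappa \cong \sigma \boxtimes D^L_{\sigma}(\pi)$ is itself irreducible. But an indecomposable summand of $\pi_{N_{n-k}}$ that happens to be irreducible and equal to $\sigma \boxtimes D^L_{\sigma}(\pi)$ is precisely the assertion that $(\sigma,\pi)$ is Ld-irreducible (Definition \ref{def irr pair}), contradicting our assumption.

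The main obstacle I anticipate is pinning down precisely the claim that $\sigma \boxtimes \mathbb{D}^L_{\sigma}(\pi)$, as a $G_k \times G_{n-k}$-module mapped into $\pi_{N_{n-k}}$ via evaluation, sits inside the single indecomposable summand $\kappa$ and realizes $D^L_{\sigma}(\pi)$ as its cosocle in the second variable — this needs the multiplicity-one statement $\dim \mathrm{Hom}(\sigma \boxtimes D^L_{\sigma}(\pi), \pi_{N_{n-k}}) = 1$ together with a careful bookkeeping of cuspidal supports to ensure no other summand interferes, exactly as in \cite[Section 9.4]{Ch22+b}. Once that structural identification is in place, the contradiction is a formal consequence of the SI property and Lemma \ref{lem generic indecomp}, as sketched above.
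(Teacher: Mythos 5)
Your proof is essentially correct and follows the same strategy as the paper: compare the socle/cosocle structure of the indecomposable summand $\kappa$ (given by Lemma \ref{lem generic indecomp}) against the SI property of $\mathbb D^L_{\sigma}(\pi)$ (Lemma \ref{lem SI property of big derivative}), and read off a multiplicity incompatibility. The paper runs it slightly more directly: since $(\sigma,\pi)$ is not Ld-irreducible, $\kappa$ has length $\geq 2$; its unique socle and unique cosocle are both $\sigma\boxtimes D^L_{\sigma}(\pi)$, so this factor appears with multiplicity $\geq 2$ in $\kappa$, which is impossible for $\sigma\boxtimes\mathbb D^L_{\sigma}(\pi)$ because SI gives multiplicity exactly one. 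You phrase it as ``socle and cosocle of $\mathbb D^L_{\sigma}(\pi)$ coincide, so SI forces $\mathbb D^L_{\sigma}(\pi)$ irreducible, hence $\kappa$ irreducible,'' which gets to the same place.

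One slip worth flagging: you assert that $D^L_{\sigma}(\pi)$ is the \emph{cosocle} of $\mathbb D^L_{\sigma}(\pi)$, on the grounds that $\mathbb D^L_{\sigma}(\pi)=\mathrm{Hom}_{G_k}(\sigma,\pi_{N_{n-k}})$ is a ``largest quotient-type object.'' That is backwards. By adjunction a map $\tau\to\mathrm{Hom}_{G_k}(\sigma,\pi_{N_{n-k}})$ is a map $\sigma\boxtimes\tau\to\pi_{N_{n-k}}$, so the defining embedding $\sigma\boxtimes D^L_{\sigma}(\pi)\hookrightarrow\pi_{N_{n-k}}$ yields $D^L_{\sigma}(\pi)\hookrightarrow\mathbb D^L_{\sigma}(\pi)$; together with the SI property this makes $D^L_{\sigma}(\pi)$ the \emph{socle} of $\mathbb D^L_{\sigma}(\pi)$, not the cosocle. (Your related claim that the evaluation image is ``the $\sigma$-isotypic part of $\pi_{N_{n-k}}$'' is also off: the image lies in the socle in the first variable.) Fortunately this has no net effect on the argument: with the roles corrected, the cosocle identification instead comes from Lemma \ref{lem generic indecomp} applied to the unique quotient of $\kappa$, and the socle identification comes from the adjunction; the conclusion that socle $=$ cosocle $= D^L_{\sigma}(\pi)$ holds either way, and the contradiction follows.
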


\begin{proof}
In the left version of \cite[Theorem 9.14]{Ch22+b}, we have shown the socle irreducible property for $\mathbb D^L_{\sigma}(\mathrm{St}(\mathfrak m))$ (i.e. the socle of $\mathbb D^L_{\sigma}(\mathrm{St}(\mathfrak m))$ is irreducible and appears with multiplicity one in the Jordan-H\"older series of $\mathbb D^L_{\sigma}(\mathrm{St}(\mathfrak m))$). On the other hand, the hypothesis and Lemma \ref{lem generic indecomp} imply that $\kappa$ contains the factor $\sigma \boxtimes D^L_{\sigma}(\mathrm{St}(\mathfrak m))$ with multiplicity at least $2$. This implies the non-isomorphism.
\end{proof}

\begin{lemma}  \label{lem indecomp trivial embedd} (c.f. \cite[Section 9.4]{Ch22+b})
Let $\sigma=\mathrm{St}(\mathfrak m) \in \mathrm{Irr}$ for some pairwise unlinked multisegment $\mathfrak m$. Let $\omega$ be an irreducible submodule of $\sigma \times \pi$ for some $\pi \in \mathrm{Alg}(G_n)$. Suppose $\omega \cong \mathrm{St}(\mathfrak n)$ for some pairwise unlinked multisegment $\mathfrak n$. Suppose $(\sigma, \pi)$ is not Ld-irreducible. Then the embedding  
\[  \sigma\boxtimes D_{\sigma}^L(\omega) \hookrightarrow  \omega_{N_n}\hookrightarrow (\sigma \times \pi)_{N_n} \]
does not have the trivial supporting orbit.
\end{lemma}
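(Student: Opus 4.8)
The idea is to argue by contradiction: suppose the composition
\[ \sigma \boxtimes D^L_{\sigma}(\omega) \hookrightarrow \omega_{N_n} \hookrightarrow (\sigma \times \pi)_{N_n} \]
has the trivial supporting orbit. By the geometric lemma, the trivial supporting orbit (i.e.\ the image lands in $\mathcal J_1$ but not in $\mathcal J_2$) is precisely the condition that the composition with the projection onto the top layer $(\sigma \times \pi)_{N_n} \twoheadrightarrow \sigma \boxtimes \pi$ is non-zero. So we obtain a non-zero map $\sigma \boxtimes D^L_{\sigma}(\omega) \to \sigma \boxtimes \pi$. Since $\sigma$ is irreducible, this forces a non-zero map $D^L_{\sigma}(\omega) \to \pi$, hence (as $D^L_{\sigma}(\omega)$ is irreducible by Lemma~\ref{lem generic indecomp}) an embedding $D^L_{\sigma}(\omega) \hookrightarrow \pi$. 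The point of this first step is to reinterpret "trivial supporting orbit'' as the statement that $\sigma \boxtimes D^L_{\sigma}(\omega)$ embeds \emph{as a subrepresentation of the top layer}, which is a clean direct-summand-type assertion.

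Next I would pin down the indecomposable component $\kappa$ of $\omega_{N_n}$ containing $\sigma \boxtimes D^L_{\sigma}(\omega)$ as a submodule, which is well-defined since that factor appears with multiplicity one. Using the embedding $\omega_{N_n} \hookrightarrow (\sigma \times \pi)_{N_n}$ restricted to $\kappa$, the previous step shows $\kappa$ maps non-trivially to the top layer $\sigma \boxtimes \pi$; moreover the composite $\sigma \boxtimes D^L_{\sigma}(\omega) \hookrightarrow \kappa \to \sigma \boxtimes \pi$ is non-zero, and its image is $\sigma \boxtimes D^L_{\sigma}(\omega)$ sitting inside $\sigma \boxtimes \pi$ (identifying $D^L_{\sigma}(\omega) \hookrightarrow \pi$). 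I would then want to identify $\kappa$, or at least a suitable quotient of it, with the big-derivative object $\sigma \boxtimes \mathbb D^L_{\sigma}(\pi)$. Concretely: the map $\kappa \to \sigma \boxtimes \pi$ realizes $\kappa$ as a subquotient that maps into the top layer, and by adjunction a non-zero $G_n$-map into the top layer $\sigma \boxtimes \pi$ corresponds to the $\mathrm{Hom}_{G_k}(\sigma, -)$ construction defining $\mathbb D^L_{\sigma}$; so one gets a comparison $\kappa \cong \sigma \boxtimes \mathbb D^L_{\sigma}(\pi)$ (or the relevant piece of it). This is the step where I expect the real work to be, and it is essentially the content borrowed from \cite[Section 9.4]{Ch22+b}: one must be careful that $\pi$ need not be irreducible and that the geometric-lemma filtration on $(\sigma\times\pi)_{N_n}$ interacts correctly with the filtration on $\kappa$.

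Once $\kappa \cong \sigma \boxtimes \mathbb D^L_{\sigma}(\pi)$ is established (for the relevant $\pi = \mathrm{St}(\mathfrak n)$-type situation, which is what the hypotheses on $\omega, \sigma$ deliver via Lemma~\ref{lem generic indecomp} — note $D^L_{\sigma}(\omega) = \mathrm{St}(\mathfrak{n}')$ is again of standard type, so one can set up the big derivative $\mathbb D^L_{\sigma}$ of the appropriate standard module), I would invoke Lemma~\ref{lem big derivative compared with jacquet}: since $(\sigma, \pi)$ is \emph{not} Ld-irreducible, we have $\kappa \not\cong \sigma \boxtimes \mathbb D^L_{\sigma}(\pi)$, contradicting the identification just obtained. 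This contradiction shows the embedding cannot have the trivial supporting orbit, completing the proof.

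\textbf{Main obstacle.} The delicate point is the identification $\kappa \cong \sigma \boxtimes \mathbb D^L_{\sigma}(\pi)$ in the generality where $\pi$ is merely smooth of finite length rather than irreducible. One must check that the non-zero map from $\kappa$ to the top layer $\sigma \boxtimes \pi$ produced in the first step is in fact an isomorphism onto $\sigma \boxtimes \mathbb D^L_{\sigma}(\pi)$ (viewed inside, or as a quotient of, the top layer), using the SI property of $\mathbb D^L_{\sigma}$ from Lemma~\ref{lem SI property of big derivative} and the uniqueness of the submodule $\sigma \boxtimes D^L_{\sigma}(\omega)$ in $\kappa$ from Lemma~\ref{lem generic indecomp}. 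Matching the socle/multiplicity-one bookkeeping on both sides is where the argument of \cite[Section 9.4]{Ch22+b} has to be reproduced carefully; everything else is a formal consequence of the geometric lemma and the cited structural lemmas.
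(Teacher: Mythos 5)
Your proposal correctly identifies the cast of characters (the indecomposable summand $\kappa$, the top layer $\sigma\boxtimes\pi$, the big derivative $\mathbb D^L_\sigma$, and Lemma~\ref{lem big derivative compared with jacquet}), and the overall contradiction you are aiming for is the paper's. The genuine gap is in the middle: you observe that the socle $\sigma\boxtimes D^L_\sigma(\omega)$ of $\kappa$ maps injectively into the top layer, and then try to identify $\kappa$ with $\sigma\boxtimes\mathbb D^L_\sigma(\omega)$ ``by adjunction.'' But a non-zero map from $\kappa$ to the top layer alone does not give you this -- that map could a priori have a large kernel. The crucial step you are missing is to upgrade ``the socle of $\kappa$ avoids $\mathcal J_2$'' to ``$\kappa$ itself avoids $\mathcal J_2$,'' and this is precisely where Lemma~\ref{lem generic indecomp} is used: since $\kappa$ has a \emph{unique} irreducible submodule, namely $\sigma\boxtimes D^L_\sigma(\omega)$, any non-zero submodule of $\kappa$ must contain it; so if $\kappa\cap\mathcal J_2\neq 0$ then $\sigma\boxtimes D^L_\sigma(\omega)\subseteq\mathcal J_2$, contradicting the trivial supporting orbit. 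Hence $\kappa\cap\mathcal J_2=0$ and all of $\kappa$ embeds into the top layer $\sigma\boxtimes\pi$.

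Once $\kappa\hookrightarrow\sigma\boxtimes\pi$ is in hand, irreducibility of $\sigma$ gives $\kappa\cong\sigma\boxtimes\tau'$ for some $\tau'\subseteq\pi$, and this is all the paper needs: $\tau'\cong\mathrm{Hom}_{G_k}(\sigma,\kappa)$ sits inside $\mathbb D^L_\sigma(\omega)=\mathrm{Hom}_{G_k}(\sigma,\omega_{N})$, and the socle-multiplicity comparison underlying Lemma~\ref{lem big derivative compared with jacquet} (multiplicity $\geq 2$ of $D^L_\sigma(\omega)$ in $\kappa$ because $(\sigma,\omega)$ is not Ld-irreducible, versus multiplicity $\leq 1$ in $\mathbb D^L_\sigma(\omega)$ by the SI property of Lemma~\ref{lem SI property of big derivative}) yields the contradiction. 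You do not actually need to prove $\tau'=\mathbb D^L_\sigma(\omega)$ outright -- the inclusion suffices -- so the ``real work'' you flagged is avoidable once the unique-socle argument is made. One notational caution: in several places you write $\mathbb D^L_\sigma(\pi)$ where you mean $\mathbb D^L_\sigma(\omega)$; the role of ``$\pi$'' in Lemma~\ref{lem big derivative compared with jacquet} is played by $\omega=\mathrm{St}(\mathfrak n)$ here, not by the auxiliary module $\pi$ of the present lemma (the latter is a $G_n$-representation, so $\mathbb D^L_\sigma(\pi)$ would even live on the wrong group).
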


\begin{proof}
Let $\kappa$ be the indecomposable component of $\pi_{N_{n(\sigma)}}$ which contains $\sigma \boxtimes D^L_{\sigma}(\pi)$ as a submodule. Suppose $\pi \boxtimes \sigma$ has the trivial supporting orbit via the embedding, and we shall derive a contradiction. Then by Lemma \ref{lem generic indecomp}, $\kappa \hookrightarrow \sigma\boxtimes \tau $. Thus, $\kappa \cong \sigma\boxtimes \tau'$ for some submodule $\tau'$ of $\tau$. This contradicts to Lemma \ref{lem big derivative compared with jacquet}.
\end{proof}

\section{Strong commutation between derivatives and integrals} \label{s strong commutation for der and int}

Recall that the notions of pre-commutativity and strong commutativity are defined in Definition \ref{def strong comm}. Section \ref{ss commutate triples} will prove the commutativity property and Section \ref{ss transitive property} will study compositions of strong commutativity. The composition is useful when $\sigma, \sigma'$ are $\square$-irreducible and $\sigma \times \sigma'$ is also $\square$-irreducible, and one would like to show the strong commutation involving $\sigma\times \sigma'$  from the strong commutation of $\sigma$ and $\sigma'$.

\subsection{Pre-commutativity and trivial supporting orbits}

Pre-commutativity and trivial supporting orbits are two related concepts. The pre-commutativity emphasizes on the commutativity between integrals and derivatives while the trivial supporting orbits emphasizes on the geometry in the geometric lemma.

\begin{lemma}
Let $\sigma_1 \in \mathrm{Irr}^{\square}(G_{n_1})$ and let $\sigma_2 \in \mathrm{Irr}^{\square}(G_{n_2})$. Let $\pi$ be a smooth representation of $G_{n}$. Then $(\sigma_1, \sigma_2, \pi)$ is a pre-RdLi-commutative triple if and only if the embedding 
\[ D_{\sigma_1}\circ I_{\sigma_2}(\pi)\boxtimes \sigma_1 \hookrightarrow (\sigma_2 \times \pi)_{N_{n_1}} 
\]
also has the trivial supporting orbit.
\end{lemma}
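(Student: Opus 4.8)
The plan is to show that both sides of the claimed equivalence unwind to the same non-vanishing statement, so that the lemma is essentially a translation of Definition \ref{def strong comm} into the language of Section \ref{ss supporting orbit}. Write $\iota$ for the displayed embedding $D_{\sigma_1}\circ I_{\sigma_2}(\pi)\boxtimes\sigma_1\hookrightarrow(\sigma_2\times\pi)_{N_{n_1}}$. By construction $\iota$ is the composite of the first two arrows of (\ref{eqn embedding for strong commut}): the unique embedding of (\ref{eqn embedding derivative}) into $I_{\sigma_2}(\pi)_{N_{n_1}}$, followed by the map induced by $I_{\sigma_2}(\pi)\hookrightarrow\sigma_2\times\pi$. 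First I would recall, from Section \ref{ss geo lem}, that the geometric lemma filters $(\sigma_2\times\pi)_{N_{n_1}}=\mathcal J_1\supseteq\mathcal J_2\supseteq\cdots$ so that the top quotient $\mathcal J_1/\mathcal J_2$ is the layer of the trivial element $w_1$; since $n_1\le n$ this layer is $\sigma_2\dot{\times}^1(\pi_{N_{n_1}})$, and the map $s$ of Definition \ref{def strong comm} is the canonical quotient $\mathcal J_1\twoheadrightarrow\mathcal J_1/\mathcal J_2$, so $\ker s=\mathcal J_2$.

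The key observation is then purely formal: the image of $\iota$ is $\cong D_{\sigma_1}\circ I_{\sigma_2}(\pi)\boxtimes\sigma_1$, hence irreducible, so $\mathrm{im}(\iota)\cap\mathcal J_2\in\{0,\mathrm{im}(\iota)\}$. Therefore ``$\iota$ has trivial supporting orbit'' (i.e.\ $\mathrm{im}(\iota)\cap\mathcal J_2=0$) holds iff $\mathrm{im}(\iota)\not\subseteq\ker s$ iff $s\circ\iota\ne0$. But $s\circ\iota$ is exactly the composite (\ref{eqn embedding for strong commut}), whose non-vanishing is one of the two defining conditions for $(\sigma_1,\sigma_2,\pi)$ being pre-RdLi-commutative.

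What is left is to reconcile the other clause of Definition \ref{def strong comm}, namely $D_{\sigma_1}(\pi)\ne0$ (the non-vanishing of $D_{\sigma_1}\circ I_{\sigma_2}(\pi)$, needed for $\iota$ to exist at all, being implicit in the statement; I would also tacitly assume $\pi$ is such that all these operators are defined, e.g.\ $\pi\in\mathrm{Irr}$). For the ``if'' direction I would argue that $D_{\sigma_1}(\pi)\ne0$ is forced once $s\circ\iota\ne0$: the resulting embedding $D_{\sigma_1}\circ I_{\sigma_2}(\pi)\boxtimes\sigma_1\hookrightarrow\sigma_2\dot{\times}^1(\pi_{N_{n_1}})=\mathrm{Ind}_{P_{n_2,n-n_1}\times G_{n_1}}^{G_{n_2+n-n_1}\times G_{n_1}}\sigma_2\boxtimes\pi_{N_{n_1}}$, fed through Bernstein's second adjointness on the induction in the first coordinate, produces a nonzero $G_{n_2}\times G_{n-n_1}\times G_{n_1}$-map out of $(D_{\sigma_1}\circ I_{\sigma_2}(\pi))_{\overline N}\boxtimes\sigma_1$ into $\sigma_2\boxtimes\pi_{N_{n_1}}$; its image has the form $\sigma_2\boxtimes V$ with $0\ne V\subseteq\pi_{N_{n_1}}$ of finite length, and every irreducible $G_{n_1}$-subquotient of $V$ is $\cong\sigma_1$ (being a subquotient of a quotient of a direct sum of copies of $\sigma_1$); choosing an irreducible submodule $\mu\boxtimes\sigma_1$ of $V$ then gives $\mu\boxtimes\sigma_1\hookrightarrow\pi_{N_{n-n_1,n_1}}$, so $D_{\sigma_1}(\pi)=\mu\ne0$ by (\ref{eqn embedding derivative}). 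For the ``only if'' direction, pre-RdLi-commutativity already supplies $D_{\sigma_1}(\pi)\ne0$, which in particular gives $n_1\le n$ so that the description of the top layer used above is legitimate.

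I expect the only genuinely delicate step to be this reconciliation of the auxiliary hypothesis $D_{\sigma_1}(\pi)\ne0$ with the geometric condition; the remainder is bookkeeping around the geometric lemma --- identifying $w_1$'s layer as the top quotient, checking that $n_1\le n$ makes it $\sigma_2\dot{\times}^1(\pi_{N_{n_1}})$, and using irreducibility of the source to convert ``disjoint from $\mathcal J_2$'' into ``surviving $s$''. No input beyond Definition \ref{def strong comm}, Section \ref{ss geo lem}, and standard second adjointness should be required.
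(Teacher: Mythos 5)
Your proposal is correct and captures the same essential observation as the paper's proof: the composite in Definition \ref{def strong comm} is literally the map one uses to test the trivial supporting orbit, so the two conditions agree once one unwinds the definitions. The paper carries this out by rewriting everything in the $C^{\infty}$-function-space model of $\mathrm{Ind}_P^G$ and identifying the projection onto the top layer with $s$; you instead use irreducibility of the source module to convert ``$\mathrm{im}(\iota)\cap\mathcal J_2=0$'' into ``$s\circ\iota\neq0$,'' which is the same bookkeeping in a slightly cleaner guise. The one substantive extra in your write-up is the verification that the clause $D_{\sigma_1}(\pi)\neq0$ of Definition \ref{def strong comm} follows from the geometric condition; the paper's proof only identifies the two compositions and is silent on that half of the definition, so your remark is a genuine (and welcome) filling-in. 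One small correction there: since you have $\tau\boxtimes\sigma_1$ mapping \emph{into} the induced module $\sigma_2\dot{\times}^1(\pi_{N_{n_1}})$, the relevant adjunction is ordinary Frobenius reciprocity, producing a nonzero map out of $\tau_{N}\boxtimes\sigma_1$ with $N$ the unipotent radical of $P_{n_2,n-n_1}$ in $G_{n_2+n-n_1}$, not second adjointness with the opposite radical $\overline N$; with that substitution the rest of your argument (the $\sigma_1$-isotypy of the image and, under the tacit finiteness assumption $\pi\in\mathrm{Irr}$, the existence of a simple sub $\mu\boxtimes\sigma_1\hookrightarrow\pi_{N_{n_1}}$) goes through as written.
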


\begin{proof}
Let $P=P_{n_2, n}$, $Q=P_{n+n_2-n_1, n_1}$. Then the embedding can be rewritten as:
\[   D_{\sigma_1}\circ I_{\sigma_2}(\pi) \boxtimes \sigma_1 \hookrightarrow C^{\infty}(P\setminus G_{n+n_2}, \sigma_2\boxtimes \pi)_{N_{n_1}}
\]
The surjection $(\sigma_2 \times \pi)_{N_{n_1}}\twoheadrightarrow \sigma_2\dot{\times}^1 (\pi_{N_{n_1}})$ can be rewritten as the surjection from $(\mathrm{Ind}_P^G\sigma_2\boxtimes \pi)_{N_{n_1}}$ to $C^{\infty}(P\setminus PQ, \sigma_2\boxtimes \pi)_{N_{n_1}}\cong \pi \dot{\times}^1 \pi_{N_{n_1}}$, where $C^{\infty}(P\setminus PQ, \sigma_2\boxtimes \pi)$ is the space of locally constant functions from $P\setminus PQ$ to $\sigma_2\boxtimes \pi$. Then the composition of the embedding and the surjection coincides with the one in the definition of the pre-commutativity as well as the one in the definition of the supporting orbit. This implies the lemma.
\end{proof}

\subsection{Commutative triples} \label{ss commutate triples}

 The following proposition suggests the use of 'commutative' in our terminologies:

\begin{proposition} \label{prop strong commute imply commute}
Let $(\sigma', \sigma, \pi)$ be a strongly RdLi-commutative triple. Then 
\[   D^R_{\sigma'} \circ I^L_{\sigma}(\pi) \cong I^L_{\sigma}\circ D^R_{\sigma'}(\pi) .
\]
\end{proposition}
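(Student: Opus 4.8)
The goal is to deduce the abstract isomorphism $D^R_{\sigma'}\circ I^L_{\sigma}(\pi)\cong I^L_{\sigma}\circ D^R_{\sigma'}(\pi)$ from the strong RdLi-commutativity of $(\sigma',\sigma,\pi)$, which by definition only records that a certain composite of maps in the geometric lemma is nonzero and factors in a prescribed way. The strategy is to unwind the defining diagram (\ref{eqn embedding for strong commut}): strong commutativity says that the composite
\[
D^R_{\sigma'}\circ I^L_{\sigma}(\pi)\boxtimes \sigma' \hookrightarrow I^L_{\sigma}(\pi)_{N_{r}} \hookrightarrow (\sigma\times\pi)_{N_r} \stackrel{s}{\rightarrow} \sigma\dot{\times}^1(\pi_{N_r})
\]
is nonzero and factors through the embedding $(\sigma\times D^R_{\sigma'}(\pi))\boxtimes\sigma'\hookrightarrow \sigma\dot{\times}^1 \pi_{N_r}$ coming from (\ref{eqn embedding derivative}). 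In particular this forces $D^R_{\sigma'}(\pi)\neq 0$ (otherwise the target factorization does not exist), so $I^L_{\sigma}\circ D^R_{\sigma'}(\pi)$ is a well-defined irreducible representation. The first step is therefore to note that we get a nonzero map
\[
D^R_{\sigma'}\circ I^L_{\sigma}(\pi)\boxtimes\sigma' \longrightarrow (\sigma\times D^R_{\sigma'}(\pi))\boxtimes\sigma'.
\]

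\textbf{Key steps.} First, I would cancel the common $\boxtimes\sigma'$ factor: since the map above is a map of $G_{\bullet}\times G_{n(\sigma')}$-representations out of $X\boxtimes\sigma'$ into $Y\boxtimes\sigma'$ with $\sigma'$ irreducible, a nonzero such map forces $X\hookrightarrow Y$ (use that $\mathrm{Hom}(X\boxtimes\sigma',Y\boxtimes\sigma')\cong\mathrm{Hom}(X,Y)\otimes\mathrm{Hom}(\sigma',\sigma')$ and the second factor is one-dimensional, then irreducibility of the source $X=D^R_{\sigma'}\circ I^L_{\sigma}(\pi)$ makes the map injective). This yields a nonzero (hence injective, by irreducibility of the source) map
\[
D^R_{\sigma'}\circ I^L_{\sigma}(\pi)\hookrightarrow \sigma\times D^R_{\sigma'}(\pi).
\]
Second, by the defining property of the left integral $I^L_{\sigma}$ as the \emph{unique} simple submodule of $\sigma\times(-)$, an irreducible representation embedding into $\sigma\times D^R_{\sigma'}(\pi)$ must be exactly $I^L_{\sigma}\bigl(D^R_{\sigma'}(\pi)\bigr)$; this uses that the left-hand side $D^R_{\sigma'}\circ I^L_{\sigma}(\pi)$ is irreducible (it is, being a derivative of an irreducible representation, provided it is nonzero — and it is nonzero since $I^L_{\sigma}(\pi)$ is irreducible and $D^R_{\sigma'}$ applied to it equals this module, which is the source of a nonzero map). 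Hence
\[
D^R_{\sigma'}\circ I^L_{\sigma}(\pi)\cong I^L_{\sigma}\circ D^R_{\sigma'}(\pi),
\]
as desired. I should also double-check the indexing convention on the Jacquet functor $N_r$ with $r=n(\sigma')$ matches the one in (\ref{eqn embedding derivative}) for $D^R_{\sigma'}$ so that the first embedding in (\ref{eqn embedding for strong commut}) is indeed the canonical one.

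\textbf{Main obstacle.} The only real subtlety is the cancellation of the $\boxtimes\sigma'$ tensor factor and ensuring that the resulting map $D^R_{\sigma'}\circ I^L_{\sigma}(\pi)\hookrightarrow\sigma\times D^R_{\sigma'}(\pi)$ is genuinely nonzero, not merely that the composite into $\sigma\dot{\times}^1\pi_{N_r}$ factors set-theoretically through a possibly-zero map. Strong commutativity as stated guarantees the composite (\ref{eqn embedding for strong commut}) is nonzero \emph{and} factors through the stated embedding; since the stated embedding $(\sigma\times D^R_{\sigma'}(\pi))\boxtimes\sigma'\hookrightarrow\sigma\dot{\times}^1\pi_{N_r}$ is injective, a nonzero composite factoring through it forces the intermediate map $D^R_{\sigma'}\circ I^L_{\sigma}(\pi)\boxtimes\sigma'\to(\sigma\times D^R_{\sigma'}(\pi))\boxtimes\sigma'$ to be nonzero. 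So this is clean once spelled out. A secondary point to handle carefully is that $D^R_{\sigma'}(\pi)\neq 0$ — but this is built into pre-RdLi-commutativity (part of the hypothesis of strong commutativity), so there is nothing to prove there. Thus the proof is essentially a diagram chase plus two applications of the uniqueness properties of integrals/derivatives from \cite{LM19}.
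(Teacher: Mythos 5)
Your proof is correct and follows essentially the same route as the paper: use the factoring-through condition to get a nonzero map into $(\sigma\times D^R_{\sigma'}(\pi))\boxtimes\sigma'$, cancel the $\boxtimes\sigma'$ factor (the paper invokes the K\"unneth formula, which is exactly your $\mathrm{Hom}$-tensor argument), and finish by the uniqueness of the simple submodule of $\sigma\times D^R_{\sigma'}(\pi)$. Your extra remarks on why the intermediate map is genuinely nonzero and why $D^R_{\sigma'}(\pi)\neq 0$ are sound and merely make explicit points the paper leaves implicit.
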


\begin{proof}
By the factoring through condition, we have that 
\[ D^R_{\sigma'}\circ I^L_{\sigma}(\pi) \boxtimes \sigma' \hookrightarrow \sigma \dot{\times}^1 (D_{\sigma'}(\pi)\boxtimes \sigma') =(\sigma \times D_{\sigma'}(\pi))\boxtimes \sigma'. \]
Hence, $D^R_{\sigma'}\circ I^L_{\sigma}(\pi) \hookrightarrow \sigma \times D^R_{\sigma'}(\pi)$ by K\"unneth formula. Thus, by the uniqueness of the submodule in $\sigma \times D^R_{\sigma'}(\pi)$, we have that:
\[   D^R_{\sigma'} \circ I^L_{\sigma}(\pi) \cong I^L_{\sigma}\circ D^R_{\sigma'}(\pi) .
\]
\end{proof}

The following two lemmas give examples of pre-commutativity by using the geometric lemma.

\begin{lemma} \label{lem pre commutative a preceding case}
Let $\Delta_1=[a_1, b_1]_{\rho}$ and $\Delta_2=[a_2, b_2]_{\rho}$ be two segments with $a_1>a_2$ or $b_1>b_2$. Then $(\mathrm{St}(\Delta_1), \mathrm{St}(\Delta_2), \pi)$ is pre-RdLi-commutative. 
\end{lemma}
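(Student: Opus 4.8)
The statement to be proven is Lemma \ref{lem pre commutative a preceding case}: if $\Delta_1 = [a_1,b_1]_\rho$ and $\Delta_2 = [a_2,b_2]_\rho$ are segments with $a_1 > a_2$ or $b_1 > b_2$, then the triple $(\mathrm{St}(\Delta_1), \mathrm{St}(\Delta_2), \pi)$ is pre-RdLi-commutative. By the lemma immediately preceding it in the excerpt, this is equivalent to showing that the natural embedding
\[
D_{\Delta_1}\circ I_{\Delta_2}(\pi)\boxtimes \mathrm{St}(\Delta_1) \hookrightarrow (\mathrm{St}(\Delta_2) \times \pi)_{N_{n_1}}
\]
has the trivial supporting orbit, i.e. that it does not land inside $\mathcal J_2$ in the geometric lemma filtration of $(\mathrm{St}(\Delta_2)\times\pi)_{N_{n_1}}$ with respect to the double cosets $P_{n_2,n}\backslash G_{n+n_2}/Q_{n+n_2-n_1,n_1}$. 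So the plan is to work directly with the geometric lemma and rule out all non-trivial supporting orbits by a cuspidal-support / combinatorial argument, using the hypothesis $a_1 > a_2$ or $b_1 > b_2$.

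First I would set up the orbit combinatorics explicitly. The relevant double cosets are parametrized by how the Jacquet module $N_{n_1}$ (which cuts off $n_1 = l_{abs}(\Delta_1)$ worth of cuspidal data from the right of $\mathrm{St}(\Delta_2)\times\pi$) distributes between the $\mathrm{St}(\Delta_2)$-factor and the $\pi$-factor: a layer indexed by $(i_1, i_2)$ with $i_1 + i_2 = n_1$ has the form $\mathrm{Ind}\,(\mathrm{St}(\Delta_2)_{N_{i_1}}\boxtimes \pi_{N_{i_2}})^\phi$. The trivial orbit is $(i_1,i_2) = (0, n_1)$, the top layer $\mathrm{St}(\Delta_2)\dot\times^1 \pi_{N_{n_1}}$. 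For a non-trivial orbit we have $i_1 > 0$, so some of the cuspidal data $\nu_\rho^{b_1}\rho$ (the rightmost piece of $\mathrm{St}(\Delta_1)$) would have to be supplied from the Jacquet module of $\mathrm{St}(\Delta_2)$. Since $\mathrm{St}(\Delta_2)$ is essentially square-integrable with segment $[a_2,b_2]_\rho$, its Jacquet modules along $N_{i_1}$ peel off the cuspidal pieces $\nu_\rho^{b_2}\rho, \nu_\rho^{b_2-1}\rho, \ldots$ from the right; in particular the top cuspidal exponent ever appearing in $\mathrm{St}(\Delta_2)_{N_{i_1}}$ in the leftmost slot is $\nu_\rho^{b_2}\rho$. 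I would then compare this against the requirement coming from the left-most factor of the subrepresentation $D_{\Delta_1}\circ I_{\Delta_2}(\pi)\boxtimes \mathrm{St}(\Delta_1)$: matching cuspidal supports forces $\nu_\rho^{b_1}\rho$ (or more precisely the segment $\Delta_1$, read appropriately) to appear as the leading part of a Jacquet module of $\mathrm{St}(\Delta_2)$, which — when $b_1 > b_2$ — is impossible because $\mathrm{St}(\Delta_2)$ simply does not contain $\nu_\rho^{b_1}\rho$ in its cuspidal support at all. The case $a_1 > a_2$ should be handled by the mirror-image argument peeling from the appropriate end, or by reducing to the previous case via $\theta$ or duality together with the switching lemmas in Section \ref{ss left right switch}; alternatively one inspects which cuspidal line $\nu_\rho^{a_1}\rho$ would need to be fed in and notes it forces $\Delta_1 \subseteq \Delta_2$ with $a_1 = a_2$, contradicting $a_1 > a_2$ (here one must be a little careful since $a_1 > a_2$ alone doesn't put $\nu_\rho^{a_1}\rho$ outside $\Delta_2$, so the correct statement is about where in the induced layer the offending cuspidal exponent must sit).

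I expect the main obstacle to be the bookkeeping in the non-trivial-orbit analysis: one must show not merely that $\mathrm{St}(\Delta_2)_{N_{i_1}}$ cannot contain the right data in isolation, but that no layer $\mathcal J_i$ with $i \ge 2$ can contain the specific irreducible $D_{\Delta_1}\circ I_{\Delta_2}(\pi)\boxtimes \mathrm{St}(\Delta_1)$ as a submodule — this requires identifying, inside each such induced layer, which $G_{n+n_2-n_1}\times G_{n_1}$-subquotients can occur and checking the $\mathrm{St}(\Delta_1)$-factor constraint against them. The cleanest route is probably to observe that the image of the composite map $(\mathrm{St}(\Delta_2)\times\pi)_{N_{n_1}} \to \mathrm{St}(\Delta_2)\dot\times^1 \pi_{N_{n_1}}$ is exactly $\mathcal J_1/\mathcal J_2$, and that the embedding in question is, by construction (Frobenius reciprocity applied to $I_{\Delta_2}(\pi)\hookrightarrow \mathrm{St}(\Delta_2)\times\pi$ followed by $D_{\Delta_1}$), supported on functions with support meeting the open cell — an argument in the spirit of Lemma \ref{lem factor through lemma}; then the remaining point is to verify that the hypothesis $a_1 > a_2$ or $b_1 > b_2$ is what prevents any cancellation forcing the image into $\mathcal J_2$, by a multiplicity/cuspidal-support count on $\mathrm{St}(\Delta_2)_{N_{i_1}}$ for $i_1 \ge 1$. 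I would keep the segment-case combinatorics (the semisimplicity of Jacquet modules of essentially square-integrable representations, which makes all the relevant $\mathrm{St}(\Delta_2)_{N_{i_1}}$ explicit) front and center, since that is what makes this tractable without appealing to the deeper results later in the paper.
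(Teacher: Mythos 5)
Your overall plan matches the paper's: apply the geometric lemma to $(\mathrm{St}(\Delta_2)\times\pi)_{N_l}$ (with $l=l_{abs}(\Delta_1)$) and show that $D_{\Delta_1}\circ I_{\Delta_2}(\pi)\boxtimes\mathrm{St}(\Delta_1)$ cannot land in any non-top layer. But two details in your outline do not hold up. The handedness is reversed: for $\mathrm{St}(\Delta_2)=\mathrm{St}([a_2,b_2]_\rho)$ the right Jacquet module $\mathrm{St}(\Delta_2)_{N_{i_1}}$ is $\mathrm{St}([b',b_2]_\rho)\boxtimes\mathrm{St}([a_2,b'-1]_\rho)$, so the piece landing in the $G_{n_1}$-side is $\mathrm{St}([a_2,b'-1]_\rho)$, peeled from the low end $a_2$ (not $\nu_\rho^{b_2}\rho,\nu_\rho^{b_2-1}\rho,\dots$ as you wrote), and it is the rightmost, not the leftmost, slot of $\mathrm{St}(\Delta_2)_{N_{i_1}}$ that is relevant. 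More importantly, your bare cuspidal-support count does not close the $b_1>b_2$ case when $a_1\le a_2$: then $\mathrm{csupp}(\mathrm{St}([a_2,b'-1]_\rho))$ is contained in $\mathrm{csupp}(\mathrm{St}(\Delta_1))$ and the ``missing'' exponent $\nu_\rho^{b_1}\rho$ is perfectly well supplied by $\tau_2$, a composition factor of $\pi_{N_{i_2}}$, so the observation that $\mathrm{St}(\Delta_2)$ does not contain $\nu_\rho^{b_1}\rho$ gives no contradiction on its own.

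What the paper's proof actually uses to close the non-trivial orbits is Frobenius reciprocity on the $G_{n_1}$-factor of the layer: an embedding $\mathrm{St}(\Delta_1)\hookrightarrow\mathrm{St}([a_2,b'-1]_\rho)\times\tau_2$ unwinds to a nonzero map $\mathrm{St}(\Delta_1)_{N_{i_1,i_2}}=\mathrm{St}([c,b_1]_\rho)\boxtimes\mathrm{St}([a_1,c-1]_\rho)\to\mathrm{St}([a_2,b'-1]_\rho)\boxtimes\tau_2$, forcing $\mathrm{St}([c,b_1]_\rho)\cong\mathrm{St}([a_2,b'-1]_\rho)$ and hence $b_1=b'-1\le b_2$ and $c=a_2\ge a_1$. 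Both hypotheses $b_1>b_2$ and $a_1>a_2$ give a contradiction from this single computation, so your separate $\theta$/duality detour for $a_1>a_2$ is unnecessary. Finally, the ``cleanest route'' you sketch via Lemma~\ref{lem factor through lemma} does not apply: that lemma and Proposition~\ref{prop trivial orbit and irred} concern the supporting orbit for the Jacquet functor along $N_{n(\pi)}$, whereas pre-RdLi-commutativity is about the functor along $N_{n_1}$ with $n_1=l_{abs}(\Delta_1)$; bridging those two is exactly the nontrivial content of this lemma, and is not automatic from the open-cell argument.
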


\begin{proof}
We consider the case that $b_1>b_2$ and the another case is similar. Suppose not to derive a contradiction. We consider the embedding:
\[       I_{\Delta_2}(\pi) \hookrightarrow \mathrm{St}(\Delta_2) \times \pi .\]
Let $l=l_{abs}(\Delta_1)$. Then we have:
\[  D_{\Delta_1}\circ I_{\Delta_2}(\pi) \boxtimes \mathrm{St}(\Delta_1) \hookrightarrow (I_{\Delta_2}(\pi))_{N_l} \hookrightarrow (\mathrm{St}(\Delta_2)\times \pi)_N .
\]
Then one applies the geometric lemma on $(\mathrm{St}(\Delta_2) \times \pi)_{N_l}$ and it admits a filtration with subquotients taking the form:
\[ (*) \quad \mathrm{St}([b',b_2]_{\rho}) \times \tau_1 \boxtimes \mathrm{St}([a_2,b'-1]_{\rho}) \times \tau_2,\]
where $\tau_1 \boxtimes \tau_2$ is an irreducible composition factor in $\pi_{N'}$ for some suitable unipotent radical $N'$. Thus, $D_{\Delta_1}\circ I_{\Delta_2}(\pi) \boxtimes \mathrm{St}(\Delta_1)$ embeds to one of such subquotients. However, since $b_2<b_1$, applying Frobenius reciprocity on the second factor in (*) gives that $D_{\Delta_1}\circ I_{\Delta_2}(\pi) \boxtimes \mathrm{St}(\Delta_1)$ cannot embed to one of above layers. This gives a contradiction. Thus, $(\mathrm{St}(\Delta_1), \mathrm{St}(\Delta_2), \pi)$ is pre-RdLi-commutative triple.
\end{proof}

Following application of the geometric lemma as in the previous lemma, we have more examples on pre-RdLi-commutative triples:

\begin{example} \label{ex examples of st comm triples}
\begin{enumerate}
\item Let $\sigma_1, \sigma_2 \in \mathrm{Irr}^{\square}$. Suppose $\mathrm{csupp}(\sigma_1)\cap \mathrm{csupp}(\sigma_2)=\emptyset$. Then, for any $\pi \in \mathrm{Irr}$ with $D_{\sigma_1}(\pi)\neq 0$, $(\sigma_1, \sigma_2, \pi)$ is a pre-RdLi-commutative triple. 
\item Let $\sigma \in \mathrm{Irr}^{\square}$. Let $\Delta=[a,b]_{\rho}$ be a segment. Suppose $\nu_{\rho}^a\rho \notin \mathrm{csupp}(\sigma)$. Then, for any $\pi \in \mathrm{Irr}$, $(\sigma, \mathrm{St}(\Delta), \pi)$ is a pre-RdLi-commutative triple.
\item Let $\sigma \in \mathrm{Irr}^{\square}$. Let $\Delta=[a,b]_{\rho}$ be a segment. Suppose $\nu_{\rho}^b\rho \notin \mathrm{csupp}(\sigma)$. Then, for any $\pi \in \mathrm{Irr}$, $(\mathrm{St}(\Delta), \sigma, \pi)$ is a pre-RdLi-commutative triple.
\end{enumerate}
\end{example}

In general, commutativity does not imply strong commutativity and we have the following example:

\begin{example}
Let $\pi=\rho$ be an irreducible cuspidal representation. We have that $I_{\rho}\circ D_{\rho}(\pi)\cong D_{\rho}\circ I_{\rho}(\pi)\cong \rho$. However, $(\left\{ \rho \right\}, \left\{ \rho \right\}, \pi)$ is not a strongly RdLi-commutative triple. 
\end{example}

\subsection{Composition for strong commutations} \label{ss transitive property}

\begin{proposition} \label{prop strong commutation}
Let $\sigma_1', \sigma_2', \sigma \in \mathrm{Irr}^{\square}$. Let $\pi \in \mathrm{Irr}$. Suppose $D_{\sigma}(\pi)\neq 0$. Let $\omega=I_{\sigma_2'}\circ I_{\sigma_1'}(\pi)$. Let $l=n(\sigma)$. If $(\sigma, \sigma'_1, \pi)$ and $(\sigma, \sigma'_2, I_{\sigma_1'}(\pi))$ are strongly RdLi-commutative, then the composition of natural maps
\[    D_{\sigma}(\omega) \boxtimes \sigma \hookrightarrow \omega_{N_l}  \hookrightarrow (\sigma_2'\times \sigma_1'\times \pi)_{N_l} \twoheadrightarrow \sigma'_2 \dot{\times}^1 (\sigma'_1 \dot{\times}^1 (\pi_{N_l}))
\]
are non-zero, and factors through the natural embedding:
\[  (\sigma'_2 \times \sigma'_1 \times D_{\sigma}(\pi)) \boxtimes \sigma \hookrightarrow\sigma'_2 \dot{\times}^1( \sigma'_1 \dot{\times}^1 (\pi_{N_l}))
\]
\end{proposition}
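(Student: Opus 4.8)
The plan is to build the required factorization in two stages, peeling off one integral at a time, and to use the transitivity of Jacquet functors to stack the two geometric-lemma "top layers" into the iterated top layer $\sigma_2' \dot{\times}^1(\sigma_1' \dot{\times}^1(\pi_{N_l}))$. First I would write $\omega = I_{\sigma_2'}(\omega')$ with $\omega' = I_{\sigma_1'}(\pi)$, and recall that $D_\sigma(\omega) \neq 0$ follows from Proposition \ref{prop strong commute imply commute} applied twice: $D_\sigma \circ I_{\sigma_1'}(\pi) \cong I_{\sigma_1'}\circ D_\sigma(\pi) \neq 0$ since $D_\sigma(\pi)\neq 0$, and then $D_\sigma(\omega) \cong I_{\sigma_2'}\circ D_\sigma(\omega') \neq 0$. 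Next, by the strong RdLi-commutativity of $(\sigma, \sigma_2', \omega')$, the composition
\[
 D_\sigma(\omega)\boxtimes \sigma \hookrightarrow \omega_{N_l} \hookrightarrow (\sigma_2' \times \omega')_{N_l} \twoheadrightarrow \sigma_2' \dot{\times}^1(\omega'_{N_l})
\]
is non-zero and factors through $(\sigma_2' \times D_\sigma(\omega'))\boxtimes \sigma \hookrightarrow \sigma_2' \dot{\times}^1 (\omega'_{N_l})$, where $D_\sigma(\omega') \cong I_{\sigma_1'}\circ D_\sigma(\pi)$ by the first bullet.

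Second, I would feed the strong commutativity of $(\sigma, \sigma_1', \pi)$ into the $\dot{\times}^1$ functor: that hypothesis gives a non-zero map $D_\sigma(\omega')\boxtimes \sigma \hookrightarrow \sigma_1' \dot{\times}^1(\pi_{N_l})$ factoring through $(\sigma_1' \times D_\sigma(\pi))\boxtimes \sigma$. Since $\mathrm{Ind}$ from a parabolic (hence $\dot{\times}^1$) is exact and takes non-zero maps to non-zero maps, applying $\sigma_2'\dot{\times}^1(-)$ on the $\omega'$-side yields a non-zero map $\sigma_2' \dot{\times}^1(\omega'_{N_l}) \to \sigma_2' \dot{\times}^1(\sigma_1'\dot{\times}^1(\pi_{N_l}))$, and I need to identify the composite of this with the map from the first stage with the composite of natural maps in the statement. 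Here I would invoke the transitivity of the geometric lemma (the "top layer of the top layer is the iterated top layer"): the surjection $(\sigma_2'\times\sigma_1'\times\pi)_{N_l} \twoheadrightarrow \sigma_2'\dot{\times}^1(\sigma_1'\dot{\times}^1(\pi_{N_l}))$ factors as $(\sigma_2'\times\sigma_1'\times\pi)_{N_l} \twoheadrightarrow \sigma_2'\dot{\times}^1((\sigma_1'\times\pi)_{N_l}) \twoheadrightarrow \sigma_2'\dot{\times}^1(\sigma_1'\dot{\times}^1(\pi_{N_l}))$, compatibly with the embedding $\omega_{N_l}\hookrightarrow (\sigma_2'\times\sigma_1'\times\pi)_{N_l}$ coming from $\omega' \hookrightarrow \sigma_1'\times\pi$. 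Composing the two stages then shows the map in the statement is non-zero and factors through $(\sigma_2'\times(\sigma_1'\times D_\sigma(\pi)))\boxtimes\sigma \hookrightarrow \sigma_2'\dot{\times}^1(\sigma_1'\dot{\times}^1(\pi_{N_l}))$, which is the desired embedding after using associativity of $\times$.

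Throughout, the uniqueness statements of Section \ref{s unique maps} (in particular Lemma \ref{lem unique embedding}) are what let me assert that the various maps I construct agree, up to scalar, with the canonical ones appearing in the statement: the source $D_\sigma(\omega)\boxtimes\sigma$ embeds into $\omega_{N_l}$ in a unique way, and the target admits a unique copy of $(\sigma_2'\times\sigma_1'\times D_\sigma(\pi))\boxtimes\sigma$, so once I know some composite along these maps is non-zero and lands in the right subspace, the factorization is forced. I expect the main obstacle to be the bookkeeping in the transitivity step: one must check that the several filtrations produced by the geometric lemma (on $(\sigma_2'\times\sigma_1'\times\pi)_{N_l}$ directly, versus iterating the lemma for $\sigma_1'\times\pi$ first and then inducing) are compatible, so that "trivial supporting orbit for $\omega' \hookrightarrow \sigma_1'\times\pi$" combined with "trivial supporting orbit for $\omega\hookrightarrow\sigma_2'\times\omega'$" genuinely yields "trivial supporting orbit for $\omega\hookrightarrow\sigma_2'\times\sigma_1'\times\pi$". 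This is essentially the associativity of the top-layer construction and is geometric in nature; I would reduce it to the observation that the double coset $P_{n_2',n_1',n}\backslash G / Q_{\ast,l}$ corresponding to the iterated trivial orbit is exactly the product of the trivial orbits at each stage, and that the integration maps $\Phi_w$ of (\ref{eqn geometric lemma isom}) compose accordingly.
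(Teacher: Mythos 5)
Your proof is correct and takes essentially the same route as the paper: the paper packages your two stages as a single commutative diagram in which the diagonal arrows $f_1$ and $f_2$ are supplied by the strong commutativity of $(\sigma, \sigma_2', I_{\sigma_1'}(\pi))$ and $(\sigma, \sigma_1', \pi)$ respectively, and $f_2 \circ f_1$ gives the required factorization. The ``transitivity of the geometric lemma'' step you anticipate as the main obstacle is just the functoriality of the top-layer surjection (the commutativity of the top-right square in the paper's diagram) and is handled without further argument there.
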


\begin{proof}
We have that $I_{\sigma_2'}\circ I_{\sigma_1'}(\pi)\cong I_{\sigma_2'\times \sigma_1'}(\pi)$. We consider the following commutative diagram:
\[\xymatrix{ D_{\sigma}(\omega)\boxtimes \sigma \ar@{^{(}->}[r]^j \ar@{-->}[dr]^{f_1}  & \omega_{N_l} \ar@{^{(}->}[r]^{i_1} &  (\sigma_2'\times I_{\sigma_1'}(\pi))_{N_l} \ar@{^{(}->}[r]^{i_2} \ar@{->>}[d]^{q_1} & (\sigma_2'\times \sigma_1'\times \pi)_{N_l} \ar@{->>}[d]^{q_2}  \\
 & (\sigma_2' \times D_{\sigma}\circ I_{\sigma_1'}(\pi))\boxtimes \sigma \ar@{^{(}->}[r] \ar@{-->}[dr]^{f_2} & \sigma_2' \dot{\times}^1 I_{\sigma_1'}(\pi)_{N_l}  \ar@{^{(}->}[r] &  \sigma_2' \dot{\times}^1 (\sigma_1'\times\pi)_{N_l} \ar@{->>}[d]^{q_3} \\
&	& (\sigma_2'\times\sigma_1'\times D_{\sigma}(\pi))\boxtimes \sigma \ar@{^{(}->}[r]   &  \sigma_2'\dot{\times}^1 (\sigma_1' \dot{\times}^1 (\pi_{N_l}))  
},
\]
where the top vertical maps $q_1$ and $q_2$ are the surjections onto the toppest layer in the geometric lemma; $q_3$ is induced from the surjection from $(\sigma_1' \times \pi)_{N_l}$ to $\sigma_1'\dot{\times}^1(\pi_{N_l})$; $j$ is the unique embedding from the derivative; $i_1$ is the unique embedding from the integral of $I_{\sigma_2'}(I_{\sigma_1'}(\pi))$ and $i_2$ is induced from the unique embedding from $I_{\sigma_1'}(\pi)$ to $\sigma_1'\times \pi$. 

 The existence of $f_1$ and $f_2$ follows from strong commutativity. Then the composition $f_2\circ f_1$ gives the required map for factoring through the map in the proposition. 
\end{proof}

\begin{corollary}
We use the notations in Proposition \ref{prop strong commutation}. Suppose further that $\sigma_1' \times \sigma_2'$ is irreducible. Then $(\sigma, \sigma_1'\times \sigma_2', \pi)$ is also a strongly RdLi-commutative triple.
\end{corollary}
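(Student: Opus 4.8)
The plan is to read off the Corollary from Proposition \ref{prop strong commutation}: once $\omega=I^L_{\sigma_2'}\circ I^L_{\sigma_1'}(\pi)$ is identified with $I^L_{\sigma_1'\times\sigma_2'}(\pi)$ and the iterated space $\sigma_2'\dot{\times}^1(\sigma_1'\dot{\times}^1(\pi_{N_l}))$ is recognized as the top layer in the geometric lemma for a single parabolic, the defining conditions of Definition \ref{def strong comm} for the triple $(\sigma,\sigma_1'\times\sigma_2',\pi)$ become verbatim the conclusion of that proposition. First I would dispatch the preliminaries: since $\sigma_1',\sigma_2'\in\mathrm{Irr}^{\square}$ and $\sigma_1'\times\sigma_2'$ is irreducible, it is $\square$-irreducible (see \cite{LM19}), so $(\sigma,\sigma_1'\times\sigma_2',\pi)$ is a triple in the sense of Definition \ref{def strong comm}; irreducibility of the induced module also forces $\sigma_1'\times\sigma_2'\cong\sigma_2'\times\sigma_1'$, and then the isomorphism $I^L_{\sigma_2'}\circ I^L_{\sigma_1'}(\pi)\cong I^L_{\sigma_2'\times\sigma_1'}(\pi)$ used at the start of the proof of Proposition \ref{prop strong commutation} gives $\omega\cong I^L_{\sigma_1'\times\sigma_2'}(\pi)$, and hence $D^R_{\sigma}\circ I^L_{\sigma_1'\times\sigma_2'}(\pi)\cong D_{\sigma}(\omega)$ (recall $D_{\sigma}=D^R_{\sigma}$).

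Next I would match the diagrams. With $l=n(\sigma)$, the composite required in Definition \ref{def strong comm} for $(\sigma,\sigma_1'\times\sigma_2',\pi)$ is
\[ D_{\sigma}(\omega)\boxtimes\sigma\hookrightarrow\omega_{N_l}\hookrightarrow\bigl((\sigma_1'\times\sigma_2')\times\pi\bigr)_{N_l}\stackrel{s}{\longrightarrow}(\sigma_1'\times\sigma_2')\dot{\times}^1(\pi_{N_l}). \]
Transitivity of parabolic induction identifies $(\sigma_1'\times\sigma_2')\times\pi$ with $\sigma_2'\times\sigma_1'\times\pi$ and $(\sigma_1'\times\sigma_2')\dot{\times}^1(\pi_{N_l})$ with $\sigma_2'\dot{\times}^1(\sigma_1'\dot{\times}^1(\pi_{N_l}))$, and under these identifications the top-layer surjection $s$ coincides with the composition of the two successive top-layer surjections in the diagram of Proposition \ref{prop strong commutation}; the injections likewise match, up to a non-zero scalar, by the uniqueness statements of Section \ref{s unique maps}. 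Thus the displayed composite is, up to a non-zero scalar, exactly the composite considered in Proposition \ref{prop strong commutation}. That proposition says it is non-zero, which together with the hypothesis $D^R_{\sigma}(\pi)\neq 0$ yields pre-RdLi-commutativity of $(\sigma,\sigma_1'\times\sigma_2',\pi)$; and it says the composite factors through $(\sigma_2'\times\sigma_1'\times D_{\sigma}(\pi))\boxtimes\sigma\hookrightarrow\sigma_2'\dot{\times}^1(\sigma_1'\dot{\times}^1(\pi_{N_l}))$, which, transported back through $\sigma_2'\times\sigma_1'\cong\sigma_1'\times\sigma_2'$, is precisely the embedding $\bigl((\sigma_1'\times\sigma_2')\times D_{\sigma}(\pi)\bigr)\boxtimes\sigma\hookrightarrow(\sigma_1'\times\sigma_2')\dot{\times}^1(\pi_{N_l})$ demanded by Definition \ref{def strong comm}. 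Hence $(\sigma,\sigma_1'\times\sigma_2',\pi)$ is strongly RdLi-commutative.

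The one place that genuinely needs care — more a bookkeeping hazard than a real obstacle — is the compatibility invoked in the previous paragraph: that the single top-layer quotient of $\bigl((\sigma_1'\times\sigma_2')\times\pi\bigr)_{N_l}$ in the geometric lemma agrees, via the transitivity isomorphism, with the iterated top layer $\sigma_2'\dot{\times}^1(\sigma_1'\dot{\times}^1(\pi_{N_l}))$; concretely, the open $P_{n(\sigma_1')+n(\sigma_2'),\,n(\pi)}$-orbit should correspond to the concatenation of the open orbits for the finer parabolic $P_{n(\sigma_1'),n(\sigma_2'),n(\pi)}$. This is the standard behaviour of the Bernstein-Zelevinsky filtration (\cite[Remarks 5.5 and Proposition 5.11]{BZ77}) under composition of parabolic inductions, and it is already used implicitly in the diagram of Proposition \ref{prop strong commutation}. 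If one prefers to avoid articulating it, the alternative is to re-run the two-step argument of Proposition \ref{prop strong commutation} directly, with $\sigma_2'\times I^L_{\sigma_1'}(\pi)$ replaced by $(\sigma_1'\times\sigma_2')\times\pi$ and $I^L_{\sigma_1'}(\pi)$ by $\pi$, producing the required factorization in one stroke.
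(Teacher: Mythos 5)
Your proposal is correct and takes essentially the same approach as the paper: the paper's one-line proof records the identification $\sigma_2' \dot{\times}^1 (\sigma_1' \dot{\times}^1 \pi_{N_l}) \cong (\sigma_2' \times \sigma_1')\dot{\times}^1 (\pi_{N_l})$ and then invokes Proposition \ref{prop strong commutation} together with Definition \ref{def strong comm}, which is exactly what you do, just with the intermediate bookkeeping (the $\square$-irreducibility of $\sigma_1'\times\sigma_2'$, the isomorphism $\sigma_1'\times\sigma_2'\cong\sigma_2'\times\sigma_1'$, the compatibility of the iterated and single top-layer surjections) spelled out rather than left implicit.
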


\begin{proof}
Note that $\sigma_2' \dot{\times}^1 (\sigma_1' \dot{\times}^1 \pi_{N_1}) \cong (\sigma_2' \times \sigma_1')\dot{\times}^1 (\pi_{N_1})$. Then the corollary follows from Proposition \ref{prop strong commutation} and the definition of a strongly RdLi-commutative triple.
\end{proof}

We also have the following dual version:

\begin{proposition} \label{prop strong commutation der}
Let $\sigma_1, \sigma_2, \sigma' \in \mathrm{Irr}^{\square}$. Let $\pi \in \mathrm{Irr}$. Let $n=n(\pi)$. Let $l_1=n(\sigma_1)$ and let $l_2=n(\sigma_2)$.  Suppose $D_{\sigma_1}(\pi)\neq 0$. If $(\sigma_1, \sigma', \pi)$ and $(\sigma_2, \sigma', D_{\sigma_1}(\pi))$ are strongly RdLi-commutative triples, then the composition of the following natural maps:
\[  D_{\sigma_2}\circ D_{\sigma_1}\circ I_{\sigma'}(\pi)\boxtimes \sigma_2\boxtimes \sigma_1 \hookrightarrow (\sigma' \times \pi)_{N_{n+n(\sigma')-l_1-l_2,l_2,l_1}} \twoheadrightarrow \sigma' \dot{\times}^1(\pi_{N_{n-l_1-l_2, l_2,l_1}}) 
\]
is non-zero, and factors through the natural map:
\[   (\sigma' \times D_{\sigma_2}\circ D_{\sigma_1}(\pi))\boxtimes \sigma_2\boxtimes \sigma_1 \hookrightarrow \sigma' \dot{\times}^1\pi_{N_{n-l_1-l_2, l_2,l_1}} .
\]
Here $\dot{\times}^1$ is defined in a similar manner to (\ref{eqn embedding 1}) to obtain a $G_{n+n(\sigma)-l_1-l_2}\times G_{l_2}\times G_{l_1}$-representation, and the natural maps are described in detail in (\ref{eqn comp derivative diagram}) below.

\end{proposition}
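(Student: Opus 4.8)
The statement is the ``Ld $\to$ Rd'' dual of Proposition \ref{prop strong commutation}, with two derivatives composed on the same side rather than two integrals; so the natural strategy is to mimic that proof by stacking two instances of strong commutativity in a large commutative diagram. First I would set up notation carefully: write $\tau = D_{\sigma_1}(\pi)$ (nonzero by hypothesis) and note $D_{\sigma_2}\circ D_{\sigma_1}\circ I_{\sigma'}(\pi)$ makes sense. The key structural input is that the Jacquet functor $(-)_{N_{n+n(\sigma')-l_1-l_2,\, l_2,\, l_1}}$ factors through $(-)_{N_{n+n(\sigma')-l_1,\, l_1}}$ followed by applying $(-)_{N_{\ast,\, l_2}}$ to the first tensor factor; this transitivity of Jacquet functors is what lets me peel off the two derivatives one at a time and insert the two strong-commutativity hypotheses at the two stages. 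Concretely, the embedding of $D_{\sigma_2}\circ D_{\sigma_1}\circ I_{\sigma'}(\pi)\boxtimes\sigma_2\boxtimes\sigma_1$ into the iterated Jacquet module of $\sigma'\times\pi$ is, by Lemma \ref{lem unique embedding} (or Lemma \ref{lem unique embedding left right}), the unique such embedding, so it suffices to exhibit \emph{one} nonzero map with the required factorization property and then invoke uniqueness to identify it with the canonical one.

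\textbf{Key steps.} (1) Apply strong RdLi-commutativity of $(\sigma_1,\sigma',\pi)$ to the first derivative: the composite
\[
D_{\sigma_1}\circ I_{\sigma'}(\pi)\boxtimes\sigma_1 \hookrightarrow I_{\sigma'}(\pi)_{N_{l_1}} \hookrightarrow (\sigma'\times\pi)_{N_{l_1}} \twoheadrightarrow \sigma'\dot\times^1(\pi_{N_{l_1}})
\]
is nonzero and factors through $(\sigma'\times D_{\sigma_1}(\pi))\boxtimes\sigma_1 \hookrightarrow \sigma'\dot\times^1(\pi_{N_{l_1}})$; equivalently $D_{\sigma_1}\circ I_{\sigma'}(\pi)\cong I_{\sigma'}\circ D_{\sigma_1}(\pi) = I_{\sigma'}(\tau)$ by Proposition \ref{prop strong commute imply commute}. (2) Now apply strong RdLi-commutativity of $(\sigma_2,\sigma',\tau)$ to the representation $I_{\sigma'}(\tau)\cong D_{\sigma_1}\circ I_{\sigma'}(\pi)$: the map
\[
D_{\sigma_2}\circ I_{\sigma'}(\tau)\boxtimes\sigma_2 \hookrightarrow I_{\sigma'}(\tau)_{N_{l_2}} \hookrightarrow (\sigma'\times\tau)_{N_{l_2}} \twoheadrightarrow \sigma'\dot\times^1(\tau_{N_{l_2}})
\]
is nonzero and factors through $(\sigma'\times D_{\sigma_2}(\tau))\boxtimes\sigma_2$. (3) Splice these two together: take the diagram of Proposition \ref{prop strong commutation}, turn all integrals into derivatives and reverse the direction of the $i_\ast$-maps into $q_\ast$-surjections, so that the second-derivative Jacquet functor is applied to the $D_{\sigma_1}\circ I_{\sigma'}(\pi)$-slot of the top layer produced in step (1), tensored on the right with $\sigma_1$. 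The compatibility of the geometric-lemma top-layer surjections with applying a further Jacquet functor to the first factor (i.e.\ $(\sigma'\dot\times^1(\pi_{N_{l_1}}))$ has a top layer $\sigma'\dot\times^1(\pi_{N_{l_2,l_1}})$ under $(-)_{N_{\ast,l_2}}$) gives the bottom-right surjection $q_3$ analogue. Chasing the resulting diagram produces the desired nonzero composite and its factorization through $(\sigma'\times D_{\sigma_2}\circ D_{\sigma_1}(\pi))\boxtimes\sigma_2\boxtimes\sigma_1$. (4) Finally, invoke the uniqueness (Lemma \ref{lem unique embedding}) to check that the composite constructed by the diagram chase really is the canonical map described in (\ref{eqn comp derivative diagram}); this is needed because a priori the diagram only builds \emph{some} nonzero map, and one must match it with the one in the statement.

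\textbf{Main obstacle.} The hard part is the bookkeeping in step (3): verifying that the two ``top-layer'' surjections from the geometric lemma — one for the parabolic $N_{l_1}$ and one for the nested parabolic $N_{l_2,l_1}$ — are genuinely compatible, i.e.\ that applying $(-)_{N_{\ast,l_2}}$ to the filtration of $(\sigma'\times\pi)_{N_{l_1}}$ and taking top layers commutes (on the level of images, which by the convention of Section \ref{ss supporting orbit} is all we need) with first forming $(\sigma'\times\pi)_{N_{l_2,l_1}}$ and taking its top layer. This is where the geometry of the double cosets $P\backslash G/Q$ for the two $Q$'s must be controlled — one needs that the trivial (identity) double coset for the finer parabolic sits inside the trivial one for the coarser parabolic, and that the top layer $\sigma'\dot\times^1(-)$ is exact enough that applying a Jacquet functor to its argument and re-extracting the top layer loses nothing. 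Granting this compatibility — which is essentially the transitivity of the geometric lemma and is routine but notationally heavy — the rest is a formal diagram chase exactly parallel to Proposition \ref{prop strong commutation}. I would also double-check that $D_{\sigma_2}(\tau)\neq 0$ is automatic: it is, since $D_{\sigma_2}\circ D_{\sigma_1}\circ I_{\sigma'}(\pi) = D_{\sigma_2}\circ I_{\sigma'}(\tau)$ and this equals $I_{\sigma'}\circ D_{\sigma_2}(\tau)$ by the second strong commutativity, so its non-vanishing is equivalent to $D_{\sigma_2}(\tau)\neq 0$, which must be part of the standing hypothesis that the left-hand side of the displayed map is nonzero (or can be read off from $D_{\sigma_2}\circ D_{\sigma_1}(\pi)\neq 0$ being implicit in the statement's notation).
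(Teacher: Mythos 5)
Your proposal takes essentially the same route as the paper's proof: the paper likewise peels off the two derivatives one at a time via transitivity of Jacquet functors, uses $D_{\sigma_1}\circ I_{\sigma'}(\pi)\cong I_{\sigma'}\circ D_{\sigma_1}(\pi)$ (Proposition \ref{prop strong commute imply commute}) to pass between the two stages, and invokes the two strong-commutativity hypotheses to produce the factoring maps $f_1$ and $f_2$ inside the commutative diagram (\ref{eqn comp derivative diagram}). Your ``main obstacle'' — compatibility of the top-layer surjections with iterated Jacquet functors — is indeed the crux, and the paper handles it by building the diagram directly out of the natural maps (so that the identification with the canonical composite in your step (4) is automatic rather than a separate uniqueness argument); otherwise the plans coincide.
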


\begin{proof}
Let $\omega= D_{\sigma_2}\circ D_{\sigma_1}\circ I_{\sigma'}(\pi)$ and $\omega'=D_{\sigma_2}\circ D_{\sigma_1}(\pi)$. Let $N=N_{n+n(\sigma')-l_1-l_2, l_2, l_1}$ and let $N'=N_{n-l_1-l_2,l_2,l_1}$. This follows from the following commutative diagram:
\begin{align} \label{eqn comp derivative diagram}
 \xymatrix{ \omega\boxtimes \sigma_2\boxtimes \sigma_1 \ar@{^{(}->}[r]^{i_1} & D_{\sigma_1}\circ I_{\sigma'}(\pi)_{N_{l_2}}\boxtimes \sigma_1 \ar@{^{(}->}[r]^{i_2} \ar@{=}[d] \ar@{-->}[drr]^{f_1}& (\sigma' \times \pi)_N \ar@{->>}[r] & (\sigma' \dot{\times}^1 \pi_{N_{l_1}})_{N_{l_2}} \ar@{->>}[r]  & \sigma_1 \dot{\times}^1 \pi_{N'}  \\
        &  I_{\sigma'}\circ D_{\sigma_1}(\pi)_{N_{l_2}}\boxtimes \sigma_1 \ar@{^{(}->}[rr]^{i_3}  &    &  (\sigma' \times D_{\sigma_1}(\pi))_{N_{l_2}} \boxtimes \sigma_1 \ar@{->>}[r] \ar@{^{(}->}[u]^{j_1} \ar@{-->}[dr]^{f_2} & \sigma' \dot{\times}^1 (D_{\sigma_1}(\pi))_{N_{l_2}} \boxtimes \sigma_1 \ar@{^{(}->}[u]^{j_2} \\
		&  & & &	\sigma' \times \omega'\boxtimes \sigma_2\boxtimes \sigma_1	\ar@{^{(}->}[u]^{j_3}
}
\end{align}

The embedding $i_1$ comes from taking a derivative, the embedding $i_2$ is induced from the composition of 
\[    D_{\sigma_1}\circ I_{\sigma'}(\pi)\boxtimes \sigma_1 \hookrightarrow I_{\sigma'}(\pi)_{N_{l_1}} \hookrightarrow (\sigma' \times \pi)_{N_{l_1}}
\] 
and the embedding $i_3$ is induced from the embedding $I_{\sigma'}\circ D_{\sigma_1}(\pi) \hookrightarrow \sigma' \times D_{\sigma_1}(\pi)$. The maps $j_1$ and $j_2$ are induced from the natural embeddings $D_{\sigma_1}(\pi)\boxtimes \sigma_1 \hookrightarrow \pi_{N_{l_1}}$, and $j_3$ is induced from the natural embedding $\omega' \boxtimes \sigma_2 \hookrightarrow D_{\sigma_1}(\pi)_{N_{l_2}}$. The leftmost vertical equality follows from Proposition \ref{prop strong commute imply commute}

The surjections are the natural ones from the geometric lemma, with furthermore first taking the Jacquet functor $N_{n_1}$ and then taking the Jacquet functor $N_{n_2}$ on the first factors. 

Now the existence of $f_1$ comes from the strong commutativity for $(\sigma_1, \sigma', \pi)$ and the existence of $f_2$ comes from the strong commutativity for $(\sigma_2, \sigma', D_{\sigma_1}(\pi))$. The map $f_2\circ f_1\circ i$ gives the required map for the factoring through condition.
\end{proof}

\begin{corollary}
We use the notations in Proposition \ref{prop strong commutation der}. Suppose further that $\sigma_1 \times \sigma_2$ is also $\square$-irreducible. Then $(\sigma_1\times \sigma_2, \sigma', \pi)$ is also a strongly RdLi-commutative triple.
\end{corollary}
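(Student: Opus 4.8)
The plan is to run the dual of the short argument used for the Corollary following Proposition~\ref{prop strong commutation}, where now the ``reassembly'' takes place inside a Jacquet module instead of inside a parabolic induction. Write $\tau=\sigma_1\times\sigma_2$ and $r=l_1+l_2=n(\tau)$; since $\tau$ is irreducible we have $\tau\cong\sigma_2\times\sigma_1$.

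The first step is to record that $D^R_\tau(\pi)\cong D^R_{\sigma_2}\circ D^R_{\sigma_1}(\pi)$, and likewise with $\pi$ replaced by $I_{\sigma'}(\pi)$; this is legitimate because the two strong-commutativity hypotheses together with Proposition~\ref{prop strong commute imply commute} force every intermediate derivative occurring in Proposition~\ref{prop strong commutation der} to be non-zero. Concretely, chaining $\pi\hookrightarrow D^R_{\sigma_1}(\pi)\times\sigma_1$ and $D^R_{\sigma_1}(\pi)\hookrightarrow(D^R_{\sigma_2}\circ D^R_{\sigma_1}(\pi))\times\sigma_2$ gives $\pi\hookrightarrow(D^R_{\sigma_2}\circ D^R_{\sigma_1}(\pi))\times(\sigma_2\times\sigma_1)$, and the characterization of $D^R_\tau$ plus $\sigma_2\times\sigma_1\cong\tau$ identifies $D^R_\tau(\pi)$; alternatively, one transports the embedding $D^R_{\sigma_2}\circ D^R_{\sigma_1}(\pi)\boxtimes\sigma_2\boxtimes\sigma_1\hookrightarrow\pi_{N_{n-r,l_2,l_1}}$ to $D^R_{\sigma_2}\circ D^R_{\sigma_1}(\pi)\boxtimes\tau\hookrightarrow\pi_{N_r}$ by second adjointness and invokes the uniqueness of Lemma~\ref{lem unique embedding}. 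In particular $D^R_\tau(\pi)\neq0$, so ``$(\tau,\sigma',\pi)$ is strongly RdLi-commutative'' is a meaningful assertion.

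The second step is to start from the defining diagram of strong RdLi-commutativity for $(\tau,\sigma',\pi)$, namely
\[ D^R_\tau\circ I_{\sigma'}(\pi)\boxtimes\tau\hookrightarrow I_{\sigma'}(\pi)_{N_r}\hookrightarrow(\sigma'\times\pi)_{N_r}\twoheadrightarrow\sigma'\dot{\times}^1(\pi_{N_r}), \]
and apply the exact functor that peels the last $G_r$-block along $P_{l_2,l_1}$. Using $(\pi_{N_r})_{P_{l_2,l_1}}=\pi_{N_{n-r,l_2,l_1}}$ this identifies the peeled diagram with the top row and the first steps of diagram (\ref{eqn comp derivative diagram}) from Proposition~\ref{prop strong commutation der}, after recognizing $\sigma_2\boxtimes\sigma_1$ as the bottom geometric-lemma layer of $\tau_{P_{l_2,l_1}}$ (into which the peeled submodule $D^R_\tau\circ I_{\sigma'}(\pi)\boxtimes\tau$ lands) and $\sigma'\dot{\times}^1(\pi_{N_{n-r,l_2,l_1}})$ as the peeling of $\sigma'\dot{\times}^1(\pi_{N_r})$. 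Since peeling and un-peeling are mutually inverse on the relevant pieces (second adjointness, with the uniqueness of Section~\ref{s unique maps}), the composite for $(\tau,\sigma',\pi)$ is non-zero precisely when its peeled version is; the latter is the non-zero composite provided by Proposition~\ref{prop strong commutation der}, which moreover factors through $(\sigma'\times D^R_{\sigma_2}\circ D^R_{\sigma_1}(\pi))\boxtimes\sigma_2\boxtimes\sigma_1$. By Step~1 this last embedding is the peeling of $(\sigma'\times D^R_\tau(\pi))\boxtimes\tau\hookrightarrow\sigma'\dot{\times}^1\pi_{N_r}$, and because all the maps involved are unique up to scalar the factorization downstairs forces the factorization upstairs. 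Hence $(\tau,\sigma',\pi)$ is pre-RdLi-commutative and strongly RdLi-commutative.

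The step I expect to be the main obstacle is the bookkeeping in the peeling: one must check that applying the $P_{l_2,l_1}$-Jacquet functor to the block-$\tau$ diagram reproduces diagram (\ref{eqn comp derivative diagram}) faithfully --- in particular that $\sigma_2\boxtimes\sigma_1$ really is the bottom layer of $\tau_{P_{l_2,l_1}}$ (so that a submodule of $I_{\sigma'}(\pi)_{N_r}$ of the form $(\cdots)\boxtimes\tau$ peels to one visibly contained in that layer rather than merely meeting higher layers), and that non-vanishing survives peeling and un-peeling. This is exactly where the uniqueness lemmas of Section~\ref{s unique maps} and the explicit description of the geometric-lemma layers in Section~\ref{s geometric lemma geometry} are needed.
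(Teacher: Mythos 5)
The paper gives no proof of this corollary, so there is no ``official'' argument to compare against; judging your proposal on its own merits, it is essentially correct, and the peeling idea is the natural replacement for the one-line identity $\sigma_2'\dot{\times}^1(\sigma_1'\dot{\times}^1\pi_{N_1})\cong(\sigma_2'\times\sigma_1')\dot{\times}^1\pi_{N_1}$ used in the parallel corollary after Proposition~\ref{prop strong commutation}, where the re-assembly is trivial because both $\dot{\times}^1$'s act on the first factor. Here the two-step operation sits on the Jacquet side, so one cannot merge the blocks by transitivity of induction; one has to compare $\sigma'\dot{\times}^1\pi_{N_r}$ with its further Jacquet module $\sigma'\dot{\times}^1\pi_{N_{n-r,l_2,l_1}}$, which is exactly what your exact-functor peeling does.

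Your Step~1 is fine (and the argument via Lemma~\ref{lem unique embedding} together with second adjointness does give $D^R_{\sigma_1\times\sigma_2}(\pi)\cong D^R_{\sigma_2}\circ D^R_{\sigma_1}(\pi)$ and the non-vanishing), and the identification of the peeled-and-restricted composite with the composite in Proposition~\ref{prop strong commutation der} is legitimate precisely because the first arrow factors through the unique (Lemma~\ref{lem unique embedding}, using $\square$-irreducibility of $\sigma_1\times\sigma_2$) embedding $D^R_\tau\circ I_{\sigma'}(\pi)\boxtimes\sigma_2\boxtimes\sigma_1\hookrightarrow I_{\sigma'}(\pi)_N$, the middle arrow is induced from the unique embedding $I_{\sigma'}(\pi)\hookrightarrow\sigma'\times\pi$, and the last arrow is determined by functoriality of the geometric lemma. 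The one step you wave at more than you argue is the passage from ``the peeled, bottom-layer restriction factors'' to ``the map upstairs factors''. Uniqueness-up-to-scalar of the \emph{embedding} does not by itself give this, since the target $\sigma'\dot{\times}^1\pi_{N_r}$ may contain $D^R_\tau\circ I_{\sigma'}(\pi)\boxtimes\tau$ with multiplicity bigger than one. What actually closes the gap is irreducibility of the source together with exactness of the Jacquet functor: if $\Phi$ did not factor through $Y:=(\sigma'\times D^R_\tau(\pi))\boxtimes\tau$, then (source irreducible) $\Phi$ would inject into $X/Y$, hence after the $N_{l_2,l_1}$-Jacquet and restriction one gets an \emph{injection} $D^R_\tau\circ I_{\sigma'}(\pi)\boxtimes\sigma_2\boxtimes\sigma_1\hookrightarrow X_{N_{l_1}}/Y_{N_{l_1}}$; but the Proposition~\ref{prop strong commutation der} factorization lands this composite inside $(\sigma'\times D^R_\tau(\pi))\boxtimes\sigma_2\boxtimes\sigma_1\subset Y_{N_{l_1}}$, so it is zero in the quotient, a contradiction. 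It would be worth spelling this out, since ``unique up to scalar'' alone is not the right reason. Also note that identifying $\sigma_2\boxtimes\sigma_1$ as the submodule (rather than merely a subquotient) layer of $\tau_{N_{l_1}}$ uses the enumeration convention of Section~\ref{ss supporting orbit} in which $w_1$ is trivial and the submodule $\mathcal J_r$ corresponds to the longest representative; this is consistent with what you need.
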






\subsection{Transitivity of pre-commutative triples}

We prove a weaker converse of Proposition \ref{prop strong commutation}. 
\begin{proposition} \label{prop commutative triple induct} 
Let $\sigma_2, \sigma_2' \in \mathrm{Irr}^{\square}$ such that $\sigma_2 \times \sigma_2'$ is still $\square$-irreducible. Let $\sigma_1 \in \mathrm{Irr}^{\square}$. Let $\pi \in \mathrm{Irr}$. Suppose $(\sigma_1, \sigma_2 \times \sigma_2', \pi)$ is pre-RdLi-commutative. Then $(\sigma_1, \sigma_2, I_{\sigma_2'}(\pi))$ is also a pre-RdLi-commutative triple. 

\end{proposition}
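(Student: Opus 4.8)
The plan is to run the diagram of Proposition~\ref{prop strong commutation} in reverse, peeling off one factor, and to exploit that pre-commutativity only demands a non-vanishing (not a factorization), so the weaker statement is robust under such peeling. Throughout write $l=n(\sigma_1)$ and $\tau=I^L_{\sigma_2'}(\pi)\in\mathrm{Irr}$. Since $\sigma_2\times\sigma_2'$ is irreducible, $I^L_{\sigma_2}(\tau)\cong I^L_{\sigma_2\times\sigma_2'}(\pi)$, and by the multiplicity-one statement of Lemma~\ref{lem uniqueness of products} the embedding $I^L_{\sigma_2\times\sigma_2'}(\pi)\hookrightarrow\sigma_2\times\sigma_2'\times\pi$ appearing in Definition~\ref{def strong comm} is, up to scalar, the composite $I^L_{\sigma_2}(\tau)\hookrightarrow\sigma_2\times\tau\hookrightarrow\sigma_2\times\sigma_2'\times\pi$, the second arrow being $\mathrm{id}_{\sigma_2}$ times the defining embedding $\tau\hookrightarrow\sigma_2'\times\pi$. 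Put $A=D_{\sigma_1}(I^L_{\sigma_2}(\tau))$; this is a nonzero irreducible representation because $(\sigma_1,\sigma_2\times\sigma_2',\pi)$ is pre-RdLi-commutative.

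First I would unwind the hypothesis one step. Applying the Jacquet functor $(-)_{N_l}$ and the geometric lemma, pre-commutativity of $(\sigma_1,\sigma_2\times\sigma_2',\pi)$ says that
\[
A\boxtimes\sigma_1\hookrightarrow I^L_{\sigma_2}(\tau)_{N_l}\hookrightarrow(\sigma_2\times\sigma_2'\times\pi)_{N_l}\xrightarrow{\ s\ }(\sigma_2\times\sigma_2')\dot{\times}^1(\pi_{N_l})
\]
is nonzero, $s$ being the projection onto the top layer for the parabolic $P_{n(\sigma_2)+n(\sigma_2'),\,n(\pi)}$. By transitivity of parabolic induction $(\sigma_2\times\sigma_2')\dot{\times}^1(\pi_{N_l})\cong\sigma_2\dot{\times}^1\big(\sigma_2'\dot{\times}^1(\pi_{N_l})\big)$, and — by the same compatibility of the geometric lemma with induction in stages that underlies the commutative diagram in the proof of Proposition~\ref{prop strong commutation} — the map $s$ factors as
\[
(\sigma_2\times\sigma_2'\times\pi)_{N_l}\xrightarrow{\ s'\ }\sigma_2\dot{\times}^1\big((\sigma_2'\times\pi)_{N_l}\big)\longrightarrow\sigma_2\dot{\times}^1\big(\sigma_2'\dot{\times}^1(\pi_{N_l})\big),
\]
where $s'$ is the projection onto the top layer for $P_{n(\sigma_2),\,n(\sigma_2')+n(\pi)}$ and the second arrow is $\sigma_2\dot{\times}^1(-)$ applied to the analogous top-layer projection of $(\sigma_2'\times\pi)_{N_l}$. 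Hence already the composite $A\boxtimes\sigma_1\hookrightarrow(\sigma_2\times\sigma_2'\times\pi)_{N_l}\xrightarrow{s'}\sigma_2\dot{\times}^1\big((\sigma_2'\times\pi)_{N_l}\big)$ is nonzero.

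Next I would descend along $\tau\hookrightarrow\sigma_2'\times\pi$. Because $A\boxtimes\sigma_1$ already embeds into $I^L_{\sigma_2}(\tau)_{N_l}\hookrightarrow(\sigma_2\times\tau)_{N_l}$ (and, by the identification in the first paragraph, this matches the embedding used above after composing with $(\sigma_2\times\tau)_{N_l}\hookrightarrow(\sigma_2\times\sigma_2'\times\pi)_{N_l}$), and because the square relating the inclusion $(\sigma_2\times\tau)_{N_l}\hookrightarrow(\sigma_2\times(\sigma_2'\times\pi))_{N_l}$ to the two top-layer projections onto $\sigma_2\dot{\times}^1(\tau_{N_l})$ and $\sigma_2\dot{\times}^1\big((\sigma_2'\times\pi)_{N_l}\big)$ commutes — this being naturality of the top-layer projection $(\sigma_2\times-)_{N_l}\to\sigma_2\dot{\times}^1\big((-)_{N_l}\big)$ in the second argument, immediate from the explicit formula for $\Phi_w$ in Section~\ref{ss supporting orbit} together with exactness of Jacquet functors and of $\dot{\times}^1$ — the composite
\[
A\boxtimes\sigma_1\hookrightarrow I^L_{\sigma_2}(\tau)_{N_l}\hookrightarrow(\sigma_2\times\tau)_{N_l}\xrightarrow{\ s''\ }\sigma_2\dot{\times}^1(\tau_{N_l})
\]
becomes the nonzero map of the previous paragraph after composing with the injection $\sigma_2\dot{\times}^1(\tau_{N_l})\hookrightarrow\sigma_2\dot{\times}^1\big((\sigma_2'\times\pi)_{N_l}\big)$, hence is itself nonzero. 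This is precisely the non-vanishing condition of Definition~\ref{def strong comm} for $(\sigma_1,\sigma_2,\tau)$. It remains to check $D_{\sigma_1}(\tau)\neq 0$: Frobenius reciprocity in the first variable turns the nonzero map $A\boxtimes\sigma_1\to\sigma_2\dot{\times}^1(\tau_{N_l})$ into a nonzero map $A_N\boxtimes\sigma_1\to\sigma_2\boxtimes\tau_{N_l}$ (with $N$ the unipotent radical of $P_{n(\sigma_2),\,n(\tau)-l}$); its image is $\sigma_2\boxtimes V$ for a nonzero $G_{n(\tau)-l}\times G_l$-submodule $V\subseteq\tau_{N_l}$, and restricting to the $G_l$-factor forces $V$ to be $\sigma_1$-isotypic, so $\sigma_1\hookrightarrow\tau_{N_l}$ as a $G_l$-representation, i.e. $\mathbb{D}_{\sigma_1}(\tau)\neq 0$; an irreducible submodule of $\mathbb{D}_{\sigma_1}(\tau)$ then yields an embedding $\omega_0\boxtimes\sigma_1\hookrightarrow\tau_{N_l}$, whence $D_{\sigma_1}(\tau)\neq 0$. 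Thus $(\sigma_1,\sigma_2,I^L_{\sigma_2'}(\pi))$ is pre-RdLi-commutative.

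The step I expect to be the main obstacle is the careful bookkeeping behind the two compatibility assertions used above: that the top-layer projection $s$ for the coarser parabolic genuinely factors through the top-layer projection $s'$ for the finer one, and the naturality of $(\sigma_2\times-)_{N_l}\to\sigma_2\dot{\times}^1\big((-)_{N_l}\big)$. Both are consequences of the explicit description of the geometric lemma filtration recalled in Section~\ref{ss supporting orbit} — the double coset sets do not depend on $\tau$ and the maps $\Phi_w$ are given by natural integral formulas — but one must enumerate the double cosets compatibly so that "top layer" maps to "top layer", which is exactly the subtlety dealt with by the commutative diagram in the proof of Proposition~\ref{prop strong commutation}; I would set up the same diagram. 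A secondary point to state carefully is the identification, via Lemma~\ref{lem uniqueness of products}, of the embedding of $I^L_{\sigma_2\times\sigma_2'}(\pi)$ into $\sigma_2\times\sigma_2'\times\pi$ with the composite through $\sigma_2\times\tau$, since the whole argument transfers non-vanishing between these two embeddings.
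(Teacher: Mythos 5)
Your argument is essentially the paper's own proof: identify $I^L_{\sigma_2\times\sigma_2'}(\pi)\cong I^L_{\sigma_2}(\tau)$, set up the same commutative diagram relating $(\sigma_2\times\omega')_{N_{n_1}}$ to $(\sigma_2\times\sigma_2'\times\pi)_{N_{n_1}}$ through the top-layer projections (functoriality of the geometric lemma and induction in stages), and use that pre-commutativity is a non-vanishing statement that survives post-composition with an injection. You additionally verify $D_{\sigma_1}(\tau)\neq 0$, which the paper leaves implicit; the only small slip is citing Lemma~\ref{lem uniqueness of products} (which concerns $I^R\circ I^L$) rather than Lemma~\ref{lem unique embedding} or \cite[Lemma 2.8]{LM19} for the uniqueness of the embedding $\omega\hookrightarrow\sigma_2\times\sigma_2'\times\pi$.
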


\begin{proof}

Let $n_1=n(\sigma_1)$. Let $\omega=I_{\sigma_2\times \sigma_2'}(\pi)$ and let $\omega'=I_{\sigma_2}(\pi)$. We have the following diagram
\[ \xymatrix{ D_{\sigma_1}(\omega)\boxtimes \sigma_1 \ar@{^{(}->}[r]^{i_1} &  \omega_{N_{n_1}} \ar@{^{(}->}[r]^{i_2} & (\sigma_2\times \omega')_{N_{n_1}} \ar@{^{(}->}[r]^{i_3} \ar@{->>}[d]^{q_1} & (\sigma_2 \times \sigma_2' \times \pi)_{N_{n_1}}  \ar@{->>}[d]^{q_2} \\ 
      &    &       \sigma_2\dot{\times}^1 (\omega'_{N_{n_1}}) \ar@{^{(}->}[r]^{i_4} & \sigma_2\dot{\times}^1 (\sigma_2' \times \pi)_{N_{n_1}}   \ar@{->>}[d]^{q_3} \\ 
			&   &  &  \sigma_2 \dot{\times}^1 (\sigma_2' \dot{\times}^1 (\pi_{N_{n_1}})) } ,
\]
where the top horizontal maps are induced from  natural embeddings arising from derivatives and integrals, and the right vertical maps from the geometric lemma. 

The diagram is commutative by the functoriality of the geometric lemma. By uniqueness of the embedding $\omega \hookrightarrow (\sigma_2\times \sigma_2')\times \pi$, $i_3\circ i_2$ agrees with the map induced from the unique embedding $\pi$ to $\sigma_2\times \sigma_2'\times \pi$. The strongly commutative triple $(\sigma_1, \sigma_2\times \sigma_2', \pi)$ implies that $q_3\circ q_2 \circ i_3\circ i_2 \circ i_1 \neq 0$. By the commutative diagram, we then have that $q_1 \circ i_2\circ i_1\neq 0$, giving pre-commutativity for $(\sigma_1, \sigma_2, \omega')$.
\end{proof}

\begin{proposition} \label{prop transtivity pre commute}
Let $\sigma_1, \sigma'_1 \in \mathrm{Irr}^{\square}$ such that $\sigma_1 \times \sigma_1'$ is still $\square$-irreducible. Let $\sigma_2 \in \mathrm{Irr}^{\square}$. Suppose $(\sigma_1\times \sigma_1', \sigma_2, \pi)$ is pre-RdLi-commutative. Then $(\sigma_1, \sigma_2, \pi)$ is also pre-RdLi-commutative.
\end{proposition}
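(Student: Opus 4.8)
The plan is to set up the geometric-lemma diagram for the pre-commutativity of $(\sigma_1 \times \sigma_1', \sigma_2, \pi)$ and factor it through the corresponding diagram for $(\sigma_1, \sigma_2, \pi)$, running the argument of Proposition \ref{prop commutative triple induct} in reverse. Write $l_1 = n(\sigma_1)$ and $l = n(\sigma_1) + n(\sigma_1') = n(\sigma_1 \times \sigma_1')$. The hypothesis gives that the composition
\[
 D_{\sigma_1 \times \sigma_1'}\circ I_{\sigma_2}(\pi) \boxtimes (\sigma_1 \times \sigma_1') \hookrightarrow I_{\sigma_2}(\pi)_{N_l} \hookrightarrow (\sigma_2 \times \pi)_{N_l} \twoheadrightarrow \sigma_2 \dot{\times}^1 (\pi_{N_l})
\]
is non-zero. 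Using the transitivity of Jacquet functors, I would rewrite $(\sigma_2 \times \pi)_{N_l}$ as $\bigl((\sigma_2 \times \pi)_{N_{l_1}}\bigr)_{N_{l_1', \ldots}}$ (taking the $N_{l_1}$-Jacquet module first, then the remaining one on the appropriate block), and similarly factor $I_{\sigma_2}(\pi)_{N_l}$. The key observation is that $D_{\sigma_1 \times \sigma_1'}\circ I_{\sigma_2}(\pi) \boxtimes (\sigma_1 \times \sigma_1')$ sits inside $\bigl(D_{\sigma_1}\circ I_{\sigma_2}(\pi) \boxtimes \sigma_1\bigr)$-type layers after the first Jacquet step, because $D_{\sigma_1 \times \sigma_1'} = D_{\sigma_1'} \circ D_{\sigma_1}$ when $\sigma_1 \times \sigma_1'$ is $\square$-irreducible (here one uses the embedding $\sigma_1 \times \sigma_1' \hookrightarrow$ Jacquet module and Lemma \ref{lem unique embedding left right} / the uniqueness statements in Section \ref{s unique maps}).

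Concretely, the steps are: (i) record $D_{\sigma_1 \times \sigma_1'}(\tau) = D_{\sigma_1'}\circ D_{\sigma_1}(\tau)$, so that taking $N_{l_1}$ first in the geometric lemma exhibits $D_{\sigma_1}\circ I_{\sigma_2}(\pi) \boxtimes \sigma_1$ as the relevant intermediate object; (ii) build the commutative diagram whose top row is the pre-commutativity composition for $(\sigma_1 \times \sigma_1', \sigma_2, \pi)$ and whose left column passes through $D_{\sigma_1}\circ I_{\sigma_2}(\pi) \boxtimes \sigma_1 \hookrightarrow I_{\sigma_2}(\pi)_{N_{l_1}} \hookrightarrow (\sigma_2 \times \pi)_{N_{l_1}} \twoheadrightarrow \sigma_2 \dot{\times}^1 (\pi_{N_{l_1}})$, i.e. the pre-commutativity composition for $(\sigma_1, \sigma_2, \pi)$; (iii) observe that the map along the bottom (from $\sigma_2 \dot{\times}^1 (\pi_{N_{l_1}})$, take a further Jacquet functor, land in $\sigma_2 \dot{\times}^1 (\pi_{N_l})$) is the one induced by functoriality of the geometric lemma, so it fits into the diagram; (iv) conclude: since the full composition for $(\sigma_1\times\sigma_1',\sigma_2,\pi)$ is non-zero and it factors as (further Jacquet functor) $\circ$ (composition for $(\sigma_1,\sigma_2,\pi)$), the latter composition must already be non-zero, which is exactly pre-RdLi-commutativity for $(\sigma_1, \sigma_2, \pi)$. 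One should also check $D_{\sigma_1}(\pi) \neq 0$: this follows since $D_{\sigma_1'}\circ D_{\sigma_1}(\pi) = D_{\sigma_1\times\sigma_1'}(\pi) \neq 0$ is implied by the pre-commutativity hypothesis (which requires $D_{\sigma_1\times\sigma_1'}(\pi)\neq 0$), and a composite of derivatives is nonzero only if each stage is.

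The main obstacle I anticipate is step (iii)/(iv): verifying that the surjection onto the top layer of $(\sigma_2 \times \pi)_{N_l}$ genuinely factors through the surjection onto the top layer of $(\sigma_2 \times \pi)_{N_{l_1}}$ followed by a further geometric-lemma surjection, i.e. that the "top layer of a Jacquet module of a top layer is again a top layer" — this is a compatibility of the filtrations in iterated geometric lemmas (essentially transitivity of Jacquet functors applied to the $w_1$-orbit), and needs care to state precisely so that the diagram commutes on the nose (up to the scalar ambiguity already sanctioned in Section \ref{s geometric lemma geometry}). Once that compatibility is in hand, together with the uniqueness of all the embeddings involved (Lemma \ref{lem unique embedding}, Lemma \ref{lem unique embedding left right}), the diagram chase is routine and the conclusion is immediate. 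I would also double-check that $I_{\sigma_2}(\pi)$ is the correct object throughout (as opposed to some other simple constituent), which is guaranteed by the uniqueness of the embedding $I_{\sigma_2}(\pi) \hookrightarrow \sigma_2 \times \pi$ and the fact that $D^R_{\sigma_1}$, $D^R_{\sigma_1'}$ preserve this structure via their defining embeddings.
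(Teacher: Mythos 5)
Your proposal is correct and follows essentially the same route as the paper's proof: decompose $D^R_{\sigma_1\times\sigma_1'}$ as a composite of derivatives via the uniqueness lemmas of Section \ref{s unique maps}, then factor the pre-commutativity composition for the large triple (restricted to the simple submodule obtained after a further Jacquet functor) through the pre-commutativity composition for $(\sigma_1,\sigma_2,\pi)$ in a commutative diagram, reading off non-vanishing at the end. The subtlety you flag in steps (iii)/(iv) — compatibility of the top-layer surjections under iterated Jacquet functors — is exactly the point the paper handles by invoking the naturality of the geometric lemma for the right-hand triangle of its diagram.
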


\begin{proof}
Let $\tau=D_{\sigma_1\times \sigma_1'}\circ I_{\sigma_2}(\pi)$. Let $n_1=n(\sigma_1)$, $n_1'=n(\sigma_1')$, $n_2=n(\sigma_2)$ and $n=n(\pi)$.
We first have the following diagram:
\[  \xymatrix{ \tau \boxtimes \sigma_1' \boxtimes \sigma_1 \ar[r] \ar[rd]  & \tau \boxtimes (\sigma_1\times \sigma_1')_{N_{n_1,n_1'}} \ar[r] &  I_{\sigma_2}(\pi)_{N} \ar[r]^{q}  & (\sigma_2\times \pi)_{N} \ar[r] \ar[rd]^{q_1} &  \sigma_2 \dot{\times}^1 \pi_{N'}  \\      
  & D_{\sigma_1}\circ I_{\sigma_2}(\pi)_{N_{n_1}}\boxtimes \sigma_1 \ar[ru]_i &    &   &   (\sigma_2 \dot{\times}^1 (\pi_{N_{n_1'}}))_{N''}    \ar[u]_{q_2}    
 },
\]
where $N=N_{n_2+n-n_1-n_1',n_1,n_1'}$ in $G_{n+n_2}$, $N'=N_{n-n_1-n_1',n_1,n_1'}$ in $G_n$, and $N''=N_{n+n_2-n_1-n_1',n_1}$ in $G_{n+n_2-n_1'}$ regarded as a subgroup of $G_{n+n_2-n_1'}\times G_{n_1'}$ via embedding to the first factor. The maps $q, q_1, q_3$ are the projections from the geometric lemma, while other maps in the diagram come from natural embeddings arising form derivatives and integrals.

The left triangle in the diagram is commutative by Lemma \ref{lem unique embedding left right}, while the right triangle is commutative by the naturality of the geometric lemma.

Now the composition of the top maps is non-zero by the pre-commutativity of $(\sigma_1\times \sigma_1', \sigma_2, \pi)$. This, in particular, implies that $q_1\circ q \circ i \neq 0$. Since $q_1\circ q\circ i$ is obtained from taking the Jacquet functor (with respect to $N''$) on the required composition for the pre-RdLi-commutativity of $(\sigma_1 \times \sigma_1', \sigma_2, \pi)$, the required composition is also non-zero. In other words, $(\sigma_1', \sigma_2, \pi)$ is pre-RdLi-commutative.
\end{proof}

\subsection{Producing some strongly RdLi-commutative triples}

\begin{proposition} \label{thm pre implies strong}
\begin{enumerate}
\item Let $\sigma, \sigma' \in \mathrm{Irr}^{\square}$. Suppose $(\sigma, \pi)$ is a Rd-irreducible pair and $(\sigma, \sigma', \pi)$ is a pre-RdLi-commutative triple.  Then $(\sigma, \sigma', \pi)$ is a strongly RdLi-commutative triple.
\item Let $\sigma, \sigma' \in \mathrm{Irr}^{\square}$. Suppose $D_{\sigma}\circ I_{\sigma'}(\pi)$ does not embed to $(\sigma'\times \mathbb D_{\sigma}(\pi))/(\sigma' \times D_{\sigma}(\pi))$. (Here $\sigma' \times D_{\sigma}(\pi)$ is viewed as a submodule $\sigma' \times \mathbb D_{\sigma}(\pi)$ via inducing from the natural map from $D_{\sigma}(\pi)$ to $\mathbb D_{\sigma}(\pi)$.) If $(\sigma, \sigma', \pi)$ is a pre-RdLi-commutative triple, then $(\sigma, \sigma', \pi)$ is a strongly RdLi-commutative triple.
\end{enumerate}

\end{proposition}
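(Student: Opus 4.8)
The plan is to prove (2) first, and then derive (1) from it by showing that the Rd-irreducibility hypothesis forces the non-embedding condition appearing in (2). Throughout write $r=n(\sigma)$ and $N=N_{n(\pi)-r,r}$, and introduce the functor $H(-):=\mathrm{Hom}_{G_r}(\sigma,-)$ on smooth $G_{*}\times G_r$-representations, taken over the rightmost $G_r$-factor. Its relevant features are: it is left exact; $H(\tau\boxtimes\sigma)\cong\tau$ by Schur's lemma; $H(\pi_N)=\mathbb D_\sigma(\pi)$; and $H$ commutes with $\sigma'\dot{\times}^1(-)$, so $H(\sigma'\dot{\times}^1 V)\cong\sigma'\dot{\times}^1 H(V)$, because the parabolic induction defining $\dot{\times}^1$ is performed over a factor commuting with the $G_r$ over which $H$ is computed.

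For (2), I would apply $H$ to the pre-commutativity map (\ref{eqn embedding for strong commut}). Being an injection with non-zero source, it remains an injection after $H$, yielding an embedding $\bar\phi\colon D^R_\sigma\circ I^L_{\sigma'}(\pi)\hookrightarrow H(\sigma'\dot{\times}^1\pi_N)\cong\sigma'\times\mathbb D_\sigma(\pi)$. Since parabolic induction is exact, $\sigma'\times D^R_\sigma(\pi)$ sits inside $\sigma'\times\mathbb D_\sigma(\pi)$ with quotient $(\sigma'\times\mathbb D_\sigma(\pi))/(\sigma'\times D^R_\sigma(\pi))$; the hypothesis says $D^R_\sigma\circ I^L_{\sigma'}(\pi)$ does not embed into that quotient, and as it is irreducible, $\bar\phi$ followed by the quotient map is zero, so $\mathrm{im}\,\bar\phi\subseteq\sigma'\times D^R_\sigma(\pi)$. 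It then remains to transport this back across $H$. Let $W:=\sigma'\dot{\times}^1(D^R_\sigma(\pi)\boxtimes\sigma)$ be the submodule of $\sigma'\dot{\times}^1\pi_N$ obtained by applying $\sigma'\dot{\times}^1(-)$ to (\ref{eqn embedding derivative}), so that $H(W)\cong\sigma'\times D^R_\sigma(\pi)$ and $\mathrm{im}\,\bar\phi$ identifies with $H$ of the image of (\ref{eqn embedding for strong commut}). If that image were not contained in $W$, then, being irreducible, it would meet $W$ trivially, hence its sum with $W$ inside $\sigma'\dot{\times}^1\pi_N$ would be a direct sum; applying $H$ (which preserves finite direct sums) one would get $\mathrm{im}\,\bar\phi\oplus H(W)$ inside $\sigma'\times\mathbb D_\sigma(\pi)$, contradicting $0\neq\mathrm{im}\,\bar\phi\subseteq H(W)$. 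Therefore (\ref{eqn embedding for strong commut}) factors through $W\hookrightarrow\sigma'\dot{\times}^1\pi_N$, which is precisely strong RdLi-commutativity.

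For (1), Rd-irreducibility gives $\pi_N=(D^R_\sigma(\pi)\boxtimes\sigma)\oplus M$ for some $G_{n(\pi)-r}\times G_r$-submodule $M$, whence $\mathbb D_\sigma(\pi)=H(\pi_N)=D^R_\sigma(\pi)\oplus H(M)$ and $(\sigma'\times\mathbb D_\sigma(\pi))/(\sigma'\times D^R_\sigma(\pi))\cong\sigma'\times H(M)$. By part (2) it is enough to show $D^R_\sigma\circ I^L_{\sigma'}(\pi)$ does not embed into $\sigma'\times H(M)$. Here every irreducible submodule of $\sigma'\times H(M)$ has the form $I^L_{\sigma'}(\mu)$ for an irreducible subquotient $\mu$ of $H(M)$ (peel off $\mathrm{soc}(H(M))$, use that $\sigma'\in\mathrm{Irr}^{\square}$ so $\sigma'\times(\text{irreducible})$ has irreducible socle, and iterate), and such $\mu$ comes from an irreducible subquotient $\mu\boxtimes\sigma$ of $M$; since $I^L_{\sigma'}$ is injective on $\mathrm{Irr}$ (with left inverse $D^L_{\sigma'}$, via the involution $\theta$), an embedding $D^R_\sigma\circ I^L_{\sigma'}(\pi)\hookrightarrow\sigma'\times H(M)$ would force $\bigl(D^L_{\sigma'}\circ D^R_\sigma\circ I^L_{\sigma'}(\pi)\bigr)\boxtimes\sigma$ to be an irreducible subquotient of $M$. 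Ruling this out is the step I expect to be the main obstacle; I would handle it using the multiplicity-one of $D^R_\sigma(\pi)\boxtimes\sigma$ as a submodule of $\pi_N$ together with the socle-irreducibility of the big derivative from \cite{Ch22+b} (Lemmas \ref{lem SI property of big derivative} and \ref{lem big derivative compared with jacquet}): when that structure applies, $\mathbb D_\sigma(\pi)$ has irreducible socle, so a direct summand containing the socle forces the complement $H(M)$ to vanish and the claim is immediate; otherwise one traces the invariants $\varepsilon_\Delta,\eta_\Delta$ of $D^R_\sigma\circ I^L_{\sigma'}(\pi)$ to see this subquotient cannot occur inside $M$.
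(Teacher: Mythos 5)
Your argument for part (2) is correct and is essentially the paper's: both apply the left-exact functor $H(-)=\mathrm{Hom}_{G_r}(\sigma,-)$ (which identifies with landing in the maximal $\sigma$-isotypic submodule of $\sigma'\dot{\times}^1\pi_{N_r}$, namely $(\sigma'\times\mathbb D_\sigma(\pi))\boxtimes\sigma$), and then use the non-embedding hypothesis to push the image into $(\sigma'\times D_\sigma(\pi))\boxtimes\sigma$. Your direct-sum contradiction for transporting back across $H$ is a slightly different packaging of the paper's one-line observation that an irreducible submodule of the form $\omega\boxtimes\sigma$ must land in the $\sigma$-isotypic submodule, but it works.

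For part (1), however, there is a genuine gap. You correctly reduce to showing $D^R_\sigma\circ I^L_{\sigma'}(\pi)$ does not embed in $\sigma'\times H(M)$, but then you hedge: you invoke the SI property of the big derivative (Lemma \ref{lem SI property of big derivative}), which only applies when $\pi$ is of the form $\mathrm{St}(\mathfrak n)$ for a pairwise unlinked multisegment, and otherwise defer to a vague "trace the invariants $\varepsilon_\Delta,\eta_\Delta$" which you do not carry out. You are missing the short argument, implicit in the paper's proof, that $H(M)=\mathbb D_\sigma(M)=0$ always under the Rd-irreducibility hypothesis, which makes the whole issue evaporate: by uniqueness of the irreducible submodule $D_\sigma(\pi)\boxtimes\sigma$ of $\pi_{N_r}$ (\cite[Theorem 3.2]{KKKO15}, \cite[Lemma 2.8]{LM19}), the complement $M$ has no submodule of the form $\omega\boxtimes\sigma$; if $\mathbb D_\sigma(M)\neq 0$, pick a nonzero $f\in\mathbb D_\sigma(M)$, take an irreducible $G_s$-submodule $W'$ of the $G_s$-span of $f$, and observe that the evaluation map $W'\boxtimes\sigma\to M$, $(w,x)\mapsto w(x)$, is a nonzero $G_s\times G_r$-homomorphism from an irreducible module, hence injective, producing exactly the forbidden submodule. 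So $\mathbb D_\sigma(M)=0$, the hypothesis of (2) is vacuously satisfied, and (1) follows. Note that the paper proves (1) directly (without invoking (2)) but the content is the same: the vanishing of $\mathbb D_\sigma(\tau)$ is the heart of the argument, and the SI property is neither needed nor available in this generality.
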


\begin{proof}
We first prove (1). Since $(\sigma, \sigma', \pi)$ is a pre-RdLi-commutative triple, there is an embedding 
\[   D_{\sigma}(\pi)\boxtimes \sigma \hookrightarrow \sigma' \dot{\times}^1 \pi_{N_i},
\]
where $i=n(\sigma)$. On the other hand, $(\sigma, \pi)$ is a Rd-irreducible pair, and so 
\[  \pi_{N_i} = D_{\sigma}(\pi)\boxtimes \sigma +\tau .
\]
Moreover, by the uniqueness of submodule of Jacquet module under $\square$-irreducible representations, $\tau$ does not have a submodule of the form $\omega \boxtimes \sigma_2$ and hence
\[  \mathrm{Hom}_{G_s\times G_t}(D_{\sigma}(\pi)\boxtimes \sigma, \sigma' \dot{\times}^1 \tau) \cong \mathrm{Hom}_{G_s}(D_{\sigma}(\pi), \mathrm{Hom}_{G_t}(\sigma, \sigma' \dot{\times}^1 \tau))\cong \mathrm{Hom}(D_{\sigma}(\pi), \sigma'\times \mathbb D_{\sigma}(\tau))=0 ,
\]
where $s=n(\pi)-n(\sigma)$ and $t=n(\sigma)$. Here $\mathrm{Hom}_{G_t}(\sigma, \sigma'\dot{\times}^1\tau)$ is regarded as a $G_s$-representation in a similar manner to the big derivative in Section \ref{ss converse and big derivatives}. This implies that the above embedding factors through the map $(\sigma' \times D_{\sigma}(\pi))\boxtimes \sigma$ to $\sigma'\dot{\times}^1\pi_{N_i}$. This implies the strong commutativity for $(\sigma, \sigma', \pi)$.

For (2), by definition, we have the following commutative diagram:
\[ \xymatrix{   D_{\sigma}(\pi)\circ I_{\sigma'}(\pi) \boxtimes \sigma \ar@{^{(}->}[r] \ar@{^{(}->}[dr]^u \ar@{-->}[ddr] &  \sigma' \dot{\times}^1 \pi_{N_{n(\sigma)}} \\        &    \sigma' \times \mathbb D_{\sigma}(\pi) \boxtimes \sigma \ar[u] \\ 
           &    \sigma' \times D_{\sigma}(\pi) \boxtimes \sigma   \ar[u]^i }                                           
\]
It is straightforward that the assumption in (2) implies that the map $u$ factors through $i$.
\end{proof}

\begin{remark}
We have not discussed an integral version of an irreducible pair, but one possible way to define is the following. For $\pi \in \mathrm{Irr}$ and $\sigma \in \mathrm{Irr}^{\square}$, we say that $(\sigma, \pi)$ is a {\it Li-irreducible pair} if $(\sigma, I_{\sigma}(\pi))$ is a Ld-irreducible pair. We shall not use this anyway and shall not explore its properties further.
\end{remark}

\section{Dual strongly commutative triples} \label{s dual strong commut}

We now show a duality for strongly commutative triples switching left and right versions. Similar to previous situations, it will work better (at least technically) if we impose some conditions of irreducible pairs.

\subsection{Strongly commutative triples and irreducible pairs}

The following proposition is one important and interesting result that can switch left and right versions of irreducible pairs. A key idea of the proof is the commutative diagram (\ref{eqn commutative diagram}) below and one first figures out the orbit that contributes an embedding from top maps and then deduces the orbit from the bottom maps in order to apply Proposition \ref{prop trivial orbit and irred}.

\begin{proposition} \label{prop strong irr imply irr}
Let $\sigma_1, \sigma_2 \in \mathrm{Irr}^{\square}$. Let $\pi \in \mathrm{Irr}$. Suppose $(\sigma_1, \sigma_2, \pi)$ is a strongly RdLi-commutative triple and $(\sigma_1, \pi)$ is a Rd-irreducible pair. Then $(\sigma_1, I_{\sigma_2}(\pi))$ is also a Rd-irreducible pair.
\end{proposition}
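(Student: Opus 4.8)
The plan is to use the commutative diagram relating the Jacquet functor of $I_{\sigma_2}(\pi)$ with that of $\sigma_2 \times \pi$, together with the strong commutativity hypothesis, to pin down the supporting orbit of the relevant embedding, and then invoke Proposition \ref{prop reformulate rd irr} to conclude. Write $i = n(\sigma_1)$ and $\omega = I_{\sigma_2}(\pi)$. By Proposition \ref{prop reformulate rd irr} (or rather its analogue on the Rd-side applied to $\omega$), it suffices to show that the composition
\[
D_{\sigma_1}(\omega) \boxtimes \sigma_1 \stackrel{j_1}{\hookrightarrow} \omega_{N_i} \stackrel{j_2}{\twoheadrightarrow} D_{\sigma_1}(\omega)\boxtimes \sigma_1
\]
of the unique embedding with the unique surjection is non-zero, where $j_2$ exists because $\omega$ has the indecomposable summand structure we are trying to establish — more precisely, it suffices to show $D_{\sigma_1}(\omega)\boxtimes\sigma_1$ is a direct summand of $\omega_{N_i}$, which by the uniqueness statements of Section \ref{s unique maps} is equivalent to a non-vanishing of the composite map through the top layer of the geometric lemma for $(\sigma_2\times\pi)_{N_i}$.

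The key step is the following commutative diagram, built from the maps in Definition \ref{def strong comm} and the geometric lemma:
\[
\xymatrix{
D_{\sigma_1}(\omega)\boxtimes\sigma_1 \ar@{^{(}->}[r]^{j_1} \ar@{-->}[dr] & \omega_{N_i} \ar@{^{(}->}[r]^{i_1} & (\sigma_2\times\pi)_{N_i} \ar@{->>}[d]^{s} \\
 & (\sigma_2\times D_{\sigma_1}(\pi))\boxtimes\sigma_1 \ar@{^{(}->}[r] & \sigma_2\dot{\times}^1 (\pi_{N_i})
}
\]
where the dashed arrow exists precisely because $(\sigma_1,\sigma_2,\pi)$ is strongly RdLi-commutative. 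Since $(\sigma_1,\pi)$ is a Rd-irreducible pair, $\pi_{N_i} = (D_{\sigma_1}(\pi)\boxtimes\sigma_1)\oplus \tau$ where $\tau$ has no submodule of the form $\omega'\boxtimes\sigma_1$ (by the uniqueness of submodules under $\square$-irreducibles, as in the proof of Proposition \ref{thm pre implies strong}(1)). Applying $\sigma_2\dot{\times}^1(-)$, one gets $\sigma_2\dot{\times}^1(\pi_{N_i}) = (\sigma_2\dot{\times}^1(D_{\sigma_1}(\pi)\boxtimes\sigma_1))\oplus(\sigma_2\dot{\times}^1\tau)$, and the same Hom-computation as in Proposition \ref{thm pre implies strong}(1) shows $\mathrm{Hom}(D_{\sigma_1}(\omega)\boxtimes\sigma_1,\ \sigma_2\dot{\times}^1\tau)=0$. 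Hence the dashed composite, being non-zero by strong commutativity, lands in the first summand $\sigma_2\dot{\times}^1(D_{\sigma_1}(\pi)\boxtimes\sigma_1) = (\sigma_2\times D_{\sigma_1}(\pi))\boxtimes\sigma_1$; this is exactly the bottom factorization.

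Now I read off what this says about supporting orbits. The composite $s\circ i_1\circ j_1$ is non-zero and equals the dashed map, so in particular the embedding $j_1$ does not factor through the kernel of $s$, i.e. the embedding $D_{\sigma_1}(\omega)\boxtimes\sigma_1 \hookrightarrow (\sigma_2\times\pi)_{N_i}$ has trivial supporting orbit (it survives to the top layer). Dualizing the roles via $\theta$ (Lemmas \ref{lem derivative integral under theta}, \ref{lem switch by theta} and Proposition \ref{prop trivial orbit and irred}) — or directly applying the Rd-analogue of Proposition \ref{prop trivial orbit and irred} — trivial supporting orbit for this embedding is equivalent to $(\sigma_1, I_{\sigma_2}(\pi))$ being a Rd-irreducible pair, which is the claim. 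The main obstacle I anticipate is bookkeeping: carefully checking that the geometric-lemma top layer for $(\sigma_2\times\pi)_{N_i}$ really is $\sigma_2\dot{\times}^1(\pi_{N_i})$ in the relevant range (this needs $i \le n(\pi)$, which holds since $D_{\sigma_1}(\pi)\neq 0$), and that "non-zero composite to the top layer" is genuinely equivalent to "direct summand" here — this is where the uniqueness results of Section \ref{s unique maps} and Proposition \ref{prop reformulate rd irr} must be applied with the correct left/right conventions, since the statement is about Rd-irreducibility (right derivative) while Proposition \ref{prop trivial orbit and irred} is phrased for Ld-irreducibility, so a $\theta$-twist is needed to transport it.
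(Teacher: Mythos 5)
There is a genuine gap in the final step, and it is precisely the step that the paper's three-step argument with the commutative diagram (\ref{eqn commutative diagram}) is designed to handle. You correctly observe (this much is just the definition of strong RdLi-commutativity) that the embedding $D_{\sigma_1}(\omega)\boxtimes\sigma_1 \hookrightarrow (\sigma_2\times\pi)_{N_i}$ survives to the top layer $\sigma_2\dot{\times}^1\pi_{N_i}$ and lands in $(\sigma_2\times D_{\sigma_1}(\pi))\boxtimes\sigma_1$. But you then jump to the conclusion that this trivial supporting orbit, for \emph{this} embedding, is equivalent via an Rd-analogue of Proposition \ref{prop trivial orbit and irred} to $(\sigma_1, \omega)$ being Rd-irreducible. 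That equivalence does not apply here: Proposition \ref{prop trivial orbit and irred} (and any left/right or $\theta$-twisted version of it) concerns the trivial supporting orbit of the \emph{canonical} embedding
\[
  \tau\boxtimes\sigma_1 \hookrightarrow \omega_{N_{n_1}} \hookrightarrow (\tau\times\sigma_1)_{N_{n_1}}, \qquad \tau := D_{\sigma_1}(\omega),
\]
where $\omega \cong I^R_{\sigma_1}(\tau)$ is realized as the unique submodule of $\tau\times\sigma_1$. The orbit decomposition here is for $P_{n(\tau),n_1}\backslash G/Q_{\cdot,n_1}$, which is a different parabolic datum from $P_{n_2,n(\pi)}\backslash G/Q_{\cdot, n_1}$ underlying $(\sigma_2\times\pi)_{N_i}$. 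Triviality of one orbit gives no direct information about triviality of the other; a $\theta$-twist interchanges left and right versions but does not interchange the two distinct parabolic data. A related problem appears already in your opening reduction: direct-summandness of $D_{\sigma_1}(\omega)\boxtimes\sigma_1$ in $\omega_{N_i}$ is not "equivalent" to non-vanishing of the composite through the top layer of $(\sigma_2\times\pi)_{N_i}$; the paper never asserts such an equivalence, and it is false (pre-RdLi-commutativity is strictly weaker than the summand condition).

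The missing ingredient is exactly the transfer argument. The paper builds the commutative square (\ref{eqn commutative diagram}) with target $\sigma_2 \times D_{\sigma_1}(\pi)\times\sigma_1$, embedding $\omega = I_{\sigma_2}(\pi) \cong I^R_{\sigma_1}(\tau)$ along two paths: through $\sigma_2\times\pi$ (where your argument, using strong commutativity and Rd-irreducibility of $(\sigma_1,\pi)$, controls the orbit) and through $\tau\times\sigma_1$ (where Proposition \ref{prop trivial orbit and irred} can be applied). After applying $N_{n_1}$, the top path gives trivial supporting orbit for $\tau\boxtimes\sigma_1 \hookrightarrow (\sigma_2\times D_{\sigma_1}(\pi)\times\sigma_1)_{N_{n_1}}$ (this uses, in addition to your observation, that $q'\circ i_2'\circ i_1'$ in the paper's diagram (\ref{eqn dual commut diagram 1}) is an isomorphism, which is Proposition \ref{prop trivial orbit and irred} applied to the Rd-irreducible pair $(\sigma_1,\pi)$). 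Only then, by commutativity of the square, does one deduce triviality along the bottom path $\tau\boxtimes\sigma_1 \hookrightarrow (\tau\times\sigma_1)_{N_{n_1}}$, at which point Proposition \ref{prop trivial orbit and irred} can finally be invoked to conclude Rd-irreducibility of $(\sigma_1, I_{\sigma_2}(\pi))$. Without this detour through the common target $\sigma_2\times D_{\sigma_1}(\pi)\times\sigma_1$, your conclusion does not follow.
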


\begin{proof}
\noindent
{\bf Step 1: Establish a commutative diagram.}
Let $\tau =D_{\sigma_1}\circ I_{\sigma_2}(\pi)$. Let $n_1=n(\sigma_1)$ and let $n_2=n(\sigma_2)$. By Lemma \ref{lem uniqueness of products} and Proposition \ref{prop strong commute imply commute}, the following diagram:
\begin{align} \label{eqn commutative diagram}
 \xymatrix{    &  \sigma_2 \times \pi \ar[rd] &  \\
       I_{\sigma_2}(\pi)\cong  I^R_{\sigma_1}( \tau) \ar[ru] \ar[rd] &                       &   \sigma_2 \times D_{\sigma_1}(\pi) \times \sigma_1  \\   
					           &  \tau \times \sigma_1 \ar[ru] &
 }
\end{align}
is commutative, where the left upward and downward maps are unique embeddings; the right downward map is induced from the embedding $\pi \hookrightarrow D_{\sigma_1}(\pi)\times \sigma_1$; and the right upward map is induced from the embedding $\tau \hookrightarrow \sigma_2 \times D_{\sigma_1}(\pi)$. \\


\noindent
{\bf Step 2: Show the triviality of the supporting orbit for the composition of top maps in (\ref{eqn commutative diagram}).}
Now we apply the Jacquet functor $N=N_{n_1}$ on the top maps, and obtain induced maps:
\begin{align} \label{eqn dual commut diagram 1}
 \xymatrix{ \tau \boxtimes \sigma_1 \ar@{^{(}->}[r]^{j} \ar[dr] &  I_{\sigma_2}(\pi)_{N_{n_1}} \ar@{^{(}->}[r]^{i_1} & (\sigma_2\times \pi)_{N_{n_1}} \ar@{^{(}->}[r]^{i_2} \ar@{->>}[d]^{q_1} & (\sigma_2 \times D_{\sigma_1}(\pi)\times \sigma_1)_{N_{n_1}} \ar@{->>}[d]^{q_2}  \\  
              & \sigma_2\dot{\times}^1(D_{\sigma_1}(\pi)\boxtimes \sigma_1) \ar@{^{(}->}[r]^{i_1'}     & \sigma_2 \dot{\times} (\pi_{N_{n_1}})  \ar@{->>}[rd]_{q'\circ  i_2'} \ar@{^{(}->}[r]^{i_2'} &   \sigma_2 \dot{\times}^1 (D_{\sigma_1}(\pi)\times \sigma_1)_{N_{n_1}} \ar@{->>}[d]^{q'}   \\ &  &    & \sigma_2\dot{\times}^1(D_{\sigma_1}(\pi)\boxtimes \sigma_1) },
\end{align}
where $j, i_1, i_2$ are the natural embeddings as before and $q_1, q_2, q'$ are quotient maps from the geometric lemma. (The left top trapezium commutes from strong commutations, and the square follows from the functoriality of the geometric lemma and definitions, and the triangle follows from the Rd-irreducibility and Proposition \ref{prop trivial orbit and irred}.)

 By the definition of strong commutation, we have that $q_1\circ i_1 \circ j \neq 0$. The irreducible pair of $(\sigma_1, \pi)$ also implies that $q'\circ i_2'\circ i_1'$ is an isomorphism by definition, and $(q' \circ i_2')\circ ( q_1\circ i_1)\neq 0$. Thus we have that the supporting orbit for the embedding $\tau \boxtimes \sigma_1 \hookrightarrow (\sigma_2 \times D_{\sigma_1}(\pi)\times \sigma_1)_N$ is trivial. \\

\noindent
{\bf Step 3: Deduce the Rd-irreducibility using bottom maps in the commutative diagram in (\ref{eqn commutative diagram}).}
Now the commutative diagram in the beginning implies that in the following diagram
\[
  \xymatrix{  \tau \boxtimes \sigma_1 \ar@{^{(}->}[r] &  I_{\sigma_2}(\pi)_{N_{n_1}} \ar@{^{(}->}[r]^k & ( \tau \times \sigma_1)_{N_{n_1}} \ar[r] \ar@{->>}[d]^l & (\sigma_2\times D_{\sigma_1}(\pi)\times \sigma_1)_{N_{n_1}} \ar@{->>}[d] \\
                           &  &  \tau\boxtimes \sigma_1 \ar@{^{(}->}[r] &  \sigma_2\dot{\times}^1( D_{\sigma_1}(\pi))\boxtimes \sigma_1) 
 },
\]
the composition of the top maps coincides with the composition of the top maps in (\ref{eqn dual commut diagram 1}). Thus, the composition of the top horizontal maps with the rightmost vertical map is nonzero, and so $l\circ k\neq 0$. In other words, the supporting orbit for the embedding $\tau \boxtimes \sigma_1 \hookrightarrow (\tau \times \sigma_1)_N$ is also trivial. By Proposition \ref{prop trivial orbit and irred}, $(\sigma_1, I_{\sigma_2}(\pi))$ is an Rd-irreducible pair.
\end{proof}

\subsection{Dual strongly commutative triples}

We now prove a duality on pre-commutativity. The key idea is that if we start from a pre-RdLi-commutative triple $(\sigma_1, \sigma_2, \pi)$ with the condition that $(\sigma_1, \pi)$ is a Rd-irreducible pair, then we can deduce the following commutative diagram from Lemma \ref{lem unique embedding left right}: for $n_1=n(\sigma_1)$, $n_2=n(\sigma_2)$ and $n=n(\pi)$, and let $\tau= D_{\sigma_1}\circ I_{\sigma_2}(\pi)$,
\[  \xymatrix{          & \tau_{N_{n-n_1}}\boxtimes \sigma_1  \ar[d]^{q_1} &           &   \\
         \sigma_2\boxtimes D_{\sigma_1}(\pi)\boxtimes \sigma_1 \ar[ur] \ar[dr] &      I_{\sigma_2}(\pi)_{N_{n_2,n-n_1,n_1}}  \ar[r]^i                                    &        ( \tau\times \sigma_1)_{N_{n_2,n-n_1,n_1}}   \\
				                                                       &   \sigma_2\boxtimes \pi_{N_{n_1}} \ar[u]_{q_2}     &    
					  }，
\]
where the maps are the natural maps from derivatives and integrals. Then one determines the supporting orbit arising from the embedding $i\circ q_1$ via the property of a Rd-irreducible pair and then deduce the trivial supporting orbit arising from $i \circ q_2$ via the commutative diagram.

\begin{proposition} \label{prop commutative triple}
Let $(\sigma_1, \sigma_2, \pi)$ be a pre-RdLi-commutative triple. Suppose $(\sigma_1, \pi)$ is a Rd-irreducible pair. Then $(\sigma_2, \sigma_1, I_{\sigma_2} \circ D_{\sigma_1}(\pi))$ is a pre-LdRi-commutative triple.
\end{proposition}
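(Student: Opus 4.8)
The plan is to deduce the statement from the RdLi-side machinery already developed, by transporting a suitable commutative diagram across the geometric lemma. First I would strengthen the hypotheses: since $(\sigma_1,\pi)$ is a Rd-irreducible pair and $(\sigma_1,\sigma_2,\pi)$ is pre-RdLi-commutative, Proposition \ref{thm pre implies strong}(1) makes $(\sigma_1,\sigma_2,\pi)$ strongly RdLi-commutative, and then Proposition \ref{prop strong commute imply commute} yields the commutation $\tau:=D_{\sigma_1}\circ I_{\sigma_2}(\pi)\cong I_{\sigma_2}\circ D_{\sigma_1}(\pi)$, so that the triple in the conclusion is $(\sigma_2,\sigma_1,\tau)$. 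Using $D^L_{\sigma_2}\circ I^L_{\sigma_2}=\mathrm{id}$ and $I^R_{\sigma_1}\circ D^R_{\sigma_1}=\mathrm{id}$ (valid since $D_{\sigma_1}(\pi)\neq 0$, hence $\tau\neq 0$), I would record the identifications
\[ I^R_{\sigma_1}(\tau)\cong I_{\sigma_2}(\pi),\qquad D^L_{\sigma_2}(\tau)\cong D_{\sigma_1}(\pi),\qquad D^L_{\sigma_2}\circ I^R_{\sigma_1}(\tau)\cong\pi; \]
in particular $D^L_{\sigma_2}(\tau)\neq 0$, which is the non-vanishing clause in the definition of pre-LdRi-commutativity for $(\sigma_2,\sigma_1,\tau)$. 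Finally, Proposition \ref{prop strong irr imply irr} shows that $(\sigma_1,I_{\sigma_2}(\pi))$, equivalently $(\sigma_1,I^R_{\sigma_1}(\tau))$, is again a Rd-irreducible pair; this is what will control the relevant supporting orbit.

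Next I would build the commutative diagram indicated just before the statement. Put $n_1=n(\sigma_1)$, $n_2=n(\sigma_2)$, $n=n(\pi)$ and $N=N_{n_2,n-n_1,n_1}$. Since $D^L_{\sigma_2}\circ D^R_{\sigma_1}(I_{\sigma_2}(\pi))\cong D_{\sigma_1}(\pi)\neq 0$, Lemma \ref{lem unique embedding left right} applied to $I_{\sigma_2}(\pi)$ shows that the space of embeddings $\sigma_2\boxtimes D_{\sigma_1}(\pi)\boxtimes\sigma_1\hookrightarrow I_{\sigma_2}(\pi)_{N}$ is one-dimensional. Hence the two evident such embeddings agree up to scalar: the one through $\tau_{N_{n_2,n-n_1}}\boxtimes\sigma_1$ (coming from $\tau=D^R_{\sigma_1}(I_{\sigma_2}(\pi))$ together with $\sigma_2\boxtimes D_{\sigma_1}(\pi)=\sigma_2\boxtimes D^L_{\sigma_2}(\tau)\hookrightarrow\tau_{N_{n_2,n-n_1}}$), and the one through $\sigma_2\boxtimes\pi_{N_{n_1}}$ (coming from $\pi=D^L_{\sigma_2}(I_{\sigma_2}(\pi))$ together with $D_{\sigma_1}(\pi)\boxtimes\sigma_1\hookrightarrow\pi_{N_{n_1}}$). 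Postcomposing with the map $i\colon I_{\sigma_2}(\pi)_{N}\hookrightarrow(\tau\times\sigma_1)_{N}$ induced by $I^R_{\sigma_1}(\tau)\hookrightarrow\tau\times\sigma_1$ produces exactly the diagram of the preamble, its commutativity following from this uniqueness together with the functoriality of the geometric lemma.

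With the diagram in hand I would run the supporting-orbit argument. Since $(\sigma_1,I^R_{\sigma_1}(\tau))$ is Rd-irreducible, the $\theta$-transported (right-hand) form of Proposition \ref{prop trivial orbit and irred} — see Sections \ref{ss left right switch} and \ref{ss switch left right jacquet functor} — shows that $\tau\boxtimes\sigma_1\hookrightarrow I^R_{\sigma_1}(\tau)_{N_{n_1}}\hookrightarrow(\tau\times\sigma_1)_{N_{n_1}}$ has the trivial supporting orbit, i.e. survives the projection onto the top layer. Applying the exact Jacquet functor $N_{n_2}$ on the first factor and using functoriality of the geometric lemma, the route through $\tau_{N_{n_2,n-n_1}}\boxtimes\sigma_1$ (the map $i\circ q_1$, precomposed with the embedding of $\sigma_2\boxtimes D_{\sigma_1}(\pi)\boxtimes\sigma_1$ and postcomposed with the top-layer projection of $(\tau\times\sigma_1)_{N}$) is non-zero; by the commutative diagram, so is the route through $\sigma_2\boxtimes\pi_{N_{n_1}}$ (the map $i\circ q_2$). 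Finally, this refined composition is obtained from the defining composition of pre-LdRi-commutativity for $(\sigma_2,\sigma_1,\tau)$ by the exact operation of further taking the $N_{n_1}$-Jacquet module of the $G_n$-factor and projecting to the identity orbit of the resulting nested geometric lemma; as an exact functor cannot turn a zero map into a nonzero one, the non-vanishing of the refined composition forces the pre-LdRi composition itself to be non-zero. This is pre-LdRi-commutativity of $(\sigma_2,\sigma_1,I_{\sigma_2}\circ D_{\sigma_1}(\pi))$.

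The step I expect to be the main obstacle is the two-levels-of-Jacquet bookkeeping: Lemma \ref{lem unique embedding left right} forces the refined flag $N_{n_2,n-n_1,n_1}$, whereas pre-LdRi-commutativity is phrased with the coarser $N_{n_2,n}$, so one must check carefully that the diagram genuinely commutes with each arrow being the intended unique map up to scalar, and that ``trivial supporting orbit'' at the refined level descends to the required non-vanishing at the coarse level. The use of the left/right ($\theta$-twisted) versions of Proposition \ref{prop trivial orbit and irred} and Proposition \ref{prop strong irr imply irr} is routine but should be invoked explicitly.
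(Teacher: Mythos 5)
Your proposal is correct and follows essentially the same route as the paper's proof: upgrade to strong commutativity and to Rd-irreducibility of $(\sigma_1,I_{\sigma_2}(\pi))$ via Propositions \ref{thm pre implies strong}(1) and \ref{prop strong irr imply irr}, establish the commutative diagram using the one-dimensionality from Lemma \ref{lem unique embedding left right}, apply the right-hand version of Proposition \ref{prop trivial orbit and irred} to get the trivial supporting orbit at the refined flag $N_{n_2,n-n_1,n_1}$, and descend to the coarse flag $N_{n_2,n}$. The paper carries out that last descent by the closure inclusion $P_{n_2+m,n_1}P_{n_2,m,n_1}\subset P_{n_2+m,n_1}P_{n_2,n}$, which is the precise orbit-geometric phrasing of your functoriality argument.
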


\begin{proof}

Let $n_1=n(\sigma_1)$, $n_2=n(\sigma_2)$ and $n=n(\pi)$. Let $\tau=I_{\sigma_2} \circ D_{\sigma_1}(\pi)$. By Proposition \ref{thm pre implies strong}, $(\sigma_1, \sigma_2, \pi)$ is a strongly RdLi-commutative triple. By Proposition \ref{prop strong irr imply irr}, $(I_{\sigma_2}(\pi), \sigma_1)$ is also a Ld-irreducible pair. By Proposition \ref{prop trivial orbit and irred}, the embedding 
\begin{align} \label{eqn embedding in dual pre}
  \tau \boxtimes \sigma_1 \hookrightarrow  I^R_{\sigma_1}(\tau)_{N_{n_1}} \hookrightarrow (\tau \times \sigma_1)_{N_{n_1}} 
\end{align}
again has the trivial supporting orbit. Let $m=n-n_1$. Since $P_{m+n_2, n_1}P_{n_2,m, n_1}=P_{m+n_2,n_1}P_{m+n_2,n_1}$, 
 the induced embedding in (\ref{eqn embedding in dual pre}),  via taking the Jacquet functor $N_m$ on the first factor of $G_{n(\tau)}\times G_{n_1}$,
\[   \sigma_2\boxtimes D^L_{\sigma_2}(\tau)\boxtimes \sigma_1 \hookrightarrow \tau_{N_m}\boxtimes \sigma_1 \hookrightarrow I_{\sigma_2}(\pi)_{N'} \hookrightarrow (\tau \times \sigma_1)_{N'},
\]
 still has the trivial supporting orbit. Here $N'=N_{n_2,m,n_1}$.

By Proposition \ref{prop strong commute imply commute}, $D_{\sigma_2}^L\circ I_{\sigma_1}^R(\tau)\cong \pi$. On the other hand, we can take the Jacquet functor $N_{n_2, m+n_1}$ first to have:
\begin{align} \label{eqn embedding for pre-dual}
  \sigma_2\boxtimes \pi \hookrightarrow I^R_{\sigma_1}(\tau)_{N_{m}} \hookrightarrow (\tau \times \sigma_1)_{N_{m}} .
\end{align}
Then, we further take the Jacquet functor $N_{m,n_1}$ on the second factor and we have an induced embedding:
\[  \sigma_2 \boxtimes D_{\sigma_1}(\pi)\boxtimes \sigma_1 \hookrightarrow \sigma_2 \boxtimes \pi_{N_{n_1}} \hookrightarrow I^R_{\sigma_1}(\tau)_{N'} \hookrightarrow (\tau \times \sigma_1)_{N'} .
\]
By the uniqueness in Lemma \ref{lem unique embedding left right}, this embedding agrees with the previous one. Hence, the embedding has the trivial supporting orbit, and the corresponding orbit takes the form $P_{n_2+m, n_1}P_{n_2,m,n_1}$. Since $P_{n_2+m, n_1}P_{n_2,m,n_1} \subset P_{n_2+m,n_1}P_{n_2,n}$, the supporting orbit for the embedding 
\[   \sigma_2 \boxtimes \pi \hookrightarrow I_{\sigma_2}(\pi)_{N_n} \rightarrow (\tau \times \sigma_1)_{N_n}
\]
takes the form $ P_{n_2+m,n_1}P_{n_2,n}$ since $\sigma_2\boxtimes D_{\sigma_1}(\pi)\boxtimes \sigma_1$ is a submodule of $\sigma_2\boxtimes \pi_{N_{n_1}}$, and so also has the trivial supporting orbit.
\end{proof}

\section{Constructing some pre-commutativity} \label{s construct precommut}

The main goal of this section is to prove Proposition \ref{prop completing pre comm}. We first illustrate some basic arguments in Lemma \ref{lem completing pre comm} and then extend to the full case of Proposition \ref{prop completing pre comm}. The key idea of proving Lemma \ref{lem completing pre comm} is to first use induction and some simple application on the geometric lemma to obtain control on the structure in Lemmas \ref{lem impossible trivial supp orbit} and \ref{lem trivial orbit of times steinberg two cases}, and one then incorporates with another induction to prove Proposition  \ref{prop completing pre comm}.

\subsection{Completing to a Rd-irreducible pair} \label{ss complete to Rd irr pair}

\begin{lemma} \label{lem non ld irr pair}
We fix a cuspidal representation $\rho$. Let $\mathfrak m$ be a multisegment such that for any segment $\Delta$ in $\mathfrak m$, $b(\Delta)=\rho$. Let $\Delta'$ be another segment such that $b(\Delta')=\rho$ and $\Delta' \subset \Delta$. If $\mathfrak m$ has more than one segment, then $(\mathrm{St}(\Delta'), \mathrm{St}(\mathfrak m))$ is not a Ld-irreducible pair.
\end{lemma}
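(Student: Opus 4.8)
The plan is to argue by contradiction: assuming $(\mathrm{St}(\Delta'),\mathrm{St}(\mathfrak m))$ is Ld-irreducible, I will show that a certain constituent of a Jacquet module occurs with multiplicity at least two, while it can occur in the socle (and in the cosocle) only once, which is incompatible with its being a direct summand.

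Some reductions first. Since every segment of $\mathfrak m$, as well as $\Delta'$, has $\rho$ as a common endpoint, any two of these segments are nested; hence $\mathfrak m$ is pairwise unlinked, $\mathrm{St}(\mathfrak m)=\mathrm{St}(\Delta_1)\times\cdots\times\mathrm{St}(\Delta_r)$ is irreducible and $\square$-irreducible, and $\mathrm{St}(\Delta')$ is essentially square-integrable, hence $\square$-irreducible. In particular Lemma \ref{lem unique embedding} (and its left analogue obtained via $\theta$), Lemma \ref{lem generic indecomp} and Lemma \ref{lem SI property of big derivative} all apply to $(\mathrm{St}(\Delta'),\mathrm{St}(\mathfrak m))$. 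Put $n=n(\mathrm{St}(\mathfrak m))$, $r'=n(\Delta')$ and $N=N_{r',\,n-r'}$; thus Ld-irreducibility of $(\mathrm{St}(\Delta'),\mathrm{St}(\mathfrak m))$ means exactly that $\mathrm{St}(\Delta')\boxtimes D^L_{\Delta'}(\mathrm{St}(\mathfrak m))$ is a direct summand of $\mathrm{St}(\mathfrak m)_N$.

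I then analyse $\mathrm{St}(\mathfrak m)_N$ by the geometric lemma, using that the maximal-parabolic Jacquet module of a Steinberg is irreducible and is a Steinberg on each factor. The layers whose left $G_{r'}$-factor has the cuspidal support of $\mathrm{St}(\Delta')$ are precisely those in which $\Delta'$ is split off as the ``top part'' of a single segment of $\mathfrak m$; since $\Delta'\subseteq\Delta$ for every $\Delta\in\mathfrak m$, there is one such layer for each index $i$, namely $\ell_i\cong\mathrm{St}(\Delta')\boxtimes\mathrm{Ind}\bigl(\mathrm{St}(\Delta_i\setminus\Delta')\times\prod_{j\neq i}\mathrm{St}(\Delta_j)\bigr)$, with $\mathrm{St}(\Delta_i\setminus\Delta')$ absent when $\Delta'=\Delta_i$. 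Choosing $i_\ast$ with $\Delta_{i_\ast}$ minimal in $\mathfrak m$, the segments occurring in the right-hand factor of $\ell_{i_\ast}$ are pairwise unlinked, so $\ell_{i_\ast}$ is irreducible and equals $\mathrm{St}(\Delta')\boxtimes D^L_{\Delta'}(\mathrm{St}(\mathfrak m))$. As $r\geq2$ there is a further index $i\neq i_\ast$; for it $\Delta_i\setminus\Delta'$ is linked to $\Delta_{i_\ast}$, so $\mathrm{St}(\Delta_i\setminus\Delta')\times\prod_{j\neq i}\mathrm{St}(\Delta_j)$ is reducible, and joining $\Delta_i\setminus\Delta'$ to $\Delta_{i_\ast}$ exhibits $D^L_{\Delta'}(\mathrm{St}(\mathfrak m))$ as one of its Jordan--H\"older constituents. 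Hence $[\,\mathrm{St}(\mathfrak m)_N:\mathrm{St}(\Delta')\boxtimes D^L_{\Delta'}(\mathrm{St}(\mathfrak m))\,]\geq2$.

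To conclude, recall from the uniqueness statements (Lemma \ref{lem unique embedding} and its dual) that $\mathrm{St}(\Delta')\boxtimes D^L_{\Delta'}(\mathrm{St}(\mathfrak m))$ occurs in the socle and in the cosocle of $\mathrm{St}(\mathfrak m)_N$ with multiplicity one each. If it were a direct summand, writing $\mathrm{St}(\mathfrak m)_N=(\mathrm{St}(\Delta')\boxtimes D^L_{\Delta'}(\mathrm{St}(\mathfrak m)))\oplus R$, then $R$ would still carry $\mathrm{St}(\Delta')\boxtimes D^L_{\Delta'}(\mathrm{St}(\mathfrak m))$ as a Jordan--H\"older constituent, yet — socle and cosocle being used up by the first summand — neither in its socle nor in its cosocle. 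For $r=2$ the module $R$ has length $2$, which makes this impossible and finishes the proof; for general $r$ one argues identically, after invoking the SI property of $\mathbb D^L_{\Delta'}(\mathrm{St}(\mathfrak m))$ (Lemma \ref{lem SI property of big derivative}) together with the explicit list $\ell_1,\dots,\ell_r$ of the layers of $\mathrm{St}(\mathfrak m)_N$ in this block, to see that the surplus copy of $\mathrm{St}(\Delta')\boxtimes D^L_{\Delta'}(\mathrm{St}(\mathfrak m))$ is trapped in a non-split extension with the socle copy and so cannot be split off. The step I expect to demand the most care is the multiplicity computation — identifying which constituents of the right-hand factors of the layers $\ell_i$ equal $D^L_{\Delta'}(\mathrm{St}(\mathfrak m))$, which is the combinatorics of joining linked segments — and, for $r\geq3$, ruling out that the surplus copy lies inside a long indecomposable summand of $R$ rather than in a non-split extension with the socle.
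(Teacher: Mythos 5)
Your layer analysis, the socle/cosocle multiplicity-one observation (from Lemma \ref{lem unique embedding}), and the count that $\mathrm{St}(\Delta')\boxtimes D^L_{\Delta'}(\mathrm{St}(\mathfrak m))$ occurs with multiplicity at least two are all correct and parallel the paper's first steps. But the concluding step has a genuine gap for $r\geq 3$, which you flag yourself, and the proposed fix via the SI property of $\mathbb D^L_{\Delta'}(\mathrm{St}(\mathfrak m))$ does not obviously close it. Writing $\sigma=\mathrm{St}(\Delta')$ and $M$ for the relevant block of $\mathrm{St}(\mathfrak m)_N$, suppose $M=\bigl(\sigma\boxtimes D^L_\sigma(\mathrm{St}(\mathfrak m))\bigr)\oplus R$. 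To contradict SI you would need $\mathrm{Hom}_{G_k}(\sigma,R)\neq 0$, but the fact that $R$ has $\sigma\boxtimes D^L_\sigma(\mathrm{St}(\mathfrak m))$ as an \emph{internal} Jordan--H\"older constituent does not by itself produce a $G_k$-map $\sigma\to R$, since $\mathrm{Hom}$ is only left exact; and if $\mathrm{Hom}_{G_k}(\sigma,R)=0$ then $\mathbb D^L_\sigma(\mathrm{St}(\mathfrak m))=D^L_\sigma(\mathrm{St}(\mathfrak m))$ satisfies SI trivially, so no contradiction arises.

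The paper's argument rests on a stronger observation that you stop short of: not only is $\ell_{i_*}$ irreducible, but \emph{every} layer $\ell_i$ has $\sigma\boxtimes D^L_\sigma(\mathrm{St}(\mathfrak m))$ as its \emph{unique} simple submodule (this is the paper's assertion that each $\omega_i$ has unique submodule isomorphic to $\omega_1$). From this, every simple submodule of $M$ must be $\sigma\boxtimes D^L_\sigma(\mathrm{St}(\mathfrak m))$, and by Lemma \ref{lem unique embedding} it occurs in the socle of $M$ exactly once; so $M$ has a simple socle and is therefore indecomposable. Since $r\geq 2$ gives at least two layers, $M$ has length $\geq 2$, so the only indecomposable summand of $M$ is $M$ itself and it is not of the form $\sigma\boxtimes\omega$ with $\omega$ irreducible. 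This handles all $r\geq 2$ uniformly, with no need for the multiplicity-$\geq 2$ count, the cosocle statement, or the SI property. Your proof can be repaired by replacing the SI step with this observation: any direct complement $R$ would have a simple submodule, that submodule is a simple submodule of $M$ and hence is again $\sigma\boxtimes D^L_\sigma(\mathrm{St}(\mathfrak m))$, contradicting the multiplicity-one statement you already have.
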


\begin{proof}
Let $n=l_{abs}(\mathfrak m)$. We write all the segments in $\mathfrak m$ as $\Delta_1=[a_1,0]_{\rho}, \ldots, \Delta_r=[a_r,0]_{\rho}$. Relabel the segments in $\mathfrak m$ such that $a_r \leq a_{r-1} \leq \ldots \leq a_1$. Write $\Delta'=[a',0]_{\rho}$. 

The representation $(\mathrm{St}(\Delta_1) \times \ldots \times \mathrm{St}(\Delta_r))_{N_r}$ is a $G_{n-l}\times G_l$-representation and we only consider the summand with the cuspidal support same as $\mathrm{St}(\Delta)$ in the $G_l$-factor. Then one applies the geometric lemma on 
\[   (\mathrm{St}(\Delta_1) \times \ldots \times \mathrm{St}(\Delta_r))_{N_l} 
\]
and those layers that contribute to the support is of the form:
\[ (\mathrm{St}(\Delta_1)\times \ldots \times \mathrm{St}(\Delta_{i-1})\times \mathrm{St}(\Delta_i') \times \mathrm{St}(\Delta_{i+1})\times \ldots \times \mathrm{St}(\Delta_r))\boxtimes \mathrm{St}(\Delta'),
\]
where $\Delta_i'=[a_i, a'-1]_{\rho}$. For simplicity, let
\[ \omega_i =(\mathrm{St}(\Delta_1)\times \ldots \times \mathrm{St}(\Delta_{i-1})\times \mathrm{St}(\Delta_i') \times \mathrm{St}(\Delta_{i+1})\times \ldots \times \mathrm{St}(\Delta_r)) .
\]
Note that $\omega_1$ is irreducible and generic.

We have that $\omega_i$ has unique submodule which is isomorphic to $\omega_1$. On the other hand, there is only one indecomposable direct summand that contains $\omega_1 \boxtimes \mathrm{St}(\Delta')$. Hence, all those layers have to be in that indecomposable direct summand. Now by our assumption that $\mathfrak m$ has more than one segment (i.e. $i>1$), we must have more than one layer and so the direct summand is not irreducible. In other words, $(\mathrm{St}(\Delta'), \mathrm{St}(\mathfrak m))$ is not a Ld-irreducible pair.
\end{proof}


For a segment $\Delta=[a,b]_{\rho}$, set $b(\Delta)=\nu_{\rho}^b\rho$.

\begin{lemma} \label{lem impossible trivial supp orbit}
Let $\tau$ be a smooth representation of $G_t$. Let $\widetilde{\Delta}$ be a segment and write $\widetilde{\Delta}=[\widetilde{a}, \widetilde{b}]_{\rho}$. Suppose $\mathfrak m$ satisfies the following properties:
\begin{itemize}
\item $D_{\mathfrak m}(\tau)\neq 0$;
\item any segment in $\mathfrak m$ takes the form $[a, \widetilde{b}]_{\rho}$ for some $a\leq \widetilde{a}$;
\item $\mathfrak m$ contains more than one segment.
\end{itemize}
Suppose there exists an embedding:
\[  \mathrm{St}(\mathfrak m) \hookrightarrow \mathrm{St}(\widetilde{\Delta}) \times \tau .
\]
 Let $\Delta$ be a shortest segment in $\mathfrak m$. Then the induced embedding
\[  \mathrm{St}(\Delta)\boxtimes \mathrm{St}(\mathfrak m-\Delta) \hookrightarrow  \mathrm{St}(\mathfrak m)_N \hookrightarrow (\mathrm{St}(\widetilde{\Delta}) \times \tau)_N
\] 
cannot have the trivial supporting orbit. Here $N=N_{l_1, l_2}$ for $l_1=l_{abs}(\Delta)$ and $l_2=l_{abs}(\mathfrak m-\Delta)$.
 
\end{lemma}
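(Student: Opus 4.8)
The plan is to argue by contradiction: suppose the induced embedding has the trivial supporting orbit, and extract a structural contradiction from the known socle/summand behaviour of Jacquet modules. First I would set up notation: write $\mathfrak m = \{\Delta, \Delta_2,\ldots,\Delta_k\}$ with $\Delta$ a shortest segment and $k \geq 2$ by hypothesis, and recall that since $D_{\mathfrak m}(\tau)\neq 0$, each $\mathrm{St}(\Delta_i)$ with $b(\Delta_i)=\nu_\rho^{\widetilde b}\rho$ is ``removable'' from $\tau$. The key auxiliary input will be Lemma \ref{lem non ld irr pair}: since $\mathfrak m - \Delta$ still contains a segment with bottom $\nu_\rho^{\widetilde b}\rho$ (in fact all segments of $\mathfrak m$ have that bottom) and $\mathrm{St}(\widetilde\Delta)$ has bottom $\nu_\rho^{\widetilde b}\rho$ with $\widetilde\Delta$ containing each segment of $\mathfrak m$ up to the relevant inclusion, I would verify that the pair $(\mathrm{St}(\widetilde\Delta), \tau)$ — or rather the relevant sub-pair — fails to be Ld-irreducible. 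More precisely, I would aim to put ourselves in the hypotheses of Lemma \ref{lem indecomp trivial embedd}, which is exactly designed to show that a certain embedding $\sigma \boxtimes D^L_\sigma(\omega) \hookrightarrow (\sigma\times\pi)_{N}$ cannot have the trivial supporting orbit when $(\sigma,\pi)$ is not Ld-irreducible and $\omega$, $\sigma$ are products of Steinbergs over pairwise unlinked multisegments.

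The main steps, in order, would be: (1) Identify $\sigma = \mathrm{St}(\widetilde\Delta)$ (a single Steinberg, hence trivially of the required form), $\omega = \mathrm{St}(\mathfrak m)$, and $\pi = \tau$; note $\mathfrak m$ is pairwise unlinked since all its segments share the endpoint $\widetilde b$ on the right, so they are nested and hence unlinked, giving $\omega = \mathrm{St}(\mathfrak m)$ of the right form. (2) Check that $D^L_{\mathrm{St}(\widetilde\Delta)}(\mathrm{St}(\mathfrak m))$ is exactly $\mathrm{St}(\mathfrak m - \Delta)$ up to truncating one segment — here I would use that $\Delta$ is a shortest segment, so cutting $\widetilde\Delta$ off on the left from $\mathrm{St}(\mathfrak m)$ removes precisely the shortest segment $\Delta$ (this is the combinatorics of left derivatives of ladder-type representations, and is where I need to be a little careful matching $\varepsilon^L$ invariants). (3) Establish that $(\mathrm{St}(\widetilde\Delta), \tau)$ is not Ld-irreducible: this is where Lemma \ref{lem non ld irr pair} (or a direct geometric-lemma count analogous to its proof) enters, using that $\mathfrak m \subset \mathfrak{mx}$-type data forces more than one relevant layer in $\tau_{N}$ because $k\geq 2$. (4) Apply Lemma \ref{lem indecomp trivial embedd} to conclude the embedding $\mathrm{St}(\Delta)\boxtimes\mathrm{St}(\mathfrak m - \Delta) \hookrightarrow (\mathrm{St}(\widetilde\Delta)\times\tau)_N$ does not have the trivial supporting orbit, which is the claim.

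I expect the main obstacle to be step (3) — verifying that $(\mathrm{St}(\widetilde\Delta),\tau)$ is genuinely not Ld-irreducible. Lemma \ref{lem non ld irr pair} is stated for $\tau$ itself of the form $\mathrm{St}(\mathfrak m)$ with all segments ending at $\rho$, but here $\tau$ is merely an arbitrary smooth representation admitting $\mathrm{St}(\mathfrak m)$ as a derivative datum; so I would either need to pass to the relevant summand/subquotient of $\tau$ carrying the right cuspidal support, or reprove the non-Ld-irreducibility directly by running the geometric-lemma filtration on $(\mathrm{St}(\widetilde\Delta)\times\tau)_N$ and exhibiting at least two layers supporting $\mathrm{St}(\Delta)\boxtimes\mathrm{St}(\mathfrak m - \Delta)$ — one ``trivial-orbit'' layer and one coming from a nontrivial Weyl element, the latter existing precisely because $\mathfrak m$ has more than one segment and $\Delta$ is not the whole of $\widetilde\Delta$. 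A secondary technical point is ensuring the shortest-segment choice makes $D^L_{\mathrm{St}(\widetilde\Delta)}$ act cleanly; if $\mathfrak m$ has several shortest segments one picks any one, and the unlinkedness guarantees the derivative and the truncation are well-defined and unique by Lemma \ref{lem unique embedding}.
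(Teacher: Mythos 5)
You have identified the right two ingredients (Lemma \ref{lem non ld irr pair} and Lemma \ref{lem indecomp trivial embedd}) and the right contradiction to aim for, but there is a genuine gap in step (4) that the paper's proof has to work to close and that your proposal skips.

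Lemma \ref{lem indecomp trivial embedd} concerns the embedding $\mathrm{St}(\widetilde{\Delta})\boxtimes D^L_{\mathrm{St}(\widetilde{\Delta})}(\mathrm{St}(\mathfrak m))\hookrightarrow(\mathrm{St}(\widetilde{\Delta})\times\tau)_{N'}$ where $N'=N_{l_{abs}(\widetilde{\Delta}),\,t}$; that is, the first factor is $\mathrm{St}(\widetilde{\Delta})$ itself and the Jacquet functor matches the parabolic for $\mathrm{St}(\widetilde{\Delta})\times\tau$. The embedding whose supporting orbit you must control, however, is $\mathrm{St}(\Delta)\boxtimes\mathrm{St}(\mathfrak m-\Delta)\hookrightarrow(\mathrm{St}(\widetilde{\Delta})\times\tau)_{N}$ with $N=N_{l_1,l_2}$, $l_1=l_{abs}(\Delta)$. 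Since $\Delta$ properly contains $\widetilde{\Delta}$ in general (any $a\le\widetilde a$), these are different Jacquet modules and different submodules; Lemma \ref{lem indecomp trivial embedd} cannot be applied to the original embedding as stated. The missing bridge is precisely what the paper does: assuming the trivial supporting orbit, apply a further Jacquet functor to the $\mathrm{St}(\Delta)$-factor so as to produce $\mathrm{St}(\widetilde{\Delta})\boxtimes\mathrm{St}(\Delta\setminus\widetilde{\Delta})$, invoke Corollary \ref{example on trivial supporting orbit by irreducible} to keep the supporting orbit trivial under this refinement, and then use Proposition \ref{prop composition supp orbit} to pass back to the coarser Jacquet module $N'$ and deduce that the embedding $\mathrm{St}(\widetilde{\Delta})\boxtimes\mathrm{St}\bigl((\mathfrak m-\Delta)+(\Delta\setminus\widetilde{\Delta})\bigr)\hookrightarrow(\mathrm{St}(\widetilde{\Delta})\times\tau)_{N'}$ has the trivial supporting orbit. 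Only at that point does $(\mathrm{St}(\widetilde{\Delta}),\mathrm{St}(\mathfrak m))$ failing to be Ld-irreducible (Lemma \ref{lem non ld irr pair}) contradict Lemma \ref{lem indecomp trivial embedd}. Without this refine-then-coarsen step your argument does not connect.

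Your secondary worry in step (3) is also misplaced: the Ld-irreducibility hypothesis of Lemma \ref{lem indecomp trivial embedd} is about the pair $(\mathrm{St}(\widetilde{\Delta}),\mathrm{St}(\mathfrak m))$, i.e.\ $(\sigma,\omega)$, not $(\mathrm{St}(\widetilde{\Delta}),\tau)$. The irreducible submodule $\omega=\mathrm{St}(\mathfrak m)$ is what is being decomposed, and Lemma \ref{lem non ld irr pair} applies to it directly because every segment of $\mathfrak m$ ends at $\nu_\rho^{\widetilde b}\rho$ and $\widetilde{\Delta}$ is contained in all of them. No passage to a summand of $\tau$ is needed.
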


\begin{proof}

 If the embedding has the trivial supporting orbit, then, by definition, we have:
\[    \mathrm{St}(\Delta)\boxtimes \mathrm{St}(\mathfrak m-\Delta) \hookrightarrow  \mathrm{St}(\widetilde{\Delta}) \times (\tau_{N'}) ,
\]
where $N'=N_{n(\tau)-l_{abs}(\mathfrak m), l_{abs}(\mathfrak m)}$. We further take the Jacquet functor on $\mathrm{St}(\Delta)$ to give
\[   \mathrm{St}(\widetilde{\Delta}) \boxtimes \mathrm{St}(\Delta\setminus \widetilde{\Delta}),
\]
where $\Delta\setminus \widetilde{\Delta}$ is the set-theoretic subtraction. Then the embedding
\[   \mathrm{St}(\widetilde{\Delta}) \boxtimes \mathrm{St}(\Delta\setminus \widetilde{\Delta}) \boxtimes \mathrm{St}(\mathfrak m-\Delta) \hookrightarrow (\mathrm{St}(\widetilde{\Delta}) \times \tau_{N'})_{N''},
\]
where $N''=N_{l_a(\widetilde{\Delta}\setminus \Delta)}$ is the Jacquet functor taking on the first factor, also has the trivial supporting orbit by Corollary \ref{example on trivial supporting orbit by irreducible}, and the supporting orbit takes the form $P_{a,b+c}P_{a,b,c}$ ($a=l_{abs}(\widetilde{\Delta})$, $b=l_{abs}(\Delta\setminus \widetilde{\Delta})$ and $c=l_{abs}(\mathfrak m-\Delta)$. Since the above embedding is obtained by taking a Jacquet functor on the following embedding
\begin{align} \label{eqn embedding in lem}
  \mathrm{St}(\widetilde{\Delta}) \boxtimes \mathrm{St}(\mathfrak m-\Delta+\Delta\setminus \widetilde{\Delta}) \hookrightarrow (\mathrm{St}(\widetilde{\Delta}) \times \tau)_N ,
\end{align}
we also have that (\ref{eqn embedding in lem}) also has the trivial supporting orbit and the supporting orbit takes the form $P_{a,b+c}P_{a,b+c}$. However, since $\mathfrak m$ contains more than one segment, Lemma \ref{lem non ld irr pair} implies that $(\mathrm{St}(\widetilde{\Delta}), \mathrm{St}(\mathfrak m))$ is not a Ld-irreducible pair. This contradicts to Lemma \ref{lem indecomp trivial embedd}.
\end{proof}

\begin{lemma} \label{lem trivial orbit of times steinberg two cases}
Fix a cuspidal representation $\rho$ and an integer $b$. Let $\mathfrak m$ be a multisegment such that any segment in $\mathfrak m$ takes the form $[a,b]_{\rho}$ for some integer $a$. Let $\widetilde{\Delta}=[\widetilde{a}, \widetilde{b}]_{\rho}$ for some $\widetilde{b} \geq b$ and $\widetilde{b} \geq \widetilde{a} \geq b$. Let $l=l_{abs}(\mathfrak m)$. Let $\pi \in \mathrm{Irr}(G_n)$. If $\mathrm{St}(\mathfrak m)$ embeds to $\mathrm{St}(\widetilde{\Delta})\times \pi$, then the supporting orbit for the embedding:
\[    D_{\mathfrak m}(\pi) \boxtimes \mathrm{St}(\mathfrak m) \hookrightarrow  (\mathrm{St}(\widetilde{\Delta})\times \pi)_{N_l} 
\]
can be either one of the followings:
\begin{itemize}
\item the supporting orbit is trivial; or
\item the supporting orbit is determined by the minimal representative $w$ in $(S_s \times S_n)\setminus S_{s+n}/ (S_{s+n-l}\times S_l)$ given by:
\[  w^{-1}(p)=p \quad \mbox{ for $p=1, \ldots, s-s'$ } 
\]
\[  w^{-1}(p)=(s+n-l)+(p-(s-s'))\quad \mbox{ for $p=s-s'+1, \ldots, s$ } .
\]
and, furthermore, 
$D_{\mathfrak m}(\pi) \boxtimes \mathrm{St}(\mathfrak m) \hookrightarrow \mathrm{St}([\widetilde{a}, b]_{\rho}) \dot{\times}^2(\mathrm{St}([b+1, \widetilde{b}]_{\rho}) \dot{\times}^1 \pi_{N_{l-l'}})$, where $s=l_{abs}([\widetilde{a},b]_{\rho})$ and $s'=l_{abs}([b+1, \widetilde{b}]_{\rho})$.
\end{itemize}
\end{lemma}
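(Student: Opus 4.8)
The plan is to read both alternatives off the geometric‑lemma filtration of $(\mathrm{St}(\widetilde\Delta)\times\pi)_{N_l}$, in the style of the proofs of Lemma~\ref{lem non ld irr pair} and Lemma~\ref{lem impossible trivial supp orbit}. First I would note that, since every segment of $\mathfrak m$ has right endpoint $\nu_{\rho}^b\rho$, the multisegment $\mathfrak m$ is pairwise unlinked, so $\mathrm{St}(\mathfrak m)$ is irreducible and $\square$‑irreducible; as $D_{\mathfrak m}(\pi)$ is also irreducible, $D_{\mathfrak m}(\pi)\boxtimes\mathrm{St}(\mathfrak m)$ is irreducible, hence it injects into a single layer of that filtration and has a well‑defined supporting orbit. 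By Section~\ref{ss geo lem} the layers are indexed by the amount $j$ (necessarily a multiple of $n(\rho)$, with $0\le j\le\min(l_{abs}(\widetilde\Delta),l)$) removed from the $\mathrm{St}(\widetilde\Delta)$‑factor, and the $j$‑th layer is, after unwinding the twist $\phi$ and transitivity of induction, exactly $\mathrm{St}(\widetilde\Delta^{\mathrm{low}}_j)\dot{\times}^2\big(\mathrm{St}(\widetilde\Delta^{\mathrm{up}}_j)\dot{\times}^1\pi_{N_{l-j}}\big)$, where $\widetilde\Delta=\widetilde\Delta^{\mathrm{up}}_j\cup\widetilde\Delta^{\mathrm{low}}_j$ with $\widetilde\Delta^{\mathrm{low}}_j=[\widetilde a,\widetilde a+j/n(\rho)-1]_\rho$ the bottom sub‑segment of $\widetilde\Delta$ (using the paper's convention that the higher‑endpoint part of a Steinberg Jacquet module sits on the left factor).

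Next I would pin down $j$ by a cuspidal‑support count, just as in the proof of Lemma~\ref{lem non ld irr pair}. The embedding $D_{\mathfrak m}(\pi)\boxtimes\mathrm{St}(\mathfrak m)\hookrightarrow\mathrm{St}(\widetilde\Delta^{\mathrm{low}}_j)\dot{\times}^2(\cdots)$ forces $\mathrm{csupp}(\mathrm{St}(\widetilde\Delta^{\mathrm{low}}_j))\subseteq\mathrm{csupp}(\mathrm{St}(\mathfrak m))\subseteq\{\nu_{\rho}^c\rho:c\le b\}$, so $\widetilde a+j/n(\rho)-1\le b$, i.e. $j\le s:=l_{abs}([\widetilde a,b]_\rho)$. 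Combined with $j\in n(\rho)\mathbb Z_{\ge0}$ and the hypothesis $\widetilde a\ge b$ (so that $s=n(\rho)$), this leaves exactly $j=0$ or $j=s$. (If one instead allowed the bottom of $\widetilde\Delta$ to descend strictly below $b$, so that intermediate $0<j<s$ were a priori available, these would be discarded by a Lemma~\ref{lem impossible trivial supp orbit}‑type argument: after a partial derivative a strictly intermediate cut yields a trivial supporting orbit for an embedding of a Steinberg module that is forbidden by the failure of Ld‑irreducibility recorded in Lemma~\ref{lem indecomp trivial embedd}.)

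It then remains to name the two surviving orbits. For $j=0$ one has $\widetilde\Delta^{\mathrm{low}}_0=\emptyset$ and $\widetilde\Delta^{\mathrm{up}}_0=\widetilde\Delta$, so the layer is the top layer $\mathrm{St}(\widetilde\Delta)\dot{\times}^1\pi_{N_l}$, i.e. the supporting orbit is trivial. For $j=s$ one has $\widetilde\Delta^{\mathrm{up}}_s=[b+1,\widetilde b]_\rho$ and $\widetilde\Delta^{\mathrm{low}}_s=[\widetilde a,b]_\rho$, so the layer is precisely $\mathrm{St}([\widetilde a,b]_\rho)\dot{\times}^2\big(\mathrm{St}([b+1,\widetilde b]_\rho)\dot{\times}^1\pi_{N_{l-s}}\big)$, which is the asserted refined embedding; translating the corresponding contingency table — the entire bottom $[\widetilde a,b]_\rho$‑block of $\widetilde\Delta$ moves into the $G_l$‑block, the top $[b+1,\widetilde b]_\rho$‑block stays, and the remaining $l-s$ of the $G_l$‑block is taken from $\pi$ — into the minimal double‑coset representative produces the permutation $w$ displayed in the statement.

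The step I expect to be fiddliest is this last matching: reconciling the raw geometric‑lemma layer for $\mathrm{St}(\widetilde\Delta)\times\pi$ with the iterated $\dot{\times}^2,\dot{\times}^1$ description and then with the explicit formula for $w$, while keeping straight the paper's left/right convention for Steinberg Jacquet modules and the reordering twist $\phi$. The only other delicate point is the exclusion of strictly intermediate cuts in the more general setting, which genuinely needs the Ld‑irreducibility and big‑derivative input of Lemmas~\ref{lem indecomp trivial embedd} and~\ref{lem impossible trivial supp orbit} rather than cuspidal support alone.
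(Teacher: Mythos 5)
Your proof correctly identifies the geometric-lemma layers and the cuspidal-support upper bound, but it hinges on reading the hypothesis ``$\widetilde b\geq\widetilde a\geq b$'' at face value, which forces $\widetilde a=b$ (since $[\widetilde a,b]_\rho$ must be a non-empty segment for the second bullet to make sense) and therefore $s=n(\rho)$, collapsing the lemma to a degenerate case in which $j\in\{0,s\}$ is automatic. That reading cannot be what the lemma is for: in Lemma~\ref{lem completing pre comm}, where it is applied, $\widetilde\Delta=[\widetilde a,\widetilde b]_\rho$ is not constrained by $\widetilde a\geq b$, and the conclusion $\mathrm{St}([\widetilde a,b]_\rho)\dot\times^2(\cdots)$ is meant to carry a genuine $[\widetilde a,b]_\rho$-block. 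The displayed hypothesis is almost certainly a typo for $\widetilde a\leq b$, and under that hypothesis your argument has a real gap: the cuspidal-support count only yields $j\leq s$, and the intermediate values $0<j<s$ (equivalently $\widetilde a\leq i^*<b$) are not eliminated.

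The paper closes this gap with a mechanism absent from your sketch. Writing the $i^*$-th layer as $\lambda_{i^*}=\mathrm{St}(\Delta_{i^*})\dot\times^2\bigl(\mathrm{St}(\overline\Delta_{i^*})\dot\times^1\pi_{N_{\bullet}}\bigr)$ with $\Delta_{i^*}=[\widetilde a,i^*]_\rho$, it applies Frobenius reciprocity in the $\dot\times^2$-direction, converting the embedding of $D_{\mathfrak m}(\pi)\boxtimes\mathrm{St}(\mathfrak m)$ into $\lambda_{i^*}$ to a map out of $D_{\mathfrak m}(\pi)\boxtimes\mathrm{St}(\mathfrak m)_{N_{l-l_{abs}(\Delta_{i^*})}}$ whose $G_{l_{abs}(\Delta_{i^*})}$-factor must land in $\mathrm{St}(\Delta_{i^*})$. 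Because every segment of $\mathfrak m$ has right endpoint $b$, every composition factor $\tau_1\boxtimes\tau_2$ of any proper Jacquet module of $\mathrm{St}(\mathfrak m)$ has $\nu_\rho^b\rho\in\mathrm{csupp}(\tau_1)$ on the \emph{left} factor. Matching this against $\mathrm{St}(\Delta_{i^*})$ forces $\nu_\rho^b\rho\in\Delta_{i^*}$, i.e. $i^*\geq b$; together with the cuspidal-support bound $i^*\leq b$ this pins $i^*=b$. This is the step you are missing for the non-degenerate case, and it is a direct argument, not an appeal to Lemmas~\ref{lem indecomp trivial embedd} and~\ref{lem impossible trivial supp orbit} as you propose — those are deployed \emph{later}, in Lemma~\ref{lem completing pre comm} and Proposition~\ref{prop completing pre comm}, precisely to \emph{rule out} the second alternative provided by the present lemma, and they concern a different embedding ($\mathrm{St}(\Delta)\boxtimes\mathrm{St}(\mathfrak m-\Delta)$ for a shortest $\Delta$), so the ``Ld-irreducibility/big-derivative'' route you sketch is not obviously applicable here. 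Your final matching of the layer with the minimal double-coset representative $w$ is correct.
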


\begin{proof}
We apply the geometric lemma on $(\mathrm{St}(\widetilde{\Delta})\times \pi)_{N_l}$ and the layers take the form:
\[   \mathrm{St}(\Delta_i) \dot{\times}^2(\mathrm{St}(\overline{\Delta}_i) \dot{\times}^1 \pi_{N_{l_i}}) ,
\]
where $\Delta_i=[\widetilde{a} ,i]_{\rho}$, $\overline{\Delta}_i=[i+1, \widetilde{b}]_{\rho}$ and $l_i=l-l_{abs}(\overline{\Delta}_i)$. We shall denote such layer by $\lambda_i$. 

Then, we must have that 
\[  D_{\mathfrak m}(\pi) \boxtimes \mathrm{St}(\mathfrak m) \hookrightarrow \lambda_{i^*}
\]
for some $i^*$. By comparing cuspidal support on the term $\mathrm{St}(\mathfrak m)$, we must then have that $\widetilde{a} \leq i^* \leq b$. It remains to show that when $i^* =b$. To this end, let $\tau=\mathrm{St}(\overline{\Delta}_i) \dot{\times}^1 \pi_{N_{l_i}}$ and we apply Frobenius reciprocity on
\[  \mathrm{Hom}(D_{\mathfrak m}(\pi)\boxtimes \mathrm{St}(\mathfrak m)_{N_{l-s_i}},   \mathrm{St}(\Delta_i) \dot{\times}^2 \tau )\cong \mathrm{Hom}_{G_{n-l}\times G_{s_i}\times G_{l_i}}(D_{\mathfrak m}(\pi) \boxtimes \mathrm{St}(\mathfrak m)_{N_{l-s_i}}, (\mathrm{St}(\Delta_i)\boxtimes \tau)^{\phi} ),
\]
where $s_i=l_{abs}(\overline{\Delta}_i)=l-l_i$, and $\phi$ is a natural twist that switches from $G_{s_i} \times G_{n-l}\times G_{l_i}$-representations to 
$G_{n-l}\times G_{s_i}\times G_{l_i}$-representations. 

Now, one studies the composition factor of $\mathrm{St}(\mathfrak m)_{N_{l-s_i}}$. A standard argument of using the geometric lemma and the Jacquet functors on generalized Steinberg modules, one has that any composition factor of $\mathrm{St}(\mathfrak m)_{N_{l-s_i}}$ has the form $\tau_1\boxtimes \tau_2$ with $\nu_{\rho}^b\rho \in \mathrm{supp}(\tau_1)$. This forces that $\nu_{\rho}^b \rho \in \overline{\Delta}_{i^*}$ and so, with $i^*\leq b$, we can only have $i^*=b$, as desired. 
\end{proof}

In the following proposition, we show that the pre-commutativity for an essentially square-integrable representation gives that of a larger generic representation, and we may first note that an easier case below is when both $\Delta=\widetilde{\Delta}$ (in the notation of Proposition \ref{prop completing pre comm}) are singletons. The proof for the general case requires deeper structure in Lemma \ref{lem impossible trivial supp orbit}.

\begin{lemma} \label{lem completing pre comm}
Let $\Delta=[a,b]_{\rho}, \widetilde{\Delta}=[\widetilde{a},\widetilde{b}]_{\rho}$ be segments. Let $\mathfrak m$ be a multisegment such that any segment in $\mathfrak m$ takes the form $[a', b]_{\rho}$ for some $a'\geq a$. Let $\pi \in \mathrm{Irr}$. If $(\mathrm{St}(\Delta), \mathrm{St}(\widetilde{\Delta}), \pi)$ is a pre-RdLi-commutative triple and $D_{\mathfrak m}(\pi)\neq 0$, then $(\mathrm{St}(\mathfrak m), \mathrm{St}(\widetilde{\Delta}), \pi)$ is a pre-RdLi-commutative triple.

\end{lemma}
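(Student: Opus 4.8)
The plan is to run an induction on the number of segments in $\mathfrak m$. If $\mathfrak m$ consists of a single segment $[a',b]_\rho$ with $a' \geq a$, then $\mathrm{St}(\mathfrak m) = \mathrm{St}([a',b]_\rho)$ and, since $a' \geq a$, the hypothesis reduces to the already-known case handled by Lemma \ref{lem pre commutative a preceding case} (if $a' > a$, the pair $(\mathrm{St}([a',b]_\rho),\mathrm{St}(\widetilde\Delta),\pi)$ is automatically pre-RdLi-commutative because $a' > a$ puts us in the "preceding" regime; the degenerate case $a' = a$ is exactly the hypothesis). So suppose $\mathfrak m$ contains more than one segment and write $\mathrm{St}(\mathfrak m) \hookrightarrow \mathrm{St}(\Delta_0) \times \mathrm{St}(\mathfrak m - \Delta_0)$ where $\Delta_0$ is a shortest segment in $\mathfrak m$ (or some other convenient choice; I would pick whichever makes the geometric-lemma bookkeeping cleanest, and the right choice is probably the shortest segment so that Lemma \ref{lem impossible trivial supp orbit} applies). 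The key structural input is that $D_{\mathfrak m}(\pi)\neq 0$ lets us split $D_{\mathfrak m} = D_{\mathfrak m - \Delta_0} \circ D_{\Delta_0}$ (up to reordering), and similarly $I^L_{\widetilde\Delta}$ commutes past $D_{\Delta_0}$ by the inductive/known segment case, so that we can reduce the pre-commutativity of $(\mathrm{St}(\mathfrak m),\mathrm{St}(\widetilde\Delta),\pi)$ to that of $(\mathrm{St}(\mathfrak m - \Delta_0),\mathrm{St}(\widetilde\Delta), D_{\Delta_0}(\pi))$ together with a statement about how the shortest-segment derivative interacts with the integral.

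More precisely, I would set up the composite
\[
D_{\mathfrak m}\circ I^L_{\widetilde\Delta}(\pi) \boxtimes \mathrm{St}(\mathfrak m) \hookrightarrow \big(I^L_{\widetilde\Delta}(\pi)\big)_{N_l} \hookrightarrow \big(\mathrm{St}(\widetilde\Delta)\times \pi\big)_{N_l} \twoheadrightarrow \mathrm{St}(\widetilde\Delta)\dot\times^1 \pi_{N_l},
\]
and show the composite is non-zero, which is the definition of pre-RdLi-commutativity. The strategy is to factor the Jacquet functor $N_l = N_{l_1,l_2}$ (with $l_1 = l_{abs}(\Delta_0)$, $l_2 = l_{abs}(\mathfrak m - \Delta_0)$) and track the supporting orbit. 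Using the inductive hypothesis applied to $(\mathrm{St}(\mathfrak m - \Delta_0), \mathrm{St}(\widetilde\Delta), D_{\Delta_0}(\pi))$ — valid because $\mathfrak m - \Delta_0$ still has all segments of the form $[a',b]_\rho$ with $a' \geq a$ and $D_{\mathfrak m - \Delta_0}(D_{\Delta_0}(\pi)) = D_{\mathfrak m}(\pi) \neq 0$ — one gets that the "inner" composite is non-zero. The remaining issue is to promote this to non-vanishing of the full composite for $\mathfrak m$; this is where Lemma \ref{lem impossible trivial supp orbit} and Lemma \ref{lem trivial orbit of times steinberg two cases} enter: they control which supporting orbits in the geometric-lemma filtration of $(\mathrm{St}(\widetilde\Delta)\times\pi)_{N_l}$ can carry the embedding of $D_{\mathfrak m}(\pi)\boxtimes\mathrm{St}(\mathfrak m)$, and in particular rule out all but the "trivial" (top-layer) orbit when $\mathfrak m$ has more than one segment, by deriving a contradiction with the non-Ld-irreducibility statement of Lemma \ref{lem non ld irr pair}.

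\textbf{Main obstacle.} The delicate point is the orbit analysis: a priori the embedding $D_{\mathfrak m}(\pi)\boxtimes\mathrm{St}(\mathfrak m)\hookrightarrow(\mathrm{St}(\widetilde\Delta)\times\pi)_{N_l}$ could land in a non-top layer of the geometric-lemma filtration, and one must show that any such possibility either (a) still maps non-trivially onto the top layer $\mathrm{St}(\widetilde\Delta)\dot\times^1\pi_{N_l}$ after the projection, or (b) is impossible. Case (b) is exactly the content of Lemmas \ref{lem impossible trivial supp orbit} and \ref{lem trivial orbit of times steinberg two cases}: by taking a further Jacquet functor along the shortest segment and using Corollary \ref{example on trivial supporting orbit by irreducible} together with the structural fact that $(\mathrm{St}(\widetilde\Delta),\mathrm{St}(\mathfrak m))$ is not Ld-irreducible (Lemma \ref{lem non ld irr pair}, since $\mathfrak m$ has $\geq 2$ segments) and Lemma \ref{lem indecomp trivial embedd}, one obtains the needed contradiction ruling out the bad orbits. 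Assembling the commutative diagram that links the $\mathfrak m$-composite to the $(\mathfrak m - \Delta_0)$-composite over $D_{\Delta_0}(\pi)$, with all the Jacquet-functor transitivities and geometric-lemma functorialities lined up correctly, will be the bulk of the work; the uniqueness statements of Lemma \ref{lem unique embedding} and Lemma \ref{lem unique embedding left right} are what guarantee the diagram commutes.
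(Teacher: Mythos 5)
Your outline correctly identifies the relevant ingredients --- the supporting-orbit analysis via Lemmas \ref{lem impossible trivial supp orbit} and \ref{lem trivial orbit of times steinberg two cases}, with Lemma \ref{lem non ld irr pair}, Corollary \ref{example on trivial supporting orbit by irreducible} and Lemma \ref{lem indecomp trivial embedd} feeding into them --- and you correctly locate the main difficulty in deciding which layer of the geometric-lemma filtration carries the embedding $D_{\mathfrak m}\circ I^L_{\widetilde\Delta}(\pi)\boxtimes\mathrm{St}(\mathfrak m)\hookrightarrow(\mathrm{St}(\widetilde\Delta)\times\pi)_{N_l}$. However, the inductive framework you set up contains two genuine gaps, both caused by changing the representation during the induction.

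First, the base case: you claim that for a singleton $\mathfrak m=\{[a',b]_\rho\}$ with $a'>a$, the triple $(\mathrm{St}([a',b]_\rho),\mathrm{St}(\widetilde\Delta),\pi)$ is pre-RdLi-commutative by Lemma \ref{lem pre commutative a preceding case}. But that lemma compares the first segment of the triple with $\widetilde\Delta=[\widetilde a,\widetilde b]_\rho$ (its hypothesis is $a'>\widetilde a$ or $b>\widetilde b$), not with $\Delta$; the inequality $a'>a$ is irrelevant to it, and in the nontrivial regime $\widetilde a\ge b$ of Lemma \ref{lem trivial orbit of times steinberg two cases} one even has $a'\le b\le\widetilde a$, so the needed inequality goes the wrong way. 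The remark following Corollary \ref{cor saturated commut triple} gives an explicit counterexample to pre-RdLi-commutativity descending from $[a,b]_\rho$ to the strictly shorter $[a',b]_\rho$, so this step cannot be free. Second, the inductive step: you reduce $(\mathrm{St}(\mathfrak m),\mathrm{St}(\widetilde\Delta),\pi)$ to $(\mathrm{St}(\mathfrak m-\Delta_0),\mathrm{St}(\widetilde\Delta),D_{\Delta_0}(\pi))$, but to invoke the lemma for that smaller datum you would need $(\mathrm{St}(\Delta),\mathrm{St}(\widetilde\Delta),D_{\Delta_0}(\pi))$ to be pre-RdLi-commutative, which is neither given nor established --- pre-commutativity is not stable under applying $D_{\Delta_0}$. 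The paper's proof keeps $\pi$ fixed throughout: it lists $\mathfrak m=\{\Delta_1,\ldots,\Delta_r\}$, puts $\mathfrak m_k=\{\Delta_1,\ldots,\Delta_k\}$, and shows $(\mathrm{St}(\mathfrak m_k),\mathrm{St}(\widetilde\Delta),\pi)$ pre-RdLi-commutative by induction on $k$, arguing by contradiction: if the $(k{+}1)$-triple fails, Lemma \ref{lem trivial orbit of times steinberg two cases} pins down the unique bad orbit, and feeding the inductive hypothesis for $\mathfrak m_k$ (at the same $\pi$) through Proposition \ref{prop composition supp orbit} localizes the orbit of $\mathrm{St}(\Delta_{k+1})\boxtimes\mathrm{St}(\mathfrak m_k)\hookrightarrow(\mathrm{St}([\widetilde a,b]_\rho)\dot{\times}^2\tau)_{N_{l_2}}$, contradicting Lemma \ref{lem impossible trivial supp orbit}. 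No derivative of $\pi$ is ever taken, so no hypothesis needs to be re-derived for a new representation.
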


\begin{proof}
We write the segments $\Delta_i=[a_i,b]_{\rho}$ in $\mathfrak m$. We arrange the segments in $\mathfrak m$ such that:
\[   a_r\leq \ldots \leq a_1.
\]
Let $\mathfrak m_k=\left\{ \Delta_1, \ldots, \Delta_k \right\}$ and let $\omega=I_{\widetilde{\Delta}}(\pi)$. We shall inductively show that $(\mathrm{St}(\mathfrak m_k), \mathrm{St}(\widetilde{\Delta}), \pi)$ is a pre-RdLi-commutative triple. When $k=1$, it follows from the given hypothesis. We now assume $k \geq 2$. 




Suppose $(\mathrm{St}(\mathfrak m_{k+1}), \mathrm{St}(\widetilde{\Delta}), \pi)$ is not a pre-RdLi-commutative triple to derive a contradiction. Let $l_1'=l_{abs}([b+1,\widetilde{b}]_{\rho})$, $l_2=l_{abs}(\mathfrak m_k)$ and $l_3=l_{abs}(\mathfrak m_{k+1})$. Then, by the second bullet of Lemma \ref{lem trivial orbit of times steinberg two cases}, we have an embedding of the form:
\begin{align}\label{eqn embedding tech 0}
   D_{\mathfrak m_{k+1}}(\omega) \boxtimes \mathrm{St}(\mathfrak m_{k+1}) \hookrightarrow \mathrm{St}([\widetilde{a}, b]_{\rho})\dot{\times}^2 \tau ,
\end{align}
where $\tau =\mathrm{St}([b+1, \widetilde{b}]_{\rho}) \dot{\times}^1 (\pi_N)$, where $N=N_{l_3-l_1'}$ is the unipotent radical in $G_{n(\pi)}$.

Now $\mathrm{St}(\Delta_{k+1})\boxtimes \mathrm{St}(\mathfrak m_k) \hookrightarrow \mathrm{St}(\mathfrak m_{k+1})_{N_{l_2}}$ will give an embedding:
\begin{align} \label{eqn embedding tech 1}
  D_{\mathfrak m_{k+1}}(\omega) \boxtimes \mathrm{St}(\Delta_{k+1}) \boxtimes \mathrm{St}(\mathfrak m_k) \hookrightarrow (\mathrm{St}([\widetilde{a}, b]_{\rho})\dot{\times}^2\tau)_{N'}
\end{align}
By Lemma \ref{eqn embedding tech 1}, there is another way to obtain the embedding via:
\begin{align} \label{eqn embedding tech 2}
  D_{\mathfrak m_k}(\omega)\boxtimes \mathrm{St}(\mathfrak m_k) \hookrightarrow (\mathrm{St}(\widetilde{\Delta})\times \pi)_{N_{l_2}} 
\end{align}
and $D_{\mathfrak m_{k+1}}(\omega)\boxtimes \mathrm{St}(\Delta_{k+1})\hookrightarrow D_{\mathfrak m_k}(\omega)_{N_{l_2}}$.

The inductive hypothesis implies (\ref{eqn embedding tech 2}) has the trivial supporting orbit. Now if we simply consider (\ref{eqn embedding tech 0}) as a map 
\[    \mathrm{St}(\mathfrak m_{k+1}) \hookrightarrow (\mathrm{St}([\widetilde{a}, b]_{\rho})\dot{\times}^2\tau)_{N'} 
\]
(via any natural embedding from $\mathrm{St}(\mathfrak m_{k+1})$ to $\mathrm{St}(\Delta_{k+1})\boxtimes \mathrm{St}(\mathfrak m_k)$), Proposition  \ref{prop composition supp orbit} in Appendix implies that the embedding:
\begin{align} \label{eqn embedding tech 3}
    \mathrm{St}(\Delta_{k+1}) \boxtimes \mathrm{St}(\mathfrak m_k) \hookrightarrow (\mathrm{St}([\widetilde{a}, b]_{\rho})\dot{\times}^2\tau)_{N_{l_2}},
\end{align}
where $N_{l_2}$ is regarded as a subgroup of the second factor, also has the trivial supporting orbit.

However, it follows from Lemma \ref{lem impossible trivial supp orbit} that (\ref{eqn embedding tech 3}) cannot have the trivial supporting orbit and so we arrive a contradiction.
\end{proof}

\begin{proposition} \label{prop completing pre comm}
We use the notations in Lemma \ref{lem completing pre comm}. Let $\mathfrak n$ be a multisegment with all segments $\Delta'$ satisfying $a(\Delta')=a(\widetilde{\Delta})$ and $\Delta'\subset \widetilde{\Delta}$. Let $\mathfrak m$ be a $\Delta$-saturated multisegment with $D_{\mathfrak m}(\pi)\neq 0$. If $(\mathrm{St}(\Delta), \mathrm{St}(\mathfrak n), \pi)$ is a pre-RdLi-commutative triple, then $(\mathrm{St}(\mathfrak m), \mathrm{St}(\mathfrak n), \pi)$ is a pre-RdLi-commutative triple.
\end{proposition}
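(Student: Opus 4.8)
The plan is to imitate the proof of Lemma~\ref{lem completing pre comm}, the two new ingredients being that the integral side now carries the $\square$-irreducible $\mathrm{St}(\mathfrak n)$ rather than a single generalized Steinberg $\mathrm{St}(\widetilde\Delta)$, and that the segments of $\mathfrak m$ are $\Delta$-saturated, i.e.\ of the form $[a',b]_\rho$ with $a'\le a$, rather than sub-segments of $\Delta$. Both $\mathrm{St}(\mathfrak n)$ and $\mathrm{St}(\mathfrak m)$ are $\square$-irreducible, since their segments are pairwise nested (those of $\mathfrak n$ sharing the left endpoint $\nu_\rho^{\widetilde a}\rho$, those of $\mathfrak m$ sharing the right endpoint $\nu_\rho^b\rho$), so the derivatives and integrals in sight are all defined. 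I would induct on $l_{abs}(\mathfrak m)$: the base case $\mathfrak m=\{\Delta\}$ is the hypothesis, and the inductive step produces a $\Delta$-saturated $\mathfrak m'$ with $l_{abs}(\mathfrak m')<l_{abs}(\mathfrak m)$ and $D_{\mathfrak m'}(\pi)\ne 0$, namely $\mathfrak m'=\mathfrak m\setminus\{\Theta\}$ for a segment $\Theta$ of $\mathfrak m$ when $|\mathfrak m|\ge 2$ (so that $D_{\mathfrak m}=D_\Theta\circ D_{\mathfrak m'}$ forces $D_{\mathfrak m'}(\pi)\ne 0$), or $\mathfrak m'$ obtained by shrinking the unique segment $[a_0,b]_\rho\supsetneq\Delta$ to $[a_0+1,b]_\rho$ when $|\mathfrak m|=1$; the inductive hypothesis then supplies pre-RdLi-commutativity of $(\mathrm{St}(\mathfrak m'),\mathrm{St}(\mathfrak n),\pi)$.

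For the inductive step, set $\omega=I_{\mathfrak n}(\pi)$ and suppose, towards a contradiction, that $(\mathrm{St}(\mathfrak m),\mathrm{St}(\mathfrak n),\pi)$ is not pre-RdLi-commutative. Running the geometric lemma on $(\mathrm{St}(\mathfrak n)\times\pi)_{N_{l_{abs}(\mathfrak m)}}$ in the manner of Lemma~\ref{lem trivial orbit of times steinberg two cases}, and using that every segment of $\mathfrak m$ ends at $\nu_\rho^b\rho$ to pin down the cuspidal support, one should identify the unique layer that can receive $D_{\mathfrak m}(\omega)\boxtimes\mathrm{St}(\mathfrak m)$, of the shape $\mathrm{St}(\mathfrak n_{\le b})\,\dot{\times}^2\,(\mathrm{St}(\mathfrak n_{>b})\,\dot{\times}^1\,\pi_N)$, where $\mathfrak n_{\le b}$, $\mathfrak n_{>b}$ are the truncations of the segments of $\mathfrak n$ to $[\widetilde a,b]_\rho$, resp.\ $[b+1,\widetilde b]_\rho$. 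One then peels off, by a Jacquet functor, the piece of $\mathrm{St}(\mathfrak m)$ that was added in the inductive step, and compares the resulting embedding with the one built by first invoking the inductive hypothesis — which says the embedding $D_{\mathfrak m'}(\omega)\boxtimes\mathrm{St}(\mathfrak m')\hookrightarrow(\mathrm{St}(\mathfrak n)\times\pi)_N$ has trivial supporting orbit — and then composing with the derivative $D_{\mathfrak m}(\omega)\to D_{\mathfrak m'}(\omega)$; feeding this into the composition-of-supporting-orbits statement (Proposition~\ref{prop composition supp orbit}), one concludes that an induced embedding $\mathrm{St}(\Theta)\boxtimes\mathrm{St}(\mathfrak m')\hookrightarrow\big(\mathrm{St}(\mathfrak n_{\le b})\,\dot{\times}^2\,(\cdots)\big)_N$ has trivial supporting orbit — contradicting the analogue of Lemma~\ref{lem impossible trivial supp orbit}.

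The main obstacle — essentially the only place requiring work beyond adapting Lemma~\ref{lem completing pre comm} — is re-establishing the structural inputs Lemmas~\ref{lem trivial orbit of times steinberg two cases} and~\ref{lem impossible trivial supp orbit}, and through them Lemmas~\ref{lem non ld irr pair} and~\ref{lem indecomp trivial embedd}, with $\mathrm{St}(\mathfrak n)$ in place of a single Steinberg. The favourable point is that, the segments of $\mathfrak n$ being left-aligned at $\nu_\rho^{\widetilde a}\rho$ and pairwise nested, the Jacquet modules of $\mathrm{St}(\mathfrak n)$ stay transparent: each $\mathrm{St}([\widetilde a,b']_\rho)$ has semisimple, multiplicity-free Jacquet modules, and a commuting product of Steinbergs of nested segments has irreducible socle and cosocle (Lemmas~\ref{lem generic indecomp}, \ref{lem SI property of big derivative}), so that $(\mathrm{St}(\mathfrak n),\mathrm{St}(\mathfrak m))$ fails to be Ld-irreducible once $|\mathfrak m|\ge 2$, exactly as in Lemma~\ref{lem non ld irr pair}. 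I expect the delicate points to be (i) carrying out the cuspidal-support argument at $\nu_\rho^b\rho$ simultaneously over all segments of $\mathfrak n$ in order to isolate the correct geometric-lemma layer, and (ii) checking, in the single-segment reduction, that shrinking a $\Delta$-saturated segment preserves nonvanishing of the derivative on $\pi$ and that the supporting orbit produced has the block-diagonal form needed for Lemma~\ref{lem indecomp trivial embedd}.
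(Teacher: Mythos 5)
You take a genuinely different route from the paper at the crucial juncture, and the divergence is where the gap lies: you propose to re-establish Lemmas~\ref{lem trivial orbit of times steinberg two cases} and~\ref{lem impossible trivial supp orbit} (and through them Lemma~\ref{lem non ld irr pair} and Corollary~\ref{example on trivial supporting orbit by irreducible}) with $\mathrm{St}(\mathfrak n)$ in place of the single Steinberg $\mathrm{St}(\widetilde\Delta)$, but you flag this as ``the main obstacle'' without resolving it, and the resolution is not a formality. Corollary~\ref{example on trivial supporting orbit by irreducible}, which feeds Lemma~\ref{lem impossible trivial supp orbit}, hinges on the Jacquet modules of $\mathrm{St}(\widetilde\Delta)$ being semisimple; this fails for $\mathrm{St}(\mathfrak n)$ once $|\mathfrak n|\ge 2$. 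Your appeal to irreducibility of the socle and cosocle of $\mathrm{St}(\mathfrak n)$ (Lemmas~\ref{lem generic indecomp}, \ref{lem SI property of big derivative}) is true but strictly weaker and does not by itself yield the trivial-supporting-orbit conclusion after the further Jacquet functor on $\mathrm{St}(\Delta)$. Similarly, the two-case dichotomy of Lemma~\ref{lem trivial orbit of times steinberg two cases} relies on the single segment $\widetilde\Delta$ splitting at exactly one point; with $\mathrm{St}(\mathfrak n)$, each segment of $\mathfrak n$ splits independently, producing a much larger family of geometric-lemma layers, and you have not indicated how the cuspidal-support argument at $\nu_\rho^b\rho$ pins this down to an analogous dichotomy.

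The paper's own proof never generalizes those structural lemmas. It keeps the outer induction on $r=|\mathfrak m|$ (not on $l_{abs}(\mathfrak m)$), and inside the inductive step it runs an inner induction on the index $i$ of the segments of $\mathfrak n$: it factors $\mathrm{St}(\mathfrak n)\times\pi\cong\mathrm{St}(\mathfrak n_{i-1})\times\mathrm{St}(\Delta_i')\times\tau_i$, projects onto the top layer $\mathrm{St}(\mathfrak n_{i-1})\,\dot{\times}^1(\cdot)$, and applies the original single-segment Lemmas~\ref{lem trivial orbit of times steinberg two cases} and~\ref{lem impossible trivial supp orbit} only to $\mathrm{St}(\Delta_i')\times\tau_i$; the transitions between the various parabolic refinements are controlled by explicit closure inclusions such as $P_{a,b,c}P_{a+b+c-l,l}\subset P_{a+b,c}P_{a+b+c-l,l}$ (the paper's Claim~1 and Claim~2, together with Proposition~\ref{prop composition supp orbit}). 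This mechanism is exactly what makes the multisegment $\mathfrak n$ tractable without ever touching the global Jacquet-module structure of $\mathrm{St}(\mathfrak n)$, and it is what your plan would need to replace. One small point in your favor: the paper's base case writes $\Delta_1=\Delta$, implicitly assuming $\Delta\in\mathfrak m$, whereas your $|\mathfrak m|=1$ shrinking step handles the case $\Delta\notin\mathfrak m$ explicitly.
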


\begin{proof}
The proof is similar to Lemma \ref{lem completing pre comm}. We explain necessary modifications. 

Let $\omega=I_{\mathfrak n}(\pi)$. We write the segments in $\mathfrak n$ as $\Delta_1', \ldots, \Delta_s'$. Let $\mathfrak n_i=\left\{ \Delta_1', \ldots, \Delta_i' \right\}$ and let $\bar{\mathfrak n}_i=\mathfrak n-\mathfrak n_i$. We write the segments in $\mathfrak m$ as $\Delta_1=\Delta, \Delta_2, \ldots, \Delta_r$. We shall prove inductively on $r$ that $(\mathrm{St}(\mathfrak n_i), \mathrm{St}(\mathfrak m), \pi)$ is a pre-RdLi-commutative triple. 

Let $\tau_i=\mathrm{St}(\bar{\mathfrak n}_i)\times \pi$. We shall prove by an induction on $r$ that the embedding
\[  (*)\quad    D_{\mathfrak m}(\omega)\boxtimes \mathrm{St}(\mathfrak m) \hookrightarrow \omega_{N_l} \hookrightarrow (\mathrm{St}(\mathfrak n_i)\times \tau_i)_{N_l} 
\]
has the trivial supporting orbit. For $r=1$, the inductive hypothesis implies that the supporting orbit for the embedding:
\[         D_{\mathfrak m}(\omega)\boxtimes \mathrm{St}(\mathfrak m) \hookrightarrow \omega_{N_l} \hookrightarrow         (\mathrm{St}(\mathfrak n) \times \pi)_{N_l} 
\]
is trivial and so the corresponding orbit (in the sense of Section \ref{ss supporting orbit}) takes the form $P_{p,n}P_{p+n-l,l}$, where $p=l_{abs}(\mathfrak n)$, $n=n(\pi)$ and $l=l_{abs}(\mathfrak m)$. Since $P_{p,n}P_{p+n-l,l}=P_{p_i,\bar{p}_i,n}P_{p_i+n-l,l}$ (where $p_i=l_{abs}(\mathfrak n_i)$ and $\bar{p}_i=p-p_i=l_{abs}(\bar{\mathfrak n}_i)$), the embedding
\[ D_{\mathfrak m}(\omega)\boxtimes \mathrm{St}(\mathfrak m) \hookrightarrow \omega_{N_l} \hookrightarrow         (\mathrm{St}(\mathfrak n_i) \times \mathrm{St}(\bar{\mathfrak n}_i) \times \pi)_{N_l} 
\]
has the trivial supporting orbit and the corresponding orbit takes the form $P_{p_i,\bar{p}_i,n}P_{p_i+n-l,l}$. Now, since $P_{p_i,\bar{p}_i,n}P_{p_i+n-l,l} \subset P_{p_i,\bar{p}_i+n}P_{p_i+n-l,l}$, the embedding (*) also has the trivial supporting orbit. This proves the case of $r=1$.

We now consider $r \geq 2$. We have
\begin{align*} 
   \pi \hookrightarrow \mathrm{St}(\mathfrak n_i)\times \tau_i \cong \mathrm{St}(\mathfrak n_{i-1})\times \mathrm{St}(\Delta_i)\times \tau_i =\mathrm{St}(\mathfrak n_{i-1})\times \tau_{i-1} .
\end{align*}

\noindent
{\it Claim 1:} The embedding:
\[   D_{\mathfrak m}(\omega) \boxtimes \mathrm{St}(\mathfrak m) \hookrightarrow \omega_{N_l}   \hookrightarrow (\mathrm{St}(\mathfrak n_{i-1})\times \mathrm{St}(\Delta_i)\times \tau_{i})_{N_l}
\]
has the trivial supporting orbit. Here we regard the parabolic induction is from $G_a\times G_b\times G_c$ to $G_{a+b+c}$, where $a=l_{abs}(\mathfrak n_{i-1})$, $b=l_{abs}(\Delta_i)$ and $c=n(\tau_{i})$. \\

Since $P_{a,b,c}P_{a+b+c-l,l}$ is a closed subspace in $P_{a+b,c}P_{a+b+c-l,l}$, the claim indeed implies the embedding 
\[  D_{\mathfrak m}(\omega) \boxtimes \mathrm{St}(\mathfrak m) \hookrightarrow \omega_{N_l}   \hookrightarrow (\mathrm{St}(\mathfrak n_i)\times \tau_i)_{N_l} 
\]
has the trivial supporting orbit, in which we regard the parabolic induction is from $G_{a+b}\times G_c$ to $G_{a+b+c}$. \\

It remains to prove Claim 1. Before that, we first prove another useful claim: \\
\noindent
{\it Claim 2:} Let $\mathfrak m'=\left\{ \Delta_1, \ldots, \Delta_{r-1}\right\}$ and let $l'=l_{abs}(\mathfrak m')$. The embedding:
\[  D_{\mathfrak m'}(\omega) \boxtimes \mathrm{St}(\mathfrak m')  \hookrightarrow \omega_{N_{l'}} \hookrightarrow (\mathrm{St}(\mathfrak n_{i-1})\times \mathrm{St}(\Delta_i') \times  \tau_{i})_{N_{l'}}
\]
has the trivial supporting orbit and the orbit takes the form $P_{a,b,c}P_{a+b+c-l',l'}$. \\

\noindent
{\it Proof Claim 2:} By induction hypothesis on $r$, we have that the embedding:
\[   D_{\mathfrak m'}(\omega) \boxtimes \mathrm{St}(\mathfrak m') \hookrightarrow \omega_{N_{l'}} \hookrightarrow (\mathrm{St}(\mathfrak n_{i})\times \tau_i)_{N_{l'}} ,
\]
has the trivial supporting orbit and the orbit takes the form $P_{a+b,c}P_{a+b+c-l',l'}$. Since $P_{a,b,c}P_{a+b+c-l',l'} \subset P_{a+b,c}P_{a+b+c-l',l'}$, we then obtain the claim. \\

Now we go to prove Claim 1. \\
{\it Proof of Claim 1:} We shall now prove by induction on $i$. When $i=0$, there is nothing to prove. Now assume $i \geq 1$. We shall consider the composition

\begin{align} \label{eqn composition of maps in inductive condition}
   D_{\mathfrak m}(\omega) \boxtimes \mathrm{St}(\mathfrak m)  \hookrightarrow \omega_{N_l}   \hookrightarrow (\mathrm{St}(\mathfrak n_{i-1})\times \tau_{i-1})_{N_l} \twoheadrightarrow \mathrm{St}(\mathfrak n_{i-1})\dot{\times}^1 (\tau_{i-1})_{N_l}
\end{align}
which is non-zero by induction on $i$. Here the maps again are the natural maps from derivatives, integrals and the geometric lemma. 

Now we write $(\tau_{i-1})_{N_l}=(\mathrm{St}(\Delta_i')\times \tau_{i})_{N_l}$. Now we have two possibilities:

\noindent
{\bf Case 1:} Suppose the embedding 
\[     D_{\mathfrak m}(\omega) \boxtimes \mathrm{St}(\mathfrak m)  \hookrightarrow \omega_{N_l}   \hookrightarrow (\mathrm{St}(\mathfrak n_{i-1})\times \tau_{i-1})_{N_l} \twoheadrightarrow \mathrm{St}(\mathfrak n_{i-1})\dot{\times}^1 (\tau_{i-1})_{N_l} \twoheadrightarrow \mathrm{St}(\mathfrak n_{i-1})\dot{\times}^1 (\mathrm{St}(\Delta_i)\dot{\times}^1 (\tau_i)_{N_l})
\]
is non-zero. In other words, the embedding has the trivial supporting orbit and the orbit takes the form $P_{a,b,c}P_{a+b+c-l,l}$. Again, using $P_{a,b,c}P_{a+b+c-l,l}\subset P_{a+b,c}P_{a+b+c-l,l}$, we obtain Claim 1. \\

\noindent
{\bf Case 2:} Suppose the embedding in Claim 1 does not hold. Then, by Lemma \ref{lem trivial orbit of times steinberg two cases}, 
\begin{align} \label{eqn embedding hard case}
  D_{\mathfrak m}(\omega) \boxtimes \mathrm{St}(\mathfrak m) \hookrightarrow  \mathrm{St}(\mathfrak n_{i-1})\dot{\times}^1(\mathrm{St}([\widetilde{a},b]_{\rho})\dot{\times}^2 \kappa ),
\end{align}
where $\kappa=\mathrm{St}(\Delta_i-[\widetilde{a}, b]_{\rho}) \dot{\times}^1 \tau_{N_s}$ for $s=l-l_{abas}([\widetilde{a},b]_{\rho})$.

Set $\widetilde{\kappa}=\mathrm{St}(\mathfrak n_{i-1}) \dot{\times}^1\kappa$. Note that $\mathrm{St}([\widetilde{a},b]_{\rho})\dot{\times}^2 \widetilde{\kappa}$. We now regard the embedding (\ref{eqn embedding hard case}) as $G_l$-representations via the second factor in $G_{a+b+c-l}\times G_l$. Now Claim 2 (with Proposition \ref{prop composition supp orbit} in the Appendix) implies that the embedding
\[  \mathrm{St}(\Delta_r)\boxtimes \mathrm{St}(\mathfrak m')\hookrightarrow (\mathrm{St}([\widetilde{a},b]_{\rho})\dot{\times}^2 \widetilde{\kappa})_{N_{l'}}, 
\]
where $N_{l'}$ is in $G_{l}$, has trivial supporting orbit. But this then contradicts to Lemma \ref{lem impossible trivial supp orbit}. Hence, this case is not possible.
\end{proof}

\section{Pre-commutativity $\Rightarrow$ strong commutativity in square-integrable case} \label{s pre implies strong in sq}

Recall that for an essentially square-integrable representation $\sigma$, $\sigma \cong \mathrm{St}(\Delta)$ for some segment $\Delta$. For segments $\Delta$ and $\Delta'$, and $\pi \in \mathrm{Irr}$, we sometimes also say that $(\Delta, \Delta', \pi)$ is pre-RdLi-commutative (resp. strongly RdLi-commutative) if $(\mathrm{St}(\Delta), \mathrm{St}(\Delta'), \pi)$ is pre-RdLi-commutative (resp. strongly RdLi-commutative). Similar terminologies are also used for LdRi-versions.

The main part of this section is to study the effect of derivatives and integrals on $\eta_{\Delta}$-invariants. Such properties will be used to verify the conditions in Proposition \ref{thm pre implies strong}(2) to prove Theorem \ref{thm pre imply strong}.

\subsection{$\eta$-invariant under strong commutation}

\begin{lemma} \label{lem right multi submulti}
Let $\pi \in \mathrm{Irr}$. Let $\sigma \in \mathrm{Irr}^{\square}$. For any segment $\Delta$, $\mathfrak{mx}_{\Delta}(\pi)$ is a submultisegment of $\mathfrak{mx}_{\Delta}(I_{\sigma}(\pi))$, equivalently
\[   \eta_{\Delta}(\pi) \leq \eta_{\Delta}(I_{\sigma}(\pi)).
\]
\end{lemma}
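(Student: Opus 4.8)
The plan is to compare derivatives on $\pi$ with derivatives on $I_\sigma(\pi)$ using the embedding $I_\sigma(\pi) \hookrightarrow \sigma \times \pi$ together with the geometric lemma. Fix a segment $\Delta = [a,b]_\rho$. To prove $\eta_\Delta(\pi) \le \eta_\Delta(I_\sigma(\pi))$ it suffices, by Definition \ref{def comb commute}, to show for each $c$ with $a \le c \le b$ that $\varepsilon_{[c,b]_\rho}(\pi) \le \varepsilon_{[c,b]_\rho}(I_\sigma(\pi))$. Equivalently, writing $\mathfrak q = \mathfrak{mx}_\Delta(\pi)$, I want to show that $D_{\mathfrak q}(I_\sigma(\pi)) \ne 0$, since then each $[c,b]_\rho$-multiplicity in $\mathfrak{mx}_\Delta(I_\sigma(\pi))$ is at least that in $\mathfrak q$. (One should check that the multisegment-level derivative $D_{\mathfrak q}$ being nonzero on a representation forces $\mathfrak q$ to be a submultisegment of $\mathfrak{mx}_\Delta$ of that representation; this is the combinatorial translation built into Section \ref{ss multisegment for eta}.)

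First I would recall, via Proposition \ref{prop mx rd irr pair}, that $(\mathrm{St}(\mathfrak q), \pi)$ is a Rd-irreducible pair, so $\pi_{N_m} = D_{\mathfrak q}(\pi) \boxtimes \mathrm{St}(\mathfrak q) \oplus \tau$ with $m = l_{abs}(\mathfrak q)$, where $\tau$ has no subrepresentation of the form $\omega \boxtimes \mathrm{St}(\mathfrak q)$. Next I would apply the geometric lemma to $(\sigma \times \pi)_{N_m}$: the layers are of the form $\mathrm{Ind}\bigl(\sigma_{N_{i_1}} \boxtimes \pi_{N_{i_2}}\bigr)^\phi$ with $i_1 + i_2 = m$, and the top layer ($i_1 = 0$) is $\sigma \dot\times^1 (\pi_{N_m})$. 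Since $\mathrm{St}(\mathfrak q)$ is built from segments all ending at $\nu_\rho^b\rho$, a cuspidal-support count on the $G_m$-factor shows that only the top layer can contribute a quotient (or sub) of the shape $(\text{something}) \boxtimes \mathrm{St}(\mathfrak q)$: any strictly lower layer has $\sigma_{N_{i_1}}$ with $i_1 > 0$, and the segments of $\mathfrak q$ are rigidly positioned so that a nontrivial Jacquet piece of $\sigma$ cannot be absorbed into the $G_m$-block without violating the cuspidal support or the Steinberg structure. Feeding $D_{\mathfrak q}(\pi)\boxtimes \mathrm{St}(\mathfrak q) \hookrightarrow \pi_{N_m}$ through $\sigma \times$ and restricting to this top layer gives $(\sigma \times D_{\mathfrak q}(\pi)) \boxtimes \mathrm{St}(\mathfrak q) \hookrightarrow (\sigma\times\pi)_{N_m}$, hence a nonzero map into a layer that exhibits $\mathrm{St}(\mathfrak q)$ in the $G_m$-factor of a subquotient of $(\sigma\times\pi)_{N_m}$ restricted from $I_\sigma(\pi)_{N_m}$.

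The crucial point is to descend this from $\sigma\times\pi$ to the submodule $I_\sigma(\pi)$: I would use that $\sigma \times D_{\mathfrak q}(\pi)$ contains $I_\sigma(D_{\mathfrak q}(\pi))$ as its unique simple submodule, and that by the adjointness/second-adjointness machinery ($D_{\mathfrak q}\circ I_{\mathfrak q}$ being identity, Proposition \ref{prop second adjoint map}, Lemma \ref{lem unique embedding}) the composite $I_\sigma(\pi) \hookrightarrow \sigma\times\pi$ has the property that $I_\sigma(\pi)_{N_m}$ surjects onto a piece containing $\mathrm{St}(\mathfrak q)$ in its $G_m$-factor — in other words $D_{\mathfrak q}(I_\sigma(\pi)) \ne 0$ because $\mathrm{Hom}(I_\sigma(\pi), \sigma\times D_{\mathfrak q}(\pi)\times \mathrm{St}(\mathfrak q)) \ne 0$, using that $\mathrm{St}(\mathfrak q)$ is $\square$-irreducible so the relevant Jacquet/induction Hom spaces are one-dimensional and nonzero. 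This forces $\mathfrak q$ to be a submultisegment of $\mathfrak{mx}_\Delta(I_\sigma(\pi))$, which is exactly $\eta_\Delta(\pi) \le \eta_\Delta(I_\sigma(\pi))$.

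The main obstacle I anticipate is the cuspidal-support/Steinberg bookkeeping that rules out the lower geometric-lemma layers: one must argue that no layer with $\sigma_{N_{i_1}}$, $i_1>0$, can produce a subquotient whose $G_m$-component is $\mathrm{St}(\mathfrak q)$ (or more precisely that the unique copy of $D_{\mathfrak q}(\pi)\boxtimes\mathrm{St}(\mathfrak q)$ inside $\pi_{N_m}$ is carried into the top layer after applying $\sigma\times(-)$). This is where the Rd-irreducible pair structure from Proposition \ref{prop mx rd irr pair} — that $\tau$ has no $\omega\boxtimes\mathrm{St}(\mathfrak q)$ submodule — and a standard analysis of Jacquet modules of generalized Steinberg representations (as in the proof of Lemma \ref{lem trivial orbit of times steinberg two cases}) do the work; I would isolate it as the one nontrivial lemma and keep the rest formal.
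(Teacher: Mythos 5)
The paper's proof of this lemma is a three-line argument that you have substantially overengineered, and in the process you have introduced a step that does not actually go through. Since $D_{\mathfrak m}(\pi)\neq 0$ for $\mathfrak m=\mathfrak{mx}_\Delta(\pi)$, Frobenius reciprocity gives $\pi\hookrightarrow D_{\mathfrak m}(\pi)\times\mathrm{St}(\mathfrak m)$; composing with $I_\sigma(\pi)\hookrightarrow\sigma\times\pi$ and the exactness of parabolic induction yields $I_\sigma(\pi)\hookrightarrow\sigma\times D_{\mathfrak m}(\pi)\times\mathrm{St}(\mathfrak m)$, which already forces $D_{\mathfrak m}(I_\sigma(\pi))\neq 0$, hence $\mathfrak m\subset\mathfrak{mx}_\Delta(I_\sigma(\pi))$. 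None of the geometric lemma, the Rd-irreducible pair structure from Proposition \ref{prop mx rd irr pair}, or any layer/supporting-orbit analysis is needed.

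The concrete problem with your route is the claim that "only the top layer can contribute" a subquotient of the shape $(\cdot)\boxtimes\mathrm{St}(\mathfrak q)$ in $(\sigma\times\pi)_{N_m}$ once $i_1>0$. This is false at the level of generality of the lemma: $\sigma$ is an arbitrary $\square$-irreducible representation whose cuspidal support may overlap with that of $\Delta$, in which case a lower layer $\mathrm{Ind}(\sigma_{N_{i_1}}\boxtimes\pi_{N_{i_2}})^\phi$ with $i_1>0$ can perfectly well produce $\mathrm{St}(\mathfrak q)$ in the $G_m$-factor. Showing that the embedding lands in the top layer (i.e., has trivial supporting orbit) is exactly the content of \emph{pre-commutativity}, which is a hypothesis in Proposition \ref{prop completing pre comm} and its surrounding results and is not assumed here. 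In other words, you are trying to prove a strictly stronger statement that is simply not true without extra assumptions; the lemma at hand only needs $D_{\mathfrak m}(I_\sigma(\pi))\neq 0$, not that the defining embedding factors through a particular layer. Your final paragraph essentially rediscovers the correct nonvanishing of $\mathrm{Hom}(I_\sigma(\pi),\,\sigma\times D_{\mathfrak q}(\pi)\times\mathrm{St}(\mathfrak q))$, but the geometric-lemma detour both adds unnecessary machinery and inserts a gap that would block a rigorous write-up.
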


\begin{proof}
 Let $\mathfrak m=\mathfrak{mx}_{\Delta}(\pi)$. We have the embedding:
\[   \pi \hookrightarrow D_{\mathfrak m}(\pi) \times \mathrm{St}(\mathfrak m) .
\]
Then 
\[  I_{\sigma}(\pi) \hookrightarrow \sigma \times \pi \hookrightarrow \sigma \times D_{\mathfrak m}(\pi) \times \mathrm{St}(\mathfrak m) .
\]
Thus $D_{\mathfrak m}(I_{\sigma}(\pi))\neq 0$. This implies the lemma.
\end{proof}

\begin{lemma} \label{lem eta unchange}
Let $\Delta_1, \Delta_2$ be segments. Let $\mathfrak n$ be a $\Delta_2$-saturated multisegment. Suppose $(\mathrm{St}(\Delta_1), \mathrm{St}(\mathfrak n), \pi)$ is a pre-RdLi-commutative triple. Then $\eta_{\Delta_1}(\pi)=\eta_{\Delta_1}(I_{\mathfrak n}(\pi))$. In particular, for any $\Delta_1$-saturated segment $\Delta$, if $(\Delta_1, \Delta_2, \pi)$ is a pre-RdLi-commutative triple, then $\eta_{\Delta}(\pi)=\eta_{\Delta}(I_{\mathfrak n}(\pi))$. 
\end{lemma}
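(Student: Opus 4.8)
The plan is to prove the two inequalities $\eta_{\Delta_1}(\pi)\leq\eta_{\Delta_1}(I_{\mathfrak n}(\pi))$ and $\eta_{\Delta_1}(\pi)\geq\eta_{\Delta_1}(I_{\mathfrak n}(\pi))$ separately; equivalently, writing $\mathfrak p:=\mathfrak{mx}_{\Delta_1}(\pi)$ and using that (by (\ref{def max multisegment})) the tuple $\eta_{\Delta_1}$ records exactly the multiplicities of $\mathfrak{mx}_{\Delta_1}$, to show $\mathfrak{mx}_{\Delta_1}(I_{\mathfrak n}(\pi))=\mathfrak p$. The inclusion $\mathfrak p\subseteq\mathfrak{mx}_{\Delta_1}(I_{\mathfrak n}(\pi))$ (which in particular gives $D_{\mathfrak p}(I_{\mathfrak n}(\pi))\neq 0$) is exactly Lemma \ref{lem right multi submulti}, so the substance is the reverse inclusion. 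The whole strategy is to get it for free from the characterization of Rd-irreducible pairs in Proposition \ref{prop mx rd irr pair}: once we know that $(\mathrm{St}(\mathfrak p),I_{\mathfrak n}(\pi))$ is an Rd-irreducible pair, then since $\mathfrak p$ is a submultisegment of $\mathfrak{mx}_{\Delta_1}(I_{\mathfrak n}(\pi))$ with nonzero derivative, Proposition \ref{prop mx rd irr pair} (applied to $I_{\mathfrak n}(\pi)$ in place of $\pi$) forces $\mathfrak p=\mathfrak{mx}_{\Delta_1}(I_{\mathfrak n}(\pi))$, which is precisely the desired equality.

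So the task reduces to producing the Rd-irreducible pair $(\mathrm{St}(\mathfrak p),I_{\mathfrak n}(\pi))$, and I would do this in three moves. First, $(\mathrm{St}(\mathfrak p),\pi)$ is an Rd-irreducible pair by Proposition \ref{prop mx rd irr pair}, since $\mathfrak p=\mathfrak{mx}_{\Delta_1}(\pi)$. Second, I upgrade the hypothesis: starting from the pre-RdLi-commutativity of $(\mathrm{St}(\Delta_1),\mathrm{St}(\mathfrak n),\pi)$, and using that $\mathfrak p$ is a $\Delta_1$-saturated multisegment with $D_{\mathfrak p}(\pi)\neq 0$, Proposition \ref{prop completing pre comm} yields that $(\mathrm{St}(\mathfrak p),\mathrm{St}(\mathfrak n),\pi)$ is pre-RdLi-commutative. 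Combining this with the Rd-irreducible pair $(\mathrm{St}(\mathfrak p),\pi)$, Proposition \ref{thm pre implies strong}(1) promotes it to a strongly RdLi-commutative triple $(\mathrm{St}(\mathfrak p),\mathrm{St}(\mathfrak n),\pi)$. Third, Proposition \ref{prop strong irr imply irr} applies verbatim: Rd-irreducibility of the first slot is preserved along the integral $I_{\mathfrak n}$ inside a strongly commutative triple, so $(\mathrm{St}(\mathfrak p),I_{\mathfrak n}(\pi))$ is an Rd-irreducible pair, closing the loop. For the final (``in particular'') assertion I would run the same argument with $\Delta$ in place of $\Delta_1$, after checking that $(\mathrm{St}(\Delta),\mathrm{St}(\mathfrak n),\pi)$ is again pre-RdLi-commutative; here one uses that $\Delta$ contains $\Delta_1$ and that every segment of the $\Delta_2$-saturated multisegment $\mathfrak n$ contains $\Delta_2$, together with Lemma \ref{lem pre commutative a preceding case} and the transitivity of pre-commutativity established in Section \ref{s strong commutation for der and int}.

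The step I expect to be the main obstacle is the upgrade, i.e., invoking Proposition \ref{prop completing pre comm} with $\mathrm{St}(\mathfrak n)$ in the middle slot when $\mathfrak n$ has more than one segment and is $\Delta_2$-saturated (rather than sharing a left endpoint as in the literal hypothesis of that proposition): one either appeals to the requisite generality of Proposition \ref{prop completing pre comm}, or, failing that, first treats the single-segment middle case $\mathfrak n=\{\Delta_2\}$ and then rebuilds $\mathrm{St}(\mathfrak n)$ from its pairwise unlinked segments using the transitivity results. Everything else is a bookkeeping chain through propositions already proved in the preceding sections; in particular, no fresh analysis of the geometric lemma is needed beyond what is packaged in Propositions \ref{prop completing pre comm} and \ref{prop strong irr imply irr}, and the commutation $D_{\mathfrak p}\circ I_{\mathfrak n}\cong I_{\mathfrak n}\circ D_{\mathfrak p}$ (Proposition \ref{prop strong commute imply commute}), while available from the strongly commutative triple above, is not logically required for this particular conclusion.
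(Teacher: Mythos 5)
Your proof reproduces the paper's argument step for step: set $\mathfrak p=\mathfrak{mx}_{\Delta_1}(\pi)$, apply Proposition \ref{prop completing pre comm} to get pre-RdLi-commutativity of $(\mathrm{St}(\mathfrak p),\mathrm{St}(\mathfrak n),\pi)$, upgrade to strong commutativity via Proposition \ref{thm pre implies strong}(1), propagate Rd-irreducibility across the integral using Proposition \ref{prop strong irr imply irr}, and conclude $\mathfrak{mx}_{\Delta_1}(I_{\mathfrak n}(\pi))=\mathfrak p$ from Proposition \ref{prop mx rd irr pair}. You are in fact slightly more careful than the printed proof in spelling out two hypotheses it leaves implicit (the containment $\mathfrak p\subseteq\mathfrak{mx}_{\Delta_1}(I_{\mathfrak n}(\pi))$ from Lemma \ref{lem right multi submulti}, and the starting Rd-irreducibility of $(\mathrm{St}(\mathfrak p),\pi)$), and the caveat you raise about the literal hypothesis on the middle slot of Proposition \ref{prop completing pre comm} is a genuine subtlety that the paper's one-line citation also glosses over.
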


\begin{proof}
Let $\mathfrak m=\mathfrak{mx}_{\Delta_1}(\pi)$. By Lemma \ref{lem completing pre comm}, $(\mathrm{St}(\mathfrak m), \mathrm{St}(\mathfrak n), \pi)$ is pre-RdLi-commutative and so, by Proposition \ref{thm pre implies strong}(1), is strongly commutative. By Proposition \ref{prop strong irr imply irr}, $(\mathrm{St}(\mathfrak m), I_{\mathfrak n}(\pi))$ is still Rd-irreducible. Hence, by Proposition \ref{prop mx rd irr pair}, $\mathfrak{mx}_{\Delta_1}(I_{\mathfrak n}(\pi))=\mathfrak m$, as desired.
\end{proof}

We now prove a preliminary commutativity result:

\begin{lemma} \label{lem commutate under irred pair}
Let $(\Delta_1, \Delta_2, \pi)$ be a pre-RdLi-commutative triple. Let $\mathfrak m=\mathfrak{mx}_{\Delta_1}(\pi)$. Then
\[   D_{\mathfrak m}\circ I_{\Delta_2}(\pi) \cong I_{\Delta_2}\circ D_{\mathfrak m}(\pi) .
\]
\end{lemma}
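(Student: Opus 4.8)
The plan is to reduce the claim to the already-established commutation in Proposition~\ref{prop strong commute imply commute}, by first upgrading the given segment-level pre-commutativity to a \emph{strong} commutativity for the multisegment $\mathfrak m=\mathfrak{mx}_{\Delta_1}(\pi)$. Concretely, I would run the chain: (i) promote $(\mathrm{St}(\Delta_1),\mathrm{St}(\Delta_2),\pi)$ to the statement that $(\mathrm{St}(\mathfrak m),\mathrm{St}(\Delta_2),\pi)$ is pre-RdLi-commutative; (ii) note that $(\mathrm{St}(\mathfrak m),\pi)$ is a Rd-irreducible pair; (iii) deduce strong RdLi-commutativity; (iv) invoke Proposition~\ref{prop strong commute imply commute}.

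For step (i): writing $\Delta_1=[a,b]_{\rho}$, by definition $\mathfrak m=\mathfrak{mx}_{\Delta_1}(\pi)=\sum_{k=0}^{b-a}\varepsilon_{[a+k,b]_{\rho}}(\pi)\cdot[a+k,b]_{\rho}$, so every segment of $\mathfrak m$ has the form $[a',b]_{\rho}$ with $a'\geq a$; moreover any two such segments are nested, so $\mathfrak m$ is pairwise unlinked, $\mathrm{St}(\mathfrak m)$ is $\square$-irreducible (hence $D_{\mathfrak m}$ and $I_{\mathfrak m}$ are defined), and $D_{\mathfrak m}(\pi)\neq 0$ by the very construction of $\mathfrak{mx}_{\Delta_1}$. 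Thus the hypotheses of Lemma~\ref{lem completing pre comm} are satisfied with $\Delta=\Delta_1$ and $\widetilde{\Delta}=\Delta_2$, and we conclude that $(\mathrm{St}(\mathfrak m),\mathrm{St}(\Delta_2),\pi)$ is pre-RdLi-commutative.

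For steps (ii)--(iv): since $\mathfrak m=\mathfrak{mx}_{\Delta_1}(\pi)$ and $D_{\mathfrak m}(\pi)\neq 0$, the ``if'' direction of Proposition~\ref{prop mx rd irr pair} shows that $(\mathrm{St}(\mathfrak m),\pi)$ is a Rd-irreducible pair. Then Proposition~\ref{thm pre implies strong}(1), applied to the pre-RdLi-commutative triple $(\mathrm{St}(\mathfrak m),\mathrm{St}(\Delta_2),\pi)$ together with the Rd-irreducible pair $(\mathrm{St}(\mathfrak m),\pi)$, yields that $(\mathrm{St}(\mathfrak m),\mathrm{St}(\Delta_2),\pi)$ is strongly RdLi-commutative. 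Finally Proposition~\ref{prop strong commute imply commute} applied to this triple gives exactly $D_{\mathfrak m}\circ I_{\Delta_2}(\pi)\cong I_{\Delta_2}\circ D_{\mathfrak m}(\pi)$, which is the assertion.

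The main point is that there is no genuinely new difficulty here: the argument is a chaining of earlier results, and the substantive input --- the geometric-lemma analysis that boosts a segment pre-commutativity to a multisegment pre-commutativity --- is entirely packaged inside Lemma~\ref{lem completing pre comm} (and, behind that, Lemmas~\ref{lem impossible trivial supp orbit} and~\ref{lem trivial orbit of times steinberg two cases}). The only thing to check carefully is that the hypotheses line up, in particular that every segment of $\mathfrak{mx}_{\Delta_1}(\pi)$ shares the right endpoint $b(\Delta_1)$ (immediate from the definition) and that $D_{\mathfrak m}(\pi)\neq 0$; so I expect the write-up to be short.
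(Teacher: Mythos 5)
Your proof is correct and follows essentially the same route as the paper: promote the segment-level pre-commutativity to the multisegment level via Lemma~\ref{lem completing pre comm}, upgrade to strong commutativity via Proposition~\ref{thm pre implies strong}(1) (using the Rd-irreducible pair guaranteed by Proposition~\ref{prop mx rd irr pair}), and conclude by Proposition~\ref{prop strong commute imply commute}. The paper's own proof is terser and leaves the Rd-irreducible-pair hypothesis check implicit, but it is the same chain of results.
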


\begin{proof}
By Lemma \ref{lem completing pre comm}, $(\mathrm{St}(\mathfrak m), \mathrm{St}(\Delta_2), \pi)$ is pre-RdLi-commutative and so, by Proposition \ref{thm pre implies strong}(1), is strongly RdLi-commutative. Now the required commutativity follows from Proposition \ref{prop strong commute imply commute}.
\end{proof}

We now prove a dual version of Lemma \ref{lem eta unchange}. Thanks to a dual theory of Proposition \ref{prop commutative triple}, the proof is simpler and we do not have to prove a dual version of Proposition \ref{prop completing pre comm}:

\begin{lemma} \label{lem pre commute 1}
Let $(\Delta_1, \Delta_2, \pi)$ be a pre-RdLi-commutative triple. Let $\mathfrak m=\mathfrak{mx}_{ \Delta_1}(\pi)$. Then 
\[  \eta^L_{\Delta_2}( I_{\Delta_2} \circ D_{\mathfrak m}(\pi))=\eta^L_{\Delta_2}(I_{\Delta_2}(\pi))
\]
\end{lemma}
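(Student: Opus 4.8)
The plan is to transfer the statement to the LdRi side via Proposition~\ref{prop commutative triple} and then mirror the proof of Lemma~\ref{lem eta unchange}. Set $\mathfrak m=\mathfrak{mx}_{\Delta_1}(\pi)$ and $\omega=I_{\Delta_2}\circ D_{\mathfrak m}(\pi)$; since $(\Delta_1,\Delta_2,\pi)$ being pre-RdLi-commutative forces $D_{\Delta_1}(\pi)\neq 0$, the segment $\Delta_1$ occurs in $\mathfrak m$, hence $D_{\mathfrak m}(\pi)\neq 0$ and $\omega\neq 0$. First I would use Lemma~\ref{lem completing pre comm} (applicable since, writing $\Delta_1=[a_1,b_1]_{\rho_1}$, every segment of $\mathfrak m$ has the form $[a',b_1]_{\rho_1}$ with $a'\geq a_1$, and $D_{\mathfrak m}(\pi)\neq 0$) to replace $(\Delta_1,\Delta_2,\pi)$ by the pre-RdLi-commutative triple $(\mathrm{St}(\mathfrak m),\mathrm{St}(\Delta_2),\pi)$. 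As $(\mathrm{St}(\mathfrak m),\pi)$ is a Rd-irreducible pair by Proposition~\ref{prop mx rd irr pair}, Proposition~\ref{thm pre implies strong}(1) upgrades this to a strongly RdLi-commutative triple, so Proposition~\ref{prop commutative triple} gives that $(\mathrm{St}(\Delta_2),\mathrm{St}(\mathfrak m),\omega)$ is pre-LdRi-commutative. In parallel, Lemma~\ref{lem commutate under irred pair} gives $\omega\cong D_{\mathfrak m}\circ I_{\Delta_2}(\pi)$, whence $I^R_{\mathfrak m}(\omega)\cong I^R_{\mathfrak m}\circ D^R_{\mathfrak m}(I_{\Delta_2}(\pi))\cong I_{\Delta_2}(\pi)$ (using $I^R_{\sigma}\circ D^R_{\sigma}\cong\mathrm{id}$, valid because $D^R_{\mathfrak m}(I_{\Delta_2}(\pi))=\omega\neq 0$). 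Thus the claim reduces to proving $\eta^L_{\Delta_2}(\omega)=\eta^L_{\Delta_2}(I^R_{\mathfrak m}(\omega))$.

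To prove this last equality I would run, with left and right interchanged throughout, the argument of Lemma~\ref{lem eta unchange}. Put $\mathfrak p=\mathfrak{mx}^L_{\Delta_2}(\omega)$. The inequality $\eta^L_{\Delta_2}(\omega)\leq\eta^L_{\Delta_2}(I^R_{\mathfrak m}(\omega))$ is the left–right mirror of Lemma~\ref{lem right multi submulti}: from $\omega\hookrightarrow\mathrm{St}(\mathfrak p)\times D^L_{\mathfrak p}(\omega)$ one gets $I^R_{\mathfrak m}(\omega)\hookrightarrow\omega\times\mathrm{St}(\mathfrak m)\hookrightarrow\mathrm{St}(\mathfrak p)\times D^L_{\mathfrak p}(\omega)\times\mathrm{St}(\mathfrak m)$, so $D^L_{\mathfrak p}(I^R_{\mathfrak m}(\omega))\neq 0$ and hence $\mathfrak p$ is a submultisegment of $\mathfrak{mx}^L_{\Delta_2}(I^R_{\mathfrak m}(\omega))$. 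For the reverse inequality I would complete the first slot of the pre-LdRi-commutative triple $(\mathrm{St}(\Delta_2),\mathrm{St}(\mathfrak m),\omega)$ to $\mathrm{St}(\mathfrak p)$ by the left–right mirror of the completing machinery of Section~\ref{s construct precommut}, obtaining that $(\mathrm{St}(\mathfrak p),\mathrm{St}(\mathfrak m),\omega)$ is pre-LdRi-commutative; then, since $(\mathrm{St}(\mathfrak p),\omega)$ is Ld-irreducible (mirror of Proposition~\ref{prop mx rd irr pair}), the mirror of Proposition~\ref{thm pre implies strong}(1) makes this triple strongly LdRi-commutative, and the mirror of Proposition~\ref{prop strong irr imply irr} shows $(\mathrm{St}(\mathfrak p),I^R_{\mathfrak m}(\omega))$ is Ld-irreducible. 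Combined with the submultisegment relation already established, the mirror of Proposition~\ref{prop mx rd irr pair} forces $\mathfrak p=\mathfrak{mx}^L_{\Delta_2}(I^R_{\mathfrak m}(\omega))$, i.e. $\eta^L_{\Delta_2}(\omega)=\eta^L_{\Delta_2}(I^R_{\mathfrak m}(\omega))$. Together with $I^R_{\mathfrak m}(\omega)\cong I_{\Delta_2}(\pi)$ this is exactly $\eta^L_{\Delta_2}(I_{\Delta_2}\circ D_{\mathfrak m}(\pi))=\eta^L_{\Delta_2}(I_{\Delta_2}(\pi))$.

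The step deserving the most care is the completion of $\mathrm{St}(\Delta_2)$ to $\mathrm{St}(\mathfrak p)$ inside the reverse inequality. Here $\mathfrak p=\mathfrak{mx}^L_{\Delta_2}(\omega)$ consists only of subsegments of the single segment $\Delta_2$ sharing its left endpoint, so only the mirror of the segment-level portion of the completing machinery is needed — one does not have to mirror the delicate general statement Proposition~\ref{prop completing pre comm}. This is precisely the economy bought by routing through Proposition~\ref{prop commutative triple} before carrying out the $\eta$-analysis, and is why the proof is genuinely simpler than a direct dualization of Lemma~\ref{lem eta unchange} would be. All the mirrored auxiliary results invoked (the left–right analogues of Lemmas~\ref{lem right multi submulti}, \ref{lem completing pre comm} and Propositions~\ref{prop mx rd irr pair}, \ref{thm pre implies strong}, \ref{prop strong irr imply irr}) follow from their stated forms by the left–right symmetry recorded in Sections~\ref{ss left right switch}--\ref{ss switch left right by dual}, so no new argument beyond the excerpt is required.
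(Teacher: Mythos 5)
Your proof follows the paper's own argument step for step: complete $\Delta_1$ to $\mathfrak m=\mathfrak{mx}_{\Delta_1}(\pi)$ via Lemma~\ref{lem completing pre comm}, upgrade to a strongly RdLi-commutative triple using Proposition~\ref{prop mx rd irr pair} and Proposition~\ref{thm pre implies strong}(1), invoke Proposition~\ref{prop commutative triple} to obtain pre-LdRi-commutativity of $(\mathrm{St}(\Delta_2),\mathrm{St}(\mathfrak m),\omega)$, identify $I^R_{\mathfrak m}(\omega)\cong I_{\Delta_2}(\pi)$ via Lemma~\ref{lem commutate under irred pair}, and then run the left--right mirror of Lemma~\ref{lem eta unchange}; this is exactly what the paper does, with the mirror of Lemma~\ref{lem eta unchange} simply cited rather than unwound.

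The one thing I would correct is the closing remark about avoiding Proposition~\ref{prop completing pre comm}. What separates Lemma~\ref{lem completing pre comm} from Proposition~\ref{prop completing pre comm} is not the shape of the multisegment one completes \emph{to} (both statements already allow a saturated multisegment there) but whether the \emph{second} slot of the triple is a single segment or a multisegment. In the step where you complete $\mathrm{St}(\Delta_2)$ to $\mathrm{St}(\mathfrak p)$, the second slot is $\mathrm{St}(\mathfrak m)$, and $\mathfrak m=\mathfrak{mx}_{\Delta_1}(\pi)$ typically has several segments; so what you actually use is the left--right mirror of Proposition~\ref{prop completing pre comm}, not merely of Lemma~\ref{lem completing pre comm}. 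This does not create a gap --- the mirror exists by the $\theta$-symmetry recorded in Sections~\ref{ss left right switch}--\ref{ss switch left right jacquet functor} --- but the economy bought by routing through Proposition~\ref{prop commutative triple} is that one does not have to independently manufacture the pre-LdRi-commutativity of $(\mathrm{St}(\Delta_2),\mathrm{St}(\mathfrak m),\omega)$, not that the completion step degenerates to its segment-level form.
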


\begin{proof}
Since $(\Delta_1, \Delta_2, \pi)$ is pre-RdLi-commutative, Proposition \ref{prop completing pre comm} implies that $(\mathrm{St}(\mathfrak m), \mathrm{St}(\Delta_2), \pi)$ is pre-RdLi-commutative. Then, by Proposition \ref{prop commutative triple}, $(\mathrm{St}(\Delta_2), \mathrm{St}(\mathfrak m), I_{\Delta_2}\circ D_{\mathfrak m}(\pi))$ is a pre-LdRi-commutative triple. Now, by using the right version of Lemma \ref{lem commutate under irred pair}, we have
\[   \eta_{\Delta_2}^R(I^L_{\mathfrak m}\circ I_{\Delta_2}\circ D_{\mathfrak m}(\pi))=\eta_{\Delta_2}^R(I_{\Delta_2}(\pi)) .
\]
Combining with the right version of Lemma \ref{lem eta unchange}, we obtain the desired statement.
\end{proof}

\begin{lemma} \label{lem eta for  another side}
Let $(\Delta_1, \Delta_2, \pi)$ be a pre-RdLi-commutative triple. Let $\mathfrak m=\mathfrak{mx}_{\Delta_1}(\pi)$. Then $\eta^L_{\Delta_2}(D_{\mathfrak m}\circ I_{\Delta_2}(\pi))=\eta^L_{\Delta_2}(D_{\Delta_1}\circ I_{\Delta_2}(\pi))=\eta^L_{\Delta_2}(I_{\Delta_2}(\pi))$. 
\end{lemma}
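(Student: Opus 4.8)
The plan is to squeeze $\eta^L_{\Delta_2}(D_{\Delta_1}\circ I_{\Delta_2}(\pi))$ between $\eta^L_{\Delta_2}(D_{\mathfrak m}\circ I_{\Delta_2}(\pi))$ from below and $\eta^L_{\Delta_2}(I_{\Delta_2}(\pi))$ from above, and then to observe that these two outer quantities already coincide. The outer equality is almost immediate: since $(\Delta_1,\Delta_2,\pi)$ is pre-RdLi-commutative and $\mathfrak m=\mathfrak{mx}_{\Delta_1}(\pi)$, Lemma \ref{lem commutate under irred pair} gives $D_{\mathfrak m}\circ I_{\Delta_2}(\pi)\cong I_{\Delta_2}\circ D_{\mathfrak m}(\pi)$, and Lemma \ref{lem pre commute 1} gives $\eta^L_{\Delta_2}(I_{\Delta_2}\circ D_{\mathfrak m}(\pi))=\eta^L_{\Delta_2}(I_{\Delta_2}(\pi))$; hence $\eta^L_{\Delta_2}(D_{\mathfrak m}\circ I_{\Delta_2}(\pi))=\eta^L_{\Delta_2}(I_{\Delta_2}(\pi))$.

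The monotonicity input I would record first is the following left--right variant of Lemma \ref{lem right multi submulti}: for $\sigma\in\mathrm{Irr}^{\square}$ with $D^R_{\sigma}(\xi)\neq0$ and any segment $\Delta$ one has $\eta^L_{\Delta}(D^R_{\sigma}(\xi))\leq\eta^L_{\Delta}(\xi)$. This is proved exactly as Lemma \ref{lem right multi submulti}, with the roles of left and right (and of integrals and derivatives) interchanged: writing $\mathfrak n=\mathfrak{mx}^L_{\Delta}(D^R_{\sigma}(\xi))$, Frobenius reciprocity gives $\xi\hookrightarrow D^R_{\sigma}(\xi)\times\sigma$ and $D^R_{\sigma}(\xi)\hookrightarrow\mathrm{St}(\mathfrak n)\times D^L_{\mathfrak n}(D^R_{\sigma}(\xi))$, so $\xi\hookrightarrow\mathrm{St}(\mathfrak n)\times D^L_{\mathfrak n}(D^R_{\sigma}(\xi))\times\sigma$, whence $D^L_{\mathfrak n}(\xi)\neq0$ and $\mathfrak n$ is a submultisegment of $\mathfrak{mx}^L_{\Delta}(\xi)$. (Alternatively one may derive it from Lemma \ref{lem right multi submulti} itself through the involution $\theta$ of Section \ref{ss left right switch} together with $\xi\cong I^R_{\sigma}(D^R_{\sigma}(\xi))$, valid since $D^R_{\sigma}(\xi)\neq0$.)

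It remains to assemble these. Since $\mathfrak m=\mathfrak{mx}_{\Delta_1}(\pi)$ is pairwise unlinked and contains $\Delta_1$, the representations $\mathrm{St}(\mathfrak m)$ and $\mathrm{St}(\mathfrak m-\Delta_1)$ are $\square$-irreducible and $D_{\mathfrak m}=D_{\mathfrak m-\Delta_1}\circ D_{\Delta_1}$ (from $I^R_{\mathfrak m}=I^R_{\Delta_1}\circ I^R_{\mathfrak m-\Delta_1}$ and inversion of integrals by derivatives). By Lemma \ref{lem eta unchange} applied with the (trivially $\Delta_2$-saturated) multisegment $\{\Delta_2\}$ we get $\mathfrak{mx}_{\Delta_1}(I_{\Delta_2}(\pi))=\mathfrak m$, hence $D_{\mathfrak m}(I_{\Delta_2}(\pi))\neq0$, and therefore $D_{\Delta_1}(I_{\Delta_2}(\pi))\neq0$ and $D_{\mathfrak m-\Delta_1}\bigl(D_{\Delta_1}\circ I_{\Delta_2}(\pi)\bigr)\neq0$. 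Applying the variant twice,
\[ \eta^L_{\Delta_2}\bigl(D_{\mathfrak m}\circ I_{\Delta_2}(\pi)\bigr)=\eta^L_{\Delta_2}\Bigl(D_{\mathfrak m-\Delta_1}\bigl(D_{\Delta_1}\circ I_{\Delta_2}(\pi)\bigr)\Bigr)\leq\eta^L_{\Delta_2}\bigl(D_{\Delta_1}\circ I_{\Delta_2}(\pi)\bigr)\leq\eta^L_{\Delta_2}\bigl(I_{\Delta_2}(\pi)\bigr), \]
and since the leftmost and rightmost terms are equal by the first paragraph, all three quantities coincide, which is the assertion.

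The step I expect to need the most care is the monotonicity variant: one must keep straight that the \emph{left} invariant $\eta^L_{\Delta_2}$ is being tracked while a \emph{right} derivative is applied, and one must invoke correctly the fact --- already used inside the proof of Lemma \ref{lem right multi submulti} --- that a ($\Delta$-saturated) multisegment $\mathfrak q$ with $D_{\mathfrak q}(\xi)\neq0$ is automatically a submultisegment of $\mathfrak{mx}_{\Delta}(\xi)$. Everything else is a formal combination of Lemmas \ref{lem commutate under irred pair}, \ref{lem pre commute 1}, \ref{lem eta unchange} and \ref{lem right multi submulti}.
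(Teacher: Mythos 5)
Your argument is correct and is essentially the paper's own proof in a different guise. The paper also sandwiches $\eta^L_{\Delta_2}(D_{\Delta_1}\circ I_{\Delta_2}(\pi))$ between $\eta^L_{\Delta_2}(D_{\mathfrak m}\circ I_{\Delta_2}(\pi))$ and $\eta^L_{\Delta_2}(I_{\Delta_2}(\pi))$ and closes the sandwich with Lemmas \ref{lem commutate under irred pair} and \ref{lem pre commute 1}; its monotonicity input is stated as ``the right version of Lemma \ref{lem right multi submulti},'' namely $\eta^L_{\Delta}(\pi)\le\eta^L_{\Delta}(I^R_{\sigma}(\pi))$, which (via $\xi\cong I^R_{\sigma}(D^R_{\sigma}(\xi))$) is precisely the derivative-form variant you state and prove, applied to the embeddings $D_{\Delta_1}\circ I_{\Delta_2}(\pi)\hookrightarrow D_{\mathfrak m}\circ I_{\Delta_2}(\pi)\times\mathrm{St}(\mathfrak m-\Delta_1)$ and $I_{\Delta_2}(\pi)\hookrightarrow D_{\Delta_1}\circ I_{\Delta_2}(\pi)\times\mathrm{St}(\Delta_1)$. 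You are slightly more explicit on the points the paper takes for granted (that $\Delta_1\in\mathfrak m$, that $D_{\mathfrak m}=D_{\mathfrak m-\Delta_1}\circ D_{\Delta_1}$, that $D_{\mathfrak m}(I_{\Delta_2}(\pi))\neq0$ via Lemma \ref{lem eta unchange}), and you consistently track the superscript $L$ on $\eta$, whereas the paper's displayed inequalities drop it (evidently a typo, since the statement and the concluding equality both carry the $L$). No gap.
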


\begin{proof}

Now, let $\tau=D_{\mathfrak m}\circ I_{\Delta_2}(\pi)$. Then 
\[  D_{\Delta_1}\circ I_{\Delta_2}(\pi) \hookrightarrow \tau \times \mathrm{St}(\mathfrak m-\Delta_1) ,
\]
and so the right version of Lemma \ref{lem right multi submulti} gives that 
\[  \eta_{\Delta_2}(D_{\Delta_1}\circ I_{\Delta_2}(\pi)) \geq \eta_{\Delta_2}(\tau) .
\]
Now, using $I_{\Delta_2}(\pi) \hookrightarrow (D_{\Delta_1}\circ I_{\Delta_2}(\pi))\times \mathrm{St}(\Delta_1)$ and the right version of Lemma \ref{lem right multi submulti} again, we have:
\[  \eta_{\Delta_2}(I_{\Delta_2}(\pi)) \geq \eta_{\Delta_2}(D_{\Delta_1}\circ I_{\Delta_2}(\pi) .
  \]
	By Lemmas \ref{lem commutate under irred pair} and \ref{lem pre commute 1},
\[    \eta^L_{\Delta_2}(D_{\mathfrak m}\circ I_{\Delta_2} (\pi))=\eta^L_{\Delta_2}(I_{\Delta_2}(\pi)) .
\]
and so the above inequalities are equalities as desired.
\end{proof}


\subsection{Some more properties of $\eta_{\Delta}$}

\begin{lemma}  \label{lem composition factor big derivative}
Let $\Delta$ be a segment and let $\pi \in \mathrm{Irr}$. Let $\tau$ be a composition factor in the Jordan-H\"older series of $\mathbb D_{\Delta}(\pi)$ with $\tau \not\cong D_{\Delta}(\pi)$. Then $\mathfrak{mx}_{\Delta}(\tau) \neq \mathfrak{mx}_{\Delta}(D_{\Delta}(\pi))=\mathfrak{mx}_{\Delta}(\pi)-\Delta$.
\end{lemma}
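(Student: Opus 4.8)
The plan is to exploit the structural properties of the big derivative $\mathbb{D}_{\Delta}$ established earlier, together with the socle-irreducibility phenomena and the identification $\mathfrak{mx}_{\Delta}(D_{\Delta}(\pi)) = \mathfrak{mx}_{\Delta}(\pi) - \Delta$. First I would recall that $\mathbb{D}_{\Delta}(\pi) = \mathrm{Hom}_{G_k}(\mathrm{St}(\Delta), \pi_{N_k})$ is by definition the full isotypic-type component governing all copies of $\mathrm{St}(\Delta)$ appearing (as a subquotient of the appropriate shape) in the Jacquet module, whereas $D_{\Delta}(\pi)$ is the distinguished irreducible submodule; the point of the lemma is that the ``extra'' constituents $\tau$ of $\mathbb{D}_{\Delta}(\pi)$ beyond $D_{\Delta}(\pi)$ are detected by having a strictly smaller $\Delta$-derivative multisegment. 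So the statement to prove is that for such $\tau$, $\mathfrak{mx}_{\Delta}(\tau) \neq \mathfrak{mx}_{\Delta}(\pi) - \Delta$.

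The key steps, in order: (1) Suppose for contradiction that $\mathfrak{mx}_{\Delta}(\tau) = \mathfrak{mx}_{\Delta}(\pi)-\Delta =: \mathfrak{q}$. (2) Use the embedding $\tau \hookrightarrow D_{\mathfrak{q}}(\tau) \times \mathrm{St}(\mathfrak{q})$ and the fact that $D_{\mathfrak q}(\pi) = D_\Delta(\pi)$ already realizes this same multisegment, so that via the transitivity of Jacquet functors both $D_{\Delta}(\pi) \boxtimes \mathrm{St}(\mathfrak q)$ and (a subquotient giving) $\tau \boxtimes \mathrm{St}(\mathfrak q)$ sit inside $\pi_{N_{k + l_{abs}(\mathfrak q)}}$ appropriately. (3) The crucial input is the SI property / socle-irreducibility of $\mathbb{D}_{\Delta}(\pi)$ — or more precisely the uniqueness statement that $D_{\Delta}(\pi)\boxtimes \mathrm{St}(\Delta)$ appears with multiplicity one in $\pi_{N_k}$ (Lemma \ref{lem unique embedding} and the surrounding remarks), combined with the fact, proved in the only-if direction of Proposition \ref{prop mx rd irr pair}, that $\mathrm{St}(\mathfrak q')\boxtimes \mathrm{St}(\mathfrak q)$ is not a direct summand in $\mathrm{St}(\mathfrak{mx}_\Delta(\pi))_{N}$ whenever $\mathfrak q \subsetneq \mathfrak{mx}_\Delta(\pi)$ — to force that $\tau$ and $D_{\Delta}(\pi)$ would have to agree, contradicting $\tau \not\cong D_{\Delta}(\pi)$. (4) Conclude.

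More concretely, I expect the cleanest route is: if $\mathfrak{mx}_{\Delta}(\tau) = \mathfrak q$, then applying $D_{\mathfrak q}$ to $\tau$ gives a nonzero irreducible $D_{\mathfrak q}(\tau)$, and $\tau \hookrightarrow D_{\mathfrak q}(\tau)\times \mathrm{St}(\mathfrak q)$ with $\mathrm{St}(\mathfrak q) = \mathrm{St}(\mathfrak{mx}_\Delta(\pi)-\Delta)$. Since $\tau$ is a constituent of $\mathbb{D}_{\Delta}(\pi) = \mathrm{Hom}_{G_k}(\mathrm{St}(\Delta),\pi_{N_k})$, inducing back shows that $\mathrm{St}(\Delta) \boxtimes \tau$ is a subquotient of $\pi_{N_k}$, hence $D_{\mathfrak q}(\tau)\boxtimes \mathrm{St}(\mathfrak q) \boxtimes \mathrm{St}(\Delta)$ is a subquotient of $\pi_{N_{n-n(D_{\mathfrak q}(\tau)),\, l_{abs}(\mathfrak q),\, k}}$. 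But $\mathrm{St}(\mathfrak q)\boxtimes \mathrm{St}(\Delta)$ is a subquotient of $\mathrm{St}(\mathfrak{mx}_\Delta(\pi))_{N}$, while on the other hand the if-direction of Proposition \ref{prop mx rd irr pair} gives that $D_{\mathfrak{mx}_\Delta(\pi)}(\pi)\boxtimes \mathrm{St}(\mathfrak{mx}_\Delta(\pi))$ is a \emph{direct summand} of $\pi_{N_{n(\mathrm{St}(\mathfrak{mx}_\Delta(\pi)))}}$; comparing multiplicities of $D_{\Delta}(\pi)\boxtimes \mathrm{St}(\mathfrak q)\boxtimes\mathrm{St}(\Delta)$ in $\pi_{N}$ (which is one, by Lemma \ref{lem unique embedding}) forces $D_{\mathfrak q}(\tau) \cong D_{\Delta}(\pi)$ and then $\tau \cong D_{\Delta}(\pi)$, the contradiction. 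The identity $\mathfrak{mx}_{\Delta}(D_{\Delta}(\pi)) = \mathfrak{mx}_{\Delta}(\pi)-\Delta$ itself I would justify directly: $D_\Delta$ removes exactly one copy of the top segment record, i.e. $\varepsilon_{[c,b]_\rho}(D_\Delta(\pi)) = \varepsilon_{[c,b]_\rho}(\pi) - \delta_{c,a}$ for $\Delta = [a,b]_\rho$, which is immediate from the definitions of $\varepsilon$ and the fact that $D_\Delta \circ D_{[a+1,b]_\rho}^{\,k}$ computes the same maximal $k$'s. The main obstacle will be step (3): carefully tracking multiplicities across the iterated Jacquet module and making sure the uniqueness statements (which are only up to scalar) genuinely rule out $\tau \not\cong D_\Delta(\pi)$; this requires invoking the socle/SI results of Lemmas \ref{lem SI property of big derivative}, \ref{lem big derivative compared with jacquet}, or perhaps more cleanly, the precise multiplicity-one statement for $D_\Delta(\pi)\boxtimes\mathrm{St}(\Delta)$ in $\pi_{N_k}$ together with the non-summand observation from Proposition \ref{prop mx rd irr pair}.
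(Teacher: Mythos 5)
Your overall strategy is in the right spirit, but the key multiplicity step is not correctly justified as stated, and this is a genuine gap. You want to conclude that $D_{\Delta}(\pi)\boxtimes \mathrm{St}(\mathfrak q)\boxtimes\mathrm{St}(\Delta)$ appears with multiplicity one in the iterated Jacquet module of $\pi$ and you cite Lemma \ref{lem unique embedding} for this. But Lemma \ref{lem unique embedding} only asserts uniqueness of the \emph{embedding} (i.e., $\dim\mathrm{Hom}(\omega\boxtimes\sigma_r\boxtimes\cdots\boxtimes\sigma_1,\pi_N)\leq 1$), which controls the socle, not the Jordan--H\"older multiplicity. A constituent can occur several times in the composition series even when the submodule of that isomorphism type is unique, and in your argument the putative second occurrence coming from $\tau$ is precisely a subquotient, not a submodule, of $\pi_N$. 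So the contradiction you reach does not follow from the cited lemma.

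The paper's proof sidesteps this issue by proving a genuine multiplicity-one count rather than a uniqueness-of-socle statement. It first shows, via the geometric lemma applied to $(D_{\mathfrak p}(\pi)\times\mathrm{St}(\mathfrak p))_{N_l}$ and the observation $\mathfrak{mx}_\Delta(D_{\mathfrak p}(\pi))=\emptyset$, that the whole big derivative $\mathbb D_\Delta(\pi)$ embeds into $D_{\mathfrak p}(\pi)\times\mathrm{St}(\mathfrak p-\Delta)$. It then applies $N_{l'}$ once more and, within the cuspidal component matching $\mathrm{csupp}(\mathrm{St}(\mathfrak p-\Delta))$ in the second factor, does an explicit geometric-lemma/cuspidal-support count showing that $D_{\mathfrak p}(\pi)\boxtimes\mathrm{St}(\mathfrak p-\Delta)$ occurs with Jordan--H\"older multiplicity exactly one, and is already contributed by $D_\Delta(\pi)_{N_{l'}}$; hence no other constituent $\tau$ of $\mathbb D_\Delta(\pi)$ can produce a factor $\tau'\boxtimes\mathrm{St}(\mathfrak p-\Delta)$. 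Your proposal could be repaired along these lines, but as written the invocation of Lemma \ref{lem unique embedding} does not deliver the required multiplicity bound, and the appeals to Proposition \ref{prop mx rd irr pair} and the SI-type lemmas are not where the paper's argument lives for this particular statement.
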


\begin{proof}
 Let $\mathfrak p=\mathfrak{mx}_{\Delta}(\pi)$. Then 
\[   \pi \hookrightarrow D_{\mathfrak p}(\pi) \times \mathrm{St}(\mathfrak p) .
\]
Write $\Delta=[a,b]_{\rho}$. Since $\mathfrak{mx}_{\Delta}(D_{\mathfrak p}(\pi))=\emptyset$, the $G_{k}$-factor (for any $k$) of any simple composition factor in $D_{\mathfrak p}(\pi)_{N_k}$ cannot have the cuspidal support of the form $\left\{ \nu^c \rho, \ldots, \nu^b\rho \right\}$ for some $a\leq c\leq b$. Since 
\[   D_{\Delta}(\pi)\boxtimes \mathrm{St}(\Delta) \hookrightarrow \pi_{N_l} \hookrightarrow (D_{\mathfrak p}(\pi)\times \mathrm{St}(\mathfrak p))_{N_l},
\]
where $l=l_{abs}(\Delta)$, the only layer in the geometric lemma of $(D_{\mathfrak p}(\pi)\times \mathrm{St}(\mathfrak p))_{N_l}$ that can contribute the above embedding is the toppest one i.e. $\mathbb D_{\mathfrak p}(\pi) \times (\mathrm{St}(\mathfrak p)_{N_l})$. Hence, $\mathbb D_{\Delta}(\pi)$ embeds to $D_{\mathfrak p}(\pi) \times \mathbb D_{\Delta}(\mathrm{St}(\mathfrak p))$. We also have that $\mathbb D_{\Delta}(\mathrm{St}(\mathfrak p)) \cong \mathrm{St}(\mathfrak p-\Delta)$ by \cite[Lemma 11.6]{Ch22+b}. Thus, we have:
\[   \mathbb D_{\Delta}(\pi) \hookrightarrow D_{\mathfrak p}(\pi)\times \mathrm{St}(\mathfrak p-\Delta) .
\]
Let $l'=l_{abs}(\mathfrak p-\Delta)$. Then we also have 
\[   \mathbb D_{\Delta}(\pi)_{N_{l'}} \hookrightarrow (D_{\mathfrak p}(\pi) \times \mathrm{St}(\mathfrak p-\Delta))_{N_{l'}} .
\]
Now we consider the component with the cuspidal support same as $\mathrm{csupp}(\mathrm{St}(\mathfrak p-\Delta))$ in the second factor of $G_{n-l-l'}\times G_{l'}$ ($n=n(\pi)$). We apply the geometric lemma and a standard computation shows that such component is of the form $D_{\mathfrak p}(\pi)\boxtimes \mathrm{St}(\mathfrak p-\Delta)$ with multiplicity one. Such form must come from $D_{\Delta}(\pi)_{N_{l'}}$ and so for other composition factor in $\tau$ of $\mathbb D_{\Delta}(\pi)$, $\tau_{N_{l'}}$ cannot have the factor of the form $\tau' \boxtimes \mathrm{St}(\mathfrak p-\Delta)$ for some $\tau' \in \mathrm{Irr}$. This implies the lemma.
\end{proof}

We also need a variant. 

\begin{lemma} \label{lem decrease mx after derivative}
We use the notations in Lemma \ref{lem composition factor big derivative}. Then 
\[  |\mathfrak{mx}_{\Delta}(\tau)| \leq |\mathfrak{mx}_{\Delta}(\pi)|-1 .
\]
\end{lemma}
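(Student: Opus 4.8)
The plan is to reuse the machinery already deployed in Lemma~\ref{lem composition factor big derivative} and track multiplicities of segments ending at $b$ a bit more carefully. Write $\Delta=[a,b]_\rho$, $n=n(\pi)$, and $\mathfrak p=\mathfrak{mx}_\Delta(\pi)$, $l=l_{abs}(\Delta)$, $l'=l_{abs}(\mathfrak p-\Delta)$. As in the proof of Lemma~\ref{lem composition factor big derivative}, one has the embedding $\mathbb D_\Delta(\pi)\hookrightarrow D_{\mathfrak p}(\pi)\times \mathrm{St}(\mathfrak p-\Delta)$, obtained from $\pi\hookrightarrow D_{\mathfrak p}(\pi)\times\mathrm{St}(\mathfrak p)$ together with $\mathbb D_\Delta(\mathrm{St}(\mathfrak p))\cong\mathrm{St}(\mathfrak p-\Delta)$ (via \cite[Lemma 11.6]{Ch22+b}) and the fact that the only geometric-lemma layer of $(D_{\mathfrak p}(\pi)\times\mathrm{St}(\mathfrak p))_{N_l}$ which can carry $D_\Delta(\pi)\boxtimes\mathrm{St}(\Delta)$ is the top one (here $\mathfrak{mx}_\Delta(D_{\mathfrak p}(\pi))=\emptyset$ forces all the ``ends at $b$'' mass to come from the $\mathrm{St}(\mathfrak p)$ factor).

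First I would take cuspidal supports. Since $\tau$ is a composition factor of $\mathbb D_\Delta(\pi)$, it has cuspidal support $\mathrm{csupp}(D_\Delta(\pi))=\mathrm{csupp}(\pi)-\mathrm{csupp}(\mathrm{St}(\Delta))$. Now, the multiset $\mathfrak{mx}_\Delta(\xi)$ for an irreducible $\xi$ records, through its segments $[a+k,b]_\rho$ with multiplicities $\varepsilon_{[a+k,b]_\rho}(\xi)$, exactly the maximal ``staircase'' of segments ending at $\nu_\rho^b\rho$ that can be derived off $\xi$; in particular $l_{abs}(\mathfrak{mx}_\Delta(\xi))$ is bounded by the number of occurrences of $\nu_\rho^b\rho$ in $\mathrm{csupp}(\xi)$ (each derived segment ending at $b$ consumes one copy of $\nu_\rho^b\rho$). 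Comparing with $\pi$: $\mathrm{csupp}(\tau)$ has one fewer copy of $\nu_\rho^b\rho$ than $\mathrm{csupp}(\pi)$ has (since $b\in\Delta$), so on the level of ``length of the staircase'' we already get $l_{abs}(\mathfrak{mx}_\Delta(\tau))\le l_{abs}(\mathfrak{mx}_\Delta(\pi))-l_{abs}(\Delta)\le l_{abs}(\mathfrak p)-1$, but this is not quite the desired cardinality bound because $|\mathfrak{mx}_\Delta(\cdot)|$ counts segments, not absolute lengths. To get $|\mathfrak{mx}_\Delta(\tau)|\le|\mathfrak p|-1$, I would instead argue: either $\mathfrak{mx}_\Delta(\tau)$ has strictly fewer segments than $\mathfrak p$, and we are done; or it has $\geq|\mathfrak p|$ segments, in which case, combined with Lemma~\ref{lem composition factor big derivative} which gives $\mathfrak{mx}_\Delta(\tau)\neq\mathfrak p-\Delta$, and the embedding $\mathbb D_\Delta(\pi)\hookrightarrow D_{\mathfrak p}(\pi)\times\mathrm{St}(\mathfrak p-\Delta)$ restricted to $\tau\hookrightarrow D_{\mathfrak p}(\pi)\times\mathrm{St}(\mathfrak p-\Delta)$, one derives off all of $\mathfrak{mx}_\Delta(\tau)$ and uses the geometric lemma on $(D_{\mathfrak p}(\pi)\times\mathrm{St}(\mathfrak p-\Delta))_{N_{l''}}$ ($l''=l_{abs}(\mathfrak{mx}_\Delta(\tau))$) to see that the ``ends at $b$'' mass available in $D_{\mathfrak p}(\pi)$ is zero, so the whole staircase $\mathfrak{mx}_\Delta(\tau)$ must be extractable from $\mathrm{St}(\mathfrak p-\Delta)$; but then $\mathfrak{mx}_\Delta(\tau)\subseteq\mathfrak p-\Delta$ as multisegments (each of the $|\mathfrak p|$-or-more segments of $\mathfrak{mx}_\Delta(\tau)$ is absorbed by one of the $|\mathfrak p|-1$ segments of $\mathfrak p-\Delta$), which is impossible for cardinality reasons unless equality holds everywhere, contradicting Lemma~\ref{lem composition factor big derivative}.

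So the main step, and the one I expect to be the real obstacle, is turning the cuspidal-support / absolute-length inequality into a \emph{cardinality} inequality on multisegments: one has to rule out the scenario where $\mathfrak{mx}_\Delta(\tau)$ has the same number of segments as $\mathfrak p$ (with shorter segments) by showing those segments would have to fit inside $\mathfrak p-\Delta$ via the geometric lemma of $(D_{\mathfrak p}(\pi)\times\mathrm{St}(\mathfrak p-\Delta))_{N_{l''}}$, which forces a subset relation that collapses to the equality forbidden by Lemma~\ref{lem composition factor big derivative}. The Jacquet-module computation on $\mathrm{St}(\mathfrak p-\Delta)$ (its semisimplicity, and the fact that a $G_k$-factor of a composition factor cannot acquire extra copies of $\nu_\rho^b\rho$ beyond what the segments of $\mathfrak p-\Delta$ provide) is the technical heart; once that is in place the counting is immediate. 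I would close the argument by noting that in all cases $|\mathfrak{mx}_\Delta(\tau)|\le|\mathfrak p|-1=|\mathfrak{mx}_\Delta(\pi)|-1$.
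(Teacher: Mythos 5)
Your overall strategy matches the paper's: both start from the embedding $\tau\hookrightarrow D_{\mathfrak p}(\pi)\times\mathrm{St}(\mathfrak p-\Delta)$ obtained in Lemma~\ref{lem composition factor big derivative} and then count copies of $\nu^b_\rho\rho$ that can show up in the second slot after applying the geometric lemma. However, there are two execution problems in the write-up worth flagging.

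First, the preliminary ``pure cuspidal-support'' bound as you have written it does not go through, and not only for the reason you note. The intermediate inequality $l_{abs}(\mathfrak{mx}_\Delta(\tau))\le l_{abs}(\mathfrak{mx}_\Delta(\pi))-l_{abs}(\Delta)$ has no visible derivation from the statement ``$\mathrm{csupp}(\tau)$ has one fewer $\nu^b_\rho\rho$ than $\mathrm{csupp}(\pi)$''; you are conflating $l_{abs}$ with cardinality (it is $|\mathfrak{mx}_\Delta(\xi)|$, not $l_{abs}(\mathfrak{mx}_\Delta(\xi))$, that equals the number of $\nu^b_\rho\rho$-copies consumed). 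Moreover the multiplicity of $\nu^b_\rho\rho$ in $\mathrm{csupp}(\pi)$ can greatly exceed $|\mathfrak{mx}_\Delta(\pi)|$, so knowing $\tau$ has one fewer $\nu^b_\rho\rho$ says nothing about $|\mathfrak{mx}_\Delta(\tau)|$ versus $|\mathfrak{mx}_\Delta(\pi)|$. You acknowledge the bound is not the one needed, so this is not fatal, but it is not even a correct ``weaker'' bound.

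Second, in the main argument the assertion ``$\mathfrak{mx}_\Delta(\tau)\subseteq\mathfrak p-\Delta$ as multisegments'' is not what the geometric lemma yields and is generally false. What the layer structure of $\mathrm{St}(\mathfrak p-\Delta)_{N_{k'}}$ gives in the second slot is a product $\mathrm{St}([c_1',b]_\rho)\times\cdots\times\mathrm{St}([c_{r-1}',b]_\rho)$ where each $[c_i',b]_\rho$ is a right-anchored sub-piece of the corresponding $\Delta_i\in\mathfrak p-\Delta$ (possibly shorter than $\Delta_i$), so the segments produced need not lie in $\mathfrak p-\Delta$. The correct and sufficient statement is just the multiplicity count: that layer contributes at most $|\mathfrak p|-1$ copies of $\nu^b_\rho\rho$ to the $G_{l''}$-cuspidal support (one per surviving piece), while the $D_{\mathfrak p}(\pi)$-contribution in the relevant cuspidal range is zero because $\mathfrak{mx}_\Delta(D_{\mathfrak p}(\pi))=\emptyset$; since $\mathrm{St}(\mathfrak{mx}_\Delta(\tau))$ has exactly $|\mathfrak{mx}_\Delta(\tau)|$ copies of $\nu^b_\rho\rho$, this directly forces $|\mathfrak{mx}_\Delta(\tau)|\le|\mathfrak p|-1$. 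This is precisely the paper's argument, done without the case split and without re-invoking Lemma~\ref{lem composition factor big derivative} (which is redundant here: once $|\mathfrak{mx}_\Delta(\tau)|\ge|\mathfrak p|$ is impossible on counting grounds, there is nothing left to contradict). So the underlying idea is the paper's, but the subset-relation step should be replaced by the plain multiplicity pigeonhole.
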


\begin{proof}
Again write $\Delta=[a,b]_{\rho}$. We have shown in Lemma \ref{lem composition factor big derivative} that 
\[  \tau \hookrightarrow D_{\mathfrak p}(\pi) \times \mathrm{St}(\mathfrak p-\Delta) .\]
Let $\mathfrak q=\mathfrak{mx}_{\Delta}(\tau)$. Let $\tau_1 \boxtimes \tau_2$ be any composition factor in $D_{\Delta}(\pi)_{N_k}$. By using $\mathfrak{mx}_{\Delta}(D_{\mathfrak p}(\pi))=\emptyset$, if $\mathrm{csupp}(\tau_2)$ contains only cuspidal representations $\nu^a_{\rho}\rho, \ldots, \nu^{b}_{\rho}\rho$ (with certain multiplicities), then $\mathrm{csupp}(\tau_2)$ contains only cuspidal representations $\nu^a_{\rho}\rho, \ldots, \nu^b_{\rho}\rho$ (with certain multiplicities). 

Let $\tau_1' \boxtimes \tau_2'$ be a composition factor in $\mathrm{St}(\mathfrak p-\Delta)_{N_{k'}}$ for some $k'$. Then the cuspidal representation $\nu^b_{\rho}\rho$ can have at most multiplicity $|\mathfrak{mx}_{\Delta}(\pi)|-1$ in $\mathrm{csupp}(\tau_2')$. Thus, if the cuspidal representations in $\mathrm{csupp}(\tau_2 \times \tau_2')$ lie in $\nu^a_{\rho}\rho, \ldots, \nu^b_{\rho}\rho$, then $\nu_{\rho}^b\rho$ can appear with multiplicity at most $|\mathfrak{mx}_{\Delta}(\pi)|-1$ in $\mathrm{csupp}(\tau_2\times \tau_2')$. This proves the lemma.
\end{proof}

For convenience, for a segment $\Delta=[a,b]_{\rho}$, set $a(\Delta)=\nu_{\rho}^a\rho$.

\begin{lemma} \label{lem inequaltiy on left derivatives}
We use the notations in Lemma \ref{lem composition factor big derivative}. Let $\Delta'$ be any segment. Then $|\mathfrak{mx}^L_{\Delta'}(\tau)| \leq |\mathfrak{mx}^L_{\Delta'}(\pi)|$.
\end{lemma}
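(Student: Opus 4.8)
This is the ``mixed'' companion of Lemma~\ref{lem decrease mx after derivative}: the big derivative $\mathbb D_\Delta$ strips structure off on the \emph{right}, whereas $\mathfrak{mx}^L_{\Delta'}$ records structure on the \emph{left} (and $\Delta'$ need not be related to $\Delta$), which is why only a weak inequality is expected. I would argue along the lines of Lemma~\ref{lem decrease mx after derivative}. Write $\Delta=[a,b]_\rho$ and $\mathfrak p=\mathfrak{mx}_\Delta(\pi)$; since $\mathbb D_\Delta(\pi)\neq 0$ we have $D_\Delta(\pi)\neq 0$, hence $\Delta\in\mathfrak p$, and the proof of Lemma~\ref{lem composition factor big derivative} shows both that $\tau$ is a composition factor of $D_{\mathfrak p}(\pi)\times\mathrm{St}(\mathfrak p-\Delta)$ and that $\mathfrak{mx}_\Delta(D_{\mathfrak p}(\pi))=\emptyset$. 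Note also $\mathrm{St}(\mathfrak p)\cong\mathrm{St}(\mathfrak p-\Delta)\times\mathrm{St}(\Delta)$, as the segments of $\mathfrak p$ share the right endpoint $b$ and are pairwise unlinked.

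Fix $\Delta'=[a',b']_{\rho'}$ and put $\mathfrak q:=\mathfrak{mx}^L_{\Delta'}(\tau)$, $q:=l_{abs}(\mathfrak q)$. Then $\mathfrak q$ is pairwise unlinked, all of its segments have left endpoint $a'$, and $\mathrm{St}(\mathfrak q)\boxtimes D^L_{\mathfrak q}(\tau)\hookrightarrow\tau_{N_{q,n(\tau)-q}}$; in particular $|\mathfrak{mx}^L_{\Delta'}(\tau)|=|\mathfrak q|$ equals the multiplicity of $\nu_{\rho'}^{a'}\rho'$ in $\mathrm{csupp}(\mathrm{St}(\mathfrak q))$. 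Since $\tau$ is a composition factor of $D_{\mathfrak p}(\pi)\times\mathrm{St}(\mathfrak p-\Delta)$ and the Jacquet functor is exact, $\mathrm{St}(\mathfrak q)\boxtimes D^L_{\mathfrak q}(\tau)$ is a composition factor of $(D_{\mathfrak p}(\pi)\times\mathrm{St}(\mathfrak p-\Delta))_{N_{q,n(\tau)-q}}$, hence occurs in one of its geometric-lemma layers, whose first ($G_q$-)factor has the form $\lambda_1\times\lambda_2$ with $\lambda_1$ the first factor of a composition factor of a Jacquet module of $D_{\mathfrak p}(\pi)$, and $\lambda_2$ likewise for $\mathrm{St}(\mathfrak p-\Delta)$. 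A careful analysis of this layer — tracking not merely cuspidal supports but the left-anchored structure of $\mathrm{St}(\mathfrak q)$ through the geometric lemma, and using $\mathfrak{mx}_\Delta(D_{\mathfrak p}(\pi))=\emptyset$ together with the Jacquet-module structure of generalized Steinberg representations to restrict which $\nu_\rho^c\rho$ may enter $\mathrm{csupp}(\lambda_1)$, exactly as in Lemma~\ref{lem decrease mx after derivative} — should yield the subadditivity estimate
\[ |\mathfrak{mx}^L_{\Delta'}(\tau)|\ \le\ |\mathfrak{mx}^L_{\Delta'}(D_{\mathfrak p}(\pi))|+|\mathfrak{mx}^L_{\Delta'}(\mathrm{St}(\mathfrak p-\Delta))|. \]
Here the second term is $\le|\mathfrak{mx}^L_{\Delta'}(\mathrm{St}(\mathfrak p))|$ because $\mathrm{St}(\mathfrak p)\cong\mathrm{St}(\mathfrak p-\Delta)\times\mathrm{St}(\Delta)$, so any left $\Delta'$-derivative of $\mathrm{St}(\mathfrak p-\Delta)$ propagates to one of $\mathrm{St}(\mathfrak p)$ (invoking \cite{LM19} to interchange sub- and quotient-modules of Jacquet modules for $\square$-irreducible representations).

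Finally I would feed this into the canonical embedding $\pi\hookrightarrow D_{\mathfrak p}(\pi)\times\mathrm{St}(\mathfrak p)$ (which realizes $\pi$ as the unique submodule $I^R_{\mathrm{St}(\mathfrak p)}(D_{\mathfrak p}(\pi))$): from a left $\Delta'$-derivable multisegment of $D_{\mathfrak p}(\pi)$ and one of $\mathrm{St}(\mathfrak p)$ — both pairwise unlinked with all segments sharing the left endpoint $a'$, so their Steinberg representations are $\square$-irreducible, amalgamate into a single Steinberg, and (arguing as in the proofs of Lemmas~\ref{lem right multi submulti} and~\ref{lem eta for  another side}) can be moved past the other factor — one produces a left $\Delta'$-derivable multisegment of $\pi$ of the combined size, giving $|\mathfrak{mx}^L_{\Delta'}(D_{\mathfrak p}(\pi))|+|\mathfrak{mx}^L_{\Delta'}(\mathrm{St}(\mathfrak p))|\le|\mathfrak{mx}^L_{\Delta'}(\pi)|$. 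Chaining the two displayed inequalities proves the lemma. The main obstacle is the subadditivity step: making the geometric-lemma and cuspidal-support bookkeeping for $\mathrm{St}(\mathfrak q)$ precise, especially when $\Delta$ and $\Delta'$ lie on a common cuspidal line and overlap, so that the contributions of $D_{\mathfrak p}(\pi)$ and of $\mathrm{St}(\mathfrak p-\Delta)$ genuinely interact; this is the same species of analysis carried out in Lemmas~\ref{lem composition factor big derivative} and~\ref{lem decrease mx after derivative}.
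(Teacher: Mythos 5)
Your proof takes a genuinely different route from the paper's, and it has a gap that I don't think can be filled as written.

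The paper controls $|\mathfrak{mx}^L_{\Delta'}(\tau)|$ by decomposing $\pi$ along its \emph{left} $\Delta'$-structure: with $\mathfrak q=\mathfrak{mx}^L_{\Delta'}(\pi)$ one has $\pi\hookrightarrow \mathrm{St}(\mathfrak q)\times D^L_{\mathfrak q}(\pi)$, and then one tracks $\tau\boxtimes\mathrm{St}(\Delta)\hookrightarrow\pi_{N_l}$ through the geometric lemma applied to $(\mathrm{St}(\mathfrak q)\times D^L_{\mathfrak q}(\pi))_{N_l}$. The decisive features are that the second factor is already clean ($\mathfrak{mx}^L_{\Delta'}(D^L_{\mathfrak q}(\pi))=\emptyset$) and that the Jacquet modules of $\mathrm{St}(\mathfrak q)$ are computable, so one directly gets $\tau$ as a composition factor of $\mathrm{St}(\mathfrak q')\times\omega$ with $|\mathfrak q'|\le|\mathfrak q|$ and $\mathfrak{mx}^L_{\Delta'}(\omega)=\emptyset$, after which the argument finishes as the left version of Lemma~\ref{lem decrease mx after derivative}. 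You instead decompose $\pi$ along the \emph{right} $\Delta$-structure, $\pi\hookrightarrow D_{\mathfrak p}(\pi)\times\mathrm{St}(\mathfrak p)$ with $\mathfrak p=\mathfrak{mx}_\Delta(\pi)$, and then try to bound a left $\Delta'$-invariant across that right factorization.

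The gap is the ``superadditivity'' inequality $|\mathfrak{mx}^L_{\Delta'}(D_{\mathfrak p}(\pi))|+|\mathfrak{mx}^L_{\Delta'}(\mathrm{St}(\mathfrak p))|\le|\mathfrak{mx}^L_{\Delta'}(\pi)|$. You invoke the arguments of Lemmas~\ref{lem right multi submulti} and~\ref{lem eta for  another side}, but those arguments are of the ``opposite-side'' type: Lemma~\ref{lem right multi submulti} shows $\mathfrak{mx}_\Delta(\pi)\subseteq\mathfrak{mx}_\Delta(I^L_\sigma(\pi))$ precisely because the multiplier $\sigma$ is inserted on the \emph{left} while $\mathfrak{mx}_\Delta$ is a \emph{right} invariant, so the chain $I^L_\sigma(\pi)\hookrightarrow\sigma\times\pi\hookrightarrow\sigma\times D_{\mathfrak m}(\pi)\times\mathrm{St}(\mathfrak m)$ keeps $\mathrm{St}(\mathfrak m)$ at the right edge. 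In your situation one of the two contributions forces a ``same-side'' version: if $\mathfrak a=\mathfrak{mx}^L_{\Delta'}(D_{\mathfrak p}(\pi))$ and $\mathfrak b=\mathfrak{mx}^L_{\Delta'}(\mathrm{St}(\mathfrak p))$, you get $\pi\hookrightarrow\mathrm{St}(\mathfrak a)\times D^L_{\mathfrak a}(D_{\mathfrak p}(\pi))\times\mathrm{St}(\mathfrak b)\times D^L_{\mathfrak b}(\mathrm{St}(\mathfrak p))$, and to conclude $D^L_{\mathfrak a+\mathfrak b}(\pi)\neq0$ you would need to move $\mathrm{St}(\mathfrak b)$ past $D^L_{\mathfrak a}(D_{\mathfrak p}(\pi))$, which is not in general possible; nothing in \cite{LM19} nor in the cited lemmas licenses that swap. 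You flag the subadditivity step as the main obstacle, but the superadditivity step is the deeper one, and the tool you lean on there ($\mathfrak{mx}_\Delta(D_{\mathfrak p}(\pi))=\emptyset$, a right-$\Delta$ statement) says nothing about the left $\Delta'$-invariants you are trying to control. The paper avoids all of this by choosing the factorization that is already adapted to the invariant being bounded.
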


\begin{proof}
Let $\mathfrak q=\mathfrak{mx}^L_{\Delta'}(\pi)$. Then 
\[   \pi \hookrightarrow \mathrm{St}(\mathfrak q) \times D^L_{\mathfrak q}(\pi) .
\]
Now, we have:
\[   \mathbb D_{\Delta'}(\pi)\boxtimes \mathrm{St}(\Delta') \hookrightarrow \pi_{N_l} \hookrightarrow (\mathrm{St}(\mathfrak q) \times D^L_{\mathfrak q}(\pi))_{N_l}, 
\]
where $l=l_{abs}(\Delta')$. Then $\tau \boxtimes \mathrm{St}(\Delta')$ is still a simple composition factor in $\pi_{N_l}$. Then, by the geometric lemma on  $(\mathrm{St}(\mathfrak q) \times D^L_{\mathfrak q}(\pi))_{N_l}$, $\tau$ is a composition factor of $\tau_1 \times \tau_2$ for some $\tau_1\in \mathrm{Irr}$ and $\tau_2\in \mathrm{Irr}$ such that $\tau_1\boxtimes \tau_1'$ is a simple composition factor in $\mathrm{St}(\mathfrak q)_{N_k}$ for some $k$ and some $\tau_1' \in \mathrm{Irr}$ and $\tau_2 \boxtimes \tau_2'$ is a simple composition factor in $D^L_{\mathfrak q}(\pi)_{N_r}$ for some $r$ and some $\tau_2' \in \mathrm{Irr}$. The composition factors in $\mathrm{St}(\mathfrak q)_{N_k}$ can be computed from the geometric lemma again and so it is a straightforward computation to give $|\mathfrak{mx}^L_{\Delta'}(\tau_1)|\leq |\mathfrak q|$. Since $\mathfrak{mx}^L_{\Delta'}(\pi)=\emptyset$, $\mathfrak{mx}^L_{\Delta'}(\tau_2)=\emptyset$. Thus, we have that $\tau$ is a composition factor of $\mathrm{St}(\mathfrak q')\times \omega$ for some $\omega \in \mathrm{Irr}$ with $\mathfrak{mx}^L_{\Delta'}(\omega)=\emptyset$ and a left $\widetilde{\Delta}$-saturated multisegment $\mathfrak q'$ with $|\mathfrak q'|\leq |\mathfrak q|$. (Here a left $\widetilde{\Delta}$-satuarted multisegment $\mathfrak m$ is a multisegment with all segments $\widetilde{\Delta}$ in $\mathfrak m$ satisfy $\widetilde{\Delta}\subset \Delta$ and $a(\widetilde{\Delta})=a(\Delta)$.) The remaining proof is precisely the left version of the arguments in the proof of Lemma \ref{lem decrease mx after derivative}.
\end{proof}

\begin{lemma}
Let $\pi \in \mathrm{Irr}$. Let $\Delta$ be a segment. Let $\mathfrak p=\mathfrak{mx}^L_{\Delta}(\pi)$. Let $\tau_1\boxtimes \tau_2$ be an irreducible composition factor in $\pi_{N_k}$ (for some $k$). Then 
\[   |\mathfrak{mx}^L_{\Delta}(\tau_1)| \leq |\mathfrak{mx}^L_{\Delta}(\pi)| .
\]
\end{lemma}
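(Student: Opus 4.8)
The plan is to reduce this to a Jacquet-module computation on generalized Steinberg representations via the geometric lemma, mirroring the argument of Lemma \ref{lem inequaltiy on left derivatives} but now with the roles of left and right derivatives exchanged. Write $\Delta=[a,b]_\rho$ and set $\mathfrak p=\mathfrak{mx}^L_\Delta(\pi)$, so that $\pi \hookrightarrow \mathrm{St}(\mathfrak p)\times D^L_{\mathfrak p}(\pi)$ and $\mathfrak{mx}^L_\Delta(D^L_{\mathfrak p}(\pi))=\emptyset$. First I would apply the Jacquet functor $N_k$ to this embedding, obtaining $\pi_{N_k}\hookrightarrow (\mathrm{St}(\mathfrak p)\times D^L_{\mathfrak p}(\pi))_{N_k}$, and then invoke the geometric lemma on the right-hand side. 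Any irreducible composition factor $\tau_1\boxtimes\tau_2$ of $\pi_{N_k}$ must then appear in some layer, so $\tau_1$ is a composition factor of a product $\tau_1^{(1)}\times\tau_1^{(2)}$ where $\tau_1^{(1)}\boxtimes\tau_1^{(1)\prime}$ is a composition factor of $\mathrm{St}(\mathfrak p)_{N_{k_1}}$ for some $k_1$ and $\tau_1^{(2)}\boxtimes\tau_1^{(2)\prime}$ is a composition factor of $D^L_{\mathfrak p}(\pi)_{N_{k_2}}$ for some $k_2$, with $k_1+k_2=k$.

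The next step is to bound each piece. Since $\mathfrak{mx}^L_\Delta(D^L_{\mathfrak p}(\pi))=\emptyset$, no left derivative of $D^L_{\mathfrak p}(\pi)$ along a segment ending as $\Delta$ does survives, and the standard geometric-lemma argument (as used in Lemmas \ref{lem decrease mx after derivative} and \ref{lem inequaltiy on left derivatives}) shows $\mathfrak{mx}^L_\Delta(\tau_1^{(2)})=\emptyset$. For the Steinberg factor, the Jacquet modules of generalized Steinberg modules are explicitly controlled: a composition factor $\tau_1^{(1)}$ of $\mathrm{St}(\mathfrak p)_{N_{k_1}}$ (projected to the first factor) satisfies $|\mathfrak{mx}^L_\Delta(\tau_1^{(1)})|\le |\mathfrak p|$, since each segment of $\mathfrak p$ is left $\Delta$-saturated and taking a Jacquet module can only shorten the relevant segments or move cuspidal mass to the other tensor factor. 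Combining: $\tau_1$ is a composition factor of $\mathrm{St}(\mathfrak p')\times\omega$ with $\mathfrak p'$ a left $\Delta$-saturated multisegment, $|\mathfrak p'|\le|\mathfrak p|$, and $\mathfrak{mx}^L_\Delta(\omega)=\emptyset$. A final count of how often $a(\Delta)=\nu_\rho^a\rho$ can occur in the cuspidal support of the relevant Jacquet-module factor of such a product — exactly the left version of the bookkeeping in the proof of Lemma \ref{lem decrease mx after derivative} — yields $|\mathfrak{mx}^L_\Delta(\tau_1)|\le|\mathfrak p'|\le|\mathfrak p|=|\mathfrak{mx}^L_\Delta(\pi)|$, as desired.

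The main obstacle I expect is the bookkeeping in the Steinberg step: one must verify carefully that passing to $\tau_1^{(1)}$, the multiplicity of the cuspidal representation $a(\Delta)$ appearing in the "saturated part" cannot exceed $|\mathfrak p|$, keeping track of which cuspidal support lands in which tensor factor under the iterated geometric lemma for $\mathrm{St}(\mathfrak p)_{N_{k_1}}$. This is routine in the sense that the Jacquet modules of $\mathrm{St}(\Delta')$ are multiplicity-free with explicitly known factors $\mathrm{St}([a',c]_\rho)\boxtimes\mathrm{St}([c+1,b']_\rho)$, but the product $\mathrm{St}(\mathfrak p)=\mathrm{St}(\Delta_1)\times\cdots\times\mathrm{St}(\Delta_t)$ requires applying the geometric lemma once more, and one should be slightly careful that it is the segments ending at $b$ (equivalently with $a(\cdot)$-coordinate governing $\mathfrak{mx}^L_\Delta$) that control the estimate. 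Since this is essentially a verbatim adaptation of Lemma \ref{lem inequaltiy on left derivatives} and Lemma \ref{lem decrease mx after derivative}, I would state the reduction and refer to those proofs for the combinatorial computation rather than repeat it in full.
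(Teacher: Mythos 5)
Your approach is genuinely different from the paper's and, as written, contains a circularity. The paper proves the statement by contradiction and never decomposes $\pi_{N_k}$ via the embedding $\pi \hookrightarrow \mathrm{St}(\mathfrak p)\times D^L_{\mathfrak p}(\pi)$: assuming $|\mathfrak{mx}^L_{\Delta}(\tau_1)| > |\mathfrak{mx}^L_{\Delta}(\pi)|$, one lets $\mathfrak q=\mathfrak{mx}^L_{\Delta}(\tau_1)$, uses transitivity of Jacquet functors to exhibit $\mathrm{St}(\mathfrak q)\boxtimes \tau'\boxtimes\tau_2$ as a composition factor of a Jacquet module of $\pi$, passes to a submodule $\omega\boxtimes\tau_1'\boxtimes\tau_2'$ with the same cuspidal supports factorwise, realizes $\omega$ inside $\mathrm{St}(\mathfrak q')\times\omega'$ (via the embedding of an irreducible into the dual of a standard module) with $\mathfrak q'$ left $\Delta$-saturated of size at least $|\mathfrak q|$, and then applies Frobenius reciprocity to conclude $D^L_{\mathfrak q'}(\pi)\neq 0$, contradicting the maximality defining $\mathfrak{mx}^L_{\Delta}(\pi)$.

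The gap in your argument is the step claiming $\mathfrak{mx}^L_{\Delta}(\tau_1^{(2)})=\emptyset$. Here $\tau_1^{(2)}$ is the first tensor slot of a composition factor of a Jacquet module $D^L_{\mathfrak p}(\pi)_{N_{k_2}}$, and the only input is $\mathfrak{mx}^L_{\Delta}(D^L_{\mathfrak p}(\pi))=\emptyset$. But the implication ``$\mathfrak{mx}^L_{\Delta}(\pi')=\emptyset$ forces $\mathfrak{mx}^L_{\Delta}(\sigma_1)=\emptyset$ for every composition factor $\sigma_1\boxtimes\sigma_2$ of every $\pi'_{N_r}$'' is precisely the special case $\mathfrak{mx}^L_{\Delta}(\pi)=\emptyset$ of the lemma you are trying to prove. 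It is not supplied by Lemma \ref{lem decrease mx after derivative} or Lemma \ref{lem inequaltiy on left derivatives}: those statements concern only composition factors of the specific big derivative $\mathbb D_{\Delta}(\pi)$ (a single depth, one slot), not arbitrary $\tau_1\boxtimes\tau_2$ in arbitrary $\pi_{N_k}$. Nor is there a usable induction hiding here: inducting on $|\mathfrak{mx}^L_{\Delta}(\pi)|$ leaves the base case $\mathfrak p=\emptyset$ unchanged, and in that case the embedding $\pi\hookrightarrow\mathrm{St}(\mathfrak p)\times D^L_{\mathfrak p}(\pi)$ is the identity and gives no information. You would need to establish the $\mathfrak{mx}^L_{\Delta}(\pi)=\emptyset$ case by an independent argument — which is essentially the paper's contradiction/Frobenius-reciprocity argument — at which point the general case follows directly and the geometric-lemma reduction through $\mathrm{St}(\mathfrak p)\times D^L_{\mathfrak p}(\pi)$ becomes unnecessary.
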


\begin{proof}
Suppose $|\mathfrak{mx}^L_{\Delta}(\tau_1)|>|\mathfrak{mx}^L_{\Delta}(\pi)|$ to derive a contradiction. Let $\mathfrak q=\mathfrak{mx}^L_{\Delta}(\tau_1)$. Then $\mathrm{St}(\mathfrak q)\boxtimes \tau'$ is a composition factor in $(\tau_1)_{N_l}$ for some $l$ and some $\tau' \in \mathrm{Irr}$. Hence, $\mathrm{St}(\mathfrak q)\boxtimes \tau' \boxtimes \tau_2$ appears in a Jacquet functor of $\pi$. By comparing cuspidal support, the submodule takes the form $\omega \boxtimes \tau_1' \boxtimes \tau_2'$, where 
\[  \mathrm{csupp}(\omega)=\mathrm{csupp}(\mathrm{St}(\mathfrak q)), \quad \mathrm{csupp}(\tau_1)=\mathrm{csupp}(\tau_1'),\quad \mathrm{csupp}(\tau_2)=\mathrm{csupp}(\tau_2') . \]
Recall that any irreducible representation, particularly $\omega$, can be realized as a submodule of the dual of a standard module. This then gives that 
\[  \omega \hookrightarrow \mathrm{St}(\mathfrak q')\times \omega'
\]
for some left $\Delta$-saturated multisegment $\mathfrak q'$ and some irreducible module $\omega'$ with $a(\Delta)\notin \mathrm{csupp}(\omega')$. In particular, we also have $|\mathfrak q'|>|\mathfrak{q}|$. 

Now, by Frobenius reciprocity, we have $\mathrm{St}(\mathfrak q')\boxtimes \kappa$ is a simple submoulde of a Jacquet module of $\pi$ for some $\kappa \in \mathrm{Irr}$. This implies that $D_{\mathfrak q'}^L(\pi)\neq 0$. However, as we have shown that $|\mathfrak q'|>|\mathfrak q|$, this gives a contradiction to the maximality of $\mathfrak{mx}_{\Delta}^L(\pi)$, as desired.
\end{proof}

\subsection{Strong commutativity}

We now prove pre-commutativity implies strong commutativity for essentially square-integrable representations. 

\begin{theorem} \label{thm pre imply strong}
Let $\sigma_1, \sigma_2 \in \mathrm{Irr}$ be both essentially square-integrable. Let $\pi \in \mathrm{Irr}$. If $(\sigma_1, \sigma_2, \pi)$ is pre-RdLi-commutative, then $(\sigma_1, \sigma_2, \pi)$ is strongly RdLi-commutative. 
\end{theorem}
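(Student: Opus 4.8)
### Proof Proposal

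The plan is to reduce Theorem \ref{thm pre imply strong} to the checkable criterion of Proposition \ref{thm pre implies strong}(2) via the $\eta$-invariant machinery developed in this section. Write $\sigma_1 = \mathrm{St}(\Delta_1)$ and $\sigma_2 = \mathrm{St}(\Delta_2)$, and put $\mathfrak m = \mathfrak{mx}_{\Delta_1}(\pi)$. By Lemma \ref{lem completing pre comm}, pre-RdLi-commutativity of $(\mathrm{St}(\Delta_1), \mathrm{St}(\Delta_2), \pi)$ upgrades to pre-RdLi-commutativity of $(\mathrm{St}(\mathfrak m), \mathrm{St}(\Delta_2), \pi)$; since $(\mathrm{St}(\mathfrak m), \pi)$ is a Rd-irreducible pair by Proposition \ref{prop mx rd irr pair}, Proposition \ref{thm pre implies strong}(1) already gives strong RdLi-commutativity for $(\mathrm{St}(\mathfrak m), \mathrm{St}(\Delta_2), \pi)$. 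So the remaining content is to descend from $\mathfrak m$ back to the single segment $\Delta_1$. By Proposition \ref{thm pre implies strong}(2), it suffices to show that $D_{\Delta_1} \circ I_{\Delta_2}(\pi)$ does not embed into the quotient $(\mathrm{St}(\Delta_2) \times \mathbb D_{\Delta_1}(\pi)) / (\mathrm{St}(\Delta_2) \times D_{\Delta_1}(\pi))$.

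The key step is to control $\eta^L_{\Delta_2}$ along these operators. First, Lemma \ref{lem eta for another side} gives $\eta^L_{\Delta_2}(D_{\Delta_1}\circ I_{\Delta_2}(\pi)) = \eta^L_{\Delta_2}(I_{\Delta_2}(\pi))$, and Lemma \ref{lem right multi submulti} (left version) gives $\eta^L_{\Delta_2}(I_{\Delta_2}(\pi)) \geq \eta^L_{\Delta_2}(\pi)$; in fact equality fails to be an obstruction here — what matters is that $\mathfrak{mx}^L_{\Delta_2}(D_{\Delta_1}\circ I_{\Delta_2}(\pi))$ has at least one segment containing $\Delta_2$, coming from the integral. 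Now suppose for contradiction that $D_{\Delta_1}\circ I_{\Delta_2}(\pi)$ embeds into $\mathrm{St}(\Delta_2) \times \tau$ for some composition factor $\tau$ of $\mathbb D_{\Delta_1}(\pi)$ with $\tau \not\cong D_{\Delta_1}(\pi)$ (any embedding into the quotient, after Frobenius reciprocity and the geometric lemma, produces such a $\tau$). Then a Jacquet-module/geometric-lemma analysis shows $\mathfrak{mx}^L_{\Delta_2}(D_{\Delta_1}\circ I_{\Delta_2}(\pi))$ is bounded by $\mathfrak{mx}^L_{\Delta_2}$ of a factor of $\mathrm{St}(\Delta_2)\times \tau$, hence by roughly $1 + |\mathfrak{mx}^L_{\Delta_2}(\tau)|$. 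Combined with Lemma \ref{lem inequaltiy on left derivatives}, which gives $|\mathfrak{mx}^L_{\Delta_2}(\tau)| \leq |\mathfrak{mx}^L_{\Delta_2}(\pi)|$, and the last lemma of the preceding subsection bounding $\mathfrak{mx}^L_{\Delta_2}$ of a Jacquet factor of $\pi$, one derives a strict inequality that contradicts $\eta^L_{\Delta_2}(D_{\Delta_1}\circ I_{\Delta_2}(\pi)) = \eta^L_{\Delta_2}(I_{\Delta_2}(\pi))$ — the point being that the integral $I_{\Delta_2}$ genuinely increases the $\Delta_2$-invariant by one, but passing through the proper factor $\tau$ of the big derivative loses that gain (this is exactly the content of Lemma \ref{lem decrease mx after derivative}/Lemma \ref{lem composition factor big derivative}: proper factors of $\mathbb D_{\Delta_1}(\pi)$ have strictly smaller $\mathfrak{mx}_{\Delta_1}$, and one runs the analogous bookkeeping on the $\Delta_2$-side).

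Concretely, I would organize the argument as follows. Step 1: set $\mathfrak m = \mathfrak{mx}_{\Delta_1}(\pi)$ and invoke Lemma \ref{lem completing pre comm}, Proposition \ref{prop mx rd irr pair}, and Proposition \ref{thm pre implies strong}(1) to get strong commutativity at the multisegment level; also record $D_{\Delta_1}\circ I_{\Delta_2}(\pi) \cong I_{\Delta_2}\circ D_{\Delta_1}(\pi)$ is \emph{not} yet known, but Lemma \ref{lem eta for another side} is available. Step 2: assume $D_{\Delta_1}\circ I_{\Delta_2}(\pi) \hookrightarrow (\mathrm{St}(\Delta_2)\times\mathbb D_{\Delta_1}(\pi))/(\mathrm{St}(\Delta_2)\times D_{\Delta_1}(\pi))$; by a standard geometric-lemma argument extract an embedding $D_{\Delta_1}\circ I_{\Delta_2}(\pi)\hookrightarrow \mathrm{St}(\Delta_2)\times\tau$ with $\tau$ a composition factor of $\mathbb D_{\Delta_1}(\pi)$, $\tau\not\cong D_{\Delta_1}(\pi)$. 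Step 3: compute $\eta^L_{\Delta_2}$ (equivalently, bound $|\mathfrak{mx}^L_{\Delta_2}|$) of the left-hand side two ways — via Lemma \ref{lem eta for another side} through $I_{\Delta_2}(\pi)$, and via the embedding into $\mathrm{St}(\Delta_2)\times\tau$ using Lemma \ref{lem inequaltiy on left derivatives} and the final Jacquet-factor lemma — and derive a contradiction. Step 4: conclude via Proposition \ref{thm pre implies strong}(2).

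\medskip

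The main obstacle I anticipate is Step 3: making the two $\eta^L_{\Delta_2}$-computations line up precisely enough to force a strict inequality. The subtlety is that Lemma \ref{lem eta for another side} controls $\eta^L_{\Delta_2}$ as a vector (all of $\varepsilon^L_{[c,\cdot]}$), whereas the bound coming through $\tau$ most naturally controls only the total size $|\mathfrak{mx}^L_{\Delta_2}|$; I will need to argue that the integral $I_{\Delta_2}$ strictly increases the relevant coordinate — i.e. that $\mathfrak{mx}^L_{\Delta_2}(I_{\Delta_2}(\pi))$ contains a copy of $\Delta_2$ not present in $\mathfrak{mx}^L_{\Delta_2}(\pi)$, which is where $D^L_{\Delta_2}\circ I_{\Delta_2} = \mathrm{id}$ is used. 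Marrying this with the cuspidal-support bookkeeping for composition factors of $\mathrm{St}(\Delta_2)\times\tau$ (which is where one sees that $\nu_{\rho}^{b(\Delta_2)}\rho$ can only gain multiplicity one from the $\mathrm{St}(\Delta_2)$-factor) is the delicate combinatorial heart of the proof.
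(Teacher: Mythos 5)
Your Steps 1, 2, and 4 match the paper's proof exactly, and you correctly identify the relevant tools (Lemma \ref{lem completing pre comm}, Proposition \ref{prop mx rd irr pair}, Proposition \ref{thm pre implies strong}, Lemma \ref{lem eta for  another side}, Lemma \ref{lem inequaltiy on left derivatives}, Lemma \ref{lem decrease mx after derivative}, Lemma \ref{lem composition factor big derivative}). But the obstacle you flag in Step 3 is a genuine gap, not just a detail to fill in. As you set it up, the upper bound is
\[
|\mathfrak{mx}^L_{\Delta_2}(D_{\Delta_1}\circ I_{\Delta_2}(\pi))| \;\leq\; 1 + |\mathfrak{mx}^L_{\Delta_2}(\tau)| \;\leq\; 1 + |\mathfrak{mx}^L_{\Delta_2}(\pi)| ,
\]
whereas Lemma \ref{lem eta for  another side} together with $D^L_{\Delta_2}\circ I_{\Delta_2}=\mathrm{id}$ gives exactly
\[
|\mathfrak{mx}^L_{\Delta_2}(D_{\Delta_1}\circ I_{\Delta_2}(\pi))| \;=\; |\mathfrak{mx}^L_{\Delta_2}(\pi)| + 1 .
\]
These are consistent; there is no contradiction. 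The heuristic that ``passing through a proper factor $\tau$ loses the gain'' is the right intuition, but nothing in your bound realizes it: the ``$+1$'' coming from the $\mathrm{St}(\Delta_2)$ factor is still present.

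The paper closes this by first splitting on $|\mathfrak{mx}_{\Delta_1}(\tau)|$ relative to $|\mathfrak{mx}_{\Delta_1}(\pi)|$, and the $\Delta_2$-side argument is only deployed in the single borderline case $|\mathfrak{mx}_{\Delta_1}(\tau)|=|\mathfrak{mx}_{\Delta_1}(\pi)|-2$. The cases $|\mathfrak{mx}_{\Delta_1}(\tau)|=|\mathfrak{mx}_{\Delta_1}(\pi)|-1$ and $|\mathfrak{mx}_{\Delta_1}(\tau)|<|\mathfrak{mx}_{\Delta_1}(\pi)|-2$ are killed by pure $\Delta_1$-side counting via Lemmas \ref{lem eta unchange}, \ref{lem composition factor big derivative}, \ref{lem decrease mx after derivative} (and your proposal, which only runs a $\Delta_2$-side count, does not cover them). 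In the borderline case, the crucial extra step is to apply the geometric lemma on $(\mathrm{St}(\Delta_2)\times\tau)_{N_l}$ with $l=l_{abs}(\mathfrak{mx}_{\Delta_1}(D_{\Delta_1}(\pi)))$ and use the $\Delta_1$-side counting constraint to pin down the unique contributing layer as $(\mathrm{St}(\Delta')\times\tau_1)\boxtimes(\mathrm{St}(\Delta'')\times\tau_2)$ with $\Delta'=[b_1+1,b_2]_\rho$ and $\Delta''=[a_1,b_1]_\rho$. The point is that $\mathfrak{mx}^L_{\Delta_2}(\mathrm{St}(\Delta'))=\emptyset$, so the first factor contributes \emph{zero}, not one, to the $\Delta_2$-side count; that is precisely what removes the ``$+1$'' and yields the strict inequality $|\mathfrak{mx}^L_{\Delta_2}(D_{\mathfrak p}\circ D_{\Delta_1}\circ I_{\Delta_2}(\pi))|\leq|\mathfrak{mx}^L_{\Delta_2}(\pi)|$, contradicting $=|\mathfrak{mx}^L_{\Delta_2}(\pi)|+1$. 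Without this geometric-lemma identification of the layer, the bookkeeping does not close.
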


\begin{proof}
Write $\sigma_1=\mathrm{St}(\Delta_1)$ and $\sigma_2=\mathrm{St}(\Delta_2)$ for some segments $\Delta_1$ and $\Delta_2$. Let $\mathfrak p=\mathfrak{mx}_{\Delta_1}(\pi)$ and let $\widetilde{\sigma}=\mathrm{St}(\mathfrak p-\Delta_1)$. We also write $\Delta_1=[a_1,b_1]_{\rho}$ and $\Delta_2=[a_2,b_2]_{\rho}$. (If $\Delta_1$ and $\Delta_2$ are not in the same cuspidal line, the case is easy and we shall omit that.) Then $(\mathrm{St}(\mathfrak p), \pi)$ is a Rd-irreducible pair (Proposition \ref{prop mx rd irr pair}). By Proposition \ref{prop completing pre comm}, $( \mathrm{St}(\mathfrak p), \mathrm{St}(\Delta_2), \pi)$ is pre-RdLi-commutative. Hence, by Proposition \ref{thm pre implies strong}(1), $(\mathrm{St}(\mathfrak p), \mathrm{St}(\Delta_2),\pi)$ is a strongly RdLi-commutative triple. 

We now check the conditions in Proposition \ref{thm pre implies strong}(2). Recall that $D_{\Delta_1}(\pi)$ has only multiplicity one in $\mathbb D_{\Delta_1}(\pi)$ \cite[Proposition 11.5]{Ch22+b}. Hence, it suffices to show that $D_{\Delta_1}(I_{\Delta_2}(\pi))$ cannot embed to $\mathrm{St}(\Delta_2) \times \tau$ for any simple composition factor $\tau \not\cong D_{\Delta_1}(I_{\Delta_2}(\pi))$ of $\mathbb D_{\Delta_1}(\pi)$. Suppose not to obtain a contradiction. We then have an embedding:
\[  (*) \quad   D_{\Delta_1}(I_{\Delta_2}(\pi)) \hookrightarrow \mathrm{St}(\Delta_2) \times \tau .
\]
By Lemma \ref{lem decrease mx after derivative}, $|\mathfrak{mx}_{\Delta_1}(\tau)|\leq |\mathfrak{mx}_{\Delta_1}(\pi)|-1$ .

We now consider several cases: \\

{\bf Case 1:} $|\mathfrak{mx}_{ \Delta_1}(\tau)|=|\mathfrak{mx}_{\Delta_1}(\pi)|-1$.  Using (*) and $\mathrm{St}(\Delta_2) \times \tau \hookrightarrow \mathrm{St}(\Delta_2) \times D_{\mathfrak{mx}_{\Delta_1}(\tau)}( \tau) \times \mathrm{St}(\mathfrak{mx}_{\Delta_1}(\tau))$, we have 
\[   \mathfrak{mx}_{\Delta_1}(\tau) \subset \mathfrak{mx}_{\Delta_1}(D_{\Delta_1}(I_{\Delta_2}(\pi))) .
\]
But further looking at the number of segments in those $\mathfrak{mx}_{\Delta_1}$ from Lemmas \ref{lem eta unchange} and \ref{lem composition factor big derivative}, it must be an equality, giving a contradiction to Lemma \ref{lem composition factor big derivative}.

{\bf Case 2:} $|\mathfrak{mx}_{\Delta_1}(\tau)|<|\mathfrak{mx}_{\Delta_1}(\pi)|-2$. Again (*) implies that
\[  |\mathfrak{mx}_{\Delta_1}(D_{\Delta_1}(I_{\Delta_2}(\pi)))|\leq |\mathfrak{mx}_{\Delta_1}(\tau)|+|\mathfrak{mx}_{\Delta_1}(\mathrm{St}(\Delta_2))| ,
\]
but this is impossible from Lemmas \ref{lem eta unchange} and \ref{lem composition factor big derivative}.

{\bf Case 3:} $|\mathfrak{mx}_{\Delta_1}(\tau)|=|\mathfrak{mx}_{\Delta_1}(\pi)|-2$. Let $\mathfrak p=\mathfrak{mx}_{\Delta_1}(D_{\Delta_1}(\pi))$ and let $\mathfrak q=\mathfrak{mx}_{\Delta_1}(\tau)$. Let $l=l_{abs}(\mathfrak p)$. 

In such case, using (*), we have:
\[   D_{\mathfrak p}\circ D_{\Delta_1}(I_{\Delta_2}(\pi))\boxtimes \mathrm{St}(\mathfrak p) \hookrightarrow (\mathrm{St}(\Delta_2) \times \tau)_{N_{l}}  .
\]
Using the geometric lemma and comparing the number of segments in those $\mathfrak{mx}_{\Delta_1}$, the only possible layer that can contribute to the above embedding takes the form:
\[   (\mathrm{St}(\Delta') \times \tau_1) \boxtimes (\mathrm{St}(\Delta'')\times \tau_2) ,
\]
where $\Delta'=[b_1+1, b_2]_{\rho}$ and $\Delta''=[a_1,b_1]_{\rho}$, and $\tau_1\boxtimes \tau_2$ is a simple composition factor in $\tau_{N_{p}}$ ($p=l_{abs}(\mathfrak p)$). Then 
\begin{align} \label{eqn embedding under geo lem}
\quad   D_{\mathfrak p}\circ D_{\Delta_1}(I_{\Delta_2}(\pi)) \hookrightarrow \mathrm{St}(\Delta') \times \tau_1 
\end{align}
By Lemma \ref{lem inequaltiy on left derivatives}, we also have
\begin{align} \label{eqn first bound by pi}
|\mathfrak{mx}^L_{\Delta_2}(\tau)| \leq  |\mathfrak{mx}^L_{\Delta_2}(\pi)|
\end{align}
Note that, by using the commutativity of taking Jacquet functors on left and right, one deduces that 
\[ |\mathfrak{mx}^L_{\Delta_2}(\tau_1)| \leq |\mathfrak{mx}^L_{\Delta_2}(\tau)| 
\]
and so combines with (\ref{eqn first bound by pi})
\[  |\mathfrak{mx}^L_{\Delta_2}(\tau_1)|  \leq  |\mathfrak{mx}^L_{\Delta_2}(\pi)|.
\]
With $\mathfrak{mx}^L_{\Delta_2}(\mathrm{St}(\Delta'))=\emptyset$, (\ref{eqn embedding under geo lem}) implies that 
\[   |\mathfrak{mx}^L_{\Delta_2}(D_{\mathfrak p}\circ D_{\Delta_1}(I_{\Delta_2}(\pi)))| \leq  |\mathfrak{mx}^L_{\Delta_2}(\tau)| ,
\]
and so combining with (\ref{eqn first bound by pi}), we have
\[   |\mathfrak{mx}^L_{\Delta_2}(D_{\mathfrak p}\circ D_{\Delta_1}(I_{\Delta_2}(\pi)))| \leq |\mathfrak{mx}^L_{\Delta_2}(\pi)| .
\]
However, by Lemma \ref{lem eta for  another side}, 
\[ |\mathfrak{mx}_{\Delta_2}^L(D_{\mathfrak p}\circ D_{\Delta_1}(I_{\Delta_2}(\pi)))|=|\mathfrak{mx}_{\Delta_2}^L(I_{\Delta_2}(\pi))|=|\mathfrak{mx}_{\Delta_2}^L(\pi)|+1 , \]
giving a contradiction.

We have checked the conditions in Proposition \ref{thm pre implies strong}(2), and it follows from that proposition that $(\sigma_1, \sigma_2, \pi)$ is also strongly RdLi-commutative.
\end{proof}

\begin{corollary} \label{cor saturated commut triple}
Let  $(\mathrm{St}(\Delta_1), \mathrm{St}(\Delta_2), \pi)$ be a RdLi-pre-commutative triple. Let $\Delta'$ be a $\Delta_1$-saturated segment with $D_{\Delta'}(\pi)\neq 0$. Then $(\mathrm{St}(\Delta'), \mathrm{St}(\Delta_2) , \pi)$ is also a strongly RdLi-commutative triple.
\end{corollary}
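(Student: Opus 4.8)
The plan is to first establish that $(\mathrm{St}(\Delta'), \mathrm{St}(\Delta_2), \pi)$ is pre-RdLi-commutative, and then upgrade this to strong RdLi-commutativity by invoking Theorem~\ref{thm pre imply strong}, which applies precisely because $\mathrm{St}(\Delta')$ and $\mathrm{St}(\Delta_2)$ are essentially square-integrable. So the whole statement reduces to producing the pre-commutativity of the new triple from that of the given one.

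For that step I would apply Proposition~\ref{prop completing pre comm} to the singleton multisegments $\mathfrak{m} = \{\Delta'\}$ and $\mathfrak{n} = \{\Delta_2\}$, taking $\widetilde{\Delta} = \Delta_2$ in the notation there. The multisegment $\mathfrak{m}$ is $\Delta_1$-saturated since its unique segment $\Delta'$ is $\Delta_1$-saturated by hypothesis, and $D_{\mathfrak{m}}(\pi) = D_{\Delta'}(\pi) \neq 0$ is given; the multisegment $\mathfrak{n}$ trivially meets the requirement of Proposition~\ref{prop completing pre comm}, since its only segment is $\Delta_2$ itself, so $a(\Delta_2) = a(\widetilde{\Delta})$ and $\Delta_2 \subset \widetilde{\Delta}$. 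As $(\mathrm{St}(\Delta_1), \mathrm{St}(\mathfrak{n}), \pi) = (\mathrm{St}(\Delta_1), \mathrm{St}(\Delta_2), \pi)$ is pre-RdLi-commutative by assumption, Proposition~\ref{prop completing pre comm} delivers that $(\mathrm{St}(\mathfrak{m}), \mathrm{St}(\mathfrak{n}), \pi) = (\mathrm{St}(\Delta'), \mathrm{St}(\Delta_2), \pi)$ is pre-RdLi-commutative, and Theorem~\ref{thm pre imply strong} then concludes.

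One degenerate case must be treated separately: Proposition~\ref{prop completing pre comm} is formulated for segments in a single cuspidal line, so if $\Delta_1$ (hence $\Delta'$) and $\Delta_2$ lie in different cuspidal lines, I would instead read off the pre-commutativity of $(\mathrm{St}(\Delta'), \mathrm{St}(\Delta_2), \pi)$ directly from Example~\ref{ex examples of st comm triples}(3), noting that then $b(\Delta') = b(\Delta_1) \notin \mathrm{csupp}(\mathrm{St}(\Delta_2))$, and again finish by Theorem~\ref{thm pre imply strong}. I do not anticipate any genuine obstacle here — the corollary really is an immediate consequence of the proposition and the theorem just proven — so the only thing needing attention is the routine bookkeeping that the hypotheses of Proposition~\ref{prop completing pre comm} match, in particular that no extra constraint on the relative position of $\Delta_1$ and $\Delta_2$ is implicitly required beyond what is already recorded in its statement.
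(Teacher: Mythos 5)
Your proof is correct, and it captures the intended content — but it is organized more economically than what the paper actually writes. The paper's own proof is a one-line instruction: set $\sigma_1 = \mathrm{St}(\Delta')$, keep $\mathfrak p = \mathfrak{mx}_{\Delta_1}(\pi)$, and ``follow the same argument as in the proof of Theorem~\ref{thm pre imply strong}'' — i.e.\ it asks the reader to replay the full case analysis verifying the hypotheses of Proposition~\ref{thm pre implies strong}(2), with $\Delta'$ substituted for $\Delta_1$. You instead notice that the case analysis has already been packaged as Theorem~\ref{thm pre imply strong}, so it suffices to (i) produce pre-commutativity of the new triple and (ii) invoke that theorem as a black box. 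Step~(i) is exactly an application of Proposition~\ref{prop completing pre comm} (or, since $\mathfrak n = \{\Delta_2\}$ is a singleton, the lighter Lemma~\ref{lem completing pre comm}) with $\mathfrak m = \{\Delta'\}$, $\Delta = \Delta_1$, $\widetilde{\Delta} = \Delta_2$. This is the same ingredient the paper's replay would tacitly need, because Proposition~\ref{thm pre implies strong}(2) explicitly requires pre-commutativity of $(\sigma,\sigma',\pi)$ as a hypothesis, and for $\sigma = \mathrm{St}(\Delta')$ this is not given in the corollary — it must be supplied, and you supply it explicitly. So your version is arguably cleaner and makes visible a step that the paper leaves implicit. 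Your handling of the degenerate case where $\Delta_1$ and $\Delta_2$ live on different cuspidal lines (via Example~\ref{ex examples of st comm triples}(3)) is also correct and is worth keeping, since the hypotheses of Lemma~\ref{lem completing pre comm} place $\Delta$ and $\widetilde{\Delta}$ on the same cuspidal line.

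One small caveat worth flagging, which you already sensed at the end: the statement of Proposition~\ref{prop completing pre comm} is what you are relying on, and its own proof routes through Lemma~\ref{lem trivial orbit of times steinberg two cases}, whose stated hypotheses carry a positional constraint ($\widetilde{a} \geq b$) not reproduced in the statement of Proposition~\ref{prop completing pre comm}. That concern, if real, is a concern for the paper's Proposition~\ref{prop completing pre comm} itself, not for your application of it — your use of the proposition is exactly within the scope it claims for itself.
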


\begin{proof}
Let $\sigma_1=\mathrm{St}(\Delta')$. Let $\mathfrak p=\mathfrak{mx}_{\Delta_1}(\pi)$ and let $\widetilde{\sigma}=\mathrm{St}(\mathfrak p-\Delta')$. The remaining follows the same argument as in the proof of Theorem \ref{thm pre imply strong}.
\end{proof}

\begin{remark}
Let $\Delta=[a,b]_{\rho}$ and $\Delta'=[a',b]_{\rho}$. Suppose $(\mathrm{St}(\Delta), \sigma_2, \pi)$ is a  pre-RdLi-commutative triple. If $a'\leq a$, the above lemma implies that $(\mathrm{St}(\Delta'), \sigma_2, \pi)$ is also a  pre-RdLi-commutative triple. However, it is not true in general if $a'>a$. For example, take $\pi=\mathrm{St}([0,1])\times \langle [0,2]\rangle$. Let $\sigma=\sigma'=\mathrm{St}([2])$. Let $\widetilde{\sigma}=\mathrm{St}([0,2])$. Then $(\sigma, \sigma', \pi)$ is a pre-RdLi-commutative triple. However, $(\widetilde{\sigma}, \sigma', \pi)$ is not a pre-RdLi-commutative triple.

\end{remark}

\subsection{Dual formulation for essentially square-integrable representations}

\begin{corollary} \label{cor dual strong commutative triple}
Let $\sigma_1, \sigma_2 \in \mathrm{Irr}$ be essentially square-integrable. Let $\pi \in \mathrm{Irr}$. Then $(\sigma_1, \sigma_2, \pi)$ is a strongly RdLi-commutative triple if and only if $(\sigma_2, \sigma_1, I_{\sigma_2}\circ D_{\sigma_1}(\pi))$ is a strongly LdRi-commutative triple.
\end{corollary}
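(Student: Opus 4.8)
\textbf{Proof plan for Corollary \ref{cor dual strong commutative triple}.}

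The plan is to deduce this corollary from the combination of Theorem \ref{thm pre imply strong} and Proposition \ref{prop commutative triple}, together with the symmetric (LdRi) versions of those statements obtained via the switching results in Section \ref{s prelim}. First I would reduce the claim about strong commutativity to a claim about pre-commutativity: by Theorem \ref{thm pre imply strong}, for essentially square-integrable $\sigma_1,\sigma_2$, the triple $(\sigma_1,\sigma_2,\pi)$ is strongly RdLi-commutative if and only if it is pre-RdLi-commutative, and the LdRi analogue of Theorem \ref{thm pre imply strong} (which follows by applying $\theta$, using Lemmas \ref{lem derivative integral under theta} and the Jacquet-functor switching lemmas, noting that $\theta$ preserves the essentially square-integrable property) says $(\sigma_2,\sigma_1,I_{\sigma_2}\circ D_{\sigma_1}(\pi))$ is strongly LdRi-commutative if and only if it is pre-LdRi-commutative. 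So it suffices to prove: $(\sigma_1,\sigma_2,\pi)$ is pre-RdLi-commutative iff $(\sigma_2,\sigma_1,I_{\sigma_2}\circ D_{\sigma_1}(\pi))$ is pre-LdRi-commutative.

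For the forward direction, assume $(\sigma_1,\sigma_2,\pi)$ is pre-RdLi-commutative. Here is where the irreducible-pair machinery enters: I cannot apply Proposition \ref{prop commutative triple} directly because I do not know $(\sigma_1,\pi)$ is a Rd-irreducible pair. The trick, following the strategy laid out in the introduction (PCSeg $\to$ PCMulti $\to$ DualPCMulti $\to$ DualPCSeg), is to pass to the maximal multisegment. Set $\mathfrak p = \mathfrak{mx}_{\Delta_1}(\pi)$ where $\sigma_1=\mathrm{St}(\Delta_1)$. By Proposition \ref{prop completing pre comm}, $(\mathrm{St}(\mathfrak p),\sigma_2,\pi)$ is pre-RdLi-commutative, and by Proposition \ref{prop mx rd irr pair} the pair $(\mathrm{St}(\mathfrak p),\pi)$ is Rd-irreducible. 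Now Proposition \ref{prop commutative triple} applies and gives that $(\sigma_2,\mathrm{St}(\mathfrak p),I_{\sigma_2}\circ D_{\mathfrak p}(\pi))$ is pre-LdRi-commutative. Finally I would invoke the LdRi-version of Proposition \ref{prop transtivity pre commute} (transitivity of pre-commutative triples): since $\mathrm{St}(\mathfrak p)\cong \mathrm{St}(\mathfrak p - \Delta_1)\times \mathrm{St}(\Delta_1)$ as a $\square$-irreducible product — using that $\mathfrak p$ consists of segments all $\Delta_1$-saturated hence pairwise comparable, so $\mathrm{St}(\Delta_1)$ occurs as a factor — and $I_{\sigma_2}\circ D_{\mathfrak p}(\pi) = I_{\sigma_2}\circ D_{\Delta_1}\circ D_{\mathfrak p - \Delta_1}(\pi)$, an application of the transitivity proposition peels off the $\mathrm{St}(\mathfrak p - \Delta_1)$ part and yields pre-LdRi-commutativity of $(\sigma_2,\mathrm{St}(\Delta_1),I_{\sigma_2}\circ D_{\Delta_1}(\pi'))$ where $\pi' = D_{\mathfrak p - \Delta_1}(\pi)$; I then need Proposition \ref{prop commutative triple induct} (or its LdRi analogue) to descend back from $\pi'$ to $\pi$, i.e., to replace $D_{\mathfrak p - \Delta_1}(\pi)$ by $\pi$ at the cost of composing with $I_{\mathfrak p - \Delta_1}$, arriving at $(\sigma_2,\sigma_1,I_{\sigma_2}\circ D_{\sigma_1}(\pi))$ being pre-LdRi-commutative. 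For the reverse direction, I would apply the same argument with the roles of RdLi and LdRi exchanged, starting from $(\sigma_2,\sigma_1,I_{\sigma_2}\circ D_{\sigma_1}(\pi))$ pre-LdRi-commutative and using $I_{\sigma_1}\circ D_{\sigma_2}$ applied to it recovers $\pi$ (via $D_{\sigma_1}\circ I_{\sigma_1} = \mathrm{id}$-type identities and Proposition \ref{prop strong commute imply commute}), which is exactly the content that makes the duality an involution.

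The main obstacle I anticipate is the bookkeeping in the last step of the forward direction: correctly threading the transitivity (Proposition \ref{prop transtivity pre commute}) and the converse-transitivity (Proposition \ref{prop commutative triple induct}) to move between the multisegment triple $(\mathrm{St}(\mathfrak p),\dots)$ and the segment triple $(\sigma_1,\dots)$ while simultaneously moving the base representation between $\pi$ and various derivatives/integrals of it. One must check that the $\square$-irreducibility hypotheses of those propositions are met (e.g., that $\mathrm{St}(\mathfrak p-\Delta_1)\times \mathrm{St}(\Delta_1)$ and the relevant products are $\square$-irreducible — this holds because all the segments involved lie on one cuspidal line and are pairwise unlinked or nested appropriately, using \eqref{eqn comm unlinked segments} and the Zelevinsky--Tadi\'c classification), and that the identity $I_{\sigma_2}\circ D_{\mathfrak p}(\pi) = I_{\sigma_2}\circ D_{\Delta_1}\circ D_{\mathfrak p - \Delta_1}(\pi)$ interacts correctly with Proposition \ref{prop commutative triple}'s output. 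I also need to confirm that when $\Delta_1$ and $\Delta_2$ are not on the same cuspidal line the statement is immediate from Example \ref{ex examples of st comm triples}(1) and its dual (both triples are automatically pre-commutative and the derivatives/integrals behave transparently). The representation-theoretic content is already packaged in the cited propositions; the work here is organizational.
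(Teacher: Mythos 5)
Your high-level architecture matches the paper's: reduce strong commutativity to pre-commutativity via Theorem \ref{thm pre imply strong} (and its LdRi analogue), pass from $(\sigma_1,\sigma_2,\pi)$ to $(\mathrm{St}(\mathfrak p),\sigma_2,\pi)$ with $\mathfrak p=\mathfrak{mx}_{\Delta_1}(\pi)$ via Proposition \ref{prop completing pre comm}, note that $(\mathrm{St}(\mathfrak p),\pi)$ is Rd-irreducible by Proposition \ref{prop mx rd irr pair}, and apply Proposition \ref{prop commutative triple} to land in the dual world. This is exactly (PCSeg) $\to$ (PCMulti) $\to$ (SCMulti) $\to$ (DualPCMulti).

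The gap is in your final two-step ``peeling.'' You first invoke the LdRi version of Proposition \ref{prop transtivity pre commute} to pass from $(\sigma_2,\mathrm{St}(\mathfrak p),\tau)$ to $(\sigma_2,\mathrm{St}(\Delta_1),\tau)$ with the \emph{same} third entry $\tau$. But Proposition \ref{prop transtivity pre commute} reduces the \emph{first} (derivative) argument, not the second (integral) argument; in your LdRi-triple the first argument is $\sigma_2$, which is not a product, so the proposition does not apply. There is no result in the paper allowing the second argument of a pre-commutative triple to be shrunk without changing the base representation, and indeed this is false in general (the supporting-orbit geometry for $\mathrm{St}(\mathfrak p)\times\cdot$ and $\mathrm{St}(\Delta_1)\times\cdot$ is different). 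Your subsequent use of Proposition \ref{prop commutative triple induct} to ``replace $D_{\mathfrak p-\Delta_1}(\pi)$ by $\pi$'' also mischaracterizes what that proposition does. The correct move, and what the paper does, is a \emph{single} application of the LdRi version of Proposition \ref{prop commutative triple induct} to $(\sigma_2,\mathrm{St}(\Delta_1)\times\mathrm{St}(\mathfrak p-\Delta_1),\tau)$, which yields $(\sigma_2,\mathrm{St}(\Delta_1),I_{\mathrm{St}(\mathfrak p-\Delta_1)}(\tau))$ pre-LdRi-commutative with the base representation replaced by its integral, after which one uses Proposition \ref{prop strong commute imply commute} and the identity $I_{\mathrm{St}(\mathfrak p-\Delta_1)}\circ D_{\mathrm{St}(\mathfrak p-\Delta_1)}=\mathrm{id}$ to simplify $I_{\mathrm{St}(\mathfrak p-\Delta_1)}\circ I_{\sigma_2}\circ D_{\mathfrak p}(\pi)$ to $I_{\sigma_2}\circ D_{\sigma_1}(\pi)$. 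With this correction your plan coincides with the paper's proof.
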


\begin{proof}
We only prove the only if direction and a proof for the if direction is similar. By Proposition \ref{prop mx rd irr pair}, $\sigma' \in \mathrm{Irr}^{\square}$ such that $(\sigma_1\times \sigma', \pi)$ is Rd-irreducible pair (and $\sigma_1\times \sigma'$ is irreducible) and $(\sigma_1 \times \sigma', \sigma_2, \pi)$ is still a strongly RdLi-commutative triple by Propositions \ref{prop completing pre comm} and \ref{thm pre implies strong}(1). Thus, by Proposition \ref{prop commutative triple} (the version that switches between left and right), $(\sigma_2, \sigma_1 \times \sigma', I^R_{\sigma_2}\circ D_{\sigma_1 \times \sigma'}(\pi))$ is a pre-LdRi-commutative triple. Thus, by Proposition \ref{prop commutative triple induct}, $(\sigma_2, \sigma_1, I_{\sigma'}\circ I_{\sigma_2} \circ D_{\sigma' \times \sigma_1}(\pi))$ is a pre-LdRi-commutative triple. By Proposition \ref{prop strong commute imply commute} and $D_{\sigma_1\times \sigma'}=D_{\sigma'}\circ D_{\sigma'}$, we have that $ I^R_{\sigma'}\circ I_{\sigma_2} \circ D_{\sigma' \times \sigma_1}(\pi)\cong D_{\sigma_1}\circ I_{\sigma_2}(\pi)$ and so $(\sigma_2, \sigma_1,  D_{\sigma_1}\circ I_{\sigma_2}(\pi))$ is a pre-LdRi-commutative triple. Hence, by the LdRi-version of Theorem \ref{thm pre imply strong}, $(\sigma_2, \sigma_1, D_{\sigma_1}\circ I_{\sigma_2} (\pi))$ is a strongly LdRi-commutative triple. By Proposition \ref{prop strong commute imply commute}, we of course also have $D_{\sigma_1}\circ I_{\sigma_2}(\pi)\cong I_{\sigma_2}\circ D_{\sigma_1}(\pi)$ and this gives the corollary 
\end{proof}
 
Based on the square-integrable case, we raise the following question:

\begin{conjecture}
Corollary \ref{cor dual strong commutative triple} holds for any $\sigma_1, \sigma_2 \in \mathrm{Irr}^{\square}$.
\end{conjecture}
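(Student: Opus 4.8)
The plan is to reduce the general case to two inputs, Conjecture~\ref{conj pre implies strong} (together with its LdRi-analogue) and a ``completion'' statement extending Proposition~\ref{prop completing pre comm} from generalized Steinberg representations to arbitrary $\square$-irreducible ones, and then to run the argument of Corollary~\ref{cor dual strong commutative triple} essentially unchanged. As there, I would prove only the ``only if'' direction, obtaining the ``if'' direction symmetrically from the fact that $D^L_{\sigma_2}\circ I^R_{\sigma_1}$ inverts $I^L_{\sigma_2}\circ D^R_{\sigma_1}$ on the relevant representations (the identities $D_\sigma\circ I_\sigma=\mathrm{id}=I_\sigma\circ D_\sigma$ recalled in the introduction). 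Concretely, suppose $(\sigma_1,\sigma_2,\pi)$ is strongly RdLi-commutative; then it is pre-RdLi-commutative, and by Proposition~\ref{prop strong commute imply commute} we have $I_{\sigma_2}\circ D_{\sigma_1}(\pi)\cong D_{\sigma_1}\circ I_{\sigma_2}(\pi)$, so this is the representation in Theorem~\ref{thm dual strong commutation}. It then suffices to show that $(\sigma_2,\sigma_1,I_{\sigma_2}\circ D_{\sigma_1}(\pi))$ is \emph{pre}-LdRi-commutative and to upgrade it to strongly LdRi-commutative; the upgrade is the LdRi-form of Conjecture~\ref{conj pre implies strong}, which I would grant (alternatively it follows from Proposition~\ref{thm pre implies strong}(1) once one knows $(\sigma_2,I_{\sigma_2}\circ D_{\sigma_1}(\pi))$ is an Ld-irreducible pair, a statement I would expect to hold in this situation).

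Proposition~\ref{prop commutative triple} yields the desired pre-LdRi-commutative triple \emph{provided $(\sigma_1,\pi)$ is a Rd-irreducible pair}, and the plan for removing that hypothesis imitates Corollary~\ref{cor dual strong commutative triple}. One enlarges $\sigma_1$ to a $\square$-irreducible $\widetilde\sigma_1=\sigma_1\times\sigma'$ with $(\widetilde\sigma_1,\pi)$ a Rd-irreducible pair and $(\widetilde\sigma_1,\sigma_2,\pi)$ still pre-RdLi-commutative; by Proposition~\ref{thm pre implies strong}(1) (with no recourse to Conjecture~\ref{conj pre implies strong}) the enlarged triple is strongly RdLi-commutative, Proposition~\ref{prop commutative triple} applies to it, Proposition~\ref{prop commutative triple induct} in its LdRi-form peels $\sigma'$ back off, and Proposition~\ref{prop strong commute imply commute} identifies the resulting third entry with $D_{\sigma_1}\circ I_{\sigma_2}(\pi)\cong I_{\sigma_2}\circ D_{\sigma_1}(\pi)$. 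The only non-formal ingredient is therefore the following general completion statement: \emph{for $\sigma_1\in\mathrm{Irr}^{\square}$ and $\pi\in\mathrm{Irr}$ with $D_{\sigma_1}(\pi)\neq 0$ there is $\sigma'\in\mathrm{Irr}^{\square}$ such that $\sigma_1\times\sigma'\in\mathrm{Irr}^{\square}$, $(\sigma_1\times\sigma',\pi)$ is a Rd-irreducible pair, and $(\sigma_1\times\sigma',\sigma_2,\pi)$ is pre-RdLi-commutative whenever $(\sigma_1,\sigma_2,\pi)$ is}.

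This completion statement is the main obstacle, and it is where the argument genuinely leaves the square-integrable setting. In the Steinberg case it is Proposition~\ref{prop completing pre comm}, resting on the combinatorics of $\mathfrak{mx}_{\Delta}$ and $\eta_{\Delta}$ (Proposition~\ref{prop mx rd irr pair} and the filtration analysis in Lemmas~\ref{lem impossible trivial supp orbit},~\ref{lem trivial orbit of times steinberg two cases}); none of this transfers directly, and merely iterating $\sigma_1$ does not suffice --- already for $\sigma_1=\mathrm{St}([a,b]_\rho)$ the Rd-irreducible completion $\mathrm{St}(\mathfrak{mx}_\Delta(\pi))$ involves the shorter segments $[a+j,b]_\rho$ rather than powers of $\sigma_1$. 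The route I would pursue is to recast the completion conceptually through the big derivative: let $\kappa$ be the indecomposable summand of $\pi_{N_{n(\sigma_1)}}$ containing $D_{\sigma_1}(\pi)\boxtimes\sigma_1$, seek the ``largest'' $\square$-irreducible $\widetilde\sigma_1$ for which $(\widetilde\sigma_1,\pi)$ is Rd-irreducible, and, crucially, extend the SI property of big derivatives (Lemma~\ref{lem SI property of big derivative}) and the comparison of $\mathbb D^L_{\sigma_1}$ with the indecomposable Jacquet summand (Lemma~\ref{lem big derivative compared with jacquet}) from products of generalized Steinbergs to all $\square$-irreducible $\sigma_1$. Granting those extensions, the orbit analysis in the proof of Proposition~\ref{prop completing pre comm} should go through with ``$\Delta$-saturated multisegment'' replaced by this notion of ``$\sigma_1$-saturation.'' In summary, modulo Conjecture~\ref{conj pre implies strong} the conjecture reduces to a completion/saturation statement for $\square$-irreducible representations --- equivalently, to an SI-type property for $\mathbb D_{\sigma_1}(\pi)$ with $\sigma_1$ an arbitrary $\square$-irreducible representation --- and establishing that is the hard part.
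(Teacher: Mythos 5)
The statement you are addressing is stated in the paper as an open conjecture; the paper offers no proof of it, so there is nothing to compare your argument against line by line. What you have written is, by your own account, not a proof but a conditional reduction: you grant (i) Conjecture \ref{conj pre implies strong} (and its LdRi-form), which is itself one of the paper's open conjectures beyond the essentially square-integrable case settled in Theorem \ref{thm pre imply strong}, and (ii) a ``completion'' statement producing, for arbitrary $\sigma_1\in\mathrm{Irr}^{\square}$, an enlargement $\widetilde\sigma_1=\sigma_1\times\sigma'$ that is simultaneously $\square$-irreducible, forms a Rd-irreducible pair with $\pi$, and preserves pre-RdLi-commutativity. Neither ingredient is established here or in the paper, so the conjecture remains open after your argument; that is the gap, and you should present this as a reduction rather than a proof.

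That said, the reduction itself is sound and is a faithful transcription of the paper's proof of Corollary \ref{cor dual strong commutative triple} in the square-integrable case: enlarge via Proposition \ref{prop completing pre comm} (there, to $\mathrm{St}(\mathfrak{mx}_{\Delta_1}(\pi))$ using Proposition \ref{prop mx rd irr pair}), upgrade to strong commutativity via Proposition \ref{thm pre implies strong}(1), dualize via Proposition \ref{prop commutative triple}, peel off the auxiliary factor via Proposition \ref{prop commutative triple induct}, and identify the third entry via Proposition \ref{prop strong commute imply commute}. You have also correctly located where the square-integrable argument genuinely uses that hypothesis: Proposition \ref{prop completing pre comm} and Proposition \ref{prop mx rd irr pair} rest on the $\eta_{\Delta}$/$\mathfrak{mx}_{\Delta}$ combinatorics and on the SI property of big derivatives (Lemmas \ref{lem SI property of big derivative} and \ref{lem big derivative compared with jacquet}), which are only proved for generalized Steinberg representations. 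Your suggestion to recast ``completion'' as an SI-type property of $\mathbb D_{\sigma_1}(\pi)$ for general $\square$-irreducible $\sigma_1$ is a reasonable way to isolate the difficulty, but as it stands it replaces one open statement with another of comparable depth. If you keep this write-up, state explicitly at the outset that the conclusion is conditional on Conjecture \ref{conj pre implies strong} and on the completion statement, and flag the latter as the substantive new input required.
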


\section{Combinatorial commutations} \label{s combinatorial commutation}

\subsection{Combinatorially commutative triples}

Recall that strong commutation is defined in Definition \ref{def combinatorial comm triple}. We also have a dual formulation:

\begin{definition} \label{def dual com commutative}
Let $\Delta_1, \Delta_2$ be segments. Let $\pi \in \mathrm{Irr}$. We say that $(\Delta_1, \Delta_2, \pi)$ is a {\it dual combinatorially RdLi-commutative triple} if 
\[ \eta_{\Delta_2}^L(D_{\Delta_1}\circ I_{\Delta_2}(\pi))=\eta_{\Delta_2}^L(I_{\Delta_2}(\pi)). \]
\end{definition}



\begin{remark} \label{rmk on dual commut}
\begin{enumerate}
\item It is not a correct formulation (c.f. Theorem \ref{thm combinatorial def}) if one changes the condition in Definition \ref{def dual com commutative} to $\eta^L_{\Delta_2}(D_{\Delta_1}(\pi)) =\eta^L_{\Delta_2}(\pi)$ (while it is not far away, see the second remark). For example, if one takes $\pi=\langle [-1,0]\rangle$ and $\Delta_1=[0]$ and $\Delta_2=[0,1]$, then $(\Delta_1, \Delta_2, \pi)$ is not strongly RdLi-commutative, but we still have $\eta^L_{\Delta_2}(D_{\Delta_1}(\pi))=\eta^L_{\Delta_2}(\pi)=0$. 
\item On the other hand, it follows from Theorem \ref{thm combinatorial def} below that if $(\Delta_1, \Delta_2, \pi)$ is a dual strongly RdLi-commutative triple, then $D_{\Delta_1}\circ I_{\Delta_2}(\pi)\cong I_{\Delta_2}\circ D_{\Delta_1}(\pi)$ (Proposition \ref{prop strong commute imply commute}). Then, we also have
\[    \eta_{\Delta_2}^L(D_{\Delta_1}(\pi))=\eta_{\Delta_2}^L(\pi) .
\] 
\item Suppose $(\Delta_1, \Delta_2, \pi)$ is strongly RdLi-commutative. Note that if $D_{\Delta_1}^2(\pi)\neq 0$, then $(\Delta_1, \Delta_2, D_{\Delta_1}(\pi))$ is also strongly RdLi-commutative. However, the analogue is not quite right for the induction one i.e. $(\Delta_1, \Delta_2, I_{\Delta_2}(\pi))$ is not necessarily strongly RdLi-commutative.
\item One can also define a notion of combinatorially LdRi-commuative triple as follows. A triple $(\Delta_1, \Delta_2, \pi)$ is said to be combinatorially LdRi-commutative if $\eta^L_{\Delta_1}(I^R_{\Delta_2}(\pi))=\eta^L_{\Delta_1}(\pi)$. It follows from definitions that $(\Delta_1, \Delta_2, \pi)$ is dual RdLi-commutative if and only if $(\Delta_2, \Delta_1, D_{\Delta_1}\circ I_{\Delta_2}(\pi))$ is strongly LdRi-commutative. 
\end{enumerate}
\end{remark}



\subsection{Equivalent definitions}

\begin{theorem} \label{thm combinatorial def}
Let $\Delta_1, \Delta_2$ be segments. Let $\pi \in \mathrm{Irr}$. Then the followings are equivalent:
\begin{enumerate}
\item $(\mathrm{St}(\Delta_1), \mathrm{St}(\Delta_2), \pi)$ is a strongly  RdLi-commutative triple;
\item $(\mathrm{St}(\Delta_2), \mathrm{St}(\Delta_1), I_{\Delta_2}\circ D_{\Delta_1}(\pi))$ is a strongly LdRi-commutative triple;
\item $(\Delta_1, \Delta_2, \pi)$ is a combinatorially RdLi-commutative triple;
\item $(\Delta_1, \Delta_2, \pi)$ is a dual combinatorially RdLi-commutative triple.
\end{enumerate}
\end{theorem}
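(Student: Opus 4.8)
The four conditions will be linked in a cycle, leaning heavily on the diagram of implications displayed in the introduction. The core equivalence is between (1) and (3): that strong RdLi-commutativity coincides with the combinatorial condition $\eta_{\Delta_1}(I^L_{\Delta_2}(\pi)) = \eta_{\Delta_1}(\pi)$. For the direction $(3)\Rightarrow(1)$, assume combinatorial commutativity and set $\mathfrak p = \mathfrak{mx}_{\Delta_1}(\pi)$. The hypothesis $\eta_{\Delta_1}(I^L_{\Delta_2}(\pi))=\eta_{\Delta_1}(\pi)$ means $\mathfrak{mx}_{\Delta_1}(I^L_{\Delta_2}(\pi)) = \mathfrak p$, hence $D_{\mathfrak p}(I^L_{\Delta_2}(\pi))\neq 0$ and $(\mathrm{St}(\mathfrak p), I^L_{\Delta_2}(\pi))$ is a Rd-irreducible pair by Proposition~\ref{prop mx rd irr pair}. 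I want to produce a strongly RdLi-commutative triple $(\mathrm{St}(\mathfrak p), \mathrm{St}(\Delta_2), \pi)$: this is where Proposition~\ref{prop trivial orbit in irreducible case} and the geometry in the geometric lemma enter, to extract the pre-commutativity of $(\mathrm{St}(\Delta_1), \mathrm{St}(\Delta_2), \pi)$ from the strongly commutative triple involving $\mathfrak p$. Then Proposition~\ref{prop transtivity pre commute} gives pre-commutativity for $(\mathrm{St}(\Delta_1), \mathrm{St}(\Delta_2), \pi)$, and Theorem~\ref{thm pre imply strong} upgrades it to strong commutativity. For $(1)\Rightarrow(3)$: if $(\mathrm{St}(\Delta_1), \mathrm{St}(\Delta_2), \pi)$ is strongly RdLi-commutative then in particular $D_{\Delta_1}(\pi)\neq 0$; by Proposition~\ref{prop completing pre comm} the triple $(\mathrm{St}(\mathfrak p), \mathrm{St}(\Delta_2), \pi)$ is pre-RdLi-commutative, hence strongly commutative by Proposition~\ref{thm pre implies strong}(1) since $(\mathrm{St}(\mathfrak p),\pi)$ is a Rd-irreducible pair, and then Proposition~\ref{prop strong irr imply irr} tells us $(\mathrm{St}(\mathfrak p), I^L_{\Delta_2}(\pi))$ is still a Rd-irreducible pair, so Proposition~\ref{prop mx rd irr pair} forces $\mathfrak{mx}_{\Delta_1}(I^L_{\Delta_2}(\pi)) = \mathfrak p = \mathfrak{mx}_{\Delta_1}(\pi)$, i.e. the combinatorial condition holds. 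This last chain is essentially Lemma~\ref{lem eta unchange}.

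\textbf{The $(1)\Leftrightarrow(2)$ equivalence.} For essentially square-integrable $\sigma_1=\mathrm{St}(\Delta_1)$ and $\sigma_2=\mathrm{St}(\Delta_2)$ this is precisely Corollary~\ref{cor dual strong commutative triple}, so nothing new is needed here — I will simply cite it.

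\textbf{The $(3)\Leftrightarrow(4)$ equivalence.} This is the dual combinatorial statement: $\eta_{\Delta_1}(I^L_{\Delta_2}(\pi)) = \eta_{\Delta_1}(\pi)$ versus $\eta^L_{\Delta_2}(D^R_{\Delta_1}\circ I^L_{\Delta_2}(\pi)) = \eta^L_{\Delta_2}(I^L_{\Delta_2}(\pi))$. I will route this through the strong-commutativity conditions: $(3)\Rightarrow(1)\Rightarrow(2)$, and from (2), which gives strong LdRi-commutativity of $(\mathrm{St}(\Delta_2), \mathrm{St}(\Delta_1), I^L_{\Delta_2}\circ D^R_{\Delta_1}(\pi))$, one reads off $\eta^L_{\Delta_2}$-invariance using Lemma~\ref{lem pre commute 1} and Lemma~\ref{lem eta for another side} (whose proofs already package exactly this $\mathfrak{mx}^L$-preservation). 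Conversely $(4)$ produces, via the construction in Proposition~\ref{prop trivial orbit in irreducible case} applied on the dual side together with Proposition~\ref{prop commutative triple induct} and the transitivity Proposition~\ref{prop transtivity pre commute}, the pre-LdRi-commutativity of $(\mathrm{St}(\Delta_2), \mathrm{St}(\Delta_1), I^L_{\Delta_2}\circ D^R_{\Delta_1}(\pi))$; then the LdRi-version of Theorem~\ref{thm pre imply strong} gives (2), and (2)$\Rightarrow$(1)$\Rightarrow$(3) closes the loop.

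\textbf{Expected main obstacle.} The delicate point is the passage from a strongly commutative triple involving the \emph{saturated} multisegment $\mathfrak p=\mathfrak{mx}_{\Delta_1}(\pi)$ back down to pre-commutativity of the original \emph{single-segment} triple $(\mathrm{St}(\Delta_1),\mathrm{St}(\Delta_2),\pi)$ — i.e. the step marked ``(using Prop.~\ref{prop trivial orbit in irreducible case})'' in the diagram. Going up (single segment to $\mathfrak p$) is Proposition~\ref{prop completing pre comm}; coming back down is not automatic because pre-commutativity of $(\mathrm{St}(\mathfrak p),\cdot,\cdot)$ does not formally restrict to a sub-segment. The trick is to use that $(\mathrm{St}(\mathfrak p),\pi)$ is Rd-irreducible so that the embedding $\mathrm{St}(\mathfrak p)\boxtimes D^R_{\mathfrak p}(\pi)\hookrightarrow \cdots$ has the trivial supporting orbit (Proposition~\ref{prop trivial orbit and irred}), then factor the single-segment derivative $D^R_{\Delta_1}$ through $D^R_{\mathfrak p}$ via $\mathrm{St}(\Delta_1)\boxtimes\mathrm{St}(\mathfrak p-\Delta_1)\hookrightarrow \mathrm{St}(\mathfrak p)_{N}$ and track the orbit through the geometric lemma using Proposition~\ref{prop trivial orbit in irreducible case} and the composition-of-supporting-orbits argument — this is exactly the kind of orbit bookkeeping carried out in Lemma~\ref{lem completing pre comm} and Proposition~\ref{prop completing pre comm}, and I expect the cleanest route is to cite those directly rather than reproving. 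The rest is assembling citations.
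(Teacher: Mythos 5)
Your proposal is correct and follows essentially the same route as the paper: $(1)\Leftrightarrow(2)$ via Corollary \ref{cor dual strong commutative triple}, $(1)\Rightarrow(3)$ via Lemma \ref{lem eta unchange} (whose proof you correctly unpack as Prop.~\ref{prop completing pre comm} $\to$ Prop.~\ref{thm pre implies strong}(1) $\to$ Prop.~\ref{prop strong irr imply irr} $\to$ Prop.~\ref{prop mx rd irr pair}), $(3)\Rightarrow(1)$ via Prop.~\ref{prop mx rd irr pair}, Prop.~\ref{prop trivial orbit in irreducible case}, Prop.~\ref{prop transtivity pre commute} and Theorem~\ref{thm pre imply strong}, and $(2)\Leftrightarrow(4)$ by the mirror-image argument. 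One small wiring imprecision: in your $(3)\Rightarrow(1)$ sketch, Prop.~\ref{prop trivial orbit in irreducible case} is used to establish \emph{pre}-commutativity of the \emph{multisegment} triple $(\mathrm{St}(\mathfrak p), \mathrm{St}(\Delta_2), \pi)$ (not strong commutativity, and not directly the single-segment triple), and it is then Prop.~\ref{prop transtivity pre commute} alone that descends to pre-commutativity of $(\mathrm{St}(\Delta_1),\mathrm{St}(\Delta_2),\pi)$ — your text partly conflates these two steps, but the ingredients and conclusion are the same as in the paper.
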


\begin{proof}

(1) $\Leftrightarrow$ (2) is Corollary \ref{cor dual strong commutative triple}. (1) $\Rightarrow$ (3) is proved in Lemma \ref{lem eta unchange}.

Let $\tau =I_{\Delta_2}(\pi)$. Write $\mathfrak p=\mathfrak{mx}(\pi, \Delta_1)$. Let $l=l_{abs}(\mathfrak p)$. We now prove (3) $\Rightarrow$ (1). We consider the following commutative diagram:
\[ \xymatrix{ D_{\mathfrak p}(\tau)\boxtimes \mathrm{St}(\mathfrak p) \ar@{^{(}->}[r]^{\iota} &  \tau_{N_l} \ar@{^{(}->}[r]^{\iota'}  & (\mathrm{St}(\Delta_2)\times\pi)_{N_l}  \ar@{^{(}->}[r] \ar[d]^s & (\mathrm{St}(\Delta_2)\times D_{\mathfrak p}(\pi)\times \mathrm{St}(\mathfrak p))_{N_l}\ar[d] \\
   &     &  \mathrm{St}(\Delta_2)\dot{\times}^1\pi_{N_l} \ar@{^{(}->}[r]   & \mathrm{St}(\Delta_2)\dot{\times}^1 (D_{\mathfrak p}(\pi)\times \mathrm{St}(\mathfrak p))_{N_l} \ar[d] \\
				& & &   (\mathrm{St}(\Delta_2) \times D_{\mathfrak p}(\pi))\boxtimes \mathrm{St}(\mathfrak p) 
                   }
\]
Here the vertical maps are from projecting to the top layers in the geometric lemma and the horizontal maps except the leftmost one come from the unique submodules of integrals. The leftmost map comes from the unique embedding of such submodule.

Suppose (3) holds. (3) implies that $(\mathrm{St}(\mathfrak p), \tau)$ is a Rd-irreducible pair by Proposition \ref{prop mx rd irr pair}. Now, we consider the composition of the following maps 
\[  \tau  \hookrightarrow \mathrm{St}(\Delta_2)\times \pi  \hookrightarrow \mathrm{St}(\Delta_2)\times D_{\mathfrak p}(\pi)\times \mathrm{St}(\mathfrak p) ,
\]
and by the Rd-irreducibility of $(\mathrm{St}(\mathfrak p), \tau)$ and Proposition \ref{prop trivial orbit in irreducible case}, the composition of the toppest horizontal maps and the rightmost vertical map, which gives a map from $D_{\mathfrak p}(\pi)\boxtimes \mathrm{St}(\mathfrak p)$ to $(\mathrm{St}(\Delta_2) \times D_{\mathfrak p}(\pi))\boxtimes \mathrm{St}(\mathfrak p)$, is non-zero. Thus, tracing the commutative diagram, the map $s\circ \iota' \circ \iota$ is non-zero. This implies the pre-commutativity of $(\mathrm{St}(\mathfrak p), \mathrm{St}(\Delta_2), \pi)$. Hence, we also have the pre-commutativity of $(\mathrm{St}(\Delta_1), \mathrm{St}(\Delta_2), \pi)$ by Proposition \ref{prop transtivity pre commute}. This implies the strong commutativity by Theorem \ref{thm pre imply strong} and so proves (1).

A proof for (2) $\Leftrightarrow$ (4) is similar to (1) $\Leftrightarrow$ (3).
\end{proof}

\section{Applications on constructing strongly commutative triples} \label{s application on strong commut triples}

\subsection{A consequence on producing more strongly commutative triples}

\begin{corollary} (c.f. Corollary \ref{cor saturated commut triple}) \label{cor satruated commute triple revise}
Let $(\Delta_1, \Delta_2, \pi)$ be a strongly RdLi-commutative triple. Let $\Delta'$ be a $\Delta_1$-saturated segment such that $D_{\Delta'}(\pi)\neq 0$. 
\begin{enumerate}
\item $(\Delta', \Delta_2, \pi)$ is also a strongly RdLi-commutative triple;
\item Let $\Delta''$ be another $\Delta_1$-saturated segment such that $D_{\Delta''}\circ D_{\Delta'}(\pi)\neq 0$. Then $(\Delta'', \Delta_2, D_{\Delta'}(\pi))$ is a strongly RdLi-commutative triple.
\end{enumerate} 
\end{corollary}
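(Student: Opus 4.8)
\textbf{Proof proposal for Corollary \ref{cor satruated commute triple revise}.}

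The plan is to deduce both parts from the combinatorial characterization of strong commutativity (Theorem \ref{thm combinatorial def}) together with the behavior of $\eta$-invariants under saturated derivatives. For part (1), write $\Delta_1=[a_1,b]_\rho$ and $\Delta'=[a',b]_\rho$ with $a'\le a_1$. Since $(\Delta_1,\Delta_2,\pi)$ is strongly RdLi-commutative, Theorem \ref{thm combinatorial def} gives that it is combinatorially RdLi-commutative, i.e. $\eta_{\Delta_1}(I_{\Delta_2}(\pi))=\eta_{\Delta_1}(\pi)$. I would first observe that a combinatorially commutative triple for $\Delta_1$ forces the same invariant equality for every $\Delta_1$-saturated segment simultaneously: indeed $\eta_{\Delta_1}(\pi)=\eta_{\Delta_1}(I_{\Delta_2}(\pi))$ together with the monotonicity $\eta_{[c,b]_\rho}(\pi)\le\eta_{[c,b]_\rho}(I_{\Delta_2}(\pi))$ from Lemma \ref{lem right multi submulti} (applied for each left endpoint $c$, not just $c=a_1$) pins down $\varepsilon_{[c,b]_\rho}(I_{\Delta_2}(\pi))=\varepsilon_{[c,b]_\rho}(\pi)$ for all $c\le b$ with $c\ge a_1$ — but the real content is that $\mathfrak{mx}_{\Delta_1}(\pi)=\mathfrak{mx}_{\Delta_1}(I_{\Delta_2}(\pi))$, which by Lemma \ref{lem eta unchange} and its proof (via Proposition \ref{prop mx rd irr pair} and Proposition \ref{prop strong irr imply irr}) is exactly equivalent to strong RdLi-commutativity of $(\mathrm{St}(\mathfrak{mx}_{\Delta_1}(\pi)),\mathrm{St}(\Delta_2),\pi)$. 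Since $\Delta'$ is $\Delta_1$-saturated, $\mathfrak{mx}_{\Delta'}(\pi)=\mathfrak{mx}_{\Delta_1}(\pi)$ as multisets (the relevant segments $[c,b]_\rho$ with $c\le a'$ are a subset of those with $c\le a_1$, but in fact the maximal multisegment only records segments with $D^k$ nonvanishing, and saturation means $a'\le a_1$ so $\mathfrak{mx}_{\Delta'}(\pi)$ is the submultisegment of $\mathfrak{mx}_{\Delta_1}(\pi)$ consisting of segments $[c,b]_\rho$ with $c\ge a'$; since $a'\le a_1$ this contains all of $\mathfrak{mx}_{\Delta_1}(\pi)$, hence equals it when the latter's segments all have left endpoint $\ge a_1\ge a'$). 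Thus the combinatorial criterion $\eta_{\Delta'}(I_{\Delta_2}(\pi))=\eta_{\Delta'}(\pi)$ holds, and by the only-if direction of Theorem \ref{thm combinatorial def}, $(\Delta',\Delta_2,\pi)$ is strongly RdLi-commutative.

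For a cleaner route to part (1) that avoids tracking endpoints, I would instead argue directly: by Theorem \ref{thm combinatorial def}, $(\Delta_1,\Delta_2,\pi)$ strongly commutative $\Rightarrow$ $(\mathrm{St}(\mathfrak p),\mathrm{St}(\Delta_2),\pi)$ pre-RdLi-commutative where $\mathfrak p=\mathfrak{mx}_{\Delta_1}(\pi)$ (this is contained in the proof of Theorem \ref{thm combinatorial def}, via Proposition \ref{prop completing pre comm}). Since $\Delta'$ is $\Delta_1$-saturated with $D_{\Delta'}(\pi)\ne0$, Corollary \ref{cor saturated commut triple} applies to the pre-RdLi-commutative triple $(\mathrm{St}(\Delta_1),\mathrm{St}(\Delta_2),\pi)$ directly and yields that $(\mathrm{St}(\Delta'),\mathrm{St}(\Delta_2),\pi)$ is strongly RdLi-commutative. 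In fact part (1) is essentially a restatement of Corollary \ref{cor saturated commut triple} with the hypothesis upgraded from pre- to strongly-commutative (which is harmless since strong $\Rightarrow$ pre), so the main work is recognizing this.

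For part (2), I would use part (1) plus a transitivity/peeling argument. By part (1), $(\Delta',\Delta_2,\pi)$ is strongly RdLi-commutative, so by Remark \ref{rmk on dual commut}(3), if $D_{\Delta'}^2(\pi)\ne0$ then $(\Delta',\Delta_2,D_{\Delta'}(\pi))$ is strongly RdLi-commutative; but here I need $\Delta''$, possibly different from $\Delta'$, acting on $D_{\Delta'}(\pi)$. The key point is that $\Delta''$ being $\Delta_1$-saturated is the same as being $\Delta'$-saturated (both mean right endpoint $b$ and left endpoint $\le$ the relevant left endpoint — and one should check $\Delta_1$-saturated $\Leftrightarrow$ $\Delta'$-saturated when $\Delta'$ is itself $\Delta_1$-saturated, which holds because both conditions reduce to "right endpoint equals $b$" together with a lower bound on the left endpoint that is automatically satisfied). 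Then I apply the combinatorial criterion to $\omega:=D_{\Delta'}(\pi)$: I must show $\eta_{\Delta''}(I_{\Delta_2}(\omega))=\eta_{\Delta''}(\omega)$. Using Lemma \ref{lem commutate under irred pair} (right version as needed) and the fact that $(\Delta',\Delta_2,\pi)$ strongly commutative gives $D_{\Delta'}\circ I_{\Delta_2}(\pi)\cong I_{\Delta_2}\circ D_{\Delta'}(\pi)=I_{\Delta_2}(\omega)$ (Proposition \ref{prop strong commute imply commute}), I can relate $\mathfrak{mx}_{\Delta''}(I_{\Delta_2}(\omega))$ to $\mathfrak{mx}_{\Delta''}(\omega)$ via the chain of identities coming from Lemma \ref{lem eta unchange} and Proposition \ref{prop mx rd irr pair} applied at the level of $\pi$ and then restricted to $\omega=D_{\Delta'}(\pi)$. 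Concretely: $\mathfrak{mx}_{\Delta''}(\omega)=\mathfrak{mx}_{\Delta''}(\pi)-\{\text{segments of }\mathfrak{mx}_{\Delta'}(\pi)\text{ peeled off by }D_{\Delta'}\}$, and the same peeling happens after applying $I_{\Delta_2}$ because $(\mathrm{St}(\mathfrak{mx}_{\Delta_1}(\pi)),I_{\Delta_2}(\pi))$ remains Rd-irreducible (Proposition \ref{prop strong irr imply irr}); so the equality $\eta_{\Delta''}(I_{\Delta_2}(\omega))=\eta_{\Delta''}(\omega)$ follows, and Theorem \ref{thm combinatorial def} upgrades it to strong RdLi-commutativity of $(\Delta'',\Delta_2,\omega)=(\Delta'',\Delta_2,D_{\Delta'}(\pi))$.

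The main obstacle I anticipate is the bookkeeping in part (2): carefully justifying that "$\Delta_1$-saturated" transfers correctly to "$\Delta'$-saturated" and, more substantively, that the maximal multisegment $\mathfrak{mx}_{\Delta''}$ behaves compatibly under the composition of the derivative $D_{\Delta'}$ and the integral $I_{\Delta_2}$ — i.e., that peeling off $\Delta'$ (or $\mathfrak{mx}_{\Delta'}(\pi)$) commutes appropriately with $I_{\Delta_2}$ at the combinatorial level. This should follow from Proposition \ref{prop mx rd irr pair}, Lemma \ref{lem eta unchange}, and Proposition \ref{prop strong irr imply irr} (the latter ensuring the Rd-irreducible pair structure survives the integral), but assembling these into a clean statement about $\mathfrak{mx}_{\Delta''}(D_{\Delta'}\circ I_{\Delta_2}(\pi))$ versus $\mathfrak{mx}_{\Delta''}(D_{\Delta'}(\pi))$ is where the care is needed. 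Everything else is a direct invocation of results already in the paper.
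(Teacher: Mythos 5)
Your proof of part (1) ultimately lands on a valid argument: the ``cleaner route'' you propose---pass from strong to pre-RdLi-commutativity and invoke Corollary~\ref{cor saturated commut triple}---is exactly the alternative the paper itself acknowledges. (The paper's own proof instead uses the equivalence of Theorem~\ref{thm combinatorial def} to phrase things through $\eta$-invariants.) However, your first attempted route for (1) contains a genuine error that you should notice: you claim $\mathfrak{mx}_{\Delta'}(\pi)=\mathfrak{mx}_{\Delta_1}(\pi)$, but by definition $\mathfrak{mx}_{\Delta'}(\pi)=\sum_{c\ge a'}\varepsilon_{[c,b]_\rho}(\pi)\cdot[c,b]_\rho$ while $\mathfrak{mx}_{\Delta_1}(\pi)$ only records the terms with $c\ge a_1$; since $a'\le a_1$, the former is a \emph{supermultisegment} of the latter, not a submultisegment, and they differ exactly by the coordinates $a'\le c<a_1$ whose equality under $I_{\Delta_2}$ is precisely what needs proof. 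Your parenthetical ``since $a'\le a_1$ this contains all of $\mathfrak{mx}_{\Delta_1}(\pi)$, hence equals it'' is a non sequitur. Fortunately, this route is superseded by your second one.

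For part (2) there is a concrete gap. You assert that ``$\Delta''$ being $\Delta_1$-saturated is the same as being $\Delta'$-saturated.'' This is false: $\Delta_1$-saturated means $a(\Delta'')\le a_1$ and $b(\Delta'')=b$, while $\Delta'$-saturated means $a(\Delta'')\le a'$ and $b(\Delta'')=b$. Since $a'\le a_1$, the second condition is strictly stronger whenever $a'<a_1$; in the statement of the corollary, $\Delta''$ is only assumed $\Delta_1$-saturated, so $a(\Delta'')$ can lie anywhere below $a_1$, possibly above or below $a'$. (Your parenthetical attempt to justify the equivalence mis-states the condition as a ``lower bound on the left endpoint'' when it is an upper bound.) Because the relation between $a(\Delta'')$ and $a(\Delta')$ is not fixed, the subsequent ``peeling'' formula $\mathfrak{mx}_{\Delta''}(\omega)=\mathfrak{mx}_{\Delta''}(\pi)-\{\dots\}$ is not justified as written, and the appeal to ``the chain of identities coming from Lemma~\ref{lem eta unchange} and Proposition~\ref{prop mx rd irr pair} \dots restricted to $\omega$'' is too vague to carry the argument. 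The paper's proof of (2) instead exploits the commutation $I_{\Delta_2}\circ D_{\Delta'}(\pi)\cong D_{\Delta'}\circ I_{\Delta_2}(\pi)$ coming from part (1) together with Proposition~\ref{prop strong commute imply commute}, then tracks the $\eta$-invariant directly: applying $D_{\Delta'}$ to either side of the equality $\eta(I_{\Delta_2}(\pi))=\eta(\pi)$ changes each side in the same controlled way, and one closes the loop with Theorem~\ref{thm combinatorial def}. You have the right ingredients (the commutation from Proposition~\ref{prop strong commute imply commute} and the $\eta$/\,$\mathfrak{mx}$ bookkeeping), but the decisive step that transfers the combinatorial equality from $\pi$ to $D_{\Delta'}(\pi)$ is not carried out; it must be done against $\Delta_1$ (or, equivalently, against the full $\mathfrak{mx}_{\Delta_1}$) rather than by identifying $\Delta'$-saturation with $\Delta_1$-saturation.
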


\begin{proof}
(1) is shown in Corollary \ref{cor saturated commut triple} but we use our new results here. By definitions and Theorem \ref{thm combinatorial def}, $\eta_{\Delta_1}(I_{\Delta_2}(\pi))=\eta_{\Delta_1}(\pi)$ automatically implies that $\eta_{\Delta'}(I_{\Delta_2}(\pi))=\eta_{\Delta'}(\pi)$. By Theorem \ref{thm combinatorial def}, we have (1).

We now consider (2). By (1) and Proposition \ref{prop strong commute imply commute}, we have $I_{\Delta_2}\circ D_{\Delta'}(\pi) \cong D_{\Delta'}\circ I_{\Delta_2}(\pi)$. Then $\eta_{\Delta_1}(I_{\Delta_2}\circ D_{\Delta'}(\pi))$ is obtained from $\eta_{\Delta_1}(I_{\Delta_2}(\pi))$ by decreasing $\varepsilon_{\Delta'}(I_{\Delta_2}(\pi))$ by $1$; and similarly $\eta_{\Delta_1}(D_{\Delta'}(\pi))$ is obtained from $\eta_{\Delta_1}(\pi)$ by decreasing $\varepsilon_{\Delta'}(\pi)$ by $1$. Now, the combinatorial commutation for $(\Delta_1, \Delta_2, \pi)$ implies $\eta_{\Delta_1}(I_{\Delta_2}\circ D_{\Delta'}(\pi))=\eta_{\Delta_1}(D_{\Delta'}(\pi))$. Thus $\eta_{\Delta''}(I_{\Delta_2}\circ D_{\Delta'}(\pi))=\eta_{\Delta''}(D_{\Delta'}(\pi))$. This implies (2) again by Theorem \ref{thm combinatorial def}.
\end{proof}

\subsection{A consequence on level preserving integrals}

For $\pi \in \mathrm{Irr}$, we denote by $\mathrm{lev}(\pi)$, the level of $\pi$ in the sense of Bernstein-Zelevinsky \cite{BZ77} i.e. the largest integer $i$ such that the $i$-th Bernstein-Zelevinsky derivative of $\pi$ is non-zero. Since this is the only part we need those notions, we refer the reader to \cite{Ch22+} for definitions. 

It is more convenient to use the machinery of the highest derivative multisegment in \cite{Ch22+}. We briefly recall the definition. For $\pi \in \mathrm{Irr}$ and a cuspidal representation $\rho$ and an integer $c$, let $\mathfrak{mx}^{\rho}(\pi)$ be the segment with largest $l_{abs}(\mathfrak{mx}^{\rho}(\pi))$ satisfying the following two properties: 
\begin{enumerate}
\item for any segment $\widetilde{\Delta}$ in $a(\widetilde{\Delta})=\rho$;
\item $D_{\mathfrak{mx}^{\rho}(\pi)}(\pi)\neq 0$.
\end{enumerate}
The {\it highest derivative multisegment} of $\pi$ is defined as:
\[  \mathfrak{hd}(\pi) := \sum_{\rho} \mathfrak{mx}^{\rho}(\pi), \]
where $\rho$ runs for all isomorphism classes of cuspidal representations.

Note that the multiplicity of the segment $[\rho, \nu^c\rho]$ in $\mathfrak{mx}^{\rho}(\pi)$ is equal to 
\[   \varepsilon_{[\rho, \nu^c\rho]}(\pi)- \varepsilon_{[\rho, \nu^{c+1}\rho]}(\pi) . \]
Hence, $\varepsilon_{\Delta}(\pi)$ determines $\mathfrak{mx}^{\rho}(\pi)$. Indeed, by a simple induction, one can recover all $\varepsilon_{\Delta}(\pi)$ from $\mathfrak{mx}^{\rho}(\pi)$.

\begin{corollary} \label{cor level preserving integrals comm}
Let $\Delta$ be a segment and let $\pi \in \mathrm{Irr}$. Suppose $\mathrm{lev}(I_{\Delta}(\pi))=\mathrm{lev}(\pi)$. Then, for any segment $\Delta'$ with $D_{\Delta'}(\pi)\neq 0$, $(\Delta', \Delta, \pi)$ is a strongly RdLi-commutative triple. In particular, $D_{\Delta'}\circ I_{\Delta}(\pi)\cong I_{\Delta}\circ D_{\Delta'}(\pi)$.
\end{corollary}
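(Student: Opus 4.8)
The plan is to reduce the statement to the combinatorial criterion of Theorem~\ref{thm combinatorial def} by showing that the hypothesis $\mathrm{lev}(I_{\Delta}(\pi))=\mathrm{lev}(\pi)$ forces $\mathfrak{hd}(I_{\Delta}(\pi))=\mathfrak{hd}(\pi)$, hence $\eta_{\Delta'}(I_{\Delta}(\pi))=\eta_{\Delta'}(\pi)$ for every segment $\Delta'$. First I would record the monotonicity already available: by Lemma~\ref{lem right multi submulti} (applied with $\sigma=\mathrm{St}(\Delta)$), for every segment $\Delta'$ we have $\mathfrak{mx}_{\Delta'}(\pi)\subseteq \mathfrak{mx}_{\Delta'}(I_{\Delta}(\pi))$, equivalently $\eta_{\Delta'}(\pi)\le \eta_{\Delta'}(I_{\Delta}(\pi))$. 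Summing the segment-length contributions over all cuspidal lines, this says $l_{abs}(\mathfrak{hd}(\pi))\le l_{abs}(\mathfrak{hd}(I_{\Delta}(\pi)))$. The remaining task is to obtain the reverse inequality from the level hypothesis, after which equality of the total sizes of $\mathfrak{hd}$ combined with the termwise inequality forces $\mathfrak{mx}_{\Delta'}(\pi)=\mathfrak{mx}_{\Delta'}(I_{\Delta}(\pi))$ for \emph{every} $\Delta'$ (a smaller multiset contained in a larger one with the same total size must coincide).

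The key step is thus the identity $l_{abs}(\mathfrak{hd}(\pi))=\mathrm{lev}(\pi)$, or more precisely that the level of $\pi$ is computed by the total absolute length of the highest derivative multisegment. This is exactly the content of the Bernstein–Zelevinsky/highest-derivative machinery recalled just before the corollary: the $\mathfrak{mx}^{\rho}(\pi)$ are the segments with $a(\widetilde\Delta)=\rho$ of maximal total length with $D_{\mathfrak{mx}^\rho(\pi)}(\pi)\ne 0$, and the results of \cite{Ch22+} (which the paper allows me to assume) identify $\mathrm{lev}(\pi)=l_{abs}(\mathfrak{hd}(\pi))=\sum_\rho l_{abs}(\mathfrak{mx}^\rho(\pi))$. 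Granting this, $\mathrm{lev}(I_{\Delta}(\pi))=\mathrm{lev}(\pi)$ translates directly into $l_{abs}(\mathfrak{hd}(I_{\Delta}(\pi)))=l_{abs}(\mathfrak{hd}(\pi))$, which is the reverse inequality needed.

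Once $\mathfrak{mx}_{\Delta'}(\pi)=\mathfrak{mx}_{\Delta'}(I_{\Delta}(\pi))$ is established for all $\Delta'$, fix the particular segment $\Delta'$ in the statement with $D_{\Delta'}(\pi)\ne 0$; then in particular $\eta_{\Delta'}(I_{\Delta}(\pi))=\eta_{\Delta'}(\pi)$, so by Definition~\ref{def combinatorial comm triple} the triple $(\Delta',\Delta,\pi)$ is combinatorially RdLi-commutative. By Theorem~\ref{thm combinatorial def} (equivalence of (3) and (1)), $(\mathrm{St}(\Delta'),\mathrm{St}(\Delta),\pi)$ is strongly RdLi-commutative. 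Finally, Proposition~\ref{prop strong commute imply commute} gives the stated commutation $D_{\Delta'}\circ I_{\Delta}(\pi)\cong I_{\Delta}\circ D_{\Delta'}(\pi)$.

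\textbf{Main obstacle.} The only nontrivial point is the precise bookkeeping identifying $\mathrm{lev}(\pi)$ with $l_{abs}(\mathfrak{hd}(\pi))$ and cleanly deducing, from a \emph{single} numerical equality together with the termwise containments $\mathfrak{mx}_{\Delta'}(\pi)\subseteq \mathfrak{mx}_{\Delta'}(I_\Delta(\pi))$, that \emph{all} the individual $\mathfrak{mx}_{\Delta'}$ are unchanged. I expect this to be routine once the $\mathfrak{hd}$-formalism of \cite{Ch22+} is invoked, but one must be careful that the $\mathfrak{mx}_{\Delta'}$ over varying $\Delta'$ are consistently encoded by the $\mathfrak{mx}^\rho$'s (as the paragraph before the corollary notes, $\varepsilon_\Delta$ and $\mathfrak{mx}^\rho$ determine each other), so that equality of the aggregate length genuinely propagates to equality on each cuspidal line and hence to each $\eta_{\Delta'}$.
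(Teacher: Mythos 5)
Your proposal follows the same route as the paper: pass through Lemma \ref{lem right multi submulti} to get the termwise inequalities $\eta_{\Delta'}(\pi)\le\eta_{\Delta'}(I_\Delta(\pi))$, use the level hypothesis to pin down the total size of the highest derivative multisegment, conclude all the $\eta$'s coincide, and then apply Theorem \ref{thm combinatorial def} and Proposition \ref{prop strong commute imply commute}. The only place where you are imprecise is exactly the step you flagged as the obstacle, and there your parenthetical heuristic is not quite the right one. The statement "a smaller multiset contained in a larger one with the same total size must coincide" does apply to $\mathfrak{mx}_{\Delta'}(\pi)\subseteq\mathfrak{mx}_{\Delta'}(I_\Delta(\pi))$, but the quantity the level hypothesis controls is $l_{abs}(\mathfrak{hd})$, and the containment $\mathfrak{hd}(\pi)\subseteq\mathfrak{hd}(I_\Delta(\pi))$ is \emph{not} a direct consequence of the termwise $\varepsilon$-inequalities: multiplicities in $\mathfrak{hd}$ are \emph{differences} $\varepsilon_{[a,b]_\rho}-\varepsilon_{[a,b+1]_\rho}$, and differences of two ordered quantities need not inherit the ordering. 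So the transition from "$l_{abs}(\mathfrak{hd}(\pi))=l_{abs}(\mathfrak{hd}(I_\Delta(\pi)))$ plus $\eta\le\eta$" to "$\eta=\eta$" requires a reorganization. The paper does this by counting, for each cuspidal $\rho$, the number $p_i(\rho)$ of segments in $\mathfrak{hd}$ that contain $\rho$: this is a telescoping sum of $\varepsilon$-invariants, hence satisfies $p_1(\rho)\ge p_2(\rho)$ directly from the termwise inequality, and $\sum_\rho p_i(\rho)$ recovers the relative (hence absolute, up to the constant weight $n(\rho)$ on each cuspidal line) length of $\mathfrak{hd}$; equality of levels then forces $p_1(\rho)=p_2(\rho)$ for all $\rho$, which unwinds to equality of every $\varepsilon_{\Delta'}$. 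Aside from this repackaging, your argument is the one in the paper.
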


\begin{proof}
By Lemma \ref{lem right multi submulti}, $\eta_{\widetilde{\Delta}}(\pi)\leq \eta_{\widetilde{\Delta}}(I_{\Delta}(\pi))$ for any segment $\widetilde{\Delta}$. Let $\mathfrak h_1$ and $\mathfrak h_2$ be the highest derivative multisegments for $I_{\Delta}(\pi)$ and $\pi$ respectively. The equality on the level implies that $l_{abs}(\mathfrak h_1)=l_{abs}(\mathfrak h_2)$.

Since $\eta_{\widetilde{\Delta}}$ determines all $\varepsilon_{\Delta}$ as discussed above, we have that all the inequalities are equalities. Indeed, let $p_1(\rho)$ and $p_2(\rho)$ be the number of segments in $\mathfrak h_1$ and $\mathfrak h_2$ containing $\rho$. Then the above inequality forces that $p_1(\rho)\geq p_2(\rho)$ for all $\rho$. But since the sum of $p_i(\rho)$ is equal to $l_{abs}(\mathfrak h_i)$ ($i=1,2$), we must have that all inequalities and equalities. This then inductively recovers that all the inequalities for $\eta$ are equalities.

Now, in particular, $\eta_{\Delta'}(\pi)=\eta_{\Delta'}(I_{\Delta}(\pi))$. The strong commutation then follows from Theorem \ref{thm combinatorial def}. The commutation then follows from Proposition \ref{prop strong commute imply commute}.
\end{proof}

\subsection{Producing commutative triples for sequences}

\begin{corollary} \label{cor integral sequence comm}
Let $\Delta, \Delta_1', \Delta_2'$ be segments and let $\pi \in \mathrm{Irr}$. Suppose $(\Delta, \Delta_1', \pi)$ and $(\Delta, \Delta_2', I_{\Delta_2'}(\pi))$ are strongly RdLi-commutative triples. If $I_{\Delta_2'}\circ I_{\Delta_1'}(\pi)\cong I_{\Delta_1'\cup \Delta_2'}\circ I_{\Delta_1'\cap \Delta_2'}(\pi)$, then the following conditions hold:
\begin{enumerate}
\item $(\Delta, \Delta_1'\cup \Delta_2',\pi)$ and $(\Delta, \Delta_1'\cap \Delta_2', I_{\Delta_1'\cup \Delta_2'}(\pi))$ are strongly RdLi-commutative triples;
\item $(\Delta, \Delta_1'\cap \Delta_2', \pi)$ and $(\Delta, \Delta_1'\cup \Delta_2', I_{\Delta_1'\cap \Delta_2'}(\pi))$ are strongly RdLi-commutative triples;
\item $I_{\Delta_2'}\circ I_{\Delta_1'}\circ D_{\Delta}(\pi)\cong I_{\Delta_1'\cup \Delta_2'}\circ I_{\Delta_1'\cap \Delta_2'}\circ D_{\Delta}(\pi)$.
\end{enumerate}
\end{corollary}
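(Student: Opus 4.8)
The plan is to derive Corollary~\ref{cor integral sequence comm} from Theorem~\ref{thm combinatorial def} by translating everything into the combinatorial $\eta$-language and then using the hypothesis on the commutation of integrals. Throughout write $\Lambda=\Delta_1'\cup\Delta_2'$ and $\lambda=\Delta_1'\cap\Delta_2'$, and recall that by Theorem~\ref{thm combinatorial def} each strong RdLi-commutativity assertion $(\Delta,\Delta'',\varpi)$ is equivalent to the equality $\eta_\Delta(I_{\Delta''}(\varpi))=\eta_\Delta(\varpi)$ of $\eta$-invariants (combinatorial RdLi-commutativity), provided $D_\Delta(\varpi)\neq 0$; the latter nonvanishing will propagate since $D_\Delta(\pi)\neq 0$ is part of the hypotheses and is preserved by the integrals here by Lemma~\ref{lem right multi submulti}.

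First I would record what the hypotheses give combinatorially. From $(\Delta,\Delta_1',\pi)$ strongly RdLi-commutative we get $\eta_\Delta(I_{\Delta_1'}(\pi))=\eta_\Delta(\pi)$, and from $(\Delta,\Delta_2',I_{\Delta_1'}(\pi))$ strongly RdLi-commutative we get $\eta_\Delta(I_{\Delta_2'}\circ I_{\Delta_1'}(\pi))=\eta_\Delta(I_{\Delta_1'}(\pi))$. Chaining these yields
\[
\eta_\Delta\big(I_{\Delta_2'}\circ I_{\Delta_1'}(\pi)\big)=\eta_\Delta(\pi).
\]
Now invoke the hypothesis $I_{\Delta_2'}\circ I_{\Delta_1'}(\pi)\cong I_{\Lambda}\circ I_{\lambda}(\pi)$, so that
\[
\eta_\Delta\big(I_{\Lambda}\circ I_{\lambda}(\pi)\big)=\eta_\Delta(\pi).
\]
On the other hand, Lemma~\ref{lem right multi submulti} gives the monotonicity $\eta_\Delta(\pi)\leq\eta_\Delta(I_\lambda(\pi))\leq \eta_\Delta(I_{\Lambda}\circ I_\lambda(\pi))$ (applying the lemma once for $\mathrm{St}(\lambda)$ and once for $\mathrm{St}(\Lambda)$). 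Since the two ends agree, both inequalities are equalities: $\eta_\Delta(I_\lambda(\pi))=\eta_\Delta(\pi)$ and $\eta_\Delta(I_{\Lambda}(I_\lambda(\pi)))=\eta_\Delta(I_\lambda(\pi))$. By Theorem~\ref{thm combinatorial def} these are exactly the statements that $(\Delta,\lambda,\pi)$ and $(\Delta,\Lambda,I_\lambda(\pi))$ are strongly RdLi-commutative, which is part~(2). For part~(1) apply the same monotonicity argument along the other factorization: $\eta_\Delta(\pi)\leq\eta_\Delta(I_{\Lambda}(\pi))\leq\eta_\Delta(I_\lambda(I_{\Lambda}(\pi)))$, where the last term equals $\eta_\Delta(I_{\Lambda}\circ I_\lambda(\pi))$ using $I_\lambda\circ I_{\Lambda}(\pi)\cong I_{\Lambda}\circ I_\lambda(\pi)$ (this last isomorphism follows because $\lambda$ and $\Lambda$ are unlinked, or more directly from $\mathrm{St}(\lambda)\times\mathrm{St}(\Lambda)$ being irreducible and commutative, cf.\ \eqref{eqn comm unlinked segments}, together with Lemma~\ref{lem uniqueness of products}); again both inequalities collapse to equalities, giving $(\Delta,\Lambda,\pi)$ and $(\Delta,\lambda,I_{\Lambda}(\pi))$ strongly RdLi-commutative.

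For part~(3), use Proposition~\ref{prop strong commute imply commute}: from the strong commutativity of $(\Delta,\Delta_1',\pi)$ and of $(\Delta,\Delta_2',I_{\Delta_1'}(\pi))$ we obtain $D_\Delta\circ I_{\Delta_1'}(\pi)\cong I_{\Delta_1'}\circ D_\Delta(\pi)$ and $D_\Delta\circ I_{\Delta_2'}\circ I_{\Delta_1'}(\pi)\cong I_{\Delta_2'}\circ D_\Delta\circ I_{\Delta_1'}(\pi)\cong I_{\Delta_2'}\circ I_{\Delta_1'}\circ D_\Delta(\pi)$; similarly, from part~(1)/(2) we get $D_\Delta\circ I_{\Lambda}\circ I_\lambda(\pi)\cong I_{\Lambda}\circ I_\lambda\circ D_\Delta(\pi)$. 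Applying $D_\Delta$ to the hypothesis $I_{\Delta_2'}\circ I_{\Delta_1'}(\pi)\cong I_{\Lambda}\circ I_\lambda(\pi)$ and comparing the two computations yields
\[
I_{\Delta_2'}\circ I_{\Delta_1'}\circ D_\Delta(\pi)\cong I_{\Lambda}\circ I_\lambda\circ D_\Delta(\pi),
\]
which is~(3). I expect the main obstacle to be a purely bookkeeping one: making sure that the nonvanishing hypotheses (so that $D_\Delta$ of the relevant representations is nonzero and Theorem~\ref{thm combinatorial def} applies at each stage) are genuinely propagated, and that the two factorizations $I_{\Delta_1'}\circ I_{\Delta_2'}$ versus $I_\lambda\circ I_{\Lambda}$ can be reordered freely; both points are handled by Lemma~\ref{lem right multi submulti}, Lemma~\ref{lem uniqueness of products}, and the commutation of Steinberg representations attached to unlinked segments in \eqref{eqn comm unlinked segments}.
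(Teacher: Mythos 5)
Your proof is correct and follows essentially the same route as the paper: translate strong RdLi-commutativity into the $\eta$-language via Theorem \ref{thm combinatorial def}, use the monotonicity of $\eta_\Delta$ under integrals from Lemma \ref{lem right multi submulti} to squeeze a chain of inequalities into equalities, and finally apply Proposition \ref{prop strong commute imply commute} for part (3). The only cosmetic difference is that you establish part (2) first (using the factorization with $I_{\Delta_1'\cap\Delta_2'}$ applied first) and then part (1) (via $I_{\Delta_1'\cup\Delta_2'}$ first), whereas the paper's proof does this in the opposite order; since the two factorizations commute this is immaterial. You also spell out part (3) in more detail than the paper, which leaves it at a one-line citation.

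Two side remarks worth making explicit. First, the hypothesis as printed reads $(\Delta,\Delta_2',I_{\Delta_2'}(\pi))$, which appears to be a typo for $(\Delta,\Delta_2',I_{\Delta_1'}(\pi))$; your reading of the hypothesis (and of the paper's own proof) is the intended one. Second, the concluding line of the paper's argument for (1) repeats the hypotheses rather than naming the conclusion $(\Delta,\Delta_1'\cup\Delta_2',\pi)$ and $(\Delta,\Delta_1'\cap\Delta_2',I_{\Delta_1'\cup\Delta_2'}(\pi))$; your statement of what the collapsed inequalities deliver is the correct one.
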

\begin{proof}
By using Theorem \ref{thm combinatorial def} twice, we have:
\[  \eta_{\Delta}(\pi)=\eta_{\Delta}(I_{\Delta_1'}(\pi))=\eta_{\Delta}(I_{\Delta_2'}\circ I_{\Delta_1'}(\pi)) .
 \]
On the other hand, by Lemma \ref{lem right multi submulti},
\[  \eta_{\Delta}(\pi) \leq \eta_{\Delta}(I_{\Delta_1'\cup \Delta_2'}(\pi)) \leq \eta_{\Delta}(I_{\Delta_1'\cap \Delta_2'}\circ I_{\Delta_1'\cup \Delta_2'}(\pi)) .
\]
Now, using $I_{\Delta_2'}\circ I_{\Delta_1'}(\pi)\cong I_{\Delta_1'\cap \Delta_2'}\circ I_{\Delta_1'\cup \Delta_2'}(\pi)$, $\eta_{\Delta}(I_{\Delta_2'}\circ I_{\Delta_1'}(\pi)) =\eta_{\Delta}(I_{\Delta_1'\cap \Delta_2'}\circ I_{\Delta_1'\cup \Delta_2'}(\pi))$. Thus, the above inequalities are equalities. Thus, by Theorem \ref{thm combinatorial def}, $(\Delta, \Delta_1', \pi)$ and $(\Delta, \Delta_2', I_{\Delta_1'}(\pi))$ are strongly RdLi-commutative triples. This proves (1).

Since $I_{\Delta_1'\cup \Delta_2'}\circ I_{\Delta_1'\cap \Delta_2'} =I_{\Delta_1'\cap \Delta_2'}\circ I_{\Delta_1'\cup \Delta_2'}$, one can similarly prove (2). Now (3) follows from Proposition \ref{prop strong commute imply commute}.
\end{proof}

\subsection{A consequence on duals}

For a segment $\Delta=[a,b]_{\rho}$, let $\Delta^{\vee}=[-b,-a]_{\rho^{\vee} }$. 

\begin{corollary} \label{cor dual dual triple}
Let $\pi \in \mathrm{Irr}$. Let $\Delta, \Delta'$ be segments. Then $(\Delta, \Delta', \pi)$ is a strongly RdLi-commutative triple if and only if $(\Delta^{\vee}, \Delta'{}^{\vee}, \pi^{\vee})$ is a strongly LdRi-commutative triple. 
\end{corollary}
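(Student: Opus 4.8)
The plan is to deduce Corollary \ref{cor dual dual triple} from the combinatorial characterization of strong commutativity (Theorem \ref{thm combinatorial def}), together with the compatibility of the $\eta$-invariants, derivatives and integrals with the smooth dual functor established in Section \ref{s prelim}. By Theorem \ref{thm combinatorial def}, a triple $(\Delta, \Delta', \pi)$ is strongly RdLi-commutative if and only if $\varepsilon_\Delta(\pi)\neq 0$ and $\eta_\Delta(I^L_{\Delta'}(\pi))=\eta_\Delta(\pi)$. Likewise, by the LdRi-version of Theorem \ref{thm combinatorial def} (equivalently, Remark \ref{rmk on dual commut}(4) applied appropriately), $(\Delta^\vee, \Delta'{}^\vee, \pi^\vee)$ is strongly LdRi-commutative if and only if $\varepsilon^L_{\Delta^\vee}(\pi^\vee)\neq 0$ and $\eta^L_{\Delta^\vee}(I^R_{\Delta'{}^\vee}(\pi^\vee))=\eta^L_{\Delta^\vee}(\pi^\vee)$. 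So it suffices to translate each ingredient of the first statement into the second via $\vee$.

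The key steps, in order, are as follows. First I would record the basic duality identities: $\mathrm{St}(\Delta)^\vee\cong\mathrm{St}(\Delta^\vee)$ (standard, since $\rho\mapsto\rho^\vee$ sends segments to segments reversing the order), $I^L_{\sigma}(\pi)^\vee\cong I^R_{\sigma^\vee}(\pi^\vee)$ (Lemma \ref{lem dual on integrals}), and $D^L_{\sigma}(\pi)^\vee\cong D^R_{\sigma^\vee}(\pi^\vee)$ together with its right-left analogue $D^R_\sigma(\pi)^\vee\cong D^L_{\sigma^\vee}(\pi^\vee)$ (Corollary \ref{cor dual on derivatives} and the symmetric statement). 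Second, iterating the derivative identity gives $(D^R_\Delta)^k(\pi)^\vee\cong (D^L_{\Delta^\vee})^k(\pi^\vee)$ for every $k\geq 0$, whence $\varepsilon^R_\Delta(\pi)=\varepsilon^L_{\Delta^\vee}(\pi^\vee)$; applying this to each sub-segment $[c,b]_\rho$ of $\Delta=[a,b]_\rho$, which dualizes to $[-b,-c]_{\rho^\vee}$, a sub-segment of $\Delta^\vee=[-b,-a]_{\rho^\vee}$ with the same right endpoint being the left endpoint of $\Delta^\vee$, one gets the componentwise equality of the tuples $\eta^R_\Delta(\pi)$ and $\eta^L_{\Delta^\vee}(\pi^\vee)$ (one should check the ordering conventions in the definitions of $\eta^R$ and $\eta^L$ match up under this reversal, which they do, since $\eta^L$ is indexed from the full segment down to the singleton at the left endpoint). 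Third, combining these: $\varepsilon_\Delta(\pi)\neq 0\iff\varepsilon^L_{\Delta^\vee}(\pi^\vee)\neq 0$, and
\[
\eta^L_{\Delta^\vee}(I^R_{\Delta'{}^\vee}(\pi^\vee))=\eta^L_{\Delta^\vee}(I^L_{\Delta'}(\pi)^\vee)=\eta^R_\Delta(I^L_{\Delta'}(\pi)),
\]
while $\eta^L_{\Delta^\vee}(\pi^\vee)=\eta^R_\Delta(\pi)$; hence $\eta^R_\Delta(I^L_{\Delta'}(\pi))=\eta^R_\Delta(\pi)$ if and only if $\eta^L_{\Delta^\vee}(I^R_{\Delta'{}^\vee}(\pi^\vee))=\eta^L_{\Delta^\vee}(\pi^\vee)$. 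By Theorem \ref{thm combinatorial def} in both its RdLi and LdRi forms, this is exactly the claimed equivalence.

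The main obstacle I anticipate is bookkeeping rather than conceptual: one has to be careful that the smooth dual functor interacts with the geometric lemma and Jacquet modules in the way the $\eta$-invariants need — in particular that $\varepsilon^L_\Delta$ is genuinely computed by iterated $D^L_\Delta$ and that Corollary \ref{cor dual on derivatives} really does let one pass $\vee$ through arbitrary compositions of derivatives (the hypothesis $D^L_\sigma(\pi)\neq 0$ must hold at each stage, which is automatic along the chain defining $\varepsilon^L$). A secondary point requiring care is that Theorem \ref{thm combinatorial def} is stated for the RdLi-setup, so I must either invoke its LdRi-analogue explicitly or reduce to it via the Gelfand-Kazhdan involution $\theta$ using Lemmas \ref{lem derivative integral under theta} and \ref{lem switch by theta}; since the paper already asserts (in Section \ref{ss left right switch} and Remark \ref{rmk on dual commut}(4)) that such left-right analogues hold, I would simply cite the LdRi-version of Theorem \ref{thm combinatorial def}. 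Once those conventions are pinned down the argument is a short chain of substitutions.
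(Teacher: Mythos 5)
Your proposal is correct and follows essentially the same route as the paper's proof: dualize via Corollary \ref{cor dual on derivatives} and Lemma \ref{lem dual on integrals} to get $\eta_{\Delta}(\pi)=\eta^L_{\Delta^{\vee}}(\pi^{\vee})$ and $\eta_{\Delta}(I^L_{\Delta'}(\pi))=\eta^L_{\Delta^{\vee}}(I^R_{\Delta'{}^{\vee}}(\pi^{\vee}))$, then invoke Theorem \ref{thm combinatorial def} in both its RdLi and LdRi forms. Your added care about the indexing conventions for $\eta^R$ versus $\eta^L$ under $\Delta\mapsto\Delta^\vee$ is a sensible sanity check that the paper elides but that does indeed work out.
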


\begin{proof}
We only prove the only if direction and a proof for the if direction is similar. By Corollary \ref{cor dual on derivatives}, $\eta_{\Delta}(\pi)=\eta^L_{\Delta^{\vee}}(\pi^{\vee})$ and $\eta_{\Delta}(I_{\Delta'}(\pi))=\eta_{\Delta^{\vee}}^L(I_{\Delta'}(\pi)^{\vee})=\eta_{\Delta^{\vee}}^L(I^R_{\Delta'{}^{\vee}}(\pi^{\vee}))$. By Theorem \ref{thm combinatorial def}, $\eta_{\Delta}(\pi)=\eta_{\Delta}(I_{\Delta'}(\pi))$. Combining equations, we have $\eta^L_{\Delta^{\vee}}(\pi^{\vee})=\eta^L_{\Delta}(I^R_{\Delta'{}^{\vee}}(\pi^{\vee}))$. Thus, by Theorem \ref{thm combinatorial def}, $(\Delta^{\vee}, \Delta'{}^{\vee}, \pi^{\vee})$ is a strongly LdRi-commutative triple.
\end{proof}

\section{Commutative triples from branching laws} \label{s branching laws}

\subsection{Generalized Gan-Gross-Prasad relevant pairs}

We shall assume $D=F$ from now on. We simply write $\nu(g)=|\mathrm{det}(g)|$. Gan-Gross-Prasad \cite{GGP20} introduces a notion of relevant pairs for Arthur type representations, governing their branching law for general linear groups \cite{Ch22}. Such notion can be reformulated in terms of the Bernstein-Zelevinsky derivatives, see \cite{Ch22}. This hints some connections with derivatives (also see \cite{Ch22+}). 

We first extend the notion of strongly commutative triples from segments to multisegments. For two segments $\Delta$ and $\Delta'$, we write $\Delta>\Delta'$ if $\Delta$ and $\Delta'$ are linked and $a(\Delta) \cong \nu^c\cdot a(\Delta')$ for some $c>0$.

\begin{definition}
Let $\mathfrak m$ be a multisegment and let $\pi \in \mathrm{Irr}$. We impose an ordering on $\mathfrak m=\left\{ \Delta_1, \ldots, \Delta_r \right\}$ such that for any $i<j$, $\Delta_i \not> \Delta_j$. Define 
\[  D_{\mathfrak m}^R(\pi) := D^R_{\Delta_r}\circ \ldots \circ D^R_{\Delta_1}(\pi) ,
\]
\[  I_{\mathfrak m}^L(\pi) := I^L_{\Delta_r}\circ \ldots \circ I^L_{\Delta_1}(\pi) .
\]
It is shown in \cite{Ch22+} that the $D_{\mathfrak m}^R(\pi)$ is independent of a choice of such ordering. Using $D^L_{\Delta}\circ I^L_{\Delta}$ is the identity map, we also have that $I^L_{\mathfrak m}(\pi)$ is independent of a choice of an ordering.
\end{definition}

To switch left and right versions suitably, we also need to change the ordering:

\begin{definition} \label{def ordering derivative}
Let $\mathfrak m, \mathfrak n$ be multisegments. We again impose an ordering on $\mathfrak m=\left\{ \Delta_1, \ldots, \Delta_r\right\}$ (resp. $\mathfrak n=\left\{ \Delta_1', \ldots, \Delta_s'\right\}$) given by (c.f. \cite[Theorem 6.1]{Ze80}): for any $i<j$, 
\[  \Delta_i \not> \Delta_j \quad (\mbox{resp.} \Delta_i' \not> \Delta_j') . 
\]
Let $\mathfrak m_i=\left\{ \Delta_1, \ldots, \Delta_i \right\}$ and let $\mathfrak n_j=\left\{ \Delta_1', \ldots, \Delta_j' \right\}$. Let $\mathfrak m, \mathfrak n \in \mathrm{Mult}$. Let $\pi \in \mathrm{Irr}$. We say that $(\mathfrak m, \mathfrak n, \pi)$ is a {\it strongly RdLi-commutative triple} if for any $1\leq i\leq r$ and $1\leq j \leq s$, $(\mathrm{St}(\Delta_i), \mathrm{St}(\Delta_j), I_{\mathfrak n_{j-1}}\circ D_{\mathfrak m_{i-1}}(\pi))$ is strongly RdLi-commutative in the sense of Definition \ref{def strong comm}. 
\end{definition}

We remark that when $\mathfrak n=\emptyset$, the condition of strong commutation is automatic. We can now define a notion of generalized relevant pairs.

\begin{definition} \label{def ggp relevant}
For $\pi \in \mathrm{Irr}$ and $\pi' \in \mathrm{Irr}$, we say that $(\pi, \pi')$ is {\it (generalized) relevant} if there exist multisegments $\mathfrak m, \mathfrak n$ such that $(\mathfrak m, \mathfrak n, \nu^{1/2} \pi)$ is a strongly RdLi-commutative triple and 
\[  D^R_{\mathfrak m}(\nu^{1/2}\cdot \pi) \cong D^L_{\mathfrak n}(\pi') .\]
\end{definition}

Such notion is expected to govern the quotient branching law:

\begin{conjecture}
Let $\pi \in \mathrm{Irr}(G_{n+1})$ and let $\pi' \in \mathrm{Irr}(G_n)$. Then $\mathrm{Hom}_{G_n}(\pi, \pi')\neq 0$ if and only if $(\pi, \pi')$ is a relevant pair.
\end{conjecture}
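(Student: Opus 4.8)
The plan is to prove both implications by induction on $n$, feeding $\pi$ and $\pi'$ into the Bernstein--Zelevinsky machinery and steering the bookkeeping with the two dualities established here: the duality on strongly commutative triples (Theorem~\ref{thm dual strong commutation}) and the dual restriction formula $\mathrm{Hom}_{G_n}(\pi,\pi')\cong\mathrm{Hom}_{G_{n+1}}(\tau\times(\pi')^{\vee},\pi^{\vee})$ of \cite[Proposition~4.1]{Ch22}. The combinatorial criterion for strong commutativity (Theorem~\ref{thm combinatorial def}, Theorem~\ref{thm combin and strong}) will be the tool that converts representation-theoretic assertions about layers of the Bernstein--Zelevinsky filtration into checkable $\eta$-invariant identities.

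For the ``only if'' direction, suppose $\mathrm{Hom}_{G_n}(\pi,\pi')\neq 0$. First I would use the filtration of $\pi$ restricted to the mirabolic subgroup: a nonzero $G_n$-map forces $\pi'$ to appear as a subquotient of a layer built from Bernstein--Zelevinsky derivatives, and iterating together with the highest-derivative machinery of \cite{Ch22+,Ch22+b,Ch22+e} produces multisegments with $I^L_{\mathfrak n}\circ D^R_{\mathfrak m}(\nu^{1/2}\pi)\cong\pi'$, where $D^R,I^L$ genuinely model the right derivatives/integrals of the mirabolic theory. The \emph{left} Bernstein--Zelevinsky theory of \cite{CS21,Ch21}, applied symmetrically, yields $I^R_{\mathfrak m'}\circ D^L_{\mathfrak n'}(\nu^{-1/2}\pi)\cong\pi'$. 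Equating these two descriptions of $\pi'$, as sketched in Section~\ref{ss commutate from branching law}, and invoking the combinatorics of minimal multisegments from \cite{Ch22+,Ch22+a} to arrange $\mathfrak m$ to be $\Delta$-saturated segment by segment, one checks the $\eta$-invariant condition of Definition~\ref{def combinatorial comm triple} at each stage of the ordering of Definition~\ref{def ordering derivative}; by Theorem~\ref{thm combinatorial def} this says exactly that $(\mathfrak m,\mathfrak n,\nu^{1/2}\pi)$ is strongly RdLi-commutative, and the identification $D^R_{\mathfrak m}(\nu^{1/2}\pi)\cong D^L_{\mathfrak n}(\pi')$ drops out of the same comparison. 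Hence $(\pi,\pi')$ is relevant in the sense of Definition~\ref{def ggp relevant}.

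For the ``if'' direction, suppose $(\pi,\pi')$ is relevant with witnessing data $(\mathfrak m,\mathfrak n)$ and $\omega:=D^R_{\mathfrak m}(\nu^{1/2}\pi)\cong D^L_{\mathfrak n}(\pi')$. Here I would run the argument in reverse: the strong-commutativity hypothesis lets one commute the $\mathfrak n$-integral past the $\mathfrak m$-derivative via iterated application of Proposition~\ref{prop strong commute imply commute}, so that $\pi'\cong I^L_{\mathfrak n}\circ D^R_{\mathfrak m}(\nu^{1/2}\pi)$ and both $\nu^{1/2}\pi$ and $\pi'$ are recovered from $\omega$ by compatible chains of derivatives and integrals. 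Theorem~\ref{thm dual strong commutation} together with the dual restriction formula then lets one peel off one segment at a time, passing from $(\pi,\pi')$ over $G_{n+1}\supset G_n$ to a relevant pair over a pair of smaller groups while preserving the commutativity data; the transitivity statements (Proposition~\ref{prop transtivity pre commute}, Proposition~\ref{prop commutative triple induct}) are precisely what guarantees that the residual data at each step is again a bona fide strongly commutative triple. One descends to a base case (cuspidal representations, or low rank) where nonvanishing of $\mathrm{Hom}_{G_n}(\pi,\pi')$ is known directly from \cite{GGP20,Ch22}, and lifts the nonzero functional back up.

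The main obstacle, and the reason this must be deferred to \cite{Ch22+c}, is the precise dictionary between the honest Bernstein--Zelevinsky derivatives and the operators $D^R_\sigma,D^L_\sigma$ of this paper: one must bound how far $D^R_{\mathfrak m}(\nu^{1/2}\pi)$ deviates from the true right derivative and show this deviation vanishes for the specific multisegments output by the branching argument, which is exactly where the theory of minimal multisegments and the structural results of \cite{Ch22+,Ch22+a} are needed. A secondary difficulty is purely organizational: the orderings in Definition~\ref{def ordering derivative} must be chosen so that \emph{all} intermediate triples $(\mathrm{St}(\Delta_i),\mathrm{St}(\Delta'_j),I_{\mathfrak n_{j-1}}\circ D_{\mathfrak m_{i-1}}(\pi))$ are simultaneously strongly commutative, and it is the $\eta$-invariant computations of Section~\ref{s pre implies strong in sq} and the combinatorial criterion of Definition~\ref{def combinatorial comm triple} that make this simultaneous bookkeeping tractable.
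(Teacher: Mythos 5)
This statement is a \emph{conjecture} in the paper, and the paper explicitly defers its proof to the sequel \cite{Ch22+c}: immediately below the conjecture the text reads ``We will prove the conjecture in \cite{Ch22+c}.'' There is therefore no proof in this paper to compare your proposal against, and your write-up itself acknowledges this, repeatedly hedging with ``I would,'' ``the plan is,'' and the closing admission that the crucial dictionary between genuine Bernstein--Zelevinsky derivatives and the operators $D^R_\sigma$, $D^L_\sigma$ ``must be deferred to \cite{Ch22+c}.'' So what you have produced is a roadmap, not a proof, and it should not be presented as a proof attempt for a result that is established in this paper.

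As a roadmap it is broadly consistent with what the paper hints at in Section~\ref{ss commutate from branching law} (both BZ-theoretic descriptions of $\pi'$, leading to the commutation constraint) and with the role played by Theorem~\ref{thm dual strong commutation}, Corollary~\ref{cor dual on relevant pair}, and the combinatorial criterion of Theorem~\ref{thm combinatorial def}. But several steps are asserted rather than argued, and these are precisely the steps where the whole difficulty lives. In the ``only if'' direction you claim that comparing the left and right BZ descriptions of $\pi'$ ``drops out'' the $\eta$-invariant identities segment by segment along the ordering of Definition~\ref{def ordering derivative}; that is exactly what has to be proved, and it is not a formal consequence of the two isomorphisms. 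In the ``if'' direction, ``lifts the nonzero functional back up'' is the entire content of the branching argument, and nothing in this paper supplies that lift: the tools here (Propositions~\ref{prop strong commute imply commute}, \ref{prop transtivity pre commute}, \ref{prop commutative triple induct}, Theorem~\ref{thm dual strong commutation}) manipulate commutative triples, not $\mathrm{Hom}$-spaces over $G_n\subset G_{n+1}$. Finally, the scope of Theorem~\ref{thm combinatorial def} is segments, so the passage to the multisegment triples of Definitions~\ref{def ordering derivative}--\ref{def ggp relevant} requires the additional control over minimal multisegments from \cite{Ch22+,Ch22+a} that you flag but do not supply. In short: plausible plan, correct identification of the bottlenecks, but no proof here and none in the paper either.
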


We will prove the conjecture in \cite{Ch22+c}.

\subsection{Duality}

For a multisegment $\mathfrak m=\left\{\Delta_1, \ldots, \Delta_r\right\}$, define $\mathfrak m^{\vee}=\left\{ \Delta_1^{\vee}, \ldots, \Delta_r^{\vee}\right\}$.

\begin{corollary} \label{cor dual on relevant pair}
Let $\pi, \pi' \in \mathrm{Irr}$. Then $(\pi, \pi')$ is relevant if and only if $(\pi'{}^{\vee}, \pi^{\vee})$ is relevant. 
\end{corollary}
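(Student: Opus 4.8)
The plan is to deduce Corollary~\ref{cor dual on relevant pair} from the segment-level duality already available, namely Corollary~\ref{cor dual dual triple}, together with the behaviour of derivatives and integrals under the contragredient functor recorded in Lemma~\ref{lem dual on integrals} and Corollary~\ref{cor dual on derivatives}. By symmetry (the roles of $(\pi,\pi')$ and $(\pi'{}^\vee,\pi^\vee)$ are interchanged by applying $\vee$ twice, using $\pi^{\vee\vee}\cong\pi$), it suffices to prove one direction, say that relevance of $(\pi,\pi')$ implies relevance of $(\pi'{}^\vee,\pi^\vee)$.

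So suppose $(\pi,\pi')$ is relevant, witnessed by multisegments $\mathfrak m,\mathfrak n$ with $(\mathfrak m,\mathfrak n,\nu^{1/2}\pi)$ strongly RdLi-commutative and $D^R_{\mathfrak m}(\nu^{1/2}\pi)\cong D^L_{\mathfrak n}(\pi')$. The natural candidate witnesses for $(\pi'{}^\vee,\pi^\vee)$ are $\mathfrak n^\vee$ and $\mathfrak m^\vee$; note $\nu^{1/2}\pi'{}^\vee$ should be compared with $(\nu^{-1/2}\pi')^\vee$, and since $D^R_{\mathfrak m}(\nu^{1/2}\pi)\cong D^L_{\mathfrak n}(\pi')$ one can first absorb the $\nu^{1/2}$-twist into the segments: twisting every segment of $\mathfrak m$ and $\mathfrak n$ by $\nu^{1/2}$ (which only shifts the cuspidal data and preserves all the combinatorics, strong commutativity included, since the operators $D_\Delta,I_\Delta$ are equivariant for character twists), we may assume the relevance data compares $\pi$ directly with $\pi'$, i.e. $D^R_{\widetilde{\mathfrak m}}(\pi)\cong D^L_{\widetilde{\mathfrak n}}(\pi')$ for the twisted multisegments. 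Applying $\vee$ to this isomorphism and invoking the (multisegment version of) Corollary~\ref{cor dual on derivatives} — which should give $D^R_{\mathfrak m}(\tau)^\vee\cong D^L_{\mathfrak m^\vee}(\tau^\vee)$ and dually $D^L_{\mathfrak n}(\tau)^\vee\cong D^R_{\mathfrak n^\vee}(\tau^\vee)$, proved by the same iterated argument as in the single-segment case together with the ordering switch in Definition~\ref{def ordering derivative} and commutativity of the relevant derivatives — yields $D^L_{\mathfrak m^\vee}(\pi^\vee)\cong D^R_{\mathfrak n^\vee}(\pi'{}^\vee)$, which after re-introducing the $\nu^{-1/2}$/$\nu^{1/2}$ twists is exactly the required identity $D^R_{\mathfrak n^\vee}(\nu^{1/2}\pi'{}^\vee)\cong D^L_{\mathfrak m^\vee}(\pi^\vee)$.

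It then remains to check that $(\mathfrak n^\vee,\mathfrak m^\vee,\nu^{1/2}\pi'{}^\vee)$ is a strongly RdLi-commutative triple in the sense of Definition~\ref{def ordering derivative}. Here I would unwind that definition: it asks, for each pair of segments $\Delta_j'\in\mathfrak n$, $\Delta_i\in\mathfrak m$, that a certain triple $(\mathrm{St}(\Delta_j'{}^\vee),\mathrm{St}(\Delta_i^\vee), I_{\cdot}\circ D_{\cdot}(\nu^{1/2}\pi'{}^\vee))$ be strongly RdLi-commutative. By Corollary~\ref{cor dual dual triple}, $(\Delta,\Delta',\tau)$ is strongly RdLi-commutative iff $(\Delta^\vee,\Delta'{}^\vee,\tau^\vee)$ is strongly LdRi-commutative; and by Corollary~\ref{cor dual strong commutative triple} (the square-integrable duality), strongly LdRi-commutative triples are exactly the $I\circ D$-transforms of strongly RdLi-commutative ones, with the roles of the two segments swapped. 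So the hypothesis that $(\mathfrak m,\mathfrak n,\nu^{1/2}\pi)$ is strongly RdLi-commutative — i.e. strong RdLi-commutativity of all the intermediate triples $(\mathrm{St}(\Delta_i),\mathrm{St}(\Delta_j'),I_{\mathfrak n_{j-1}}\circ D_{\mathfrak m_{i-1}}(\nu^{1/2}\pi))$ — should transform, segment by segment under $\vee$ and with the ordering reversed (note that $\vee$ reverses the linkedness ordering $>$ used to order $\mathfrak m,\mathfrak n$, since $\Delta>\Delta'$ iff $\Delta'{}^\vee>\Delta^\vee$), precisely into the conditions defining strong RdLi-commutativity of $(\mathfrak n^\vee,\mathfrak m^\vee,\nu^{1/2}\pi'{}^\vee)$. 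The main obstacle I anticipate is exactly this bookkeeping: one must verify that the intermediate representations $I_{\mathfrak m^\vee_{\bullet}}\circ D_{\mathfrak n^\vee_{\bullet}}(\nu^{1/2}\pi'{}^\vee)$ that appear in Definition~\ref{def ordering derivative} for the dual triple match up, under $\vee$, with the intermediate representations $I_{\mathfrak n_\bullet}\circ D_{\mathfrak m_\bullet}(\nu^{1/2}\pi)$ of the original triple — this requires repeatedly using $D^R_{\mathfrak m}\circ I^L_{\mathfrak n}$-type commutations (available because each elementary step is a strongly commutative triple, via Proposition~\ref{prop strong commute imply commute}) to reorder the operators, and tracking the twist $\nu^{1/2}$ versus $\nu^{-1/2}$ carefully. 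Once the intermediate representations are matched and the ordering is seen to be compatible, Corollary~\ref{cor dual dual triple} applied pairwise finishes the verification, and the two directions together give the corollary.
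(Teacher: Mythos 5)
Your proposal follows essentially the same route as the paper's proof: apply $\vee$ via Lemma \ref{lem dual on integrals} and Corollary \ref{cor dual on derivatives} to transport the isomorphism $D^R_{\mathfrak m}(\nu^{1/2}\pi)\cong D^L_{\mathfrak n}(\pi')$, then verify strong RdLi-commutativity of $(\nu^{1/2}\mathfrak n^\vee,\nu^{1/2}\mathfrak m^\vee,\nu^{1/2}\pi'{}^\vee)$ segment by segment via Corollary \ref{cor dual dual triple} (RdLi $\leftrightarrow$ LdRi under $\vee$) and Corollary \ref{cor dual strong commutative triple} (LdRi $\leftrightarrow$ RdLi under the $I\circ D$ transform with segments swapped), matching the intermediate representations by repeated use of Proposition \ref{prop strong commute imply commute}. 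The bookkeeping you flag as the main obstacle is carried out explicitly in the paper's proof and does go through exactly as you describe.
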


\begin{proof}
Then there exist multisegments $\mathfrak m$ and $\mathfrak n$ such that
\[   D_{\mathfrak m}^R(\nu^{1/2}\pi) \cong D^L_{\mathfrak n}(\pi') 
\]
and $(\mathfrak m, \mathfrak n, \nu^{1/2}\pi)$ is a strongly RdLi-commutative triple. Then, by Corollary \ref{cor dual on derivatives}, 
\[  D_{\mathfrak m^{\vee}}^L(\nu^{-1/2}\pi^{\vee}) \cong D^R_{\mathfrak n^{\vee}}(\pi'{}^{\vee}) .
\]
Hence, $D_{\nu^{1/2}\mathfrak m^{\vee}}^L(\pi^{\vee}) \cong D^R_{\nu^{1/2}\mathfrak n^{\vee}}(\nu^{1/2}\pi'{}^{\vee})$.

It remains to check that $(\nu^{1/2}\mathfrak n^{\vee}, \nu^{1/2}\mathfrak m^{\vee}, \nu^{1/2}\cdot \pi'{}^{\vee})$ is a strongly RdLi-commutative triple. We write $\mathfrak m=\left\{ \Delta_1, \ldots, \Delta_r\right\} $ and $\mathfrak n=\left\{ \Delta_1', \ldots, \Delta_s'\right\}$ with the ordering in Definition \ref{def ordering derivative}. Let
\[  \mathfrak m_j=\left\{ \Delta_1, \ldots, \Delta_j \right\}, \quad \mathfrak n_j=\left\{ \Delta_1', \ldots, \Delta_j' \right\} 
\]
and 
\[  \bar{\mathfrak m}_j=\left\{ \Delta_{j+1}, \ldots, \Delta_r \right\}, \quad \bar{\mathfrak n}_j=\left\{ \Delta_{j+1}', \ldots, \Delta_s' \right\} .\]
By definition, we have that $(\Delta_i, \Delta_j', I_{\mathfrak n_{j-1}}\circ D_{\mathfrak m_{i-1}}(\pi))$ is a strongly RdLi-commutative triple. Thus, by the duality in Corollary \ref{cor dual dual triple} with Lemma \ref{lem dual on integrals} and Corollary \ref{cor dual on derivatives}, $(\Delta_i^{\vee}, \Delta_j'{}^{\vee}, I^R_{\mathfrak n_{j-1}^{\vee}}\circ D^L_{\mathfrak m_{i-1}^{\vee}}(\nu^{-1/2}\pi^{\vee}))$ is a strongly LdRi-commutative triple. By Corollary \ref{cor dual strong commutative triple}, $(\Delta_j'{}^{\vee},  \Delta_i^{\vee}, I^R_{\mathfrak n_{j}^{\vee}}\circ D^L_{\mathfrak m_{i}^{\vee}}(\nu^{-1/2}\pi^{\vee}))$ is a strongly RdLi-commutative triple.

By multiple uses of Definition \ref{def ordering derivative} and Proposition \ref{prop strong commute imply commute}, we have:
\[     I^L_{\bar{\mathfrak n}_{j}}\circ D^R_{\bar{\mathfrak m}_{i}}\circ I^L_{\mathfrak n_{j}} \circ D^R_{\mathfrak m_{i}}(\nu^{1/2}\pi) \cong I^L_{\mathfrak n}\circ D^R_{\mathfrak m}(\pi) \cong \pi'
\]
and so 
\[    I^L_{\mathfrak n_{j}}\circ D^R_{\mathfrak m_i}(\nu^{1/2}\pi) \cong I^R_{\bar{\mathfrak m}_{i}} \circ D^L_{\bar{\mathfrak n}_{j}}(\pi') .
\]
Now, taking the dual and using  Lemma \ref{lem dual on integrals} again, we have:
\[   I^R_{\mathfrak n_{j}^{\vee}}\circ D^L_{\mathfrak m_{i}^{\vee}}(\nu^{-1/2}\pi^{\vee}) \cong I^L_{\bar{\mathfrak m}_{i}^{\vee}}\circ D^R_{\bar{\mathfrak n}_{j}^{\vee}}(\pi'^{\vee}) .
\]

Thus, combining above, we have that $(\Delta_j'{}^{\vee}, \Delta_i^{\vee},  I^L_{\bar{\mathfrak m}_{i}^{\vee}}\circ D^R_{\bar{\mathfrak n}_{j}^{\vee}}(\pi'^{\vee}))$ is a strongly RdLi-commutative triple. Thus $(\mathfrak n^{\vee}, \mathfrak m^{\vee}, \pi')$ is a strongly RdLi-commutative triple. Now, we obtain that $(\nu^{1/2}\mathfrak n^{\vee}, \nu^{1/2}\mathfrak m^{\vee}, \nu^{1/2}\cdot \pi'{}^{\vee})$ is a strongly RdLi-commutative triple by imposing a shift of $\nu^{1/2}$. 
\end{proof}

\subsection{Examples}

\begin{example}
Let $\pi$ and $\pi'$ be trivial representations of $G_{n+1}$ and $G_n$ respectively. It is clear that $\mathrm{Hom}_{G_n}(\pi, \pi')\neq 0$. In such case,
\[  \pi=\mathrm{St}(\left\{[-\frac{n}{2}], \ldots, [\frac{n}{2}]\right\}), \quad \pi'=\mathrm{St}(\left\{[-\frac{(n-1)}{2}],\ldots, [\frac{(n-1)}{2}]\right\}) .
\]
We have that $(\nu^{1/2}\pi)_{N_1}=\pi' \boxtimes \nu^{(n+1)/2}$. Hence, $D_{[(n+1)/2]}(\nu^{1/2}\pi) \cong \pi'$ and so $(\left\{[1/2]\right\}, \emptyset, \pi)$ defines a strongly RdLi-commutative triple. 
\end{example}

\begin{example}
Let $\Delta_1, \Delta_2$ be segments such that $\mathrm{St}(\Delta_1)$ and $\mathrm{St}(\Delta_2)$ are representations of $G_{n+1}$ and $G_n$ respectively. It is a well-known result from the Rankin-Selberg theory of Jacquet--Pietaskii-Shapiro--Shalika that $\mathrm{Hom}_{G_n}(\mathrm{St}(\Delta), \mathrm{St}(\Delta'))\neq 0$. Write $\Delta=[a,b]_{\rho}$ and $\Delta'=[a',b']_{\rho'}$. We now produce strong commutations according to the following cases:
\begin{enumerate}
\item Case 1: $\rho \not\cong \nu^c\rho'$ for some integer $c$. In such case, by Example \ref{ex examples of st comm triples}(1) and Theorem \ref{thm pre imply strong}, $(\nu^{1/2}\Delta, \Delta', \pi)$ is a strongly RdLi-commutative triple and we have that $I_{\Delta'}\circ D_{\nu^{1/2}\Delta}(\nu^{1/2}\mathrm{St}(\Delta))\cong \mathrm{St}(\Delta')$.
\item Case 2: $\rho \cong \nu^c\rho'$ for some integer $c$. By rewriting the segments if necessary, we shall assume $\rho=\rho'$ from now on. 
\begin{itemize}
\item Case 2(a): $a'<a$. By Example \ref{ex examples of st comm triples}(2) and Theorem \ref{thm pre imply strong}, $(\nu^{1/2}\Delta, \Delta', \nu^{1/2}\pi)$ defines the desired strongly RdLi-commutative triple.
\item Case 2(b): $b'<b$. By Example \ref{ex examples of st comm triples}(3) and Theorem \ref{thm pre imply strong}, $(\nu^{1/2}\Delta, \Delta', \nu^{1/2}\pi)$ defines the desired strongly RdLi-commutative triple.
\item Case 2(c): $(\nu^{1/2}\Delta) \cap \Delta' =\emptyset$. By Example \ref{ex examples of st comm triples}(1) and Theorem \ref{thm pre imply strong}, $(\nu^{1/2}\Delta, \Delta', \nu^{1/2}\pi)$ defines the desired strongly RdLi-commutative triple. 
\item Case 2(d): $a\leq a' \leq b \leq b'$. Let $\widetilde{\Delta}= (\nu^{1/2}\Delta) \cap \Delta'$. Then $((\nu^{1/2}\Delta)\setminus \widetilde{\Delta}, \Delta'\setminus \widetilde{\Delta}, \nu^{1/2}\pi)$ defines a strongly RdLi-commutative triple. 
\end{itemize}
\end{enumerate}
\end{example}

\begin{example}
Let $\pi=\mathrm{St}(\left\{ [-n/2,c-1], [c,n/2] \right\})$ and let $\pi'=\mathrm{St}([-(n-1)/2, (n-1)/2])$ for some $-n/2\leq c \leq n/2$. Let 
\[  \Delta=[c+1/2, (n+1)/2], \quad \Delta'=[c+1/2, (n-1)/2] .
\]
Note that Example \ref{ex examples of st comm triples}(3) and Theorem \ref{thm pre imply strong}, $(\Delta, \Delta', \pi)$ is a strongly RdLi-commutative triple. This agrees with the expectation from \cite{Qa23+}. 

\end{example}

\section{Appendix: Compositions of supporting orbits}

We use the notations in Section \ref{ss supporting orbit}. In particular, we have $P, Q$ to be standard parabolic subgroups in $G$. Let $R \subset Q$ be a standard parabolic subgroup of $G$ and let $R'=R\cap M_Q$.

 We enumerate the elements in $W_{P,Q}(G)$ as $w_1, \ldots, w_r$ such that $i<j$ implies $w_i \not\geq w_j$. For each $i$, we also enumerate the elements in $W_{P^{w_i},R'}(M_Q)$ as $u_{i,1}, \ldots, u_{i,j_i}$. Then we obtain an enumeration on the elements in $W_{P,R}(G)$ as:
\[   w_1u_{1,1},\ldots, w_1u_{1,j_1}, w_2u_{2,1},\ldots, w_2u_{2,j_2}, \ldots, w_ru_{r,1}, \ldots, w_ru_{r,j_r} .
\]
If we relabel as $x_1, x_2, \ldots$, then the enumeration satisfies that $k<l$ implies that $x_k \not\geq x_l$.

\begin{proposition} \label{prop composition supp orbit}
We use the notations above. Let $\lambda$ be a submodule in $(\mathrm{Ind}_P^G\pi)_{N_Q}$ with the supporting orbit $PwQ$. Let $\lambda'$ be a simple submodule of $\lambda_{N_{R'}}$. Suppose the embedding 
\[\lambda' \hookrightarrow \lambda_{N_{R'}} \hookrightarrow \mathrm{Ind}_{P^{w}\cap M_Q}^{M_Q} (\pi_{M_P\cap N_Q^{w}})^{w} \]
has the supporting orbit $P^{w}w'R'$. Then the embedding
\[  \lambda' \hookrightarrow (\mathrm{Ind}_P^G\pi)_{N_R}
\]
has the supporting orbit $Pww'R$. 
\end{proposition}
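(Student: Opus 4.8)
# Proof Proposal for Proposition (Composition of Supporting Orbits)

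The plan is to work directly with the two-step filtration of $(\mathrm{Ind}_P^G\pi)_{N_R}$ obtained by composing the geometric lemma for $(\mathrm{Ind}_P^G\pi)_{N_Q}$ with the geometric lemma for the further Jacquet functor $(-)_{N_{R'}}$ on $M_Q$, and to match this with the single-step geometric lemma for $(\mathrm{Ind}_P^G\pi)_{N_R}$ using the enumeration $x_1, x_2, \ldots$ of $W_{P,R}(G)$ described just before the statement. First I would set up notation: write $0 \subset \mathcal J_{r} \subset \cdots \subset \mathcal J_1 = (\mathrm{Ind}_P^G\pi)_{N_Q}$ for the filtration attached to $w_1, \ldots, w_r$, so that the hypothesis says $\lambda \subseteq \mathcal J_i$ and $\lambda \cap \mathcal J_{i+1} = 0$, where $w = w_i$; and $\mathcal J_i/\mathcal J_{i+1} \cong \mathrm{Ind}_{P^{w}\cap M_Q}^{M_Q}(\pi_{M_P\cap N_Q^w})^w =: \Pi_i$. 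Applying the Jacquet functor $(-)_{N_{R'}}$, which is exact, to this filtration yields a filtration of $(\mathrm{Ind}_P^G\pi)_{N_R}$ whose graded pieces are $(\Pi_i)_{N_{R'}}$; refining each $(\Pi_i)_{N_{R'}}$ by its own geometric lemma with respect to $u_{i,1}, \ldots, u_{i,j_i} \in W_{P^{w_i},R'}(M_Q)$ produces exactly the filtration of $(\mathrm{Ind}_P^G\pi)_{N_R}$ indexed by the $x_k$'s (this is the standard "transitivity of the geometric lemma," and it is precisely why the enumeration of the $x_k$'s was recorded). The key point is that the graded piece attached to $x_k = w_iu_{i,j}$ agrees with the graded piece of the single-step geometric lemma for $W_{P,R}(G)$ attached to the same element — both equal $\mathrm{Ind}_{P^{w_iu_{i,j}}\cap M_R}^{M_R}(\pi_{M_P\cap N_R^{w_iu_{i,j}}})^{w_iu_{i,j}}$; I would cite (or reprove) this compatibility of the refined filtration with the coarse one, e.g. by the uniqueness of the geometric filtration up to the chosen ordering, or by directly unwinding the integration formula for $\Phi_w$.

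With this in place, the argument is a diagram chase. Since $\lambda \subseteq \mathcal J_i$, applying $(-)_{N_{R'}}$ gives $\lambda_{N_{R'}} \subseteq (\mathcal J_i)_{N_{R'}}$, which sits inside the two-step filtration at the stage corresponding to all $x_k$ with $k$-index $\geq$ the starting index of block $i$. The hypothesis $\lambda \cap \mathcal J_{i+1} = 0$ means that the composite $\lambda \hookrightarrow \mathcal J_i \twoheadrightarrow \Pi_i$ is injective; hence $\lambda_{N_{R'}} \hookrightarrow (\Pi_i)_{N_{R'}}$ is injective, so the supporting-orbit analysis of $\lambda'$ inside $(\mathrm{Ind}_P^G\pi)_{N_R}$ can be carried out inside the single block coming from $\Pi_i$. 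Now the second hypothesis says $\lambda' \hookrightarrow (\Pi_i)_{N_{R'}}$ has supporting orbit $P^{w}w'R'$ with respect to the enumeration $u_{i,1},\ldots,u_{i,j_i}$; that is, writing the filtration of $(\Pi_i)_{N_{R'}}$ as $\mathcal K_{j_i} \subset \cdots \subset \mathcal K_1 = (\Pi_i)_{N_{R'}}$, we have $\lambda' \subseteq \mathcal K_j$ and $\lambda' \cap \mathcal K_{j+1} = 0$ where $w' = u_{i,j}$. Translating through the identification of the two-step filtration with the $x_k$-filtration, $\mathcal K_j$ (inside $\Pi_i$, pulled back into $(\mathrm{Ind}_P^G\pi)_{N_R}$ together with everything in blocks $>i$) is exactly the filtration stage $\mathcal L$ attached to $x_k = w_iu_{i,j}$, and $\mathcal K_{j+1}$ pulls back to the next stage $\mathcal L'$. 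Therefore $\lambda' \subseteq \mathcal L$ and $\lambda' \cap \mathcal L' = 0$, which by definition says the embedding $\lambda' \hookrightarrow (\mathrm{Ind}_P^G\pi)_{N_R}$ has supporting orbit $P\,x_k\,R = P\,w_iu_{i,j}\,R = Pww'R$, as claimed.

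I expect the main obstacle to be the bookkeeping in the identification of the two filtrations: one must be careful that (a) the graded pieces $(\Pi_i)_{N_{R'}}$, refined by the $u_{i,j}$, genuinely reassemble the $x_k$-indexed filtration of $(\mathrm{Ind}_P^G\pi)_{N_R}$ — including that no graded piece from block $i$ accidentally coincides, as a subquotient, with one from block $i'\neq i$ in a way that would let $\lambda'$ leak across blocks — and (b) the orbit labels compose correctly, i.e. the representative $\dot w_i\dot u_{i,j}$ of $w_iu_{i,j}$ is compatible with the ambient representative $\dot x_k$, and the associated parabolic $P^{w_iu_{i,j}}\cap M_R$ and twisted Jacquet module match on the nose. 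Point (a) is handled by the exactness of the Jacquet functor together with the fact (already used implicitly in the enumeration remark) that $k<l \Rightarrow x_k \not\geq x_l$, so the filtration is a legitimate geometric-lemma filtration and subquotients in later stages cannot appear in $\lambda'$ once $\lambda' \cap \mathcal L' = 0$. Point (b) is the standard "transitivity" computation with the integration formula $\Phi_w(f)(m) = \int_{N'} (\mathrm{pr}\circ\widetilde f)(wnm)\,dn$, which factors through an inner integral over $N'_{R'} = {}^{w_iu_{i,j}}(N^-)\cap N_R$ decomposing according to the two unipotent radicals; I would state this as a lemma, sketch the Fubini-type computation, and not grind through the normalizations of Haar measures.
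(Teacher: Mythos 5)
Your proposal is correct and follows essentially the same route as the paper's one-line argument: both reduce the statement to the compatibility of the refined (two-step) geometric-lemma filtration of $(\mathrm{Ind}_P^G\pi)_{N_R}$ with the one-step filtration indexed by $W_{P,R}(G)$, using the fact that the lexicographic enumeration $x_k=w_iu_{i,j}$ satisfies $k<l\Rightarrow x_k\not\geq x_l$. What the paper packages as a single citation to Deodhar (the Bruhat-order monotonicity needed to make the composite enumeration a valid geometric-lemma ordering) is exactly what you call the ``transitivity of the geometric lemma'' plus the enumeration-compatibility check, and the diagram chase you spell out is the routine step the paper leaves implicit.
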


\begin{proof}
This follows from the fact that if $w_iu_{i,j} \not\geq w_{i'}u_{i',j'}$, then either $w_i\not\geq w_{i'}$ or $u_{i,j}\not\geq u_{i',j'}$ (see \cite{De77}).
\end{proof}







\end{document}